\documentclass{article}

\usepackage[margin = 1in]{geometry}

\usepackage{titlesec} % for redefining section headings

\usepackage{tikz}
\usepackage{tikz-cd}
\usetikzlibrary{cd, arrows.meta, calc}
\usepackage{url, hyperref}
\usepackage{float}

\tikzset{->-/.style={decoration={
  markings,
  mark=at position .45 with {\arrow{>}}},postaction={decorate}}}
\usepackage{amsmath}
\usepackage{stmaryrd}
\usepackage{amssymb}
\usepackage{enumitem}
\usepackage{graphicx}
\usepackage{mathdots}
\usepackage{color}
\usepackage{diagbox}
\usepackage{array, makecell}
\usepackage{rotating}
\usepackage{amsthm}
\usepackage{dsfont}
\usepackage{adjustbox}
\usetikzlibrary{arrows}
\usepackage{mathtools}

\def\N{{\mathbb{N}}}

\def\Ev{{\mathsf{Ev}}}
\def\cM{{\overline{\mathcal{M}}}}

\def\mC{\mathsf{C}}

\def\cV{\mathcal{V}}
\def\cW{\mathcal{W}}

\def\cO{\mathcal{O}}
\def\oM{\overline{\mathcal{M}}}
\def\cM{{\mathcal{M}}}

\def\Z{\mathbb{Z}}
\def\C{\mathbb{C}}
\def\R{\mathbb{R}}

\def\b1{{\bf 1}}

\def\Aut{{\rm Aut}}

\def\WG{\mathcal{W}_{\Gamma}}

\def\P{\mathbb{P}}

\newcount\colveccount
\newcommand*\colvec[1]{
        \global\colveccount#1
        \begin{pmatrix}
        \colvecnext
}
\def\colvecnext#1{
        #1
        \global\advance\colveccount-1
        \ifnum\colveccount>0
                \\
                \expandafter\colvecnext
        \else
                \end{pmatrix}
        \fi
}

\usepackage{amsthm}
\usepackage{esvect}
\newtheorem{definition}{Definition}[section]
\newtheorem{theorem}[definition]{Theorem}
\newtheorem{example}[definition]{Example}
\newtheorem{proposition}[definition]{Proposition}
\newtheorem{corollary}[definition]{Corollary}
\newtheorem{lemma}[definition]{Lemma}
\newtheorem{remark}[definition]{Remark}

\newtheorem{notation}[definition]{Notation}

\newtheorem{construction}[definition]{Construction}

\newtheorem{observation}[definition]{Observation}

\newtheorem{innercustomthm}{Theorem}
\newenvironment{customthm}[1]
{\renewcommand\theinnercustomthm{#1}\innercustomthm}
{\endinnercustomthm}

\usepackage{amsmath,calligra,mathrsfs}
\DeclareMathOperator{\cHom}{\mathscr{H}\text{\kern -3pt {\calligra\large om}}\,}

\def\log{\mathrm{log}}

\usepackage{blkarray}
\usepackage{booktabs}
\usepackage{verbatim}
\usepackage{subcaption}

\newcommand{\tropC}{\mathsf{C}}
\newcommand{\defic}[1]{\operatorname{def}(#1)}
\newcommand{\ov}[1]{\operatorname{ov}(#1)}
\newcommand{\val}[1]{\operatorname{val}(#1)}
\newcommand{\lm}[1]{\operatorname{loop}(#1)}
\newcommand{\trop}{\operatorname{trop}}

\newcommand{\gr}{\mathrm{gr}}
\newcommand{\sat}{\mathrm{sat}}
\newcommand{\rad}{\mathrm{rad}}
\newcommand{\tors}{\mathrm{tors}}
\newcommand{\fine}{\mathsf{fine}}
\newcommand{\W}{\mathsf{W}}

\newcommand{\Id}{\operatorname{Id}}

\newcommand{\nbhdtropC}{\mathring{\tropC}_0}

\newcommand{\Span}{\operatorname{Span}_{\mathbb{R}}}
\newcommand{\nn}{n^{\operatorname{Aut}}}

\definecolor{lightgreen}{HTML}{E8F9E2}
\definecolor{lightblue}{HTML}{ADD8E6}
\definecolor{lightcyan}{HTML}{E0FFFF}
\definecolor{lightred}{HTML}{ffcccb}
\definecolor{lightpink}{HTML}{ffe6e6}
\definecolor{coralpink}{HTML}{F88379}
\definecolor{burntsiena}{HTML}{E97451}
\definecolor{forestgreen}{HTML}{228B22}
\definecolor{skyblue}{HTML}{448EE4}
\definecolor{darkgreen}{HTML}{006400}
\definecolor{darkblue}{HTML}{00008B}
\definecolor{darkorchid}{HTML}{9932CC}

\title{Genus one correspondence between tropical and algebraic curves}

\author{Alessio Cela and Sae Koyama}

\date{ }

\usepackage{graphicx}

\begin{document}

\maketitle

\begin{abstract}

We show that the genuinely enumerative count of algebraic elliptic curves in any toric variety agrees with the count of the corresponding well-spaced tropical curves, weighted by explicit combinatorial multiplicities. This provides a complete genus-$1$ generalization of the celebrated Nishinou--Siebert correspondence theorem in genus $0$. The proof is algebro-geometric and relies on logarithmic deformation theory together with an explicit enumeration of logarithmic maps with fixed tropicalization.

\end{abstract}
\tableofcontents

\section{Introduction}

Given a curve in a toric variety, tropical geometry associates to it a graph embedded in a real vector space. The reverse problem of determining which tropical curves arise from algebraic curves is difficult. In genus $0$, the balancing condition is the only requirement. In positive genus, the situation is considerably more subtle and remains almost completely open. The genus $1$ case was first studied in 2005 by Speyer~\cite{Speyer-PhD, Speyer}, who established a sufficient condition (and some necessary conditions) for a tropical curve to admit an algebraic lift. At that time, he highlighted significant challenges in applying his results to enumerative geometry, suggesting that, due to its qualitative rather than quantitative nature, his framework was better suited for existence results and lower bounds rather than for deriving exact combinatorial counts. This paper directly addresses this challenge and enumerates algebraic genus $1$ curves in toric varieties of \emph{arbitrary} dimension through a purely tropical approach. Our main result is the following.

Let $X$ be a smooth projective toric variety over $\C$. Let $N_X$ be the cocharacter lattice of $X$, and set $(N_X)_\R := N_X \otimes_{\Z} \R$. Let $\beta \in H_2(X,\Z)$ be an effective curve class, and let $Z_1,\ldots,Z_n$ be general subvarieties of $X$ (in the appropriate parameter spaces). Assume that there are only finitely many maps from smooth genus one curves (possibly with a fixed general $j$-invariant) to $X$ of class $\beta$ passing through $Z_1,\ldots,Z_n$ . Denote their number by
$
\mathsf{N}^{X,\mathrm{alg}}_{1,\beta,Z_1,\ldots,Z_n}
$ (resp. by $
\mathsf{N}^{X,\mathrm{alg}}_{1;\beta;Z_1,\ldots,Z_n;j}$ if the complex structure of the curve is fixed).

Similarly, let $\mathsf{N}^{X,\mathrm{trop}}_{1;\beta;Z_1,\ldots,Z_n}$ (resp. $\mathsf{N}^{X,\mathrm{trop}}_{1;\beta;Z_1,\ldots,Z_n;j}$) denote the count (with multiplicities) of genus one well-spaced tropical curves (possibly with prescribed general cycle length $j$) in $(N_X)_\R$, of the corresponding tropical degree, passing through the tropicalizations of the $Z_i$. The multiplicity of each tropical curve is made explicit in the main body of the paper (see Theorem \ref{thm: calculation of weights}) and can be computed effectively.

\begin{customthm}{A}\label{thm: main}
The algebraic count
$\mathsf{N}^{X,\mathrm{alg}}_{1,\beta,Z_1,\ldots,Z_n}$
(resp. $\mathsf{N}^{X,\mathrm{alg}}_{1;\beta;Z_1,\ldots,Z_n;j}$)
and the tropical count
$\mathsf{N}^{X,\mathrm{trop}}_{1;\beta;Z_1,\ldots,Z_n}$
(resp. $\mathsf{N}^{X,\mathrm{trop}}_{1;\beta;Z_1,\ldots,Z_n;j}$)
coincide; that is,
\[
\mathsf{N}^{X,\mathrm{alg}}_{1,\beta,Z_1,\ldots,Z_n}
=
\mathsf{N}^{X,\mathrm{trop}}_{1;\beta;Z_1,\ldots,Z_n}
\quad \text{and} \quad
\mathsf{N}^{X,\mathrm{alg}}_{1;\beta,Z_1,\ldots,Z_n;j}
=
\mathsf{N}^{X,\mathrm{trop}}_{1;\beta,Z_1,\ldots,Z_n;j}.
\]
\end{customthm}

When $X$ is a toric surface, dimension-counting arguments can be used to show that the pathological tropical curves do not contribute to the enumeration. This observation was fundamental for Mikhalkin's breakthrough \cite{Mik05}, which established that the count of algebraic curves in toric surfaces matches the count of tropical curves when weighted by appropriate combinatorial multiplicities. In arbitrary dimensions, however, the realizability problem becomes significantly more complex. While a definitive and elegant correspondence theorem was achieved in genus 0 by Nishinou–Siebert \cite{NS06}, the higher-genus setting has remained open for over two decades due to the failure of these low-dimensional dimension-counting arguments.

In 2019, Ranganathan--Santos-Parker--Wise~\cite{RSWI, RSWII} (see also \cite{LNR} for a different approach) provided a necessary and sufficient condition for a genus $1$ tropical curve to admit an algebraic lift to a map from a smooth elliptic curve: there must exist a logarithmic map from a possibly reducible algebraic curve of the same combinatorial type as the given tropical curve, and the tropical curve must be \emph{well-spaced}. This resolves the combinatorial aspect of the problem, but not the geometric one. Indeed, the first condition is non-trivial to verify in practice. We establish in Theorem \ref{thm: realizability genus 1} a criterion that allows one to check this condition. We then use this to obtain a complete description of the genus $1$ theory and explicitly enumerate the number of algebraic lifts of each genus $1$ tropical curve, thereby proving Theorem~\ref{thm: main}.

It should be highlighted that the invariants of interest in this paper are genuinely enumerative, as opposed to virtual ones. In the latter framework, Abramovich–Chen–Gross–Siebert \cite{ACGSI} showed that logarithmic Gromov–Witten invariants can be expressed as weighted counts of certain tropical curves. A key distinction from our work is that, in this virtual setting, the weights assigned to tropical curves are given in terms of Gromov–Witten invariants, rather than being described purely by combinatorial data.

Our methodology diverges from that of previous works. Mikhalkin’s original proof of the correspondence is not algebro-geometric, and the algebraic treatments in \cite{NS06} and \cite{ACGSI} rely on degeneration techniques. This paper takes a different path by employing the theory of Minkowski weights \cite{FuSt}. This framework is particularly well-suited to our purposes, as it inherently incorporates the principle that intersections of subvarieties in a toric variety can be understood tropically by translating their tropicalizations by a generic displacement vector.

\begin{table}[htpb]
    \centering
    \renewcommand{\arraystretch}{1.3}
    \begin{tabular}{@{}llcl@{}}
        \toprule
        \textbf{Authors} & \textbf{Dimension} & \textbf{Genus} & \textbf{Count Type} \\
        \midrule
        Mikhalkin \cite{Mik05} & $\dim X = 2$ & All & Enumerative \\
        Nishinou--Siebert \cite{NS06} & Arbitrary & $g=0$ & Enumerative \\
        Abramovich-Chen-Gross-Siebert \cite{ACGSI} & Arbitrary & All & Virtual \\
        \midrule
        \textbf{Our Work (Thm.~\ref{thm: main} or more precisely Thms.~\ref{thm: tropical correspondence} and \ref{thm: calculation of weights})} & \textbf{Arbitrary} & $\mathbf{g=1}$ & \textbf{Enumerative} \\
        \bottomrule
    \end{tabular}
    \vspace{0.2cm}
    \caption{Relation with previous tropical correspondences.}
    \label{tab:historical_context}
\end{table}

\subsection{Well-spaced genus $1$ maps}

Let $X$ be non-singular, irreducible, projective toric variety over $\C$ of dimension $r$. Let $\Sigma(X)$ be its fan. Ranganathan--Parker--Wise~\cite{RSWI, RSWII} introduced the moduli space $\mathcal{W}_{\Gamma}(X)$ of genus $1$ well-spaced stable maps to $X$ with prescribed contact orders along the toric boundary $\partial X$ 
and $n$ additional free markings. 

The curve class $A \in H_2(X)$ is determined by the contact orders, and the domain curve has $n+m$ markings: $m$ markings $z_j$ with prescribed contact to $\partial X$ and $n$ free markings $p_i$ with contact order $0$. The contact order with $\partial X$ at the marking $z_j$ is encoded by a vector $\delta_j \in N_X$ and the discrete data of the contact orders and free markings is encoded by $\Gamma$. 

\begin{example}
    When $X=\mathbb{P}^r$ is projective space, the most general tangency condition for degree $d$ curves corresponds to the case in which $\delta$ consists of $d$ copies of the primitive generator of each ray of $\Sigma(\mathbb{P}^r)$. 
\end{example}

The moduli stack $\mathcal{W}_{\Gamma}(X)$ is proper, logarithmically smooth and pure of dimension 
\[
\dim \mathcal{W}_{\Gamma}(X) = n +  m +(r-\operatorname{dim}(\operatorname{Span}_\R(\delta_1,\ldots,\delta_m))).
\]

Our primary result is a correspondence theorem connecting these algebraic curve counts with counts of tropical curves. As the next proposition shows, the general element of every component of $\cW_\Gamma(X)$ has a trivial stabilizer. Consequently, the corresponding count is an integer.

\begin{proposition}\label{prop: integrality}
Assume $n \geq 1$. Then the generic point of every irreducible component of $\mathcal{W}_\Gamma(X)$ has a trivial automorphism group.
\end{proposition}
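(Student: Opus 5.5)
The plan is to show that the locus of points with nontrivial automorphisms is nowhere dense in each irreducible component. By the paper's own description, $\mathcal{W}_\Gamma(X)$ is logarithmically smooth of pure dimension $n+m+(r-\dim \Span(\delta_1,\ldots,\delta_m))$. Hence every irreducible component is the closure of its locus of maximal dimension, and it suffices to do two things. First, identify what a generic point of a component looks like. Second, show that any point with a nontrivial automorphism lies in a closed substack of strictly smaller dimension.

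\textbf{Step 1: automorphisms act only on contracted parts.} Let $f\colon C\to X$ be a point of $\mathcal{W}_\Gamma(X)$ and let $\sigma$ be a nontrivial automorphism. Then $f\circ\sigma=f$, and $\sigma$ fixes every marking $p_i$ and $z_j$. Let $C'\subseteq C$ be a component on which $f$ is nonconstant. Since $\sigma$ fixes the markings, it preserves $C'$. On any non-contracted rational tail or bridge, a torus-equivariant argument applies: the map to $X$ composed with the torus characters is a rational function with prescribed zeros and poles. Together with the fixed markings and special points, this forces $\sigma|_{C'}=\mathrm{id}$ as soon as $f|_{C'}$ is birational onto its image. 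The remaining cases are:
\begin{enumerate}[label=(\roman*)]
\item $f|_{C'}$ is a multiple cover, which can have deck transformations commuting with $f$;
\item $\sigma$ acts nontrivially on a contracted genus $1$ subcurve, for example the elliptic involution on a contracted elliptic component, or on a cycle of rational curves contracted by $f$.
\end{enumerate}

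\textbf{Step 2: use the free marking to kill both cases.} Case (ii) is where $n\geq 1$ enters. On the generic point of a component, consider where the free marking $p_1$ sits. Forgetting $p_1$ and stabilizing gives a map $\mathcal{W}_\Gamma(X)\to \mathcal{W}_{\Gamma'}(X)$ whose fibres are one-dimensional, essentially the universal curve, up to the well-spacedness modification. By the dimension formula, a component of $\mathcal{W}_\Gamma(X)$ therefore dominates the universal curve over a component of $\mathcal{W}_{\Gamma'}(X)$. Consequently $p_1$ is generically a general point of a non-contracted component, or of the genus-$1$ core. If $p_1$ lies on the core, the elliptic involution (and any rotation or reflection of a contracted cycle) moves a general point, so it is not an automorphism. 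I expect the same reasoning to dispose of multiple covers in case (i): a deck transformation must fix the general point $p_1$ and every $z_j$. When $p_1$ does not sit on the relevant component, I plan a separate dimension count instead. Loci of multiple covers of rational curves, and loci with contracted elliptic pieces carrying no markings, have dimension strictly less than the expected dimension. The reason is that they are parametrized by lower-degree maps together with branch data. The well-spaced condition only cuts further, so such loci cannot form a component.

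\textbf{Main obstacle.} The hardest step is the second half of Step 2. Components of $\mathcal{W}_\Gamma(X)$ need not be the main component: the space is log smooth but has boundary components whose generic point is a map with a contracted genus-one core and rational tails. On such components a generic point can genuinely have the core contracted. I would therefore argue component by component using the log smooth chart. Each component corresponds to a cone of the tropical moduli, namely a tropical type with well-spacedness. In each case I would show that $p_1$, being free, can be placed generically on a component of $C$, and use $p_1$ to rigidify that component. The remaining components are then rigidified through the nodes. Contracted rational components are stable, hence have no automorphisms fixing their special points. The contracted genus-$1$ core has at least one special point, and its automorphisms fixing that point are at most the involution. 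This involution must be excluded by the well-spaced/log structure, which prescribes the positions of the attaching points, or by the placement of $p_1$. Checking this last point carefully is where I expect the real work to lie.
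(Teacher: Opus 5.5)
\textbf{The gap} is the premise behind your ``Main obstacle''. You assert that $\mathcal{W}_\Gamma(X)$, although log smooth, has components whose generic point is a map with a contracted genus-one core and rational tails. This is false, and it is exactly the consequence of log smoothness that your argument never uses.

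\'Etale locally, $\mathcal{W}_\Gamma(X)$ admits a strict smooth morphism to an affine toric variety $\mathrm{Spec}\,\mathbb{C}[P]$. The locus where the log structure is trivial is the preimage of the dense torus, so it is open and dense in every irreducible component. At a point with trivial basic monoid the curve has no nodes, since every node $q$ contributes a nonzero element $\rho_q$ of the base monoid. Hence the generic point of every component is a map from a \emph{smooth} genus-one curve. Components generically made of contracted cores with tails occur in $\overline{\mathcal{M}}_{1,n}(X,\beta)$ or in the ACGS space. Removing them is the point of the well-spaced construction, and in $\mathcal{W}_\Gamma(X)$ such maps lie in boundary strata of positive codimension.

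Without this observation, your plan becomes a case analysis of nodal configurations that never occur generically. That analysis has three further problems:
\begin{itemize}
\item It relies on a forgetful map $\mathcal{W}_\Gamma(X)\to\mathcal{W}_{\Gamma'}(X)$ whose compatibility with radial alignment and well-spacedness you do not establish.
\item It relies on dimension estimates that are false in the generality you state them. For $X=\mathbb{P}^1$, or in degree $2$ in $\mathbb{P}^2$, every genus-one map is a multiple cover of a rational curve.
\item The step you yourself call ``the real work'' is left open.
\end{itemize}

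\textbf{The fix.} Once you have the observation, the proof is short and is exactly the paper's. Your idea for case (i) is the right one, and case (ii) disappears because $f$ is nonconstant on the irreducible curve $C$.
\begin{itemize}
\item Factor $f=f''\circ f'$, with $f'\colon C\to C'$ the map to the normalization of the image and $f''$ birational onto its image.
\item Since $p_1$ is free and you are in the open trivial-log locus, moving $p_1$ along $C$ stays in the same component. So you may assume $f'$ is unramified at $p_1$.
\item An automorphism $\varphi$ fixes $p_1$ and satisfies $f''\circ f'\circ\varphi=f''\circ f'$. Since $f''$ is generically injective, $f'\circ\varphi=f'$.
\item Over a small neighbourhood of $f'(p_1)$, the preimage under $f'$ is a disjoint union of sheets, each mapped isomorphically. $\varphi$ preserves the sheet through $p_1$ and commutes with $f'$, so it is the identity there, hence on all of $C$.
\end{itemize}

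Nonconstancy of $f$ is genuinely needed, and your claim that the elliptic involution ``moves a general point'' is not the right mechanism. Automorphisms must fix $p_1$, so the relevant involution is the one centred at $p_1$. For $A=0$, $n=1$, $m=0$, the generic point is a constant map from a smooth $(E,p_1)$, and that involution is a nontrivial automorphism of it. The statement therefore tacitly assumes a nonzero curve class, as does the paper's proof.
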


The proof is given in \S\ref{sec: integrality}.

\subsection{Tropical well-spaced genus $1$ maps}
The combinatorial analog of the algebraic space $\mathcal{W}_{\Gamma}(X)$ is the moduli space 
$W_{\Gamma}(\Sigma(X))$ of tropical well-spaced genus $1$ maps with discrete data $\Gamma$. 
Let $N_X$ denote the cocharacter lattice of the toric variety $X$, and set
$
(N_{X})_{\mathbb{R}} := N_X \otimes_{\mathbb{Z}} \mathbb{R}.
$
The space $W_{\Gamma}(\Sigma(X))$ is a pure dimensional weighted generalized cone complex of dimension 
$$
\dim W_{\Gamma}(\Sigma(X)) = n +  m +(r-\operatorname{dim}(\operatorname{Span}_\R(\delta_1,\ldots,\delta_m)))
$$ 
constructed via a sequence of subdivisions and restrictions from the moduli space of genus $1$ tropical maps 
\[
f: (\mathsf{C},p_1,\ldots,p_n) \longrightarrow (N_X)_{\mathbb{R}}
\]
to the cocharacter lattice $N_X$ of $X$ with ends prescribed by $\Gamma$. See \S\ref{sec: well-spaced tropical maps}. 

Let $\WG(X)^{\circ} \subseteq \WG(X)$ denotes the locus of maps where the logarithmic structure is trivial. As explained in \cite[Theorem~7.14]{ACMUW} and \cite[Theorem~4.6.2]{RSWII}, there is a continuous tropicalization map
\begin{equation}\label{eqn: tropicalization map}
\mathrm{trop} \colon \WG(X)^{\circ, \mathrm{an}}\longrightarrow {\Sigma}({\WG(X)} )
\to W_{\Gamma}(\Sigma(X)) ,
\end{equation}
Here $(\cdot)^{\mathrm{an}}$ denotes the analytification functor, and $\Sigma(-)$ denotes the tropicalization.
The second map is finite and restricts to an isomorphism on each cone of $\Sigma(\WG(X))$. 
We endow the generalized cone complex $W_{\Gamma}(\Sigma(X))$ with a weight function $w$ that assigns to each maximal cone $\sigma$ the quantity
$$
w(\sigma)= \sum_i \frac{1}{|\mathrm{Aut}(f_{\sigma_i})|},
$$
where the sum ranges over all maximal cones $\sigma_i$ in $\Sigma(\cW_{\Gamma}(X))$ that map to $\sigma$, and $f_{\sigma_i}$ denotes the logarithmic map corresponding to $\sigma_i$. Here, $\mathrm{Aut}(f_{\sigma_i})$ denotes the automorphism group of the logarithmic map $f_{\sigma_i}$ viewed as an object of $\cW_{\Gamma}(X)$. See \S\ref{sec: automorphisms} for a discussion about automorphism.

For example, $w(\sigma)=0$ if and only if $\sigma$ does not lie in the image of the map~\eqref{eqn: tropicalization map}. Thus, computing $w(\sigma)$ amounts, in particular, to determining whether a given tropical map arises from an algebraic map. This problem is solved in \S\ref{sec: realizability}. Example \ref{example: weight zero example} provides an example in which $w(\sigma)=0$.

\subsection{The correspondence theorem}

The main result of this paper is a correspondence theorem between the count of tropical well-spaced genus $1$ maps and the corresponding count of algebraic ones for the morphism
\begin{equation}\label{eqn: map}
\mathsf{ev} \times \mathsf{st} \colon \WG(X) \longrightarrow X^n \times  \overline{\mathcal{M}}_{1,1}=: \Ev.
\end{equation}
Denote by
\[
\Sigma(\mathsf{ev} \times \mathsf{st}) \colon 
\Sigma(\WG(X)) \longrightarrow \Sigma(\Ev)
\]
the induced map of cone complexes. This factors through the natural map
\begin{equation}\label{eqn:tropical_ev_map}
W_{\Gamma}(\Sigma(X)) \longrightarrow \Sigma(X)^n \times \mathbb{R}_{\ge 0}= \Sigma(\Ev),
\end{equation}
given by the tropical evaluation map at the free markings $p_i$ together with the map recalling the $j$-invariant of the domain graph. See \S\ref{sec: key commutative diagram}. By abuse of notation, we also denote the map \eqref{eqn:tropical_ev_map} by $\Sigma(\mathsf{ev} \times \mathsf{st})$. 

Let $\alpha \in \mathsf{CH}^*(X^n)$ and $\beta \in \mathsf{CH}^*(\overline{\mathcal{M}}_{1,1})$ be homogeneous cohomology classes such that 
\[
\deg(\alpha) + \deg(\beta)
= \dim \mathcal{W}_{\Gamma}(X)
\]
Note that $\beta$ is either a multiple of $1$ or a multiple of the point class. It is therefore not restrictive to assume that $\beta$ is one of the two classes $1$ or the point class $\mathsf{pt}_{\oM_{1,1}}$.

Denote by
\begin{equation*}\label{eqn: calpha}
c_{\alpha} \colon \Sigma(X^n)[rn-\deg(\alpha)]\longrightarrow \mathbb{Z}
\end{equation*}
the Minkowski weight associated to $\alpha$. Here, $\Sigma(X^n)[k]$ denotes the set of $k$-dimensional cones in the fan $\Sigma(X^n)$. This is an integer-valued function defined on the cones of $\Sigma(X^n)$ of codimension $\deg(\alpha)$ (equivalently, of dimension $rn-\deg(\alpha)$). We refer to the classical treatment \cite{FuSt} for the theory of Minkowski weights and intersection theory on toric varieties.

There is no complete analog of the theory of Minkowski weights for cone complexes in general. However, the geometry of $\overline{\mathcal{M}}_{1,1}$ is simple enough that we can give an ad-hoc definition of the Minkowski weight associated to $\alpha \otimes \beta$ as follows.

A cone in $\Sigma(\Ev)$ of codimension equal to $\dim \WG(X)$ is either of the form
$
\gamma \times \R_{\ge 0},
$
for some cone $\gamma \in \Sigma(X^n)$ of dimension one less, or of the form
$
\gamma \times \{0\},
$
for some cone $\gamma \in \Sigma(X^n)$ of the same dimension.

We define 
\[
c_{\alpha \otimes \beta} \colon \Sigma(\Ev)[rn+1 - \dim \WG(X)] \longrightarrow \Z
\]
as follows. For a cone $\tau$ of the above type, set
\begin{equation}\label{eqn: def of Minkowski weight c}
c_{\alpha \otimes \beta}(\tau) =
\begin{cases}
c_{\alpha}(\gamma) & \text{if } \tau = \gamma \times \R_{\ge 0}
\text{ and } \beta = 1, \\[6pt]
c_{\alpha}(\gamma) & \text{if } \tau = \gamma \times \{0\}
\text{ and } \beta = \mathsf{pt}_{\overline{\mathcal{M}}_{1,1}}, \\[6pt]
0 & \text{otherwise}.
\end{cases}
\end{equation}

\begin{customthm}{B}[Correspondence theorem, Part I]\label{thm: tropical correspondence}
Let $W \in (\mathbb{R}^r)^n \times \mathbb{R}_{\geq 0}$ be a generic displacement vector. Then
\begin{equation}\label{eqn: correspondence}
(\mathsf{ev} \times \mathsf{st})^*(\alpha \otimes \beta) \cap [\WG(X)]
=
\sum_{\sigma} 
w(\sigma)
\sum_{\tau}
c_{\alpha \otimes \beta}(\tau)\,
\bigl[
N_{\Ev}: \Sigma( \mathsf{ev} \times \mathsf{st})(N_\sigma)  +N_\tau
\bigr],
\end{equation}
where:
\begin{itemize}
\item The outer sum runs over all maximal cones $\sigma$ of $W_{\Gamma}(\Sigma(X))$.
\item The quantity $w(\sigma)$ denotes the weight of the cone $\sigma$. 
\item The inner sum runs over cones $\tau \subset \Sigma(\Ev)$ of codimension $\dim \WG(X)$ such that
$$(\tau + W) \cap \Sigma( \mathsf{ev} \times \mathsf{st})(\sigma) \neq \varnothing.$$
\item The quantity $\bigl[  N_{\Ev}: \Sigma( \mathsf{ev} \times \mathsf{st})(N_\sigma)  +N_\tau  \bigr]$ is the lattice index of the subgroup of $N_{\Ev}= N_X^n \times \Z$ generated by $\Sigma( \mathsf{ev} \times \mathsf{st})(N_\sigma)$ and $N_\tau$. Here, $N_\tau$ (resp.\ $N_\sigma$) denotes the lattice of the Euclidean space associated with $\tau$ (resp.\ $\sigma$). See Notation \ref{notation: Nsigma}.
\end{itemize}
\end{customthm}

For the more tropically minded reader, we make a few remarks connecting the expression \eqref{eqn: correspondence} to a more familiar language.

\begin{remark}
     Write 
\[
W = (W_1, \ldots, W_n, j) \in (\mathbb{R}^r)^n \times \mathbb{R}_{\ge 0}
\] for the displacement vector in Theorem \ref{thm: tropical correspondence}.
Suppose that 
$
\alpha = \alpha_1\otimes \cdots \otimes \alpha_n
$
for certain Chow classes $\alpha_i \in \mathsf{CH}^*(X)$ with associated Minkowski weight $m_i$. Then the condition
\[
(\tau + W) \cap \Sigma(\mathsf{ev} \times \mathsf{st})(\sigma) \neq \emptyset
\]
restricts the sum to those cones $\sigma$ of $W_{\Gamma}(\Sigma(X))$ that interpolate the required geometric conditions. That is, there exists a point
\[
f \colon (\mathsf{C}, p_1, \ldots, p_n) \longrightarrow \Sigma(X)
\]
of $\sigma$ such that
\[
f(p_i) \in W_i + \mathrm{supp}(m_i), \qquad 1 \le i \le n,
\]
and, if $\beta = \mathsf{P}_{\overline{\mathcal{M}}_{1,1}}$, the tropical $j$-invariant of $\mathsf{C}$—that is, the length of its unique cycle—is equal to $j$. Here $\operatorname{supp}(m_i)$ denotes the support of the Minkowski weight $m_i$.
\end{remark}

\begin{remark}\label{rmk: lattice index is determinant}
    The lattice index
\[
\bigl[
N_{\mathsf{Ev}} :  \Sigma( \mathsf{ev} \times \mathsf{st})(N_\sigma) + N_\tau
\bigr]
\]
is equal to the absolute value of the determinant of the linear transformation
\[
N_\sigma \xrightarrow{\;\Sigma(\mathsf{ev} \times \mathsf{st})\;} N_{\mathsf{Ev}} \longrightarrow N_{\mathsf{Ev}}/ N_\tau,
\]
expressed with respect to a choice of $\mathbb{Z}$-basis of $N_\sigma$ and a $\mathbb{Z}$-basis of
\[
N_{\mathsf{Ev}}/ N_\tau \simeq \bigl((\mathbb{Z}^r)^n \times \mathbb{Z}\bigr) / N_\tau.
\]
\end{remark}

Tropical geometers often work with tropical maps to $(N_X)_{\mathbb{R}}$ rather than to $\Sigma(X)$. The cone complex $W_\Gamma(\Sigma(X))$ is a subdivision of the cone complex $W_\Gamma((N_X)_{\mathbb{R}})$, which parametrizes well-spaced tropical maps to $(N_X)_{\mathbb{R}}$. This subdivision involves a rescaling of the lattice structures. For a cone $\sigma$ of $W_{\Gamma}(\Sigma(X))$, we denote the corresponding scaling factor by $\mathrm{lat}(\sigma)$. See Definition~\ref{def: lattice index} for details.

Another natural quantity associated with a tropical map is its loop multiplicity.

\begin{definition}\label{def:loop}
The loop multiplicity of a tropical map $[h \colon \tropC \to \Sigma(X)]$ in $\Sigma(X)$ (or $(N_X)_\mathbb{R}$) is defined as the index of the map
\[
\mathbb{Z}^{E(\tropC_0)} \longrightarrow \operatorname{span}_{\mathbb{R}}(h(\tropC_0)) \cap N_X,
\]
given by
\[
(\ell_1, \ldots, \ell_k) \longmapsto \sum \ell_i u_i.
\]
Here, $\tropC_0$ denotes the unique cycle of $\tropC$, $E(\tropC_0)$ its edge set, and, for each edge $e_i \in E(\tropC_0)$, $u_i$ denotes the direction vector corresponding to a chosen orientation $\vec{e}_i$ of $e_i$.
\end{definition}

Clearly, the definition of loop multiplicity does not depend on the particular tropical curve parametrized by a cone, nor on the choice of orientation of the edges in $E(\tropC_0)$. It therefore determines a well-defined loop multiplicity $\mathrm{loop}(\sigma)$ (resp. $\mathrm{loop}(\sigma')$) for every cone $\sigma \subseteq W_\Gamma(\Sigma(X))$ (resp. $\sigma' \subseteq W_\Gamma((N_X)_\R)$).

\begin{remark}
Let $\sigma'$ be the cone in the moduli space of tropical well-spaced maps to $(N_X)_{\mathbb{R}}$ to which $\sigma$ maps. Then one has
\[
\left[
N_{\mathsf{Ev}}/N_\tau: \Sigma( \mathsf{ev} \times \mathsf{st})(N_\sigma)
\right]
=
\mathrm{lat}(\sigma) \cdot
\left[
N_{\mathsf{Ev}}/N_\tau : \Sigma( \mathsf{ev} \times \mathsf{st})(N_{\sigma'})
\right].
\]

\end{remark}

Let $\sigma$ be a maximal cone of $W_{\Gamma}(\Sigma(X))$ and denote by $\{ \sigma_i\}$ the set of maximal cones of $\Sigma(\cW_{\Gamma}(X))$ above $\sigma$. For every index $i$ let $f_{\sigma_i}$ be the point in $\cW_{\Gamma}(X)$ corresponding to $\sigma_i$. Let $\{f_j^\mathrm{fine}\}$ be the set of non-isomorphic basic \textit{fine} logarithmic maps associated to the collection $\{f_{\sigma_i}\}$. More explicitly, a basic fine logarithmic map belongs to the set $\{f_j^\mathrm{fine}\}$ if and only if there exists an index $i$ such that the basic fine logarithmic map underlying $f_{\sigma_i}$ is isomorphic to it. Let $n(\sigma)$ be the cardinality of the set $\{ f_j^\fine\}$ (See Equation \ref{eqn: def n(sigma)} for a more precise definition).  In Theorem \ref{thm: automorphisms}, we will show that the automorphism group
$\mathrm{Aut}(f_{\sigma_i})$ of each $f_{\sigma_i}$ 
depends only on the automorphism group of the underlying fine basic logarithmic map. Moreover, by Theorem \ref{thm: number of logarithmic enhancements}, over each $f_j^\fine$ there are precisely $\mathrm{sat}(\sigma)$ maps $f_{\sigma_i}$ with associated basic fine logarithmic map $f_j^\fine$. Here $\mathrm{sat}(\sigma)$ denotes the saturation index of $\sigma$ (see Definition \ref{def: sat}). Then one has
\begin{equation}\label{eqn: reduction_to_aut_of_fine_map}
w(\sigma)= \mathrm{sat}(\sigma) \sum_{j=1}^{n(\sigma)} \frac{1}{|\mathrm{Aut}(f_j^\fine)|}
\end{equation}

where now $\mathrm{Aut}(f_j^\fine)$ denotes the automorphism group of the fine map $f_j^\fine$. There is a natural normal subgroup 
\[
 \prod_{V \text{ unconstrained}} \mu_{d_V} \subseteq \mathrm{Aut}(f_j^{\mathsf{fine}}).
\]
Here, the product is over the unconstrained vertices $V$ of the tropical curve - those corresponding to components $C_V$ that multiply cover a toric $\mathbb{P}^1$ in $X$ - and the value $d_V$ denotes the degree of the restriction $\underline{f}_j|_{C_V}$ onto its image. See Definition \ref{def: dv}. 

In \S\ref{sec: curves-in-Nx}, we prove the following remarkable relation between saturation and lattice index, loop multiplicity and automorphisms.

\begin{customthm}{C}\label{thm: relation indices, loop aut}
    For a maximal cone $\sigma$ of $W_{\Gamma}(\Sigma(X))$, we have following equality 
    \[
\mathrm{sat}(\sigma)= 
\frac{ \bigg(
\prod\limits_{V \text{ unconstrained}} d_V \bigg) \cdot \mathrm{loop}(\sigma)
}{\mathrm{lat}(\sigma)}.
\]
\end{customthm}
Set
\begin{equation*}
\nn(\sigma) :=  \sum_{j=1}^{n(\sigma)} \frac{1}{|\mathrm{Aut}(f_j^\fine)/ \prod \mu_V|} 
\end{equation*}

If $\sigma'$ is a maximal cone of $W_{\Gamma}((N_X)_\R)$ satisfying
\[
(\tau + W)\cap \Sigma(\mathsf{ev}\times\mathsf{st})(\sigma')\neq\emptyset,
\]
then this intersection consists of a unique point, which lies in a unique maximal cone $\sigma$ of the subdivision $W_{\Gamma}(\Sigma(X))$. Moreover, as explained in Corollary \ref{cor: n' sigma depends only on sigma'}, the automorphism-weighted factor $\nn(\sigma)$ depends only on the cone $\sigma'$. Consequently, it defines a well-defined quantity $\nn(\sigma')$. 

Putting everything together, Equation~\eqref{eqn: correspondence} yields the following corollary.

\begin{corollary}\label{cor: correspondence_thm_Hannah_style}
    Let $W \in (\mathbb{R}^r)^n \times \mathbb{R}_{\geq 0}$ be a generic displacement vector. Then
    \begin{equation}\label{eqn: correspondence bis}
(\mathsf{ev} \times \mathsf{st})^*(\alpha \otimes \beta) \cap [\WG(X)]
=
\sum_{\sigma'} \mathrm{loop}(\sigma')
\cdot \nn(\sigma') 
\sum_{\tau}
c_{\alpha \otimes \beta}(\tau)\,
\bigl[ 
N_{\Ev}: \Sigma( \mathsf{ev} \times \mathsf{st})(N_{\sigma'})  + N_\tau
\bigr],
\end{equation}
where the external sum is over maximal cones $\sigma'$ in $W_\Gamma((N_X)_\R)$ and the inner sum is over cones $\tau \subseteq \Sigma(\mathsf{Ev})$ of codimension $\dim \WG(X)$ and such that $(\tau + W) \cap \Sigma( \mathsf{ev} \times \mathsf{st})(\sigma') \neq \varnothing.$
\end{corollary}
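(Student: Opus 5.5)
The corollary should follow from Theorem~\ref{thm: tropical correspondence} by rewriting each term $w(\sigma)\,[N_{\Ev}:\Sigma(\mathsf{ev}\times\mathsf{st})(N_\sigma)+N_\tau]$ as a term indexed by the coarser cone $\sigma'$.

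Start from \eqref{eqn: correspondence}. Since $W$ is generic, only maximal cones $\sigma$ with $(\tau+W)\cap\Sigma(\mathsf{ev}\times\mathsf{st})(\sigma)\neq\varnothing$ contribute. I will reindex these by pairs $(\sigma',\tau)$. Each maximal $\sigma$ of the subdivision $W_\Gamma(\Sigma(X))$ lies in a unique maximal cone $\sigma'$ of $W_\Gamma((N_X)_\R)$. Conversely, as stated just before the corollary, for each $\sigma'$ and $\tau$ with nonempty intersection the intersection is a single point, lying in a unique maximal $\sigma\subseteq\sigma'$. So the pairs $(\sigma,\tau)$ occurring in \eqref{eqn: correspondence} are in bijection with the pairs $(\sigma',\tau)$ occurring in \eqref{eqn: correspondence bis}. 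Genericity of $W$ ensures the point avoids lower-dimensional cones of the subdivision, so no double counting occurs.

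For a matched pair, I rewrite the summand in four steps.
\begin{enumerate}
\item Use \eqref{eqn: reduction_to_aut_of_fine_map} to write $w(\sigma)=\mathrm{sat}(\sigma)\sum_j 1/|\mathrm{Aut}(f_j^\fine)|$.
\item Since $\prod_V\mu_{d_V}$ is a normal subgroup of $\mathrm{Aut}(f_j^\fine)$ of order $\prod_V d_V$, we have $1/|\mathrm{Aut}(f_j^\fine)| = (\prod_V d_V)^{-1}\cdot 1/|\mathrm{Aut}(f_j^\fine)/\prod\mu_V|$. Hence $w(\sigma)=\mathrm{sat}(\sigma)\,(\prod_V d_V)^{-1}\,\nn(\sigma)$.
\item Theorem~\ref{thm: relation indices, loop aut} turns this into $w(\sigma)=\mathrm{loop}(\sigma)\,\nn(\sigma)/\mathrm{lat}(\sigma)$.
\item By the remark relating lattice indices, $[N_{\Ev}:\Sigma(\mathsf{ev}\times\mathsf{st})(N_\sigma)+N_\tau]=\mathrm{lat}(\sigma)\,[N_{\Ev}:\Sigma(\mathsf{ev}\times\mathsf{st})(N_{\sigma'})+N_\tau]$. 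Here I use that $[N_{\Ev}:A+N_\tau]=[N_{\Ev}/N_\tau:\text{image of }A]$, as in Remark~\ref{rmk: lattice index is determinant}.
\end{enumerate}
The factor $\mathrm{lat}(\sigma)$ cancels, leaving $\mathrm{loop}(\sigma)\,\nn(\sigma)\,[N_{\Ev}:\Sigma(\mathsf{ev}\times\mathsf{st})(N_{\sigma'})+N_\tau]$.

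It remains to replace $\sigma$ by $\sigma'$ in the first two factors. The loop multiplicity depends only on the directions of the cycle edges in $N_X$. These are the same for $\sigma$ and $\sigma'$, because subdividing according to $\Sigma(X)$ does not change the underlying tropical map to $(N_X)_\R$; hence $\mathrm{loop}(\sigma)=\mathrm{loop}(\sigma')$. Corollary~\ref{cor: n' sigma depends only on sigma'} gives $\nn(\sigma)=\nn(\sigma')$. Summing over the bijection gives \eqref{eqn: correspondence bis}.

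The main point needing care is the bijection, specifically that $\sigma$ is well defined from $(\sigma',\tau)$. One must check that the single point of $(\tau+W)\cap\Sigma(\mathsf{ev}\times\mathsf{st})(\sigma')$ determines a unique $\sigma$, and not merely a unique point of $\sigma'$. If $\Sigma(\mathsf{ev}\times\mathsf{st})$ restricted to $\sigma'$ were not injective, several subcones could map to the same point. I would handle this with a dimension count: $\dim\sigma'$ plus $\dim\tau$ equals $\dim\Sigma(\Ev)$, and a nonzero index forces the linear map to be injective on $N_{\sigma'}$. Terms where the map is non-injective have index zero in both formulas, so they may be discarded.
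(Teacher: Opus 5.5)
Your proposal is correct and is the paper's own argument: it combines Theorem~\ref{thm: tropical correspondence} with Equation~\eqref{eqn: reduction_to_aut_of_fine_map}, Theorem~\ref{thm: relation indices, loop aut}, the lattice-scaling remark, $\mathrm{loop}(\sigma)=\mathrm{loop}(\sigma')$ and Corollary~\ref{cor: n' sigma depends only on sigma'}, reindexing $(\sigma,\tau)\leftrightarrow(\sigma',\tau)$ as stated just before the corollary; note that in Theorem~\ref{thm: relation indices, loop aut} the product of the $d_V$ runs over the constrained vertices, which matches the order of $\prod\mu_{d_V}$ that you use. One small refinement: when $\Sigma(\mathsf{ev}\times\mathsf{st})$ is not injective on $N_{\sigma'}$, the image of $\sigma'$ has dimension less than $\operatorname{codim}\tau$, so for generic $W$ such terms do not occur at all, rather than appearing with index zero.
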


Note that, as in Remark~\ref{rmk: lattice index is determinant}, the lattice index
\[
\bigl[
N_{\Ev}: \Sigma( \mathsf{ev} \times \mathsf{st})(N_{\sigma'})  + N_\tau
\bigr]
\]
is the absolute value of the usual determinant.

\subsection{Weights of maximal cones of $W_{\Gamma}(\Sigma(X))$}\label{sec: result weights}

Corollary \ref{cor: correspondence_thm_Hannah_style} reduces the enumerative problem in $X$ to a purely combinatorial one, the only remaining ingredient being the automorphism-weighted factor
$
\nn(\sigma')
$
attached to each maximal cone $\sigma'$ of $W_{\Gamma}((N_{X})_\R)$. An explicit formula for this factor is obtained in \S\ref{sec: weights}. Before stating our result we require some notation.

Fix a maximal cone $\sigma'$ in $W_{\Gamma}((N_X)_\R)$. This corresponds to a tropical type of well-spaced tropical maps to $(N_X)_\R$ (see \S\ref{sec: well-spaced tropical maps}). Let $H_0$ (resp. $H_1$) be the vector subspace of $(N_X)_{\mathbb{R}}$ given by the span of the direction vectors in the cycle (resp. neighborhood of the cycle).

\begin{notation}
     \label{notation: shape nbhd cycle}
    Let $V_1,\ldots,V_t$ be the vertices of the cycle with valency at least $4$.
    We may choose an orientation of the cycle, and assume that the above ordering respects the induced ordering of the vertices in the cycle. This is illustrated in Figure \ref{fig:shape-nbhd-cycle}. 

    \begin{figure}[h]
        \centering
        \begin{tikzpicture}
            \def\radius{1.5}
            \def\linelength{0.5}
            \draw (0,0) circle (\radius);
            
            \foreach \i in {1,2, 4, 5, 6, 8, 9} {
                \pgfmathsetmacro{\angle}{36*\i+90}
                
                \coordinate (P) at ({\radius*cos(\angle)}, {\radius*sin(\angle)});
                \coordinate (Q) at ({(\radius+\linelength)*cos(\angle)},
                            {(\radius+\linelength)*sin(\angle)});
                \fill (P) circle (2pt);
                \draw (P) -- (Q);
            }
            \foreach \i in {0, 2, 3, 7}{
                \pgfmathsetmacro{\angle}{36*\i+90}
                     \coordinate (P) at ({\radius*cos(\angle)}, {\radius*sin(\angle)});
                     \fill (P) circle (2pt);
                    \coordinate (Q) at ({(\radius+\linelength)*cos(\angle+5)},
                                {(\radius+\linelength)*sin(\angle+5)});
                    
                    \coordinate (Q') at ({(\radius+\linelength)*cos(\angle-5)},
                                {(\radius+\linelength)*sin(\angle-5)});
                                
                    \draw (P) -- (Q);
                    \draw (P) -- (Q');
            }
            \foreach \i in {7}{
                \pgfmathsetmacro{\angle}{36*\i+90}
                    \coordinate (P) at ({\radius*cos(\angle)}, {\radius*sin(\angle)});
                    \coordinate (Q) at ({(\radius+\linelength)*cos(\angle+10)},
                                {(\radius+\linelength)*sin(\angle+10)});
                    
                    \coordinate (Q') at ({(\radius+\linelength)*cos(\angle-10)},
                                {(\radius+\linelength)*sin(\angle-10)});
                                
                    \draw (P) -- (Q);
                    \draw (P) -- (Q');

                    \coordinate (P') at ({(\radius+0.03)*cos(\angle+15)}, {(\radius+0.03)*sin(\angle+15)});
                    \coordinate (P'') at ({(\radius+0.03)*cos(\angle-15)}, {(\radius+0.03)*sin(\angle-15)});

                \draw[->, forestgreen] (P) -- (P');
                \draw[->, forestgreen] (P) -- (P'');
                \draw[->, forestgreen] (P) -- (Q);
                \fill (P) circle (2pt);
            
            }
            %labels
            \node at (0,1.2) {$V_1$};
            \node at (-1.1, 0.3) {$V_2$};
            \node at (-1.1, -0.4) {$V_3$};
            \fill (0,-1.2) circle (0.6pt);
            \fill (-0.2,-1.15) circle (0.6pt);
            \fill (0.2,-1.15) circle (0.6pt);
            \node at (1.1, -0.4) {$V_j$};

            \node[text = forestgreen] at (1.45, -1.05) {$\xi_{j,1}$}; 
            \node[text = forestgreen] at (1.8, 0.1) {$\xi_{j,2}$}; 
            \node[text = forestgreen] at (2.3,-0.15) {$\zeta_{j,k}$}; 
            
        \end{tikzpicture}
        \caption{General shape of the neighborhood of the cycle.}
        \label{fig:shape-nbhd-cycle}
    \end{figure}
    
    Let $\zeta_{j,k}$, for $k = 1, \ldots, e_j$, denote the half-edges adjacent to $V_j$ that are not contained in the cycle. Let $\xi_{j,1}$ and $\xi_{j,2}$ denote the half-edges adjacent to $V_j$ that lie in the cycle. Let
    $
    u_{j,k} = u_{\zeta_{j,k}}
    $
    be the direction vector of $\zeta_{j,k}$, and let $\bar{u}_{j,k}$ denote the image of $u_{j,k}$ under the projection
    $
    H_1 \to H_1/H_0.
    $
\end{notation}

\begin{proposition}\label{prop: nbhd-cycle-span}
In the setting above, the set of vectors 
$$\{ \bar{u}_{j,k} | 1 \leq j \leq t, 1 \leq k \leq e_j - 1\}$$
is an $\mathbb{R}$-basis for $H_1/H_0$. 
\end{proposition}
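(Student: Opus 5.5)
The plan is to prove spanning and linear independence separately. Spanning follows from the balancing condition. Independence will come from the maximality of $\sigma'$ via a dimension count. I expect independence to be the main obstacle.

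\emph{Spanning.} By definition $H_1$ is spanned by $H_0$ together with the direction vectors of all half-edges adjacent to a vertex of the cycle but not contained in it. At a cycle vertex of valency $3$ there is exactly one such half-edge. Balancing at that vertex says its direction is minus the sum of the two cycle directions, so it lies in $H_0$ and contributes nothing to $H_1/H_0$. At a vertex $V_j$ of valency at least $4$, balancing gives
\[
\sum_{k=1}^{e_j} u_{j,k} = -(u_{\xi_{j,1}}+u_{\xi_{j,2}}) \in H_0,
\]
so $\bar u_{j,e_j} = -\sum_{k<e_j}\bar u_{j,k}$. Hence the vectors $\bar u_{j,k}$ with $k\le e_j-1$ already span $H_1/H_0$.

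\emph{Independence, by a dimension count.} I would compute $\dim\sigma'$ directly from the combinatorial type, as the space of positions of vertices and edge lengths. The result is expressed through the usual genus-$1$ count:
\begin{itemize}
\item the genus-$0$ expected dimension;
\item minus the $\dim H_0$ independent conditions that the cycle closes up (edge lengths times directions sum to zero);
\item minus the well-spacedness conditions;
\item plus one for each extra valency at the vertices $V_j$, which is what makes these vertices non-generic.
\end{itemize}
Concretely, I would use the description of $W_\Gamma((N_X)_\R)$ from \S\ref{sec: well-spaced tropical maps}. There, well-spacedness requires that the tropical distances from the cycle to the first vertices at which the image leaves each flag of subspaces $H_0\subsetneq\cdots$ be suitably equal. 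This turns the codimension contributed by the $V_j$ into $\sum_j(e_j-1)-\dim(H_1/H_0)$, up to the terms already accounted for. Comparing with the known pure dimension $n+m+(r-\dim\operatorname{Span}_\R(\delta_1,\ldots,\delta_m))$ of maximal cones then forces $\sum_j (e_j-1)\le \dim H_1/H_0$. Combined with spanning, this gives a basis.

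\emph{Alternative route to independence.} If the bookkeeping above becomes unwieldy, I would argue by contradiction using maximality. Suppose there is a nontrivial relation $\sum_{j}\sum_{k<e_j} a_{j,k}\bar u_{j,k}=0$. Choose a $j$ with some $a_{j,k}\neq0$. I would then try to resolve $V_j$: split off a new edge carrying the half-edges $\zeta_{j,k}$ with $a_{j,k}\neq 0$, or slide part of the cycle out in the direction dictated by the relation. The relation is exactly what should keep the resulting tropical map balanced, with closed cycle and still well-spaced. This would produce a one-parameter family of well-spaced maps of a type specializing to $\sigma'$, so $\sigma'$ is a proper face of a larger cone, contradicting maximality.

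The hard step in this alternative is checking that the deformed map stays well-spaced: the relation controls only the image modulo $H_0$, while well-spacedness constrains the lengths at which the image leaves $H_0$ and $H_1$. I would first try the dimension-count version, which handles this uniformly.
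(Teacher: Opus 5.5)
Your spanning argument is correct. The gap is in independence, which is the entire content of the statement (the paper simply cites \cite[Proposition 3.2.18(b)]{Tor14}). As sketched, your dimension count does not close.

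\textbf{What the count actually gives.} Done carefully, a cone of $W_\Gamma((N_X)_\R)$ with radial alignment $\rho$ has dimension
\[
r+|E|-\dim H_0-\sum_{a\ge 1}(|\rho^{-1}(a)|-1)=N-\operatorname{ov}+\operatorname{def}-\sum_{a\ge1}(|\rho^{-1}(a)|-1),
\]
using $|E|=|V|=N-\operatorname{ov}$ in genus one. Note that overvalence lowers the dimension. The only equations are cycle closure and the radial-alignment equalities; well-spacedness is a condition on the type and imposes no equations. Set this equal to $N+r-\dim\operatorname{Span}_{\mathbb R}(\delta)$, and use $\operatorname{Span}_{\mathbb R}(\delta)=H'_{\ell(\rho)}$ (cycle directions lie in it because the cycle closes). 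You get a single identity
\[
\dim H_1/H_0+\sum_{a\ge 1}\dim H'_a/H'_{a-1}=\sum_{j}(e_j-1)+\sum_{a\ge1}\bigl(\operatorname{ov}_a+|\rho^{-1}(a)|-1\bigr),
\qquad \operatorname{ov}_a:=\sum_{V\in\rho^{-1}(a)}(\operatorname{val}(V)-3).
\]
This controls only the sum over all levels. To extract $\dim H_1/H_0\ge\sum_j(e_j-1)$, you need
\[
\dim H'_a/H'_{a-1}\le \operatorname{ov}_a+|\rho^{-1}(a)|-1 \quad\text{for every } a\ge1 .
\]
Your phrase ``up to the terms already accounted for'' hides exactly this step. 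Balancing alone only gives the bound $\operatorname{ov}_a+|\rho^{-1}(a)|$, which is one too many per level. That excess genuinely occurs for non-well-spaced types. For example, take $r=2$ and $\delta=(u,-u)$, with a contracted loop whose contracted tail ends at a trivalent vertex emitting $u$ and $-u$. This type spans a $4$-dimensional cone, while $\dim W_\Gamma=3$.

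\textbf{The missing step: well-spacedness level by level.} Suppose equality held in the balancing bound at some level $a$. Pick one vertex $W\in\rho^{-1}(a)$ and take the span of:
\begin{itemize}
\item $H'_{a-1}$;
\item all outgoing directions at the other vertices of $\rho^{-1}(a)$;
\item all but two of the outgoing directions at $W$.
\end{itemize}
This is a hyperplane of $H'_a$. It misses the two remaining directions at $W$: one by independence, the other by balancing. Add a complement of $H'_a$ and translate through $h(\mathsf{C}_0)$. The result is a hyperplane containing the neighborhood of the cycle, which the curve first leaves along exactly two half-edges. This contradicts well-spacedness. It is the same kind of hyperplane the paper builds in Lemmas~\ref{lem: good-plane} and~\ref{lemma: all_but_4_vect}.

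With these per-level bounds together with your spanning bound $\dim H_1/H_0\le\sum_j(e_j-1)$, the identity forces every inequality to be an equality. That proves the proposition, and Torres' part (c) comes out as a byproduct.

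\textbf{The alternative route.} Your deformation argument is not a proof as it stands. A relation with arbitrary integer coefficients spread over several $V_j$ does not directly give a balanced resolution with a closed cycle. You also leave the well-spacedness of the deformed curve unchecked, as you acknowledge.
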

\begin{proof}
    Follows from \cite[Proposition 3.2.18(b)]{Tor14}.
\end{proof}

\begin{construction} \label{const: polytope}
     Set $r_0 = \dim H_0$ and $r_1 = \dim H_1$. Suppose there is at least one vertex in the cycle with valence at least four. Define $M_{\sigma'} \in \mathrm{Hom}(\R^{r_1-r_0}, H_1/H_0)$ to be the map given by the $e_{j,k} \mapsto \bar{u}_{j,k}$. Here $e_{j,k}$ denotes the standard basis of $\R^{r_1-r_0}$. Consider the parallelotope in $H_1/H_0$ given by the image of the unit cube under the map $M_{\sigma'}: \mathbb{R}^{r_1-r_0} \rightarrow H_1/H_0$. 
    Let $x_{j,k}$ be the coordinates on $\R^{r_1-r_0}$ and call $\triangle \subseteq \R^{r_1-r_0}$ the union of diagonals 
    $$\triangle = \bigcup_{1 \leq j \leq t, 1 \leq k,k' \leq e_j-1} \{x_{j,k} = x_{j,k'}\}.$$
    Define the region 
    $$P(\sigma') = M_{\sigma'}([0,1]^{ \times r_1-r_0}) \smallsetminus M(\triangle).$$     
\end{construction}

\begin{definition}
    For any region $P \subseteq (N_X)_{\mathbb{R}}$, we denote by $i(P)$ the number of interior lattice points in $P$. 
\end{definition}

\begin{customthm}{D}[Correspondence theorem, Part II] \label{thm: calculation of weights}
    Let $\sigma'$ be a cone of maximum dimension in $W_\Gamma((N_X)_\R)$. 
    \begin{enumerate}[label=(\alph*)]
        \item If every vertex on the cycle has valence at most three, then $\nn(\sigma') = 1/|\Aut(\sigma')|.$
        \item If there is at least one vertex on the cycle with valence at least four, then we have
    $$ \nn(\sigma') = \frac{i(P(\sigma'))}{|\Aut(\sigma')|}$$
    where $P(\sigma') \subset (N_X)_{\mathbb{R}}$ is the region associated to $\sigma'$ in Construction~\ref{const: polytope}.
    \end{enumerate}
    Here $\Aut(\sigma')$ is the automorphism group of tropical curves parametrized by $\sigma'$. 
\end{customthm}

Theorems~\ref{thm: tropical correspondence} and~\ref{thm: calculation of weights} together establish a full correspondence  between algebraic and tropical genus~$1$ curve counts in all dimensions.

\begin{remark}
For the cycle-theoretic reader, the intersection numbers
\[
(\mathsf{ev} \times \mathsf{st})^*(\alpha \otimes \beta) \cap [\WG(X)]
=
(\mathsf{ev} \times \mathsf{st})_*[\WG(X)] \cdot (\alpha \otimes \beta)
\]
for all classes $\alpha \in \mathsf{CH}^*(X^n)$ and $\beta \in \mathsf{CH}^*(\oM_{1,1})$ determine the pushforward class
$(\mathsf{ev} \times \mathsf{st})_*[\WG(X)] \in \mathsf{CH}_*(X^n \times \oM_{1,1})$.

Thus, our correspondence theorem can be interpreted as the computation of the Minkowski weight associated to the cycle $(\mathsf{ev} \times \mathsf{st})_*[\WG(X)]$.
\end{remark}

\subsection{Specialization to low dimension}

The automorphism-weighted factor $\nn(\sigma')$ takes a particularly simple form for low dimenisonal varieties $X$.

\begin{definition} \label{def:deficandov}
    For a parametrized tropical curve $[h: \tropC \rightarrow (N_X)_{\mathbb{R}}]$:
    \begin{enumerate}
        \item The deficiency of $[h: \tropC \rightarrow (N_X)_\R]$ is the codimension in $(N_X)_\R$ of the subspace spanned by the image of the cycle $h(\tropC_0)$.
        \item The overvalence of $\tropC$ is 
        $$ \ov{\tropC} = \sum_{V \in V(\tropC), \val{V} \geq 3} \val{V} - 3.$$
    \end{enumerate}
     
\end{definition}

These depend only on the tropical type, so for each cone $\sigma'$ of $W_{\Gamma}((N_X)_{\mathbb{R}})$, we can define $\defic{\sigma'}, \ov{\sigma'}$ as the deficiency and overvalence respectively of any parametrized tropical curve with this type.

\begin{proposition}\label{thm:mult1} 
    Suppose the dimension of $X$ is $1$ and let $\sigma'$ be a cone of maximum dimension in $W_{\Gamma}((N_X)_{\mathbb{R}})$. Then $\nn(\sigma')$ is given as follows:
    \begin{enumerate}
        \item \textbf{Deficiency 0.} $\nn(\sigma') = 1/|\Aut(\sigma')|$. 
        \item  \textbf{Deficiency 1.} There are three cases (examples illustrated in Figure \ref{fig: dim1, defic1}):
        
        \begin{enumerate}
    \item The neighborhood of the cycle spans. In this case we must have $\ov{\sigma'} = 1$ and the unique $4$-valent vertex $V$ is on the cycle. The vertex $V$ has two adjacent non-contracted edges. Let $\pm d \cdot u$ be their direction vectors, where $ d \in \Z_{>0}$ and $u \in N_X$ is primitive. 
    Then $\nn(\sigma') = (d-1)/|\Aut(\sigma')|$. 
    \item The neighborhood of the cycle is contracted, and $\ov{\sigma'} = 1$. Then the unique $4$-valent vertex is at the minimum distance from the cycle to where the curve leaves the point of contraction. We have $\nn(\sigma') = 1/|\Aut(\sigma')|$. 
    \item The neighborhood of the cycle does not span, and $\ov{\sigma'} = 0$. Then at the minimum distance from the cycle to where the curve leaves the point of contraction, there must be exactly two vertices. We have $\nn(\sigma') = 1/|\Aut(\sigma')|$. 
    \begin{figure}[h]
    \centering
    \begin{subfigure}[t]{0.3\textwidth}
        \centering
        \begin{tikzpicture}
            \draw (-2, 0) -- (2,0); 
            \draw[dashed] (0,0)  to[in=50,out=130,loop, style={min distance=12mm}] (0,0);
            \draw[->] (0, -0.7) -- (0,-1.2); 
            \draw (-2,-1.5) -- (2, -1.5);
            \node[forestgreen] at (0.6,0.2) {$d \cdot u$};
            \node[forestgreen] at (-0.7,0.2) {$-d\cdot u$};
            \draw[forestgreen, ->] (0,0) -- (-1,0);
            \draw[forestgreen, ->] (0,0) -- (1,0);
            \node at (0, -0.3) {$V$};
            \fill (0,0) circle[radius=2pt];
        \end{tikzpicture}
        \caption{Neighborhood spans.}
        \label{fig:dim1nbhdspan}
    \end{subfigure}
    \begin{subfigure}[t]{0.3\textwidth}
        \centering
        \begin{tikzpicture}
            \fill (0,0) circle[radius=2pt];
            \draw (-2, 0) -- (0,0); 
            \draw (0, -0.05) -- (2, -0.05); 
            \draw (0, 0.05) -- (2, 0.05);
            \draw[dashed] (0,0) -- (0,1);
            \draw[dashed] (0,1)  to[in=50,out=130,loop, style={min distance=12mm}] (0,1);
            \draw[->] (0, -0.5) -- (0,-1); 
            \draw (-2,-1.5) -- (2, -1.5);
        \end{tikzpicture}
        \caption{Neighborhood doesn't span, overvalence $1$.}
        \label{fig:dim1ov1}
    \end{subfigure}
    \begin{subfigure}[t]{0.3\textwidth}
        \centering
        \begin{tikzpicture}
            \fill (0,0) circle[radius=2pt];
            \fill (0,1.5) circle[radius=2pt];
            \draw (-2, 0) -- (2,0); 
            \draw (-2, 1.5) -- (2,1.5);
            \draw[dashed] (0,0) -- (0, 0.5); 
            \draw[dashed] (0.05, 0.5) -- (0.05, 1); 
            \draw[dashed] (-0.05, 0.5) -- (-0.05, 1); 
            \draw[dashed] (0, 1) -- (0,1.5); 
            \draw[->] (0, -0.5) -- (0,-1); 
            \draw (-2,-1.5) -- (2, -1.5);
            \fill[gray] (0,0.5) circle[radius=2pt];
            \fill[gray] (0,1) circle[radius=2pt];
            \node at (0.2,1.25) {$l$};
            \node at (0.2,0.25) {$l$};
        \end{tikzpicture}
        \caption{Neighborhood doesn't span, overvalence 0.}
        \label{fig:dim1,ov0}
    \end{subfigure}
    \caption{Dimension 1, deficiency 1. Contracted parts are shown dashed.}
    \label{fig: dim1, defic1}
\end{figure}
\end{enumerate}
    \end{enumerate}
\end{proposition}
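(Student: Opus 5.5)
The proof applies Theorem~\ref{thm: calculation of weights} to each combinatorial situation that can occur when $r=\dim X=1$. For $r=1$ we have $(N_X)_\R\cong\R$, so the image of the cycle $h(\tropC_0)$ is either a segment, giving $H_0=\R$ and deficiency $0$, or a point, giving a contracted cycle with $H_0=0$ and deficiency $1$. So the argument splits along the deficiency, and within deficiency $1$ along whether $H_1=\R$ or $H_1=0$.

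\textbf{Deficiency 0.} Here $H_0=H_1=(N_X)_\R$, so Proposition~\ref{prop: nbhd-cycle-span} says that for each $j$ the vectors $\bar u_{j,k}$ with $1\le k\le e_j-1$ form a basis of the zero space $H_1/H_0$. This forces $e_j=1$ for every vertex on the cycle, so each such vertex is trivalent. Part (a) of Theorem~\ref{thm: calculation of weights} then gives $\nn(\sigma')=1/|\Aut(\sigma')|$.

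\textbf{Deficiency 1, neighborhood spans.} Now $H_0=0$ and $H_1=\R$, so $H_1/H_0\cong\R$ and Proposition~\ref{prop: nbhd-cycle-span} gives $\sum_j(e_j-1)=1$. Hence exactly one cycle vertex $V$ has $e_V=2$ and all other cycle vertices are trivalent.
\begin{itemize}
\item I expect a dimension count comparing $\dim\sigma'$ with the expected dimension $n+m+(r-\dim\Span(\delta))$ to show that the total overvalence of a maximal cone with deficiency $1$ is at most $1$. Since $V$ already contributes $1$, we get $\ov{\sigma'}=1$ and all other vertices are trivalent.
\item By balancing at $V$, and because the cycle edges are contracted, the two non-cycle edges at $V$ have directions $\pm du$ with $u$ primitive.
\item Construction~\ref{const: polytope} then gives $M_{\sigma'}([0,1])$, the segment from $0$ to $du$ (or $-du$), and the diagonal set $\triangle$ imposes nothing since there is a single coordinate.
\item The interior lattice points are $u,2u,\dots,(d-1)u$, so $i(P(\sigma'))=d-1$. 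Part (b) then yields $(d-1)/|\Aut(\sigma')|$.
\end{itemize}

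\textbf{Deficiency 1, neighborhood does not span.} Now $H_1=0$, so every edge adjacent to the cycle is contracted and Proposition~\ref{prop: nbhd-cycle-span} forces every cycle vertex to be trivalent. Part (a) then gives $\nn(\sigma')=1/|\Aut(\sigma')|$.

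What remains is the combinatorial description of cases (b) and (c) in the statement. The contracted genus-one subcurve leaves the contraction point along edges at some minimal distance from the cycle. The well-spacedness condition of \S\ref{sec: well-spaced tropical maps} requires that at least two of these exits occur at that minimal distance. In a maximal cone, the overvalence budget of $1$ then allows exactly two configurations:
\begin{itemize}
\item a single $4$-valent vertex at the minimal distance, which gives $\ov{\sigma'}=1$ and is case (b);
\item two trivalent vertices at equal distance $l$, which gives $\ov{\sigma'}=0$ and is case (c).
\end{itemize}
Any further coincidence would impose an extra linear condition and drop the dimension of the cone.

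\textbf{Main obstacle.} The step I expect to be hardest is the dimension bookkeeping: showing that maximal cones of $W_\Gamma((N_X)_\R)$ with deficiency $1$ have overvalence at most $1$, and pinning down exactly where that overvalence or the equidistant pair sits. I would approach it by counting edge-length parameters and vertex positions, subtracting one constraint for each well-spacedness equality, and comparing the result with $\dim W_\Gamma$. By contrast, the lattice-point count giving $d-1$ is routine.
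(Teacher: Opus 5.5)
Your proposal is correct and follows the paper's own argument. You split by deficiency and by whether $H_1=H_0$, apply Theorem~\ref{thm: calculation of weights}(a) (the paper cites Corollary~\ref{cor: cyclespans}) whenever all cycle vertices are trivalent, and in the spanning case count the $d-1$ interior lattice points of the segment $[0,du]$; the paper also only asserts the bound $\ov{\sigma'}\le\defic{\sigma'}$, which is exactly the dimension count you sketch.

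One correction: well-spacedness requires the minimal exit distance to be attained by at least \emph{three} half-edges, not two. This is precisely what excludes a single trivalent exit vertex and leaves only configurations (b) and (c).
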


  When $\dim X = 2$ and $\sigma'$ has deficiency $2$, the polygon $P(\sigma')$ is closely related to the dual polygon associated with the tropical curve at the unique vertex in the cycle.

\begin{proposition} \label{thm:mult2}
    Suppose the dimension of $X$ is $2$ and let $\sigma'$ be a cone of maximum dimension in $W_{\Gamma}((N_X)_{\mathbb{R}})$. Then $\nn(\sigma')$ is given as follows :
    \begin{enumerate}
        \item \textbf{Deficiency 0.} $\nn(\sigma') = 1$.
        \item \textbf{Deficiency 1.}  There are three cases (examples illustrated in Figure \ref{fig: dim2, defic1}):
            \begin{enumerate}
                \item The neighborhood of the cycle spans. In this case we must have $\ov{\sigma'} = 1$ and the unique $4$-valent vertex $V$ is on the cycle. Let $n_1$ be the primitive vector of an edge in the cycle adjacent to $V$. Let $u$ be the direction vector of one of the edges adjacent to $V$ leaving the cycle. Then 
                $$\nn(\sigma') = \frac{|\det(u, n_1)| - 1}{|\Aut(\sigma')|}.$$ 
                \item The neighborhood of the cycle does not span, and $\ov{\sigma'} = 1$. Then $\nn(\sigma') = 1/|\Aut(\sigma')|$. 
                \item The neighborhood of the cycle does not span, and $\ov{\sigma'} = 0$. Then $\nn(\sigma') = 1/|\Aut(\sigma')|$. 

                \begin{figure}[h]
                \centering
                \begin{subfigure}[t]{0.3\textwidth}
            		\centering
            		\begin{tikzpicture}
            			\fill (0,0) circle[radius=2pt];
            			\fill (2,0) circle[radius=2pt];
            			\fill (1.5,0) circle[radius=2pt];
            			\draw (0, 0.05) -- (1.5, 0.05);
            			\draw (0, -0.05) -- (1.5, -0.05);
            			\draw (1.5, 0) -- (2,0);
            			\draw (0, 0) -- (-1, 1); 
            			\draw (0, 0) -- (-1, -1); 
            			\draw (2, 0) -- (3, 1); 
            			\draw (2, 0) -- (3, -1);
            
                        \draw[forestgreen, ->] (0,0.15) -- (-0.5, 0.65);
                        \node[forestgreen] at (-0.5,0.8) {$u$};
            
                        \draw[forestgreen, ->] (0.2, 0.15) -- (1, 0.15); 
                        \node[forestgreen] at (0.6, 0.35) {$a n_1$};

                        \draw[forestgreen, ->] (0.2, -0.15) -- (1, -0.15);
                        \node[forestgreen] at (0.6, -0.4) {$b n_1$};
            			
                        \node at (-0.4,0) {$V$};
            		\end{tikzpicture}
            		\caption{Neighborhood spans.}
            		\label{fig:dim2def1}
            	\end{subfigure}	
                \begin{subfigure}[t]{0.3\textwidth}
                    \centering
                    \begin{tikzpicture}
                        \fill (-1,0) circle[radius=2pt];
                        \fill (2,0) circle[radius=2pt];
                        \fill (0.5,0) circle[radius=2pt];
                        \fill (1.5,0) circle[radius=2pt];
                        \draw (-1, 0) -- (0.5, 0);
                        \draw (0.5, 0.05) -- (1.5, 0.05); 
                        \draw (0.5, -0.05) -- (1.5, -0.05); 
                        \draw (1.5, 0) -- (2, 0); 
                        \draw (-1, 0) -- (-2, 1); 
                        \draw (-1, 0) -- (-2, -1); 
                        \draw (2, 0) -- (3, 1); 
                        \draw (2, 0) -- (3, -1);
                        \draw (2,0) -- (3,0.5); 
                    \end{tikzpicture}
                    \caption{Neighborhood doesn't span, overvalence $1$.}
                    \label{fig:dim2def1case2}
                  \end{subfigure}
                \begin{subfigure}[t]{0.3\textwidth}
                    \centering
                    \begin{tikzpicture}
                        \fill (0,0) circle[radius=2pt];
                        \fill (2,0) circle[radius=2pt];
                        \fill (0.5,0) circle[radius=2pt];
                        \fill (1.5,0) circle[radius=2pt];
                        \draw (0, 0) -- (0.5, 0);
                        \draw (0.5, 0.05) -- (1.5, 0.05); 
                        \draw (0.5, -0.05) -- (1.5, -0.05); 
                        \draw (1.5, 0) -- (2, 0); 
                        \draw (0, 0) -- (-1, 1); 
                        \draw (0, 0) -- (-1, -1); 
                        \draw (2, 0) -- (3, 1); 
                        \draw (2, 0) -- (3, -1);
                        \node at (0.3,0.3) {$l$};
                        \node at (1.7,0.3) {$l$};
                    \end{tikzpicture}
                    \caption{Neighborhood doesn't span, overvalence $0$.}
                    \label{fig:dim2def1case3}
                  \end{subfigure}
                    
                  \caption{Dimension 2, deficiency 1.}
                \label{fig: dim2, defic1}
            \end{figure}

            \end{enumerate}
        \item \textbf{Deficiency 2.} We have the following cases (examples illustrated in Figure \ref{fig: dim2, defic2}):
        \begin{enumerate}
            \item The neighborhood of the cycle spans. \begin{enumerate}[label=(\roman*)]
                \item If there is a single $5$-valent vertex $V$ on the cycle, then
                $$\nn(\sigma') = \frac{2 \times i(\text{dual polygon at $V$})}{|\Aut(\sigma')|}.$$ 
                \item If there are two $4$-valent vertices on $V_1, V_2$, suppose the non-contracted half-edges adjacent to $V_1, V_2$ have directions $\pm u_1, \pm u_2$. Then $\nn(\sigma')$ is the number of interior points in the parallelogram with vertices $0, u_1, u_1+u_2, u_2$, divided by $|\Aut(\sigma')|$. 
            \end{enumerate}
            \item The neighborhood of the cycle is contained in a line but not contracted. There is a single $4$-valent vertex on the cycle with two adjacent non-contracted edges. Let $\pm d \cdot u$ be their direction vectors, where $ d \in \Z_{>0}$ and $u \in N_X$ is primitive. 
            Then,  $\nn(\sigma') = (d-1)/|\Aut(\sigma')|$.
            \item The neighborhood of the cycle is contracted. We have $\nn(\sigma') = 1/|\Aut(\sigma')|$.
        \end{enumerate}
        \begin{figure}[H]
            \centering
            \begin{subfigure}[t]{0.35\textwidth}
                    \centering
                    \begin{tikzpicture}
                        \draw (-1.2, 0) -- (0,0); 
                        \draw (0, 0) -- (1,1); 
                        \draw (0,0) -- (1, -1);
                        \draw[dashed] (0,0)  to[in=50,out=130,loop, style={min distance=12mm}] (0,0);
                        \fill[white] (1.5,0) circle[radius=2pt];
                        \node at (-0.1, -0.3) {$V$};
                        \draw[->, forestgreen] (0,0) -- (0.5, 0.5);
                        \draw[->, forestgreen] (0,0) -- (0.5, -0.5); 
                        \node[forestgreen] at (0.8, 0.5) {$u_1$};
                        \node[forestgreen] at (0.8, -0.5) {$u_2$};
                        \fill (0,0) circle[radius=2pt];
                    \end{tikzpicture}
                    \begin{tikzpicture}
                        \draw[dashed] (0, 0) to[out=30, in=150, relative] (0, 1); 
                        \draw[dashed] (0, 0) to[out=60,in=300] (0, 1); 
                        \draw[skyblue] (-1, 1) -- (1, 1); 
                        \draw[coralpink] (-0.8, 0.3) -- (0.8,-0.3);
                        \draw[->] (0, -0.3) -- (0, -0.6);
                        \draw[coralpink] (-0.8, -0.7) -- (0.8, -1.3);
                        \draw[skyblue] (-1, -1) -- (1, -1);
                        \node at (0.3, 0.8) {$V_1$};
                        \node at (-0.3, -0.2) {$V_2$};
                        \draw[->, forestgreen] (0,-1) -- (0.5, -1);
                        \draw[->, forestgreen] (0,-1) -- (0.4, -1.15); 
                        \node[forestgreen] at (0.5, -0.8) {$u_1$};
                        \node[forestgreen] at (0.2, -1.4) {$u_2$};
                        \fill (0,-1) circle[radius=2pt];
                        \fill (0,0) circle[radius=2pt];
                        \fill (0,1) circle[radius=2pt];
                    \end{tikzpicture}
                    \caption{The neighborhood of the cycle spans.}
                    \label{fig:loopfivevalent}
            \end{subfigure}
            \begin{subfigure}[t]{0.6\textwidth}
    			\centering
                \begin{tikzpicture}
    				\fill (0,0) circle[radius=2pt];
    				\fill (2,0) circle[radius=2pt];
    				\fill (1.5,0) circle[radius=2pt];
    				\draw (0, 0) -- (2, 0);
    				\draw (0, 0) -- (-1, 1); 
    				\draw (0, 0) -- (-1, -1); 
    				\draw (2, 0) -- (3, 1); 
    				\draw (2, 0) -- (3, -1);
                    \draw (2,0) -- (3,0.5); 
    				\draw[dashed] (1.5,0)  to[in=50,out=130,loop, style={min distance=12mm}] (1.5,0);
    				\fill[white] (3.5,0) circle[radius=2pt];
                    \node at (1.5, -0.3) {$V_1$};
                    \node at (2, -0.3) {$V_2$};
    			\end{tikzpicture}
    			\begin{tikzpicture}
    				\fill (0,0) circle[radius=2pt];
    				\fill (2,0) circle[radius=2pt];
    				\fill (1,0) circle[radius=2pt];
    				\draw (0, 0) -- (2, 0);
    				\draw (0, 0) -- (-1, 1); 
    				\draw (0, 0) -- (-1, -1); 
    				\draw (2, 0) -- (3, 1); 
    				\draw (2, 0) -- (3, -1);
                    \node at (0.3,0.3) {$l$};
                    \node at (1.7,0.3) {$l$};
    				\draw[dashed] (1,0)  to[in=50,out=130,loop, style={min distance=12mm}] (1,0);
    			\end{tikzpicture}
    			\caption{The neighborhood of the cycle is contained in a line but not contracted. }
    			\label{fig:dim2defic2case2}
    		  \end{subfigure}

    \end{figure}

    \begin{figure}[H]
    \ContinuedFloat
              
		\begin{subfigure}[t]{\textwidth}
    			\centering

                \begin{tikzpicture}
                    \fill (0,0) circle[radius=2pt];
                    \draw (-1, -0.5) -- (0,0); 
                    \draw (-1, 0.6) -- (0,0);
                    \draw (0, 0) -- (1,1); 
                    \draw (0,0) -- (1, -1);
                    \draw[dashed] (0,0) -- (0,0.6);
                    \draw[dashed] (0,0.6)  to[in=50,out=130,loop, style={min distance=12mm}] (0,0.6);
                    \node at (0,-0.3) {$V$};
                    \fill[white] (2,0) circle[radius=2pt];

                    \node at (0,-0.3) {$V$};
                \end{tikzpicture}
                \begin{tikzpicture}
                    \draw[skyblue] (-1.2, 0) -- (0,0); 
                    \draw[skyblue] (0, 0) -- (1,0.3); 
                    \draw[skyblue] (0,0) -- (1, -0.3);
                    
                    \draw[coralpink] (1,1.2) -- (-1, 1.8); 
                    \draw[dashed] (0,0) -- (0, 0.5); 
                    \draw[dashed] (0, 0.5) to[out=30, in=150, relative] (0, 1); 
                    \draw[dashed] (0, 0.5) to[out=60,in=300] (0, 1); 
                    \draw[dashed] (0, 1) -- (0,1.5); 
                    \fill[gray] (0,0.5) circle[radius=2pt];
                    \fill[gray] (0,1) circle[radius=2pt];
                    
                    \draw[->] (0,-0.3) -- (0, -0.7);
                    
                    \draw[gray] (-1, -1) -- (2, -1) -- (1, -2) -- (-2, -2) -- cycle; 
                    \draw[skyblue] (-1.2, -1.5) -- (0,-1.5); 
                    \draw[skyblue] (0, -1.5) -- (1,-1.2); 
                    \draw[skyblue] (0,-1.5) -- (1, -1.8);
                    \draw[coralpink] (-0.3, -1.1) -- (0.3, -1.9); 
                    \fill (0,-1.5) circle[radius=2pt];
                    \fill (0,0) circle[radius=2pt];
                    \fill (0,1.5) circle[radius=2pt];
                    
                    \fill[white] (2,0) circle[radius=2pt];
                    \fill[white] (0,2) circle[radius=2pt];

                    \node at (-0.3,0.3)  {$l$};
                    \node at (-0.3,1.3) {$l$};
    			\end{tikzpicture}
                \begin{tikzpicture}
    				
                    \draw[skyblue] (-1.2, 0) -- (0,0); 
                    \draw[skyblue] (0,0) -- (0.5, 0); 
                    \draw[skyblue] (0.5, 0) -- (1.5,0.3); 
                    \draw[skyblue] (0.5,0) -- (1.5, -0.3);
                    \draw[skyblue] (0.5, 0) -- (1.5, 0.1); 
                    
                    \draw[coralpink] (1,1.5) -- (-1, 1.5); 
                    \draw[dashed] (0,0) -- (0, 0.5); 
                    \draw[dashed] (0, 0.5) to[out=30, in=150, relative] (0, 1); 
                    \draw[dashed] (0, 0.5) to[out=60,in=300] (0, 1); 
                    \draw[dashed] (0, 1) -- (0,1.5); 
                    \fill[gray] (0,0.5) circle[radius=2pt];
                    \fill[gray] (0,1) circle[radius=2pt];
                    
                    \draw[->] (0,-0.3) -- (0, -0.7);
                    
                    \draw[gray] (-1, -1) -- (2, -1) -- (1, -2) -- (-2, -2) -- cycle; 
                    \draw[skyblue] (-1.2, -1.5) -- (0,-1.5); 
                    \draw[skyblue] (0, -1.5) -- (0.3, -1.5); \draw[skyblue] (0.3, -1.5) -- (1, -1.4); 
                    \draw[skyblue] (0.3, -1.5) -- (1,-1.2); 
                    \draw[skyblue] (0.3,-1.5) -- (1, -1.8);
                    \draw[coralpink] (-1, -1.51) -- (1, -1.51); 
                    \fill (0,-1.505) circle[radius=2pt];
                    \fill (0,0) circle[radius=2pt];
                    \fill (0,1.5) circle[radius=2pt];
                    
                    \fill[white] (2,0) circle[radius=2pt];
                    \fill[white] (0,2) circle[radius=2pt];

                    \node at (-0.3,0.3)  {$l$};
                    \node at (-0.3,1.3) {$l$};
    			\end{tikzpicture}
    			\begin{tikzpicture}
    				
                    \draw[darkorchid] (-1.2, -0.8) -- (1.2, -0.2); 
                    \draw[coralpink] (-1.2, -0.1) -- (1.2, -0.5); 
                    
                    \draw[skyblue] (1,1.2) -- (-1, 1.8); 
                    
                    \draw[dashed] (0.15,-0.5) -- (0.15, 0.5); 
                    \draw[dashed] (-0.15,-0.3) -- (-0.15, 0.5);

                    \draw[dashed] (-0.15, 0.5) -- (0, 1); 
                    \draw[dashed] (0.15, 0.5) -- (0, 1);
                    \draw[dashed] (-0.15, 0.5) -- (0.15, 0.5); 
                    \draw[dashed] (0, 1) -- (0,1.5); 
                    \fill[gray] (-0.15,0.5) circle[radius=2pt];
                    \fill[gray] (0.15,0.5) circle[radius=2pt];
                    \fill[gray] (0,1) circle[radius=2pt];
                    
                    \draw[->] (0,-0.5) -- (0, -0.9);
                    
                    \draw[gray] (-1, -1) -- (2, -1) -- (1, -2) -- (-2, -2) -- cycle; 
                    \draw[darkorchid] (-1, -1.8) -- (1,-1.2); 
                    \draw[skyblue] (-1,-1.2) -- (1, -1.8);
                    \draw[coralpink] (-0.3, -1.1) -- (0.3, -1.9); 
                    
                    \fill (0,-1.5) circle[radius=2pt];
                    \fill (0.15,-0.47) circle[radius=2pt];
                    \fill (-0.15,-0.28) circle[radius=2pt];
                    \fill (0,1.5) circle[radius=2pt];
                    
                    \fill[white] (2,0) circle[radius=2pt];
                    \fill[white] (0,2) circle[radius=2pt];

                    \node at (-0.3,0.1)  {$l$};
                    \node at (0.3,0)  {$l$};
                    \node at (-0.3,1.3) {$l$};
    			\end{tikzpicture}
                \begin{tikzpicture}
    				
                    \draw[coralpink] (-1.2, 0) -- (0.8,0); 
                    \draw[coralpink] (0.8, 0) -- (1.2,0.3); 
                    \draw[coralpink] (0.8,0) -- (1.2, -0.3);
                    \fill (0,1.5) circle[radius=2pt];
                    \draw[skyblue] (-0.8,1.5) -- (1.2, 1.5); 
                    \draw[skyblue] (-0.8,1.5) -- (-1.2, 1.8); 
                    \draw[skyblue] (-0.8, 1.5) -- (-1.2, 1.2);
                    \draw[dashed] (0,0) -- (0, 0.5); 
                    \draw[dashed] (0, 0.5) to[out=30, in=150, relative] (0, 1); 
                    \draw[dashed] (0, 0.5) to[out=60,in=300] (0, 1); 
                    \draw[dashed] (0, 1) -- (0,1.5); 
                    \fill[gray] (0,0.5) circle[radius=2pt];
                    \fill[gray] (0,1) circle[radius=2pt];
                    
                    \draw[->] (0,-0.5) -- (0, -0.9);
                    
                    \draw[gray] (-1, -1) -- (2, -1) -- (1, -2) -- (-2, -2) -- cycle; 
                    \draw[skyblue] (-0.8, -1.5) -- (1.2,-1.49); 
                    \draw[skyblue] (-0.8, -1.5) -- (-1, -1.1); 
                    \draw[skyblue] (-0.8, -1.5) -- (-1.5, -1.9);
                    \draw[coralpink] (-1.2, -1.51) -- (0.8,-1.51); 
                    \draw[coralpink] (0.8,-1.51) -- (1.6, -1.1);
                    \draw[coralpink] (0.8, -1.51) -- (1, -1.9); 
                    \fill (0,-1.5) circle[radius=2pt];
                    \fill (0,0) circle[radius=2pt];
                    \fill (0,1.5) circle[radius=2pt];
                    \fill[white] (2,0) circle[radius=2pt];
                    \fill[white] (0,2) circle[radius=2pt];

                    \node at (-0.2,0.3)  {$l_1$};
                    \node at (-0.2,1.2) {$l_1$};
                    \node at (0.4,0.2)  {$l_2$};
                    \node at (-0.4,1.7) {$l_2$};
    			\end{tikzpicture}
    			\caption{The neighborhood of the cycle is contracted.}
    			\label{fig:dim2defic2case3}
    		  \end{subfigure}
            \caption{Dimension 2, deficiency 2.}
            \label{fig: dim2, defic2}
        \end{figure}
    \end{enumerate}
\end{proposition}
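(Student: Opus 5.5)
The plan is to derive Proposition~\ref{thm:mult2} as a case-by-case specialization of Theorem~\ref{thm: calculation of weights}. In each case the main task is to identify $H_0$, $H_1$, the vertices of valence at least four on the cycle, and the region $P(\sigma')$ of Construction~\ref{const: polytope}. Then we count its interior lattice points.

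The first step is a dimension count. Since $\sigma'$ is maximal, $\dim W_\Gamma$ is attained. We will use the standard relation for genus one well-spaced maps: the overvalence of a maximal type equals the deficiency, or deficiency minus one when the neighbourhood of the cycle fails to span $(N_X)_\R$ modulo $H_0$. This comes from comparing $\dim\sigma'$ with the expected dimension, where the well-spacedness condition cuts down by $\defic{\sigma'}$ minus the rank contributed by the neighbourhood. This bounds the possible configurations. In deficiency $0$ every vertex is trivalent, so part (a) of Theorem~\ref{thm: calculation of weights} gives $1/|\Aut(\sigma')|$. For $\dim X=2$ there are no nontrivial automorphisms in that case, which yields $\nn(\sigma')=1$.

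For deficiency $1$ we have $H_0$ a line spanned by a primitive $n_1$.
\begin{enumerate}[label=(\alph*)]
\item If the neighbourhood spans, the count forces a unique $4$-valent vertex $V$ on the cycle with $e_1=2$. Then $r_1-r_0=1$, and $P(\sigma')$ is the segment from $0$ to $\bar u$ in $H_1/H_0\cong\R$. The diagonal removal is vacuous, so $i(P)=|\bar u|-1$ after identifying $N_X/\Z n_1\cong\Z$ via $\det(\cdot,n_1)$. This gives $|\det(u,n_1)|-1$.
\item and (c) If the neighbourhood does not span, then $H_1=H_0$, so every vertex of valence at least four on the cycle must have its extra edges contained in $H_0$. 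The dimension count then shows no vertex on the cycle has valence at least four; the overvalence lies off the cycle. Part (a) of Theorem~\ref{thm: calculation of weights} applies.
\end{enumerate}

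Deficiency $2$ means $H_0=0$ (the cycle is contracted). Then $H_1$ is spanned by the outgoing edges at the cycle vertices, and we split on $\dim H_1\in\{2,1,0\}$.
\begin{itemize}
\item If $\dim H_1=2$, Proposition~\ref{prop: nbhd-cycle-span} says $\sum_j(e_j-1)=2$. Either there is one vertex with $e_1=3$ (valence $5$) or two vertices with $e_j=2$.
\begin{itemize}
\item In the two-vertex case, $P$ is the half-open parallelogram spanned by $u_1,u_2$ with no diagonal removed, which is the stated count.
\item In the $5$-valent case, $P$ is the parallelogram spanned by $u_1,u_2$ minus the image of the diagonal $x_1=x_2$. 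Balancing gives $u_3=-u_1-u_2$, so the dual polygon at $V$ is the triangle with edges $u_1,u_2,u_3$ rotated, and it has the same lattice points as the triangle $0,u_1,u_1+u_2$. The diagonal splits the parallelogram into two such triangles exchanged by $x\mapsto u_1+u_2-x$. Hence $i(P)=2\,i(\text{triangle})$. The lattice points on the removed diagonal are excluded consistently with the count of interior points of the triangles.
\end{itemize}
\item If $\dim H_1=1$, we get one $4$-valent vertex with edges $\pm du$, and the argument of deficiency $1$, case (a) gives $d-1$.
\item If $\dim H_1=0$, the valence at least four vertices lie off the cycle, and case (a) of Theorem~\ref{thm: calculation of weights} applies.
\end{itemize}

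The main obstacle is the first step: rigorously establishing which configurations can occur in a maximal cone, notably that in the non-spanning cases no vertex of valence at least four sits on the cycle. I would argue this by computing $\dim\sigma'$ as edge lengths plus translations minus the $\defic{\sigma'}$ independent well-spacedness equations, and comparing with $\dim W_\Gamma$. The bookkeeping of the diagonal removal in the $5$-valent case is the secondary delicate point.
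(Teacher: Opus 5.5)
Your route is the paper's: you specialize Theorem~\ref{thm: calculation of weights} case by case, use Proposition~\ref{prop: nbhd-cycle-span} to identify the vertices of valence at least four on the cycle, and count interior lattice points of $P(\sigma')$. Your segment count via $\det(\cdot,n_1)$ and your splitting of the parallelogram into two triangles exchanged by $x\mapsto u_1+u_2-x$ are exactly the paper's computations.

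The step that does not hold up is the ``standard relation'' you start from, and it is false. Deficiency~$1$, case~(b) of the statement you are proving has a non-spanning neighbourhood with $\ov{\sigma'}=1=\defic{\sigma'}$. Deficiency~$2$, case~(b)(i) has a neighbourhood contained in a line with $\ov{\sigma'}=2$. In deficiency~$2$, case~(c), both $\ov{\sigma'}=2$ (a $5$-valent vertex where the curve leaves the point of contraction) and $\ov{\sigma'}=0$ occur. The dimension count only gives $\ov{\sigma'}\le\defic{\sigma'}$, because overvalence away from the cycle can be traded for equalities imposed by the radial alignment. So no dichotomy of the kind you state holds, and a dimension count built on it would rule out cases that actually exist.

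You do not need it, and the ``main obstacle'' you single out disappears. Each vertex $V_j$ of valence at least four on the cycle has $e_j\ge 2$. It therefore contributes $e_j-1\ge 1$ vectors $\bar u_{j,k}$ to the basis of $H_1/H_0$ in Proposition~\ref{prop: nbhd-cycle-span}. Hence $H_0=H_1$ forces $t=0$. Deficiency~$1$ (b),(c) and deficiency~$2$ (c) then follow from Corollary~\ref{cor: cyclespans}, which is how the paper argues. The same proposition gives $\sum_j(e_j-1)=\dim H_1/H_0$, which produces all your remaining configurations. The claim $\ov{\sigma'}=1$ in deficiency~$1$ (a) follows from $1=e_1-1\le\ov{\sigma'}\le\defic{\sigma'}=1$.

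Two small points remain. In deficiency~$0$, say why $\Aut(\sigma')$ is trivial: a cycle spanning $(N_X)_\R$ is neither a self-loop nor two vertices joined by parallel edges, so Corollary~\ref{cor: automorphisms} applies. Also, $P(\sigma')$ is the closed parallelogram and $i$ counts its interior points. The lattice points of a half-open parallelogram number $|\det(u_1,u_2)|$, which is not the count in the statement.
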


For $\dim X \geq 3$, there is no analogue of the dual polygon associated to a vertex of a tropical curve. On the other hand, the computation of $\nn(\sigma')$ exhibits a recursive structure. Indeed, the automorphism-weighted factor $\nn(\sigma')$ depends only on the neighborhood of the cycle inside the vector space $H_1/H_0$. In particular, if $\dim(H_1/H_0)<\dim(X)$, the computation of $\nn(\sigma')$ reduces to the corresponding problem in lower dimension. If instead $\dim(H_1/H_0)=\dim(X)$, then the tropical curves parametrized by $\sigma'$ contract the cycle $\tropC_0$, and the neighborhood of the cycle necessarily spans all of $(N_X)_\mathbb{R}$, that is,
$
H_1=(N_X)_\mathbb{R}.
$

For example, when $\dim(X)=3$, by Proposition \ref{prop: nbhd-cycle-span}, there are only three new cases to consider:
\begin{enumerate} [label=(\alph*)]
    \item There is a $6$-valent vertex on the cycle (Figure \ref{fig: dim3 nbhd spans}(a)) 
    \item There is a $5$ valent vertex $V_1$ and a $4$ valent vertex $V_2$ on the cycle (Figure \ref{fig: dim3 nbhd spans}(b)). 
    \item There are three $4$-valent vertices on the cycle (Figure \ref{fig: dim3 nbhd spans}(c)).
\end{enumerate}
\begin{figure}[h]
    \renewcommand{\thesubfigure}{\alph{subfigure}}
    \centering
    \begin{subfigure}[t]{0.3\textwidth}
        \centering
        \begin{tikzpicture}
            \fill (0,0) circle[radius=2pt];
            \draw (-1, -0.5) -- (0,0); 
            \draw (-1, 0.6) -- (0,0);
            \draw (0, 0) -- (1,1); 
            \draw (0,0) -- (1, -1);
            \draw[dashed] (0,0)  to[in=50,out=130,loop, style={min distance=12mm}] (0,0);
        \end{tikzpicture}
        \caption{$6$-valent vertex on cycle.}
        
    \end{subfigure}	
    \begin{subfigure}[t]{0.3\textwidth}
        \centering
        \begin{tikzpicture}

            \draw[dashed] (0, 0) to[out=30, in=150, relative] (0, 1); 
            \draw[dashed] (0, 0) to[out=60,in=300] (0, 1); 
            \draw[coralpink]  (-1, 0.7) -- (1, 1.3); 
            \draw[skyblue] (-1.2, 0) -- (0,0); 
            \draw[skyblue] (0, 0) -- (1,0.3); 
            \draw[skyblue] (0,0) -- (1, -0.3);
            \draw[->] (0, -0.3) -- (0, -0.6);
        
            \draw[skyblue] (-1.2, -1.5) -- (0,-1.5); 
            \draw[skyblue] (0, -1.5) -- (1,-1.2); 
            \draw[skyblue] (0,-1.5) -- (1, -1.8);
            \draw[coralpink] (0.3, -0.5) -- (-0.3, -2.5); 
            
            \fill (0,0) circle[radius=2pt];
            \fill (0,1) circle[radius=2pt];
            \fill (0,-1.5) circle[radius=2pt];
        \end{tikzpicture}
        \caption{$5$-valent vertex and $4$-valent vertex on cycle.}
      \end{subfigure}
    \begin{subfigure}[t]{0.3\textwidth}
        \centering
        \begin{tikzpicture}
            
            \draw[darkorchid] (-1.2, -0.1) -- (1.2, 0.1); 
            \draw[coralpink] (-1.2, 0.2) -- (1.2, 0.8); 
            \draw[skyblue] (1,1.2) -- (-1, 1.8);

            \draw[dashed] (0, 1.5) to[out=30, in=150, relative] (0.05, 0); 
            \draw[dashed] (-0.1, 0.5) to[out=30, in=150, relative] (0, 1.5);
            \draw[dashed] (0.05,0) -- (-0.1, 0.5);

            \draw[->] (0, -0.3) -- (0, -0.6);
            
            \draw[darkorchid] (-1, -1.8) -- (1,-1.2); 
            \draw[skyblue] (-1,-1.2) -- (1, -1.8);
            \draw[coralpink] (0.3, -0.5) -- (-0.3, -2.5); 

            \fill (0,-1.5) circle[radius=2pt];

            \fill (0.05,0) circle[radius=2pt];
            \fill (-0.1,0.5) circle[radius=2pt];
            \fill (0,1.5) circle[radius=2pt];
        \end{tikzpicture}
        \caption{Three $4$-valent vertices on cycle.}
      \end{subfigure}
        
      \caption{Dimension 3, deficiency 3, neighborhood spans.}
    \label{fig: dim3 nbhd spans}
\end{figure}

\subsection{Comparisons with previous works}

The problem of tropically counting elliptic curves with fixed $j$-invariant in certain surfaces was studied in \cite{KM09,LR18}. Kerber and Markwig introduced ad hoc multiplicities for cones of tropical genus $1$ curves in $\mathbb{P}^2$ in order to obtain a well-defined count of degree $d$ plane elliptic curves with fixed $j$-invariant passing through $3d-1$ point conditions. They showed that, with this choice of multiplicities, the resulting tropical count agrees with the algebraic count of Pandharipande \cite{Pandh97}.

As explained in \cite{LR18}, which corrected some of these multiplicities, these weights do not arise from algebraic curves. Both \cite{KM09} and \cite{LR18} appeared before the construction of the Ranganathan--Santos-Parker--Wise moduli space $\mathcal{W}_{\Gamma}(X)$. In the present work, the weights of cones in $\Sigma(\mathcal{W}_{\Gamma}(X))$ arise directly from algebraic curves.

We report in Tables~\ref{tab: comparing weights deficiency 1} and \ref{tab: comparing weights deficiency 2} the cases in which our weights differ from those of \cite{KM09} and \cite{LR18}. Since both references assign weights to tropical curves in $(N_X)_\mathbb{R}$, from Equation \eqref{eqn: correspondence bis}, the appropriate quantities to compare are $\mathrm{loop}(\sigma')\, \nn(\sigma')$ for maximal cones $\sigma'$ of $W_\Gamma((N_X)_\mathbb{R})$. In both tables, the vector $n_1$ is always assumed to be primitive.

\begin{table}[H]
    \centering
    \renewcommand{\arraystretch}{1.5}
    \begin{tabular}{@{}lcc@{}}
        \toprule
        &
        \begin{tikzpicture}[scale=0.8]
            \fill[white] (0,1.2) circle[radius=2pt];
            \fill (0,0) circle[radius=2pt];
            \fill (2,0) circle[radius=2pt];
            \fill (1.5,0) circle[radius=2pt];
            \draw (0, 0.05) -- (1.5, 0.05);
            \draw (0, -0.05) -- (1.5, -0.05);
            \draw (1.5, 0) -- (2,0);
            \draw (0, 0) -- (-1, 1); 
            \draw (0, 0) -- (-1, -1); 
            \draw (2, 0) -- (3, 1); 
            \draw (2, 0) -- (3, -1);

            \draw[forestgreen, ->] (0,0.15) -- (-0.5, 0.65);
            \node[forestgreen] at (-0.5,0.8) {$u$};

            \draw[forestgreen, ->] (0.2, 0.15) -- (1, 0.15); 
            \node[forestgreen] at (0.6, 0.35) {$a n_1$};

            \draw[forestgreen, ->] (0.2, -0.15) -- (1, -0.15);
            \node[forestgreen] at (0.6, -0.4) {$b n_1$};
            
            \node at (-0.4,0) {$V_1$};
        \end{tikzpicture} 
        &
        \begin{tikzpicture}[scale=0.8]
            \fill (0,0) circle[radius=2pt];
            \fill (2,0) circle[radius=2pt];
            \fill (0.5,0) circle[radius=2pt];
            \fill (1.5,0) circle[radius=2pt];
            \draw (0, 0) -- (0.5, 0);
            \draw (0.5, 0.05) -- (1.5, 0.05); 
            \draw (0.5, -0.05) -- (1.5, -0.05); 
            \draw (1.5, 0) -- (2, 0); 
            \draw (0, 0) -- (-1, 1); 
            \draw (0, 0) -- (-1, -1); 
            \draw (2, 0) -- (3, 1); 
            \draw (2, 0) -- (3, -1);
            \node at (0.3,0.3) {$l$};
            \node at (1.7,0.3) {$l$};

            \draw[forestgreen, ->] (0.6, 0.15) -- (1.4, 0.15); 
            \node[forestgreen] at (1, 0.35) {$a n_1$};

            \draw[forestgreen, ->] (0.6, -0.15) -- (1.4, -0.15);
            \node[forestgreen] at (1, -0.4) {$b n_1$};
        \end{tikzpicture} 
        \\
        \midrule
        $\lm{\sigma'} \nn(\sigma')$ & $\gcd(a,b) \times (|\det(u, n_1)| - 1)/\Aut(\sigma').$ & $\gcd(a,b)/\Aut(\sigma').$ \\
        $\lm{\sigma'} \nn(\sigma')$ in \cite{KM09} & $\gcd(a,b) \times |\det(u, n_1)|/\Aut(\sigma').$ & 0 \\
        $\lm{\sigma'} \nn(\sigma')$ in \cite{LR18} & Not included & $\gcd(a,b)/\Aut(\sigma').$ \\
        \bottomrule
    \end{tabular}
    \vspace{0.2cm}
    \caption{Comparing weights in dimension $2$ and deficiency $1$.}
    \label{tab: comparing weights deficiency 1}
\end{table}

\begin{table}[H]
    \centering
    \renewcommand{\arraystretch}{1.5}
    \begin{tabular}{@{}lcc@{}}
        \toprule
        &
        \begin{tikzpicture}[scale=0.8]
            \fill[white] (0,1.2) circle[radius=2pt];
            \fill (0,0) circle[radius=2pt];
            \fill (2,0) circle[radius=2pt];
            \draw (0, 0) -- (2,0);
            \draw (0, 0) -- (-1, 1); 
            \draw (0, 0) -- (-1, -1); 
            \draw (2, 0) -- (3, 1); 
            \draw (2, 0) -- (3, -1);
            \draw[dashed] (0,0)  to[in=50,out=130,loop, style={min distance=12mm}] (0,0);

            \node at (-0.4,0) {$V_1$};
        \end{tikzpicture} 
        &
        \begin{tikzpicture}[scale=0.8]
            \fill (0,0) circle[radius=2pt];
            \fill (2,0) circle[radius=2pt];
            \draw (0, 0) -- (2,0);
            \draw (1.5, 0) -- (2, 0); 
            \draw (0, 0) -- (-1, 1); 
            \draw (0, 0) -- (-1, -1); 
            \draw (2, 0) -- (3, 1); 
            \draw (2, 0) -- (3, -1);
            \draw[dashed] (1,0)  to[in=50,out=130,loop, style={min distance=12mm}] (1,0);
            \node at (0.3,0.3) {$l$};
            \node at (1.7,0.3) {$l$};

            \draw[forestgreen, ->] (1,0) -- (0.3, 0);
            \draw[forestgreen, ->] (1,0) -- (1.7, 0);

            \node[forestgreen] at (1.4, -0.3) {$d n_1$};

            \fill (1,0) circle[radius=2pt];
        \end{tikzpicture} 
        \\
        \midrule
        $\lm{\sigma'} \nn(\sigma')$  & $i(\text{dual polygon at $V_1$})$ & $(d-1)/2$ \\
        $\lm{\sigma'} \nn(\sigma')$ in \cite{KM09}  & $\text{Area}(\text{dual polygon at } V_1) - \frac{1}{2}$ & 0 \\
        $\lm{\sigma'} \nn(\sigma')$ in \cite{LR18}   & $i(\text{dual polygon at $V_1$})$ & $(d-1)/2$ \\
        \bottomrule
    \end{tabular}
    \vspace{0.2cm}
    \caption{Comparing weights in dimension $2$ and deficiency $2$.}
    \label{tab: comparing weights deficiency 2}
\end{table}

\subsection{Strategy and methods}

This paper makes use of the full strength of the technical developments of the last decades, including Artin fans, desingularizations of moduli spaces via subdivisions, the correspondence between logarithmic and tropical geometry, tropicalizations of moduli spaces, and logarithmic Gromov--Witten theory.

The proof of Theorem \eqref{thm: tropical correspondence} starts with modifying the map \eqref{eqn: map} to reduce to the situation illustrated in the next proposition, which is of independent interest.

\begin{notation}
    For a logarithmic smooth stack $X$ and a cone $\sigma$ in $\Sigma(X)$, let $V(\sigma)$ 
    denote the substack corresponding to the logarithmic stratum associated with $\sigma$.
\end{notation}

\begin{proposition}\label{prop: technical tool}
    Let $f : X \to Y$ be a logarithmic morphism between two fs log-smooth Deligne-Mumford stacks. 
    Assume that $X$ and $Y$ are pure dimensional, that $Y$ is smooth, and that $f$ is integral \footnote{Under these assumptions the cones of $\Sigma(Y)$ are all isomorphic to $\mathbb{R}_{\ge 0}^n$ for some $n \ge 0$, with integral structure $\mathbb{N}^n \subseteq \mathbb{R}_{\ge 0}^n$. By \cite[Proposition~2.4]{Mirror-GS}, $f$ is integral if and only if the restriction of the map
    $
    \Sigma(X) \to \Sigma(Y)
    $
    to any cone of $\Sigma(X)$ surjects onto a cone of $\Sigma(Y)$.}. Assume also that there is a commutative diagram
    \begin{equation}\label{eqn: artin-fan-diagram}
\begin{tikzcd}
X 
\arrow[r, "f"] 
\arrow[d, "\varphi"']               
& Y
\arrow[d, "\psi"]     \\
\mathcal{A}_{X} 
\arrow[r, "\mathcal{A}(f)"] 
& \mathcal{A}_{Y}
\end{tikzcd}
\end{equation}
    Let $\tau$ be a cone in $\Sigma(Y)$. Then
\[
f^*[V(\tau)] = \sum_{\sigma} [\, N_\tau: \Sigma(f)(N_\sigma) \,]\,[V(\sigma)],
\]
where the sum runs over the cones $\sigma \in \Sigma(X)$ such that 
$\Sigma(f)(\sigma) = \tau$.
\end{proposition}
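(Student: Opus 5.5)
The plan is to reduce to a computation on Artin fans and then check it étale locally, where everything becomes toric. Since $Y$ is smooth and log smooth, the divisor-type strata $V(\tau)$ of $Y$ are pulled back from the corresponding closed substacks $\mathcal{V}(\tau)\subseteq \mathcal{A}_Y$. Indeed, $\psi$ is smooth (log smoothness of $Y$ means $Y\to\mathcal{A}_Y$ is smooth), and $V(\tau)=\psi^{-1}\mathcal{V}(\tau)$ scheme-theoretically. Hence $[V(\tau)]=\psi^*[\mathcal{V}(\tau)]$, and commutativity of \eqref{eqn: artin-fan-diagram} gives
\[
f^*[V(\tau)] = f^*\psi^*[\mathcal{V}(\tau)] = \varphi^*\mathcal{A}(f)^*[\mathcal{V}(\tau)].
\]
Since $\varphi$ is also smooth, it suffices to prove the analogous formula for $\mathcal{A}(f)\colon\mathcal{A}_X\to\mathcal{A}_Y$, with $[\mathcal{V}(\sigma)]$ in place of $[V(\sigma)]$, and then pull back along $\varphi$, using $\varphi^{-1}\mathcal{V}(\sigma)=V(\sigma)$. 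Here $\mathcal{A}(f)^*$ is a refined Gysin pullback, which makes sense because $\mathcal{V}(\tau)\subseteq\mathcal{A}_Y$ is a regular embedding: locally it is $[\mathbb{A}^k/\mathbb{G}_m^k]$ with a coordinate subspace, since the cones of $\Sigma(Y)$ are smooth.

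Next I would localize to a single cone. Both sides are determined étale locally on $\mathcal{A}_X$, and $\mathcal{A}_X$ is covered by the open substacks $\mathcal{A}_\rho=[\mathrm{Spec}\,\C[P_\rho]/\mathrm{Spec}\,\C[P_\rho^{gp}]]$, one for each cone $\rho$. On such a chart, $\mathcal{A}(f)$ is induced by a toric morphism of affine toric varieties $U_\rho\to U_{\tau'}=\mathbb{A}^k$ corresponding to the cone map $\rho\to\tau'$, together with the homomorphism of tori. By integrality, the image of $\rho$ is a full face-closed cone $\tau'\cong\R_{\ge0}^k$, and the image of every face of $\rho$ is a face of $\tau'$. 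Equivariant pullback along the torus quotients then reduces the claim to the following toric statement. Let $g\colon U_\rho\to\mathbb{A}^k$ be a toric morphism that is integral, i.e.\ flat: integrality of a morphism of toric monoids means each face maps onto a face, and the morphism is then flat with reduced fibres after saturation issues. Then $g^*[\{x_i=0,\ i\in\tau\}]$ equals the sum over faces $\sigma\preceq\rho$ mapping onto $\tau$ of the index $[N_\tau:\Sigma(f)(N_\sigma)]$ times $[V(\sigma)]$.

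Since $g$ is flat, $g^*$ is just the flat pullback: the class of the scheme-theoretic preimage $g^{-1}(V(\tau))$. This preimage is the toric closed subscheme cut out by the monomial ideal generated by the images of $x_i$ for $i\in\tau$. Its irreducible components of the right dimension are the orbit closures $V(\sigma)$ with $\Sigma(f)(\sigma)=\tau$. Faces mapping into a proper face of $\tau$ do not lie in the preimage, and faces mapping onto $\tau$ but with the wrong dimension are excluded by flatness, via equidimensionality of fibres.

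The remaining, and in my view hardest, step is computing the multiplicity of $V(\sigma)$ in the preimage. I would compute it at the generic point of $V(\sigma)$ by localizing, i.e.\ inverting the monomials in $\sigma^\perp$. This reduces to the toric morphism $\C[\sigma^\vee\cap M]\to$ the torus of the star of $\sigma$, times a map $\mathbb{A}^{\dim\tau}\leftarrow U_\sigma$, which is finite flat over the generic point. The length of $\C[M_\sigma]/(\text{images of } x_i)$ at the torus-fixed point equals the index of the sublattice generated by the images of the dual basis. I expect this to be the cokernel order of $M_\tau\to M_\sigma$, which equals $[N_\tau:\Sigma(f)(N_\sigma)]$ by duality. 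The point to check carefully is that the local ring is Cohen--Macaulay of the right dimension, so that length equals the intersection multiplicity. This is where I would use that $\Sigma(f)|_\sigma\colon\sigma\to\tau$ is surjective between cones of equal dimension, making the map a finite quotient by the group $N_\tau/\Sigma(f)(N_\sigma)$.
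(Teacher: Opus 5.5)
Your argument is correct, but it follows a different route from the paper. The paper never looks at the scheme-theoretic preimage. Instead it works entirely with divisors:
\begin{itemize}
\item it writes $[V(\tau)]=D_{\rho_1}\cdots D_{\rho_k}$, using that $\tau$ is a smooth cone;
\item it uses flatness of $X\to\mathcal{A}_Y$ to pull each $D_{\rho_i}$ back to a $\mathbb{Q}$-Cartier divisor $\sum x_{\rho'}D_{\rho'}$, summed over the rays $\rho'$ of $\Sigma(X)$ mapping onto $\rho_i$;
\item it expands the product, discarding monomials whose rays do not span a $k$-dimensional cone (these vanish for dimension reasons);
\item it evaluates each surviving monomial with an auxiliary lemma, $D_{\rho'_1}\cdots D_{\rho'_k}=\mathrm{mult}(\sigma)^{-1}[V(\sigma)]$ for simplicial cones, transported from toric varieties by strict smooth charts.
\end{itemize}
The index then comes out of the identity $\prod x_{\rho'}=\mathrm{mult}(\sigma)\,[N_\tau:\Sigma(f)(N_\sigma)]$.

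You instead compute the flat pullback directly as the fundamental cycle of $g^{-1}V(\tau)$, and read off each coefficient as a length in a local toric model. Your route works at the level of cycles and shows the index is a genuine scheme-theoretic multiplicity. It also avoids $\mathbb{Q}$-Cartier intersection theory on a possibly singular stack and the simplicial-multiplicity lemma. The paper's route is shorter once standard toric intersection theory is granted, and it stays within divisor calculus.

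Two small repairs to your last step.

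First, the Cohen--Macaulay concern is unnecessary. Since $g$ is flat, the coefficients of $g^*[V(\tau)]=[g^{-1}V(\tau)]$ are, by definition, lengths at the generic points. To compute the length, write $U_\sigma\cong U_{\sigma,N_\sigma}\times(\C^*)^{\dim\rho-\dim\sigma}$, where $U_{\sigma,N_\sigma}$ is the affine toric variety of $\sigma$ in its own lattice. On $U_\sigma$ each $x_i=\chi^{e_i^*}$ pulls back, up to a unit, to $\chi^{\bar m_i}$ with $\bar m_i=\Sigma(f)^*(e_i^*)\in M_\sigma$. Since $\Sigma(f)$ identifies $\sigma$ with $\tau$ over $\R$, the cone $\sigma^\vee$ is simplicial and generated by the $\bar m_i$. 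Hence $\sigma^\vee\cap M_\sigma$ is the disjoint union of the $d=[M_\sigma:\sum_i\Z\bar m_i]$ translates of $\sum_i\N\bar m_i$ by the lattice points of the half-open fundamental parallelepiped. Therefore $\C[\sigma^\vee\cap M_\sigma]$ is free of rank $d=[N_\tau:\Sigma(f)(N_\sigma)]$ over $\C[x_1,\ldots,x_k]$, which gives the length. Note you wrote $\C[M_\sigma]$, which is the coordinate ring of the torus and not the ring you need.

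Second, drop the parenthetical claim that integral toric morphisms have reduced fibres. They are flat, but their fibres are non-reduced in general, and that non-reducedness is exactly the multiplicity you are computing.
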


The commutative diagram~\eqref{eqn: artin-fan-diagram} guarantees that the map $X \to \mathcal{A}_Y$ is flat. Such a diagram exists when $X$ is a logarithmic scheme with Zariski log structure and $Y$ is a scheme~\cite[Proposition~2.8]{ACGSI}, but not in general; see~\cite[Section~4.6.2]{ACGSI}. Consequently, verifying that the morphism from the moduli space of logarithmic maps to the evaluation space $\mathsf{Ev}:=X^n \times \oM_{1,1}$ satisfies the hypotheses of Proposition~\ref{prop: technical tool} is not formal. This verification is carried out in Theorem~\ref{thm: commutative diagram for mapping spaces}.

The proof of Theorem~\ref{thm: calculation of weights} is delicate. Given a maximal cone $\sigma \in W_{\Gamma}(\Sigma(X))$, it is not a priori clear whether the corresponding stratum of $\cW_\Gamma(X)$ is non-empty. Indeed, it may be empty (see Example~\ref{example: weight zero example}). When the stratum is non-empty, determining how many points it contains is a subtle problem. A result of Jonathan Wise \cite{Wise} implies that the number of fine maps of type $\sigma$ is equal to the number of scheme maps $f:(\underline{C},\mathsf{p}) \to \underline{X}$ shaped by $\sigma$ (see Observation~\ref{disc:properties-over-sigma} for the precise meaning of this condition) that admit an fs logarithmic enhancement. Theorem~\ref{thm: realizability genus 1} reduces the existence of such logarithmic enhancements, in arbitrary genus, to the existence of suitable coordinates at the nodes of $\underline{C}$ together with certain meromorphic functions satisfying prescribed conditions. In genus $1$, and in combination with \cite[Theorem 4.4.8]{RSWII}, this criterion becomes particularly powerful and completely resolves Speyer's realizability problem (see \S\ref{sec: realizability}).

Theorem~\ref{thm: realizability genus 1} is effective: it yields a system of equations on the special points of $(\underline{C},\mathsf{p})$ which holds if and only if the map $\underline{f}:(\underline{C},\mathsf{p}) \to \underline{X}$ admits an fs logarithmic enhancement. A careful study of the set of solution of such system identifies configurations of special points on $\underline{C}$ giving rise to isomorphic basic fine maps. In Theorem \ref{thm: calculation of n(sigma)} we explicitly compute the number $n(\sigma)$ of distinct fine basic well-spaced maps to $X$ over a maximal cone $\sigma$ in terms of the polytope associated to $\sigma$. An examination of the $n(\sigma)$ non-isomorphic solutions then determines their automorphism groups (Theorem~\ref{thm: automorphisms}). The resulting automorphism-weighted multiplicity $\nn(\sigma)$ is then related in a precise way to the interior lattice points of the polytope $P(\sigma)$ in Construction~\ref{const: polytope} by Theorem~\ref{thm: calculation of weights}. A further difficulty in this study is to determine, for each of the $n(\sigma)$ basic fine maps, the number of basic fs maps having it as their associated fine map, and to relate this number to the loop multiplicity of the cone $\sigma$. This is explained in Theorems \ref{thm: number of logarithmic enhancements} and \ref{thm: relation indices, loop aut}.

\subsection{Future directions and applications}

This work opens the way to various future directions. We list some of them below.
\begin{enumerate}
    \item[$\bullet$] In upcoming work \cite{CKUpcoming}, we use the correspondence theorem~\ref{thm: tropical correspondence}), together with the weight computation of Theorem~\ref{thm: calculation of weights}, to tropically enumerate plane elliptic curves of fixed degree $d$ passing through $3d-1$ general points and having fixed $j$-invariant. This is the problem considered by Kerber and Markwig \cite{KM09}. Although the multiplicities assigned to the cones differ from those in their work, the explicit final formula we obtain agrees with those of \cite{KM09} and \cite{Pandh97}.
    \item[$\bullet$] In upcoming work \cite{KUpcoming}, the second author uses the correspondence Theorem \ref{thm: tropical correspondence} to compare well-spaced invariants with logarithmic Gromov-Witten invariants \cite{GS2013}. When $X$ is a toric threefold, one obtains a Getzler--Pandharipande type equation (see \cite[Theorem 6.1]{Ge97}) relating the two. 
    \item[$\bullet$] The problem of determining the enumerative (as opposed to virtual) count $\tau_d^{\mathbb{P}^r}(\beta_1,\ldots,\beta_n)$ of elliptic curves in $\mathbb{P}^r$ of degree $d$ satisfying incidence conditions $\beta_1,\ldots,\beta_n$ was studied by Ionel \cite{Ionel}. There, the enumerative count is recovered from the corresponding Gromov--Witten invariant by carefully removing the excess contributions arising in the virtual count. In \cite[Proposition 0.2]{Ionel}, Ionel gives a formula expressing the number $\tau_d^{\mathbb{P}^3}(\mathsf{p}^a,\mathsf{l}^b)$ of elliptic curves in $\mathbb{P}^3$ passing through $a$ points and meeting $b$ lines in terms of genus-$0$ Gromov--Witten invariants of $\mathbb{P}^3$. However, her formula appears to be incorrect.

    The simplest example occurs when $d=3$, $a=5$, and $b=1$. In this case,
    $
        \tau_3^{\mathbb{P}^3}(\mathsf{p}^5,\mathsf{l})
    $
    clearly vanishes, since every degree-$3$ curve in $\mathbb{P}^3$ is contained in a plane. On the other hand, Ionel's formula gives
    \[
        \tau_3^{\mathbb{P}^3}(\mathsf{p}^5,\mathsf{l})
        =
        \frac{2}{3}\sigma_3^{\mathbb{P}^3}(\mathsf{p}^5,\mathsf{l}^2)
        -\frac{1}{3}
        \Bigl(
        2\,\sigma_1^{\mathbb{P}^3}(\mathsf{p},\mathsf{l}^2)
        \cdot
        \sigma_2^{\mathbb{P}^3}(\mathsf{p}^4,\mathsf{l}^0)
        +
        \sigma_2^{\mathbb{P}^3}(\mathsf{p}^3,\mathsf{l}^2)
        \cdot
        \sigma_1^{\mathbb{P}^3}(\mathsf{p}^2,\mathsf{l}^0)
        \Bigr),
        \]
    where $\sigma_d^{\mathbb{P}^3}(\mathsf{p}^a,\mathsf{l}^b)$ denotes the number of genus-$0$ stable maps of degree $d$ through $a$ points and meeting $b$ lines. These numbers were computed by Getzler \cite[Table~2]{Ge97}, and one finds
    \[
        \sigma_3^{\mathbb{P}^3}(\mathsf{p}^5,\mathsf{l}^2)=5,\qquad
        \sigma_1^{\mathbb{P}^3}(\mathsf{p},\mathsf{l}^2)=1,\qquad
        \sigma_2^{\mathbb{P}^3}(\mathsf{p}^4, \mathsf{l}^0)=0,\qquad
        \sigma_2^{\mathbb{P}^3}(\mathsf{p}^3,\mathsf{l}^2)=1,\qquad
        \sigma_1^{\mathbb{P}^3}(\mathsf{p}^2, \mathsf{l}^0)=1.
    \]
    Substituting these values into Ionel's formula yields
    $
        \tau_3^{\mathbb{P}^3}(\mathsf{p}^5,\mathsf{l})=3,
    $
    contradicting the fact that the enumerative count is zero.

    With the Correspondence Theorem~\ref{thm: tropical correspondence} in hand, we are currently developing a tropical approach to the computation of $\tau_d^{\mathbb{P}^3}(\mathsf{p}^a,\mathsf{l}^b)$ that will provide a corrected enumerative formula.

    \item[$\bullet$] It would be interesting to extend the correspondence theorem to incidence conditions arising from classes on the moduli stack $\overline{\mathcal{M}}_{1,n}$ beyond the point class on $\overline{\mathcal{M}}_{1,1}$. For example, one may hope for a complete tropical understanding of the genus-one reduced descendant theory. In genus zero, this problem has been resolved by Mandel--Ruddat \cite{RT}.% Trnot yet fully understood, despite the fact that $\overline{M}_{0,n}$ is closely related to a toric variety.

    %The methods developed in the present paper are well suited to this setting and should lead to explicit tropical correspondence theorems incorporating incidence conditions pulled back from moduli spaces of curves in both genus zero and genus one. 
    
    \item[$\bullet$] Extending the results of this paper to higher genus would first require the construction of a higher-genus analogue of $\mathcal{W}_{\Gamma}(X)$ for general toric varieties. See \cite{BC-g2} for the case of maps to $\P^r$.
\end{enumerate}

\subsection{Conventions}

We work in the category of schemes of finite type over $\C$. 

We denote log schemes by $X = (\underline{X}, \mathcal{M}_X)$, and we generally avoid underlining ordinary schemes unless we wish to emphasize the absence of a log structure. Similarly, for a morphism $f$ of log schemes, we sometimes write $\underline{f}$ for the underlying morphism of schemes.

The ghost sheaf (also called the characteristic sheaf) associated to a log structure $\cM$ is denoted
$
\oM := \cM / \mathcal{O}_X^{\times}.
$
We write $\cM$ multiplicatively and $\oM$ additively.

Unless otherwise stated, logarithmic schemes will always be fine and saturated over the trivial log point.
We will also sometimes consider fine log schemes over the trivial log point, and we will make this explicit when needed.

For a monoid $Q$, we write
$
Q^{\vee} := \operatorname{Hom}(Q, \mathbb{N})
$
for its dual in the category of monoids, $Q^{\mathrm{gp}}$ for the associated abelian group, and
$
Q^{*} := \operatorname{Hom}(Q, \mathbb{Z}).
$

\subsection*{Acknowledgments}

We are indebted to Dhruv Ranganathan for introducing us to the moduli stack $\mathcal{W}_{\Gamma}(X)$ of well-spaces genus $1$ maps and for countless hours of discussions on the geometry of this space and the content of this paper.

We are also grateful to Hannah Markwig, who explained to the first author many years ago the content of her paper \cite{KM09}. She also explained that her multiplicities were explicitly designed to yield a well-defined count, although it was not clear that they correspond to counts of algebraic curves. %We are further grateful to her for encouraging us to pursue this direction during her visit to Cambridge University in April 2025. 

We would like to express our gratitude to Xuanchun Lu for many helpful conversations on logarithmic geometry and various aspects of this work. 
Finally, we have benefited greatly from discussions with Dan Abramovich, Francesca Carocci, Renzo Cavalieri, Erin Dawson, Mark Gross, Samouil Molcho and Jonathan Wise. 

The first presentation of the results proven here was given at the Oberseminar Kombinatorische Algebraische Geometrie in Tubigen with very helpful discussions with Hannah Markwig and Erin Dawson.

A.~C.\ is supported by SNF grant P500PT-222363. S.~K.'s doctorial programme is funded by St John's College, Cambridge.

\section{Cones and Artin fans}

\subsection{Weighted generalized cone complexes}\label{sec: cones}

Let $\mathbf{Cones}$ denotes the category of cones. Objects in  $\mathbf{Cones}$ are pairs $\sigma = (\sigma_{\mathbb{R}}, N)$, where $N \simeq \mathbb{Z}^n$ is a lattice and 
\[
    \sigma_{\mathbb{R}} \subset N_{\mathbb{R}} := N \otimes_{\mathbb{Z}} \mathbb{R}
\]
is a top-dimensional strictly convex rational polyhedral cone.  In particular, every cone $\sigma$ comes with a lattice structure. A morphism of cones $\varphi: \sigma_1 \to \sigma_2$ is the data of a homomorphism of the underlying lattices taking $\sigma_1$ to $\sigma_2$. Such a morphsim is a face morphism if it identifies $\sigma_1$ with a face of $\sigma_2$.

\begin{notation}\label{notation: Nsigma}
    If we need to specify that $N$ is associated to $\sigma$ we write $N_\sigma$ instead.
\end{notation}

\begin{definition}\label{def: cones}
    A generalized cone complex is a topological space with a presentation as the colimit of an arbitrary finite diagram in the category $\mathbf{Cones}$ with all morphisms being face morphisms. 
\end{definition}

\begin{definition}
    We say that a generalized cone complex is pure-dimensional if all its maximal cones have the same dimension.
\end{definition}

Generalized cone complexes form a category, which we denote by $\mathsf{GenCones}$, whose morphisms are continuous maps that restrict on each cone to maps of cones.

\begin{definition}
    Let $\Delta$ be a pure-dimensional generalized cone complex. A weight function on $\Delta$ is the data of an integer-valued function from the maximal cones of $\Delta$.
\end{definition}

\begin{definition}
    Let $\Delta'$ be a generalized cone complex. A subdivision of $\Delta'$ is a generalized cone complex $\Delta$ equipped with a morphism
    $$ 
    \Delta \longrightarrow \Delta'
    $$
    such that:
    \begin{enumerate}
        \item[$\bullet$] The induced map on the underlying rational polyhedral spaces $|\Delta| \to |\Delta'|$ is a bijection. 
        \item[$\bullet$] For every cone $\sigma$ in $\Delta$ mapping to a cone $\sigma'$ in $\Delta'$, the corresponding map of integral lattices $N_{\sigma} \to N_{\sigma'}$ is an injective homomorphism with finite cokernel. 
    \end{enumerate}
\end{definition}

\subsection{Artin fans and logarithmic blow-ups}

Let $(Z,\mathcal{M}_Z)$ be a Zariski fs log scheme of finite type, and let 
$
\oM_{Z,\eta} = \mathcal{M}_{Z,\eta}/\mathcal{O}_{Z,\eta}^*
$
be the characteristic monoid at a generic point $\eta$ of a stratum of $Z$. 
Define
$
(\oM_{Z,\eta})^* := \mathrm{Hom}(\oM_{X,\eta}, \Z)$ and 
$ 
(\oM_{Z,\eta})_\R^\vee := \mathrm{Hom}(\oM_{Z,\eta}, \R_{\ge 0}).
$

The cone associated to $\eta$ is 
\[
\sigma_\eta := \bigl((\oM_{Z,\eta})_\R^\vee, (\oM_{Z,\eta})^*\bigr),
\]
and the generalized cone complex $\Sigma(Z)$ of $Z$ is obtained by gluing these cones along face maps corresponding to inclusions of strata. This construction generalizes to algebraic log stacks. See \cite[Section 2.1.4]{ACGSI}.

An Artin stack logarithmically étale over $\mathrm{Spec}(\C)$ is called Artin fan. From $\Sigma(X)$, one can construct the Artin fan $\mathcal{A}_X$ of $X$ as follows. 
For a cone $\sigma$, let
$
P = \sigma^\vee= \mathrm{Hom}(\sigma \cap N_\sigma, \N)
$
be the associated monoid, that is,
\[
\sigma^\vee = \{ m \in N_\sigma^* \mid \langle m, n \rangle \geq 0 \text{ for all } n \in \sigma \}.
\]
Set
\[
\mathcal{A}_\sigma := \bigl[ \mathrm{Spec}\, \mathbb{C}[P] \big/ \mathrm{Spec}\, \mathbb{C}[P^{\mathrm{gp}}] \bigr].
\]

Then the Artin fan of $Z$ is obtained as the colimit
\[
\mathcal{A}_Z := \varinjlim_{\sigma \in \Sigma(Z)} \mathcal{A}_\sigma
\]
in the category of sheaves over the Olsson's stack $\mathsf{Log}$ of fine logarithmic structures \cite{Olsson03}.

A subdivision of $\widetilde{\Sigma(Z)}$ of $\Sigma(Z)$ naturally induces a morphism of Artin fans $\widetilde{\mathcal{A}}_Z \longrightarrow \mathcal{A}_Z$. 
\begin{definition}
    A logarithmic blowup $\widetilde{Z} \to Z$ is  the fs base change of a morphism of Artin fans arising from a subdivision  $\widetilde{\Sigma(Z)} \to \Sigma(Z)$. 
\end{definition}

\section{Tropical well-spaced genus $1$ maps} \label{sec: tropical definitions}

    In this section, we review the definition of well-spaced tropical curves from~\cite{RSWII}. Given that we will use several variants of moduli spaces of tropical maps, this section primarily serves to establish the notation for subsequent sections.

    \subsection{Families of tropical maps to $\Sigma(X)$}

    We follow \cite{ACGSI} for the definition of tropical curves and maps. 

    \begin{definition}\cite[Definition 2.16]{GeomAC-ACG}
        A graph $G$ consists of the following data: 
        \begin{enumerate}
            \item (the set of vertices) a finite non-empty set $ V(G)$ ; 
            \item (the set of half-edges) a finite set $ L(G)$ ; 
            \item (the edges) an involution $\iota: L(G) \rightarrow L(G)$; and 
            \item (adjacency) a partition of $L(G)$ indexed by $V(G)$ - that is  $L(G)_V \subset L(G)$ such that $L(G) = \cup_V L(G)_V$ and $L_V \cap L_W = \emptyset$ for vertices $V \neq W$ . 
        \end{enumerate}
    \end{definition}

    A pair of distinct elements of $L$ interchanged by $\iota$ is called a \emph{edge} of the graph. Denote the set of edges by $E(G)$. An element of $L$ mapped to itself under $\iota$ is called a \emph{infinite end}. Denote the set of infinite ends by $L^e(G)$. 
    For each vertex $V$, the number of adjacent half-edges $|L_V|$ is the \emph{valency} of the vertex $V$.

    \begin{definition}
        A decorated graph $(G, g, p, z)$ consists of a (finite) connected graph $G$, a genus function $g: V(G) \rightarrow \mathbb{Z}_{\geq 0}$, marking functions $p: \{1, \ldots, n\} \rightarrow L^e(G)$ and $z: \{1, \ldots, m\} \rightarrow L^e(G)$ such that $p \sqcup z: \{1, \ldots, n\} \sqcup \{1,\ldots, m\} \rightarrow L^e(G)$ is a bijection. 
    \end{definition}

    Let $N = n + m$ denote the \emph{number of markings} of $(G, g, p, z)$. We distinguish between the $p$- and $z$-markings because, in what follows, the $p$-markings will play the role of free markings of a tropical curve, while the $z$-markings will correspond to markings with prescribed contact orders.
    
    \begin{definition}
        \cite[Definition 2.19]{ACGSI}
		A family of tropical curves $\tropC/\sigma = (G, g, p, z, \ell)$ over a cone $\sigma$ consists of a decorated graph $(G, g, p, z)$, and a length function $\ell:E(G) \rightarrow \operatorname{Hom}(\sigma \cap N_{\sigma}, \mathbb{N}) \smallsetminus \{0\}$ from the edge set of $G$. 
	\end{definition}

    \begin{construction}\cite[Construction 2.20]{ACGSI} \label{const: tropCspace}
        Given a family of tropical curves $\tropC/\sigma = (G, g, p, z, \ell)$ over a cone $\sigma$, we can build a morphism of cone complexes $\pi: \hat{C}/ \sigma \rightarrow \sigma$ as follows. Associate:
        \begin{enumerate}
            \item To each vertex $V \in V(\tropC)$ one copy $\sigma_V$ of $\sigma$. 
            \item To each edge $e \in E(\tropC)$ the cone
            $\sigma_e = \{(s, \lambda) \in \sigma \times \mathbb{R}_{\geq 0} \mid \lambda \leq \ell(e)(s)\}.$
            \item To each infinite end $z$ the cone $\sigma_{z} = \sigma \times \mathbb{R}_{\geq 0}$.
        \end{enumerate}
        The cone $\sigma_e$ has two facets, each isomorphic to $\sigma$ via the projection to the first factor. Suppose $e$ is an edge between $V$ and $V'$. The inclusions $s \mapsto (s,0)$ and $s \mapsto (s, \ell(s))$ define face morphisms $\sigma_{V}, \sigma_{V'} \rightarrow \sigma_e $. For each infinite end $z$ adjacent to $V$, we have a face morphsim $\sigma_V \rightarrow \sigma_{z}$ given by the inclusion $\sigma \times \{0 \} \rightarrow \sigma_{z}$. The cone complex $\hat{\tropC}$ is defined by this directed system. The morphism $\pi$ is defined on $\sigma_{e}$ and $\sigma_{z}$ by projection to the first factor. 
    \end{construction}

    \begin{definition}\label{def: family of trop maps to Sigma}\cite[Definition 2.21]{ACGSI}
        A family of tropical maps to $\Sigma(X)$ over a cone $\sigma$ is a family of tropical curves $\tropC/\sigma$ over $\sigma$ with associated cone complex $\hat{\tropC}/\sigma$ together with a morphism of cone complexes $h: \hat{\tropC}/\sigma \rightarrow \Sigma(X)$.

    Given such a family, we can extract the following discrete data: 
    \begin{enumerate}
        \item (Image cones.) For vertex, edge, or infinite end $x$ of $\tropC/\sigma$, let $\sigma_x$ be the cone of $\hat{\tropC}/\sigma$ associated to $x$. Let $\vartheta_x$ be the minimal cone of $\Sigma(X)$ which contains $h(\sigma_x)$. 
        \item (Contact orders.) Suppose $\zeta$ is a half-edge of $G$ adjacent to a vertex $V$. If $\zeta$ is an infinite end, say $\zeta=z$, define $u_{\zeta}= u_{z} := h((0,1))$ where $ (0,1) \in N_{\sigma_\zeta} = N_\sigma \times \mathbb{R}$. If $\zeta$ is contained in an edge $e$, orient $e$ away from $V$ and define $u_{\zeta} := h((0,1))$ where $ (0,1) \in N_{\sigma_e} = N_\sigma \times \mathbb{R}$. We will also use the notation $u_{\vec{e}}$ for $u_{\zeta}$.
    \end{enumerate}

    \end{definition}

    \begin{remark}
        For each $V \in V(G)$, there is a natural section $\sigma \rightarrow \hat{\tropC}/\sigma$, $s \mapsto V(s) = s \in \sigma_V$. Suppose $\vec{e}: V \to V'$ is an oriented edge. The element $u_{\vec{e}} \in N_X$ satisfies 
        $$h(V(s)) - h(V'(s)) = \ell(e)(s) \cdot u_{\vec{e}}$$
    \end{remark}

    \begin{definition}\cite[Definition 2.23]{ACGSI}
        The combinatorial type of a family of tropical maps $h: \tropC/\sigma \rightarrow \Sigma(X)$ consists of the information obtained by dropping $\ell$. That is $(G, g, p, z)$, the underlying decorated graph, as well as the the image cones of the vertices and contact orders $(\vartheta_V, u_{\zeta})$.
    \end{definition}

    \subsection{Moduli spaces of tropical maps to $\Sigma(X)$ and $(N_X)_{\mathbb{R}}$}\label{sec: trop maps}

    In this section, we recall the construction of various moduli spaces of tropical maps and establish the notation used throughout this work. 

\begin{definition}
    \cite[Definition 2.2.1]{RSWII}
    A tropical curve $\tropC = (G, g, p, z, \ell)$ consists of a decorated graph $(G, g, p, z)$ and a \emph{length function} $\ell: E(G) \rightarrow \mathbb{R}_{>0}$ defined on the edge set of $G$. 
\end{definition}

    \begin{notation}
        For $\tropC = (G, g, p, z, \ell)$, let $E(\tropC) := E(G), L(\tropC) := L(G)$, and $V(\tropC) := V(G)$. 
    \end{notation}

The length function may be viewed as a function on half-edges $\ell: L(G) \setminus L^e(G) \rightarrow \operatorname{Hom}(\sigma \cap N_{\sigma}, \mathbb{N})$ such that $\ell(\zeta) = \ell(\iota(\zeta))$ for all $\zeta \in L(G)$. 

An \emph{isomorphism} of tropical curves $(G, g, p, z, \ell) \rightarrow (G', g', p', z', \ell')$ is a pair of bijections 
$$ \varphi_V: V(G) \rightarrow V(G') \quad \text{and} \quad \varphi_L: L(G) \rightarrow L(G') $$
such that:
\begin{enumerate}
    \item the maps induce an isomorphism of graphs $\varphi: G \rightarrow G'$; and 
    \item the decorations are preserved: $g = g' \circ \varphi_V$, $p' = \varphi_L \circ p$, $z' = \varphi_L \circ z$, and $\ell = \ell' \circ \varphi_L$. 
\end{enumerate}

\begin{construction}\label{const: tropCmetricspace}

To a tropical curve $\tropC$, we associate a metric space $\hat{\tropC}$ via the following construction. First, choose an arbitrary orientation for the edges of the underlying graph $G$ of $\tropC$ (the resulting metric space is independent of this choice).

\begin{itemize}
    \item For each vertex $V \in V(G)$, we introduce a corresponding point $V$ in $\hat{\tropC}$.
    
    \item For each oriented edge $\vec{e}: V \to V'$ of length $\ell(e)$, let $I_e = [0, \ell(e)]$, and identify $0 \in I_e$ with $V$ and $\ell(e) \in I_e$ with $V'$.
    
    \item For each infinite end $z$ adjacent to a vertex $V$, let $I_z = [0, \infty)$, and identify $0 \in I_z$ with $V$.
\end{itemize}

The topological space $\hat{\tropC}$ is the quotient of the disjoint union of these points and intervals under the given identifications. It is naturally equipped with the path metric induced by the Euclidean metric on each interval.
\end{construction}

    \begin{definition}
        The genus of a tropical curve $\tropC = (G, g, p, z, \ell)$ is $g(\tropC) = h^1(\hat{\tropC}) + \sum_{V \in V(G)}g(V)$.
    \end{definition}

    Let $\delta=(\delta_1,\ldots,\delta_m)$ be an ordered collection of vectors in $N_X$ such that $\sum_{j=1}^m \delta_j=0$. 
    
	\begin{definition}\label{def: curves in NX}
	      A parametrized tropical curve $[h: \tropC \rightarrow (N_X)_{\mathbb{R}}]$ of genus $g$ and degree $\delta$ is a piecewise linear map $h$ from $\hat{\tropC}$ (as in Construction \ref{const: tropCmetricspace}) to $(N_X)_{\mathbb{R}}$ satisfying the following conditions:
\begin{enumerate}
    \item (Integer slopes) Let $\zeta$ be a half-edge adjacent to a vertex $V$. If $\zeta$ is contained in a finite edge $e$, we orient the edge as $\vec{e} \colon V \to V'$. Otherwise, $\zeta$ is an infinite end $z$. The function $h$ restricts to an affine linear map on the corresponding interval--either $[0, \ell(e)]$ or $[0, \infty)$--given by $h(s) = h(0) + s u_{\zeta}$. We require the slope $u_\zeta$ to lie in $N_X$. When $\zeta$ is the initial part of the oriented edge $\vec{e}$, we often write $u_\zeta = u_{\vec{e}}$, and when $\zeta = z$ is an infinite end, we write $u_\zeta = u_z$.
    
    \item (Balancing condition) For every vertex $V$ of $\tropC$ with adjacent half-edges $z_1, \ldots, z_k$, we have $\sum_{i=1}^k u_{z_i} = 0$.
    
    \item (Degree condition) For $i = 1, \ldots, n$, we have $u_{p(i)} = 0$, and for $j = 1, \ldots, m$, we have $u_{z(j)} = \delta_j$.
\end{enumerate}

	\end{definition}

    \begin{definition} \label{def: curves in Sigma}
     A parametrized tropical curve in $\Sigma(X)$ of genus $g$ and degree $\delta$, denoted $[h: \tropC \rightarrow \Sigma(X)]$, is a parametrized tropical curve $[h: \tropC \rightarrow (N_X)_{\mathbb{R}}]$ of genus $g$ and degree $\delta$ satisfying the additional condition that every edge is mapped entirely into a single cone of $\Sigma(X)$. \footnote{Sometimes in the literature, it is assumed that the vectors in $\delta$ are parallel to some ray in $\Sigma(X)$. This is not restrictive, as given any $\delta$ one can always find a smooth blowup of $X$ with this property. However, this assumption is not necessary, and we will avoid making it.}
\end{definition}

    When we wish to emphasize the marked points, we write $[h: (\tropC, \mathsf{p}) \rightarrow (N_X)_{\mathbb{R}}]$ or $[h: (\tropC, \mathsf{p}) \rightarrow \Sigma]$ for a parametrized tropical curve in $(N_X)_{\mathbb{R}}$ or $\Sigma(X)$, respectively.

    \begin{remark}
        Consider a family of tropical curves $\tropC = (G, g, p, z, \ell)$ over $\sigma$. For any point $s \in \operatorname{Int}(\sigma) \cap N_{\sigma}$, we obtain a tropical curve $\mathsf{C}$ by associating to each edge $e$ the length $\ell(e)(s) \in \mathbb{N}$. The preimage of $s$ under the map $\pi: \hat{\tropC}/\sigma \rightarrow \sigma$ of Construction \ref{const: tropCspace} is precisely the metric space $\hat{\mathsf{C}}$ associated to $\mathsf{C}$ in Construction \ref{const: tropCmetricspace}. Moreover, given a family of tropical maps $h: \hat{\tropC}/\sigma \to \Sigma(X)$, the restriction of $h$ to the fiber over $s$ is a parametrized tropical curve in $\Sigma(X)$ as in Definition \ref{def: curves in Sigma}. Finally, under these identifications, the two notions of direction vectors $u_{\vec{e}}$ and $u_z$ in Definitions \ref{def: family of trop maps to Sigma} and \ref{def: curves in NX} coincide.
    \end{remark}

    \begin{definition}
        A parametrized tropical curve in $(N_X)_{\mathbb{R}}$ is stable if every genus $0$ vertex is at least $3$-valent. A parametrized tropical curve in $\Sigma(X)$ is stable if for every $2$-valent genus $0$ vertex $V$, the set $h(\operatorname{Star}(V))$ is not contained in the relative interior of a single cone of $\Sigma(X)$.
    \end{definition}

    Note in particular that a stable parametrized tropical curve in $\Sigma(X)$ is not necessarily stable when viewed as a tropical curve in $(N_X)_{\mathbb{R}}$. However, by forgetting the $2$-valent vertices (and replacing their two adjacent edges with a single edge whose length is the sum of the original lengths), we obtain a stable parametrized tropical curve in $(N_X)_{\mathbb{R}}$. We will refer to this as the \emph{associated stable parametrized tropical curve} in $(N_X)_{\mathbb{R}}$.

     \begin{remark} 
        The notions of deficiency and overvalency given in Definition \ref{def:deficandov} for a parametrized tropical curve in $(N_X)_{\mathbb{R}}$ are in fact well-defined for curves  $h \colon (\mathsf{C},\mathsf{p}) \to\Sigma(X)$ in $\Sigma(X)$, and the deficiency and overvalency of $h$ are the same as those of its associated stable parametrized tropical curve in $(N_X)_{\mathbb{R}}$. Note that, by definiton, the overvalency of $h$ is a sum over vertices of valency at least $3$.
    \end{remark}

     \begin{definition} \label{def: dv}
         Given the tropical type of a parametrized tropical curve in $\Sigma(X)$, we say that a vertex is unconstrained if it is mapped to the interior of a maximal-dimensional cone of $\Sigma(X)$, and constrained otherwise.

    Let $V$ be a vertex constrained to a codimension-one cone $\tau \in \Sigma(X)$ spanning a hyperplane $H \subseteq (N_X)_\mathbb{R}$. Let $n_1, \ldots, n_{r-1}$ be a $\mathbb{Z}$-basis of $H \cap N_X$, and let $n_r \in N_X$ be an element completing this to a $\mathbb{Z}$-basis of $N_X$. Let $m_1, \ldots, m_r \in M_X$ be the corresponding dual basis. We define $d_V$ to be the absolute value 
\[
d_V = |m_r(u_{\vec{e}})|,
\]
where $\vec{e}$ is any choice of orientation of one of the two edges $e$ adjacent to $V$.\footnote{The quantity $d_V$ in Definition~\ref{def: dv} is defined only for vertices constrained in cones of $\Sigma(X)$ of codimension~$1$. The reason why this is enough for our purposes is that if $\sigma$ is a maximal cone in the cone complex $W_\Gamma(\Sigma(X))$ of tropical well-spaced maps to $\Sigma(X)$ (to be introduced in \S\ref{sec: well-spaced tropical maps}), then all constrained vertices of the tropical curves parametrized by $\sigma$ generically lie in a single codimension~$1$ cone of $\Sigma(X)$.} Note that this definition is independent of the choice of edge, the orientation $\vec{e}$, and the choice of the basis elements.\footnote{For a basic well-spaced logarithmic map $(C/S,\mathsf{p}, f, \rho)$ in $\cW_\Gamma(X)$ (see Notation \ref{not: well-spaced maps}) with basic monoid corresponding to tropical curves parametrized by a cone $\sigma$ of $W_\Gamma(\Sigma(X))$, let $C_V$ be the component corresponding to a vertex $V$ constrained in a codimension $1$ cone $\tau$ of $\Sigma(X)$. Then the restriction $\underline{f}|_{C_V}$ is a multiple cover of degree $d_V$ of the toric $\mathbb{P}^1$ in $X$ corresponding to $\tau$. See Observation \ref{disc:properties-over-sigma}.}
    \end{definition}

    \begin{definition}
        An isomorphism of parametrized tropical curves $\Phi: [h:\tropC \rightarrow (N_X)_{\mathbb{R}}] \rightarrow [h':\tropC'\rightarrow (N_X)_{\mathbb{R}}]$ is an isomorphism of tropical curves $\tropC \rightarrow \tropC'$ such that for the induced map $\varphi: \hat{\tropC} \rightarrow \hat{\tropC'}$ makes the diagram
    \begin{center}
        % https://tikzcd.yichuanshen.de/#N4Igdg9gJgpgziAXAbVABwnAlgFyxMJZABgBpiBdUkANwEMAbAVxiRAB12cAnCNAYRABfUuky58hFAEZyVWoxZtOPPvwDkw0SAzY8BIrOnz6zVog7sAtnRwALAEYPgAJSEA9bsPkwoAc3giUAAzXiskMhAcCCRZBTNldnpuNDssLRCw2OpopAAmalMlCzsMkFCIcMRI3MQC+OKQO01qBjoHGAYABXF9KRBuLD87HG8hIA
        \begin{tikzcd}
        \hat{\tropC} \arrow[r, "\varphi"] \arrow[rd, "h'"'] & \hat{\tropC'} \arrow[d, "h"] \\
                                                      & (N_X)_\R         
        \end{tikzcd}
    \end{center}
    commutative. Parametrized tropical curves in $\Sigma(X)$ are isomorphic if they are isomorphic when viewed as parametrized tropical curves in $(N_X)_{\mathbb{R}}$.
    \end{definition}

    \begin{definition} \cite[Definition 4.1.1]{RSWII}  \label{def: comb-type}
        The combinatorial type of a parametrized tropical curve $[h:\tropC \rightarrow (N_X)_{\mathbb{R}}]$ consists of the data of the underlying decorated graph $(G, g, p, z)$ together with the direction vectors $u_\zeta$ for every half-edge $\zeta$. 
        
    The combinatorial type of a parametrized tropical curve in $\Sigma(X)$ consists of this same data, along with the assignment of the cone $\vartheta_V$ in $\Sigma(X)$ for each vertex $V$ of $\tropC$, such that $h(V)$ lies in the relative interior of $\vartheta_V$.
    \end{definition}

    Let $\Gamma  = (g,n,m,\delta)$ be the data of a genus, number of free and constrained markings and degree. There is a tropical moduli space $M_\Gamma(\Sigma(X))$ (resp. $M_\Gamma((N_X)_\R)$) of stable parametrized tropical curves in $\Sigma(X)$ (resp. $(N_X)_\R)$) of genus $g$ with $n$ free markings and degree $\delta$, which has the structure of a generalized cone complex. Each cone corresponds to a tropical type. The lattice points are those tropical curves with integer edge lengths $\ell(e)$. Finally, each cone $\sigma$ of $M_\Gamma(\Sigma(X))$ naturally yields a family of tropical maps to $\Sigma(X)$ in the sense of Definition \ref{def: family of trop maps to Sigma}.

    Recall the definitions of overvalence and deficiency given in Definition \ref{def:deficandov}. 
    The expected dimension of a cone in $M_\Gamma(\Sigma(X))$ (or $M_\Gamma((N_X)_\R)$ is $$n+ m+(r-3)(1-g)- \ov{\sigma} .$$ 
    If the image under the tropical map $h$ of the cycle $\tropC_0 \subseteq \mathsf{C}$ lies in a proper subspace of $(N_X)_{\mathbb{R}}$, then the corresponding cone will have dimension $\defic{\sigma}$ larger than expected dimension.

    There is a morphism of cone complexes 
    \begin{equation} \label{eqn: forgetfan}
    M_\Gamma(\Sigma(X)) \rightarrow M_\Gamma((N_X)_{\mathbb{R}})
    \end{equation}
    defined by sending a stable tropical curve in $\Sigma(X)$ to its associated stable tropical curve in $(N_X)_{\mathbb{R}}$. This is a subdivision, but there may be a non-trivial lattice index, as shown in Example \ref{exa:dilation}.

    \begin{example} \label{exa:dilation}
        Consider the tropical curve $(h,\tropC)$ in $\Sigma(\mathbb{P}^2)$ shown in black in Figure \ref{fig:difflat}. We obtain a tropical curve $(h',\tropC')$ in $(N_{\P^2})_{\mathbb{R}}$  by removing the bivalent vertex. Suppose that $\vec{e}: V_1 \to V_2$ has direction vector $u_{\vec{e}}=(2,0)$. For $(h,\tropC)$ to have integer edge lengths, the first coordinate of $V_1$ must be even. There is no such constraint for $(h',\tropC')$. 

        \begin{figure}[h]
    			\centering
    			\begin{tikzpicture}
                    %fan of P2
    				\draw[burntsiena] (0, 0) -- (2, 0);
    				\draw[burntsiena] (0,0) -- (0,2); 
                    \draw[burntsiena] (0,0) -- (-1.4,-1.4);

                    %tropical curve
                    \fill (-1,1) circle[radius=2pt];
                    \fill (0,1) circle[radius=2pt];
                    \draw (-1,1) -- (0,1); 
                    \draw[->] (0,1) -- (2,1);
                    \draw[->] (-1,1) -- (-1,2); 
                    \draw[->] (-1,1) -- (-3, -1);

                    \node at (-1.3,1.3) {$V_1$};
                    \node at (0.3,1.3) {$V_2$};
                    \node at (-0.5, 0.7) {$(2,0)$};

                    %markings
                    \draw[dashed] (-1,1.5) -- (-0.5, 2);
                    \draw[dashed] (-2, 0) -- (-2, -0.8);
    				
    			\end{tikzpicture}
    			\caption{A tropical curve mapping to the fan of $\mathbb{P}^2$.}
    			\label{fig:difflat}
    		  \end{figure}
    \end{example}

There are tropical evaluation and stabilization maps fitting into the following commutative diagram:

\begin{equation}\label{eqn: full tropical ev map}
    \begin{tikzcd}
        M_\Gamma(\Sigma(X)) 
        \arrow[r] 
        \arrow[d]                
        & \Sigma(X)^n \times M_{g,n}^{\trop}
        \arrow[d]      \\
        M_\Gamma((N_X)_{\mathbb{R}}) 
        \arrow[r] 
        & (N_X)_{\mathbb{R}}^n \times M_{g,n}^{\trop}
    \end{tikzcd}
\end{equation}

The evaluation maps to $\Sigma(X)$ and $(N_X)_{\mathbb{R}}$ are defined by sending a tropical map $h$ to the images of the markings $h(p_i)$ for $i = 1, \ldots, n$. The stabilization maps to $M_{g,n}^{\trop}$ record the underlying stabilized graph together with its edge lengths. Here $M_{g,n}^{\trop}$ denotes the moduli space of tropical curves of genus $g$ with $n$ markings. 

Since we only require the existence of this morphism in genus~$1$—and, more specifically, only after further projection to $M_{1,1}^{\trop}$—we do not discuss the general case here.

    \subsection{Radial alignment and well-spacedness}\label{sec: well-spaced tropical maps}

    In this section, we specialize to genus $1$ and recall the definition of the moduli space of tropical well-spaced curves.

Let $\mathsf{C}$ be a tropical curve of genus $1$. The core $\mathsf{C}_0$ of $\mathsf{C}$ is its minimal genus $1$ subgraph. We denote by $T(\mathsf{C}) = E(\mathsf{C}) \setminus E(\mathsf{C}_0)$ the set of edges of $\mathsf{C}$ that are not contained in $\mathsf{C}_0$. Moreover, we define a partial order on the vertices of $\mathsf{C}$ as follows: for $V, V' \in V(\mathsf{C})$, we set $V < V'$ if and only if the distance from $V$ to $\mathsf{C}_0$ is strictly less than the distance from $V'$ to $\mathsf{C}_0$.

\begin{definition}\label{def:radalign}
A radial alignment on $\mathsf{C}$ is a surjective morphism
\[
\rho \colon V(\mathsf{C}) \to \{0,\ldots,k\}
\]
for some $k \geq 0$, such that $\rho^{-1}(0) = V(\mathsf{C}_0)$ and whenever $V < W$, we have $\rho(V) < \rho(W)$. In particular, if $\rho(V)=\rho(V')$, then $V$ and $V'$ have the same distance from $\tropC_0$.

The integer $k$ is called the length of $\rho$ and is denoted by $\ell(\rho)$.
\end{definition}

Adding the data of a radial alignment to the combinatorial type of the tropical curve, one obtains a subdivision $M^{\rad}_\Gamma(\Sigma(X))$ (resp. $M^{\rad}_\Gamma( (N_X)_\R)$) of $M_\Gamma(\Sigma(X))$ (resp. $M_\Gamma( (N_X)_\R)$). More precisely, the cones in $M^{\rad}_\Gamma(\Sigma(X))$ (resp. $M^{\rad}_\Gamma( (N_X)_\R)$)  parametrize tropical curves in $\Sigma(X)$ (resp. $(N_X)_\R$) with a fixed combinatorial type, where this combinatorial type now includes the radial alignment $\rho$ in addition to the combinatorial data of curves in $\Sigma(X)$ in Definition \ref{def: comb-type}.

    \begin{definition} \label{def: nbhd-cycle}
    Given a parametrized tropical curve $[h: \tropC \rightarrow (N_X)_{\mathbb{R}}]$, the neighborhood of the cycle is defined as the parametrized tropical curve $[h_0: \nbhdtropC \rightarrow (N_X)_{\mathbb{R}}]$ where:
    \begin{enumerate}
        \item the tropical curve $\nbhdtropC$ is given by taking all the vertices $V \in V(\tropC_0)$ and any adjacent half-edges to $V$ (replacing edges with infinite legs as necessary); and 
        \item the piecewise linear function $h_0$ on $\nbhdtropC$ is the restriction of $h$ on the cycle $\tropC_0$, and has the same slopes $u_{\zeta}$ on all half-edges $\zeta$ of $\nbhdtropC$. 
    \end{enumerate}
    \end{definition}

    \begin{definition}\cite[Definition 4.4.4.]{RSWII} 
        Let $[h:\tropC \rightarrow (N_X)_{\mathbb{R}}]$ be a parametrized tropical curve of genus $1$. Let $\tropC_0$ be the circuit of $\tropC$. Given a half-edge $\zeta$ in $\tropC$ adjacent to vertex $V$, let $d(\zeta,\tropC_0)$ be the distance between the $\tropC_0$ to $V$. Then $[h:\tropC \rightarrow (N_X)_{\mathbb{R}}]$ is well-spaced if for every hyperplane $H \subset (N_X)_{\mathbb{R}}$, one of the following holds: 
        \begin{enumerate}[label=(\arabic*)]
            \item the neighborhood of the cycle $h_0(\nbhdtropC)$ is not contained in $H$; or
            \item the neighborhood of the cycle $h_0(\nbhdtropC)$ is contained in $H$, and if the set of half-edges $\zeta$ adjacent to $V$ with $h(V) \in H$ and $h(\zeta) \nsubseteq H$ is non-empty, then among this set the minimum of $d(\zeta, \tropC_0)$ occurs at least three times. 
        \end{enumerate}

        A parametrized tropical curve $[h:\tropC \rightarrow \Sigma(X)]$ of genus $1$ is well-spaced if it is well-spaced as a parametrized tropical curve in $(N_X)_{\mathbb{R}}$.
    \end{definition}

    The cone complex $W_\Gamma(\Sigma(X))$ (resp. $W_\Gamma((N_X)_{\mathbb{R}})$) of \emph{well-spaced} parametrized tropical curves is, by definition, the subcone complex of $M^{\rad}_\Gamma(\Sigma(X))$ (resp. $M^{\rad}_\Gamma((N_X)_{\mathbb{R}})$) parametrizing well-spaced tropical curves. By \cite[Theorem 3.2.10]{Tor14}, it is a pure-dimensional cone complex of dimension
    $$n+ m+ (r - \dim \Span(\delta_1, \ldots, \delta_m)).$$

    The combinatorial type of a well-spaced tropical curve in $\Sigma(X)$ (resp. in $(N_X)_\R$) with radial alignment $\rho$ is the same as the combinatorial type of the corresponding curve viewed in $M^{\rad}_{\Gamma}(\Sigma(X))$ (resp.~$M^{\rad}_{\Gamma}((N_X)_\R)$).
    
    \begin{definition} \label{def: filtration}
    Consider a combinatorial type $\sigma$ of radially aligned tropical curves in $(N_X)_{\mathbb{R}}$ with radial alignement $\rho: V(\tropC) \rightarrow \{ 0, \ldots, k\}$. For $a = 0, \ldots, k$, we define
    $$H'_a = \Span \{ u_{\zeta} \mid \zeta \text{ adjacent to } V, \rho(V) \leq a \}.$$
    Let $H_1 \subsetneq H_2 \subsetneq \ldots \subsetneq H_{e}$ be vector subspaces such that
    $$\{H_1, \ldots, H_e\} = \{H'_a \mid 0 \leq a \leq k\},$$
    that is, we remove repetitions from the $H_a'$. Note that $H_1$ is precisely the neighborhood of the cycle as defined in \S\ref{sec: result weights}.
    Let $a_i$ be the minimum $a$ for which $H'_a = H_i$. For instance, $a_1=0$.
    We also define 
    $$H_0 = \Span \{u_{\zeta} \mid \zeta \in L(\tropC_0)\}$$ 
    to be the span of the cycle.

    Define integers $r_0 = \dim H_0$ and $r_1 = \dim H_1$, the dimensions of the span of the cycle and the neighborhood of the cycle respectively. 
    For each $i = 1, \ldots, e$, let $V_{ij}$ for $j =1 , \ldots, h_i$ be the vertices in $\rho^{-1}(a_i)$. Let $\zeta_{i,j,k}$ for $k = 1, \ldots, f_{i,j}$ be the set of half-edges adjacent to $V_{i,j}$ and that are part of an edge connecting $V$ to a vertex where $\rho$ is bigger than $\rho(V)$. Let $u_{i,j,k} = u_{\zeta_{i,j,k}}$ be the direction vector of $\zeta_{i,j,k}$. This is illustrated in Figure \ref{fig:radial alignment}. 
    
    \begin{figure}[h]
        \centering
        \begin{tikzpicture}
            \draw[coralpink, dashed] (0, 3) -- (0, -3); 
            \draw[coralpink, dashed] (1, 3) -- (1, -3); 
            \draw[coralpink, dashed] (3, 3) -- (3, -3); 
            \draw[skyblue, dotted] ( 2, 3) -- (2, -3); 

            \draw (0,0) -- (1,1); 
            \draw (0,0) -- (1, -1); 
            \draw (1, 1) -- (3, 2); 
            \draw (3,2) -- (4, 2.5); 
            \draw (3,2) -- (4, 2) ; 
            \draw (3,2) -- (4, 1.5); 
            \draw (1,1) -- (4, 0.5); 
            \draw (1,-1) -- (4, -0.5); 
            \draw (1,-1) -- (3, -2); 
            \draw (3,-2) -- (4, -1.5); 
            \draw (3,-2) -- (4, -2.5);

            \draw[forestgreen, ->] (3,2) -- (3.5, 2.25); 
        
            \fill (0,0) circle[radius=4pt];
            \fill (1, 1) circle[radius=2pt];
            \fill (1, -1) circle[radius=2pt];
            \fill (3, 2) circle[radius=2pt];
            \fill (3, -2) circle[radius=2pt];
            \fill (2, -0.833) circle[radius=2pt];

            \node at (-0.4, 0) {$\tropC_0$};
            \node at (2.6, 2.2) {$V_{3,j}$};
            \node[text=forestgreen] at (3.5, 2.6) {$u_{3,j,k}$};
            
            \node at (0, -3.3) {$0$};
            \node at (1, -3.3) {$1$};
            \node at (2, -3.3) {$2$};
            \node at (3, -3.3) {$3$};
            
            \node at (0, -3.8) {$a_1$};
            \node at (1, -3.8) {$a_2$};
            \node at (3, -3.8) {$a_3$};
            
        \end{tikzpicture}
        \caption{Example of radial alignment. The cycle is drawn as a large bullet, while the vertices outside the cycle are drawn as small bullets. The group of vertices in $\rho^{-1}(a)$ are shown on the vertical lines, which are pink dashed lines when $a$ is equal to some $a_i$ and blue dotted lines otherwise. }
        \label{fig:radial alignment}
    \end{figure}	
    
    \end{definition}
    
    Let $\sigma$ be a maximal cone of $W_{\Gamma}((N_X)_{\mathbb{R}})$. With notation as in Definition~\ref{def: filtration}, for each $i \in 2, \ldots, e$, for any $1 \leq j, j' \leq h_i$, the radial alignment requires that $d(\tropC_0, V_{i,j}) = d(\tropC_0, V_{i,j'})$. As the next lemma shows these are in fact all the relations required by the radial alignment $\rho$.

    \begin{lemma}\label{lemma: Ha'=Ha+1'}
        Let $a \geq 1$. If $H_a'=H_{a+1}'$, then $|\rho^{-1}(a)|=1$. 
    \end{lemma}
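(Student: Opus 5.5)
The plan is to argue by maximality of $\sigma$ together with the pure-dimensionality of $W_\Gamma((N_X)_\R)$ (\cite[Theorem 3.2.10]{Tor14}). The idea is that every equality $d(\tropC_0,V)=d(\tropC_0,V')$ imposed by a radial alignment cuts down the dimension of the cone by one. For a maximal cone, each such equality must therefore be forced by well-spacedness. We show that when $H'_a=H'_{a+1}$ no well-spacedness condition can force the vertices of the relevant level to be equidistant. Hence that level has only one vertex.

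\emph{Step 1 (which hyperplanes see level $a$).} Let $H$ be a hyperplane containing the neighborhood of the cycle, and let $b(H)$ be the smallest index with $H'_{b(H)}\not\subseteq H$. By definition of $H'_\bullet$, the half-edges that leave $H$ at minimal distance from $\tropC_0$ are exactly those attached to the vertices realising this first escape. Well-spacedness for $H$ is then the single condition that this minimum is attained at least three times. So the only equalities of distances that well-spacedness can impose are among vertices at a level $b$ where $H'_\bullet$ strictly grows, i.e. $b=a_i$ for some $i$. If $H'_a=H'_{a+1}$, the vertices at the level where nothing new is spanned never realise a first escape from any $H$. Hence no well-spacedness condition involves the equality of their distances to $\tropC_0$. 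I will double-check against the indexing convention of Definition~\ref{def: filtration}, i.e. whether the outgoing half-edges of $\rho^{-1}(a)$ are counted in $H'_a$; the argument is the same in either case, applied to the level that contributes no new direction.

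\emph{Step 2 (perturbation).} Suppose $|\rho^{-1}(a)|\ge 2$. Pick $V\in\rho^{-1}(a)$ and take a tropical curve in the interior of $\sigma$. Lengthen or shorten by a small $\varepsilon$ the edge joining $V$ to the previous level, and translate the subtree beyond $V$ rigidly. This changes $d(\tropC_0,V)$ alone. It produces a curve with a refined radial alignment $\rho'$ in which $V$ forms its own level. Balancing, slopes and the images of all other vertices are unchanged, and $\varepsilon$ is small, so no new coincidences arise. By Step 1 every hyperplane condition is verified exactly as before: the first-escape levels and their multiplicities are unchanged. The perturbed curve is therefore well-spaced, and its type $\sigma''$ is a cone of $W_\Gamma((N_X)_\R)$ having $\sigma$ as a proper face.

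\emph{Step 3 (conclusion).} The cone $\sigma''$ has dimension $\dim\sigma+1$, because one linear equality among edge-length coordinates has been dropped. This contradicts the maximality of $\sigma$ in a pure-dimensional complex. The main obstacle is Step 1: making rigorous that an equality of distances at a non-jumping level is never among the ``at least three times'' constraints. In particular one must rule out that such a level is involved for some hyperplane $H$ containing $H'_{a}$ but not the half-edges from later levels. For this I would use the monotonicity $H'_a\subseteq H'_{a+1}$ and check that the escape for $H$ occurs at the first index with $H'_b\not\subseteq H$, which is never a non-jumping level.
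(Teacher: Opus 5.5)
Your overall strategy is sound and differs from the paper's: a tie at a level where $H'_\bullet$ does not grow is never forced by well-spacedness, so splitting it would enlarge a maximal cone. However, Step 2 does not do what you claim.

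\textbf{The gap in Step 2.} Lengthening the edge from $V$ to its parent and rigidly translating the subtree beyond $V$ changes $d(\tropC_0,\cdot)$ by $\varepsilon$ at \emph{every} descendant of $V$, not only at $V$. So any tie at a later level between a descendant $W$ of $V$ and a non-descendant $W'$ is broken too. That later level may be a jump level whose tie is exactly what well-spacedness demands. For example, the first escape from some hyperplane may happen at $W$ and $W'$, each along two half-edges; after your move only two half-edges escape at minimal distance. Step 1 says nothing about these ties, so the perturbed curve need not be well-spaced and the contradiction is not reached.

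\textbf{The repair.} Lengthen the edge from $V$ to its parent by $\varepsilon$ and shorten every bounded outgoing edge of $V$ by $\varepsilon$. Since $\tropC\smallsetminus\tropC_0$ is a forest, this has the following properties:
\begin{itemize}
\item it changes $d(\tropC_0,V)$ and no other distance;
\item it touches no cycle edge, so the closing conditions are unaffected;
\item it satisfies every defining equality of $\sigma$ except $d(\tropC_0,V)=d(\tropC_0,V')$.
\end{itemize}
It therefore exhibits $\sigma$ as a facet of a cone of dimension $\dim\sigma+1$. That cone is well-spaced by your Step 1, since only a non-jump level was split.

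\textbf{Step 1 is routine.} Let $H$ contain the neighborhood of the cycle, and let $b$ be the least level with $H'_b$ not in the direction space of $H$. Every vertex at level $\le b$ lies in $H$, because its path from $\tropC_0$ uses only half-edges at lower levels. So the minimal-distance escapes are exactly the outgoing half-edges at level $b$ leaving $H$, and $b$ is by construction a level where $H'_\bullet$ jumps.

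\textbf{Indexing.} Under Definition~\ref{def: filtration}, $H'_a$ already contains the outgoing half-edges of $\rho^{-1}(a)$. So your hedge resolves to ``if $H'_a=H'_{a+1}$ then $|\rho^{-1}(a+1)|=1$''. This is what the paper's proof actually establishes: it deduces $|\rho^{-1}(a)|=1$ from $\dim(H'_a/H'_{a-1})=0$. It is also what is used afterwards, namely that ties occur only at the levels $a_i$. The printed statement is off by one: a jump level consisting of two equidistant trivalent vertices can be followed by a single vertex adding no new direction.

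\textbf{Comparison of routes.} The paper does no perturbation. It quotes \cite[Proposition 3.2.18(c)]{Tor14}, $\dim(H'_a/H'_{a-1})=|\mathcal{W}_a|-|\rho^{-1}(a)|-1$, and combines it with $|\mathcal{W}_a|\ge 2|\rho^{-1}(a)|$, which holds because vertices off the cycle have valence at least $3$. Your corrected argument is self-contained and uses only maximality, not the valence bound. In effect it reproves the relevant part of Torres's dimension count directly, whereas the paper's version is a one-line computation once that formula is granted.
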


    \begin{proof}
        Let $\mathcal{W}_a$ be the set of half-edges adjacent to vertices $V \in \rho^{-1}(a)$ and that are part of an edge connecting $V$ to a vertex where $\rho$ is bigger than $\rho(V)$. 
        From \cite[Proposition 3.2.18(c)]{Tor14}, for $a = 2, \ldots, k$ we have that $$\dim(H_a'/H_{a-1}') = |\mathcal{W}_a| - | \rho^{-1}(a)| - 1.$$
        Since we are working in $(N_X)_{\mathbb{R}}$, every vertex outside of the cycle must have valence at least $3$, so 
        $$ |\mathcal{W}_a| \geq 2 \cdot | \rho^{-1}(a)|.$$
        It follows that if $\dim(H_a'/H_{a-1}') = 0$, then $| \rho^{-1}(a)| = 1$. 
    \end{proof}

    Now, fix $i=2,\ldots,e$. For each pair $j,j'=1,\ldots, h_i$, there exists a unique path from $\tropC_0$ to $V_{i,j}$ (resp. $V_{i,j'}$) consisting of edges $e_{i,j,t}$ (resp. $e_{i,j',t'}$). Within the cone $\sigma$, we impose the relations
\begin{equation*} \label{eqn: well-spaced relations}
    \sum_{t} \ell(e_{i,j,t}) = \sum_{t'} \ell(e_{i,j',t'}).
\end{equation*}

These constraints are clearly redundant and we can be reduced this system of equations to the equivalent set
\begin{equation*} 
    \sum_{t} \ell(e_{i,h_i,t}) = \sum_{t'} \ell(e_{i,j,t'})
\end{equation*}
for $j = 1, \ldots, h_i-1$.
    
    \begin{definition}\label{def: sets V}
        Using the notation as in Definition \ref{def: filtration}, for $i = 2, \ldots, e$, we define the sets
    \begin{equation}\label{def: cV}
    \mathcal{V}_i = \{V_{i,1}, \ldots, V_{i,h_i}\} \quad 
    \end{equation}
    and let $\cV= \bigcup_{i=1}^{e-1} \mathcal{V}_i$.
    \end{definition}
    
    Note that the morphism of cone complexes in Equation \eqref{eqn: forgetfan} restricts to a morphism
\begin{equation} \label{eqn:ws-forgetfan}
    W_\Gamma(\Sigma(X)) \rightarrow W_\Gamma((N_X)_{\mathbb{R}}),
\end{equation}
which is also a subdivision involving lattice indices.

    \begin{definition} \label{def: lattice index}
    Let $\sigma$ be a maximal-dimensional cone of $W_{\Gamma}(\Sigma(X))$ mapping to $\sigma'$ in $W_{\Gamma}((N_X)_{\mathbb{R}})$. The \emph{lattice scaling} of $\sigma$, denoted by $\mathrm{lat}(\sigma)$, is the lattice index of the map $N_\sigma \rightarrow N_{\sigma'}$.
\end{definition}

Finally, the diagram in \eqref{eqn: full tropical ev map} induces a similar commutative diagram involving well-spaced tropical curves rather than general tropical curves.

\section{Well-Spaced genus $1$ maps over fine and fs logarithmic schemes}

In this section, we review the construction of $\cW_\Gamma(X)$ from \cite{RSWII}. We also use this section to establish most of the notation for logarithmic maps used in subsequent sections.

\subsection{Abramovich-Chen and Gross-Siebert spaces of logarithmic maps}\label{sec: ACGS space}

Abramovich-Chen \cite{AC14} and Gross-Siebert \cite{GS2013} independently introduced the moduli stack of stable logaritmic maps to $X$. 

\begin{definition} [{\cite[definition 2.12]{ACGSI}}]\label{def: stable log map}
A stable logarithmic map $(C/S,\mathsf{p},f)$ is a diagram
\[
\begin{tikzcd} 
& C \arrow[r, "f"] \arrow[d, "\pi"'] & X \\ &S \arrow[u, bend left=60, "p_i"] \arrow[u, bend right=60, "z_j"'] 
\end{tikzcd} 
\] where: 
\begin{enumerate}
    \item $\pi$ is a proper, logarithmically smooth and integral morphism of log schemes with a tuple of sections $\mathsf{p}=(p_1,\ldots,p_n,z_1,\ldots,z_m)$ of $\underline{\pi}$ such that every geometric fiber of $\underline{\pi}$ is a reduced and connected curve, and if $U \subseteq \underline{C}$ is the non-critical locus of $\underline{\pi}$ then $\overline{\cM}_C|_U \simeq \pi^*\overline{\cM}_S \oplus \bigoplus_{i=1}^n (p_i)_* \mathbb{N} \oplus \bigoplus_{j=1}^m (z_j)_*\N$
    \item For every geometric point $s \in \underline{S}$, the restriction of $\underline{f}$ to $\underline{C}_s$ together with $\mathsf{p}$ is an ordinary stable map. 
\end{enumerate} 
\end{definition}

In the subsequent discussion, the $p$-markings will have trivial contact order with $\partial X$ and will be constrained to lie in prescribed subvarieties of $X$, while the $z$-markings will have prescribed, non-trivial contact order with $\partial X$.

\begin{remark}\label{rmk: local structure at node}
    
Suppose $\underline{S}=\operatorname{Spec}(A)$ with $(A,\mathfrak{m})$ a strictly Henselian local ring and let $0 \in \underline{S}$ be the closed point and $Q= \oM_{S,0}$. Let $\sigma: Q \to A$ be a chart. By \cite[Page 222]{Kato}, ètale locally around a node $q$ in the central fiber $C_0$, the curve $C$ is isomorphic to the logarithmic scheme
\[
V:=\operatorname{Spec}\big(A[z,w]/(zw - t)\big) \to \operatorname{Spec}(A),
\]
with $t \in \mathfrak{m}$, and with the log structure induced from the homomorphism
\[
Q \oplus_{\mathbb{N}} \mathbb{N}^2 \longrightarrow \mathcal{O}_V,
\qquad
(q,(a,b)) \longmapsto \sigma(q)\, z^a w^b.
\]
Here $\mathbb{N} \to \mathbb{N}^2$ is the diagonal embedding, and $\mathbb{N} \to Q$ is a homomorphism $1 \mapsto \rho_q$ with $\rho_q \neq 0$.

\end{remark}

\subsubsection{Basic maps and the ACGS moduli stack}

In order to define the functor of stable logarithmic maps on the category of schemes, one needs to introduce the notion of basicness (also called  minimality in \cite{GS2013}).

Let Let $(C/S,\mathsf{p},f)$ be a logarithmic map over $S = \mathrm{Spec}(Q' \to k)$, where $Q'$ is a sharp fs monoid and $k$ is an algebraically closed field. Then we have morphisms

\[
\begin{tikzcd} 
& \oM_C  & \underline{f}^{-1}\oM_X \arrow[l, "f^\flat "] \\ 
&\underline{\pi}^{-1} Q' \arrow[u, "\pi^\flat"']
\end{tikzcd} 
\]

as follows. The morphism $\pi^\flat$ is an isomorphism outside of nodes and markings of $C$. At a marked point, the sheaf $\oM_C$ has stalk $Q' \oplus \mathbb{N}$ and $\pi^\flat$ is the inclusion in the first factor. At a node, the stalk is
$
\oM_{C,q} = Q' \oplus_{\mathbb{N}} \mathbb{N}^2,
$
where the pushout is determined by the map
\begin{equation*}
\mathbb{N} \to Q' \quad  1 \mapsto \rho_q \in Q' \setminus \{0\}.
\end{equation*}
and the diagonal map $\mathbb{N} \to \mathbb{N}^2$. In particular, we can identify
\begin{equation}\label{eqn: structure at nodes}
    Q' \oplus_{\mathbb{N}} \mathbb{N}^2 \simeq \{(m_1,m_2) \in Q \times Q \ | \ m_1-m_2 \in \Z\rho_q \subseteq (Q')^{gr} \} \subseteq Q \times Q
\end{equation}

Again, $\pi^\flat$ is the inclusion in the $Q'$ factor.

Let $x \in \underline{C}$ be a geometric point. Then $f^\flat$ induces a map on stalks
\[
f^\flat_x: \oM_{X,\underline{f}(x)} \to \oM_{\underline{C},x}.
\]
%which depends only on the type of $x$:
The behavior of $f_x^\flat$ depends on \(x\) as follows: 

\begin{itemize}
    \item If $x = \eta$ is a generic point of a component, then
    \[
    f^\flat_\eta : \oM_{X,\underline{f}(\eta)} \to Q'
    \]
    is a local isomorphism of monoids. 

    \item If $x = p$ or $x=z$ is a marked point, then
    $
    f^\flat_x: \oM_{X,\underline{f}(x)} \to Q' \oplus \mathbb{N}
    $
    induces a map
    \begin{equation}\label{eqn:contact_order}
        u_x: \oM_{X,\underline{f}(p)} \xrightarrow{f^\flat_p} Q' \oplus \mathbb{N} \xrightarrow{\mathrm{pr}_2} \mathbb{N},
    \end{equation}
    called the \emph{contact order} at $x$.

    \item If $x = q$ is a node lying at the intersection of two components with generic points $\eta_1$ and $\eta_2$, let
    \[
    \chi_i: \oM_{X,\underline{f}(q)} \to \oM_{X,\underline{f}(\eta_i)}
    \]
    be the generization maps. Then, as explained in \cite{GS2013}, we have a commutative diagram

    \[
    \begin{tikzcd}
    & \oM_{X,\underline{f}(\eta_1)} \ar[rr, "f^\flat_{\eta_1}"]  & & Q'  \\
    \oM_{X,\underline{f}(q)} \ar[ur, "\chi_1"] \ar[dr, "\chi_2"] \ar[rr, "f^\flat_q"'] & & Q' \oplus_{\mathbb{N}} \N^2 \ar[ur] \ar[dr] \ar[r] & Q' \times Q' \ar[d, "pr_2"] \ar[u, "pr_1"] \\
    & \oM_{X,\underline{f}(\eta_2)} \ar[rr, "f^\flat_{\eta_2}"']  & & Q'
    \end{tikzcd}
    \]
    By \eqref{eqn: structure at nodes}, there exists a homomorphism
    \begin{equation}
        u_q: \oM_{X,\underline{f}(q)} \to \Z
    \end{equation}
    called \emph{contact order} at $q$, such that for all $m \in \oM_{X,\underline{f}(q)}$
    \begin{equation}\label{eqn: relations}
    f^\flat_{\eta_1}(\chi_1(m))-f^\flat_{\eta_2}(\chi_2(m))= u_q(m) \rho_q
    \end{equation}
    with $\rho_q \neq 0$ as in Remark \ref{rmk: local structure at node}. Note that the sign of $u_q$ depends on a choice of ordering of $\eta_1$ and $\eta_2$. We will assume that we have chosen an orientation on $\mathsf{C}$, and all the definitions will be independent of this choice.
\end{itemize}

 Let $R \subseteq (\prod_{\eta} \oM_{X,f(\eta)} \times \prod_e \N)^{gr}$ be the subgroup generated by the elements
$$
a_q(m):=( (\ldots, \chi_1(m),\ldots,-\chi_2(m),\ldots),(\ldots,u_q(m),\ldots) 
$$
where $q$ runs over the nodes and $m \in \oM_{X,\underline{f}(q)}$. Let $Q$ be the fs free monoid defined by
\begin{equation}\label{eqn: basic monoids}
   Q=\Bigg[ \frac{\prod_{\eta} \oM_{X,\underline{f}(\eta)} \times \prod_q \N}{R^{\sat}} \Bigg]^{\sat} 
\end{equation}

where $\eta$ runs over  the generic points of the components of $C$ and $q$ over the nodes of $C$. Then, by the above discussion and Equation \eqref{eqn: relations}, we obtain a map 
\begin{equation}\label{eqn: basicness map}
    Q \to Q'
\end{equation}

\begin{definition}
    Let $(C/S,\mathsf{p},f)$ be a stable logarithmic map. We say that $f$ is basic if for all geometric points $s\in \underline{S}$, the map \eqref{eqn: basicness map} defined by the restriction of $(C_s/s,\mathsf{p}(s),f_s)$  is an isomorphism.
\end{definition}

\subsubsection{Discrete data}

\begin{definition}[Rephrasing of {\cite[Definition 2.15]{ACGSI}}]
We denote by $\Gamma$ the discrete data consisting of:
\begin{itemize}
    \item the genus $g$, the curve class $A \in H_2(X)$, the number of marked points $N=n+m$;
    \item integral elements
    $
    u_{p_1}, \ldots, u_{p_n}, u_{z_1},\ldots,u_{z_{m}} \in (N_X)_{\mathbb{R}}
    $. We will always be in the situation where $u_{p_i}=0$ for $i=1,\ldots,n$.
\end{itemize}

We say that a stable logarithmic map $(C/S, \mathbf{p}, f)$ is of type $\Gamma$ if the following conditions are satisfied:
\begin{itemize}
    \item The underlying ordinary stable map is of type $(g, A, N)$. 
    \item For $j=1,\ldots, m$ define the closed subset $Z_j \subset X$ to be the union of strata with generic points $\eta$ such that $u_{z_j}$ lies in the image of $\sigma_\eta= \mathrm{Hom}(\oM_{X,\eta},\N) \to (N_X)_{\mathbb{R}}$. Then for any $j$ we have $\mathrm{im}(f \circ z_j) \subset Z_j$, and for any geometric point $\bar{s} \to S$ such that $z_j(\bar{s})$ lies in the stratum of $X$ with generic point $\eta$, there exists $u \in \sigma_\eta = \mathrm{Hom}(\oM_{X,\bar{\eta}}, \mathbb{N})$ mapping to $u_{z_j} \in (N_X)_{\mathbb{R}}$ making the following diagram commute:
\[
\begin{tikzcd}[row sep=large, column sep=large]
\oM_{X,f(z_j(\bar{s}))} \arrow[r, "f^\flat"] \arrow[d] & \oM_{C,z_j(\bar{s})} = \oM_{S,\bar{s}} \oplus \mathbb{N} \arrow[d, "\mathrm{pr}_2"] \\
\oM_{X,\bar{\eta}} \arrow[r, "u"'] & \mathbb{N}
\end{tikzcd}
\]
\end{itemize}
\end{definition}

The curve class $A \in H_2(X)$ is uniquely determined by its intersection multiplicities with the boundary divisors of the toric variety $X$. We will always assume the following. For each ray $\rho \in \Sigma[1]$, let $n_\rho \in N_X$ be its primitive generator. If a contact order $u_j \in N_X$ belongs to a cone $\tau \in \Sigma(X)$, it decomposes uniquely as a non-negative integral combination 
\[
u_j = \sum_{\rho' \in \tau[1]} c_{\rho'}(u_j) n_{\rho'}.
\]
We define $(u_j)_\rho = c_\rho(u_j)$ if $\rho \in \tau[1]$, and $(u_j)_\rho = 0$ otherwise. Then, we will always assume that
\[
D_\rho \cdot A = \sum_{j} (u_j)_\rho.
\]

There is a proper Deligne--Mumford moduli stack $\mathsf{ACGS}_{\Gamma}(X)$ parametrizing families of basic stable logarithmic maps with discrete data $\Gamma$ \cite[Theorem~2.4]{GS2013}.

\subsubsection{Variants of the basic monoid} \label{sec:define monoid}

One may consider various variants of $Q$. We define

\begin{equation}\label{eqn: variants of Q}
Q^{\mathsf{fine}} =
\left[
\frac{\prod_{\eta} \oM_{X,\underline{f}(\eta)} \times \prod_q \mathbb{N}}{R}
\right] \quad \text{and} \quad 
Q^{\mathrm{un}} =
\left[
\frac{\prod_{\eta} \oM_{X,\underline{f}(\eta)} \times \prod_q \mathbb{N}}{R^{\mathrm{sat}}}
\right].
\end{equation}

The monoid $Q^{\mathsf{fine}}$ is fine, but in general neither saturated nor torsion free; and $Q^{\mathrm{un}}$ is its torsion-free part. Finally,
$
Q = (Q^{\mathrm{un}})^{\mathrm{sat}}
$
is the saturation of $Q^{\mathrm{un}}$.

The monoid $Q^{\mathrm{un}}$ already appeared in the literature before \cite{R17}, while $Q^{\mathsf{fine}}$ did not and will play an essential role in \S\ref{sec: weights}, where we will deal with fine but not necessarily saturated maps.

\subsubsection{Tropical interpretation of the basic monoid and combinatorial type}

For the tropically inclined reader, it may be useful to think of $Q$ using the identification

\begin{equation}\label{eqn: dual-of-Q}
Q^\vee = \mathrm{Hom}(Q,\N)=
\Bigg\{
\bigl( (V_\eta)_\eta, (e_q)_q \bigr) \in \bigoplus_\eta \overline{\mathcal{M}}_{X,\underline{f}(\eta)}^\vee \;\oplus\; \bigoplus_q \N
\;\Big|\;
\forall q: V_{\eta_1} \circ \chi_1 - V_{\eta_2} \circ \chi_2 = e_q u_q
\Bigg\}.
\end{equation}

See \cite[Remark 1.18]{GS2013}.

We conclude this section with the definition of combinatorial type of logarithmic maps.

\begin{definition}[{\cite[Definition 2.13]{ACGSI}}]
    Let $(C/S,\mathsf{p},f)$ be a stable logarithmic map over $S=\mathrm{Spec}(Q \to k)$. The combinatorial type of $(C/S,\mathsf{p},f)$ consists of the following data:
    \begin{itemize}
        \item the dual graph $G$ of $C$;
        \item the genus function $g: V(G) \to \Z_{\geq 0}$ associating to a vertex $V$ the genus of the corresponding component $C_V$;
        \item the map $ V(G) \cup E(G) \cup L^e(G) \to \Sigma(X)$ mapping $x \in C$ to the cone $(\oM_{X,\underline{f}(x)}^\vee)_{\R} \in \Sigma(X)$ (see \cite[Definition 2.13 and Proposition 2.25]{ACGSI});
        \item the contact data $\mathsf{u}=\{u_{p}, u_{z}, u_q\}$ at marked points $p, z$ and nodes $q$.
    \end{itemize}
\end{definition}

\subsubsection{Maps to the Artin fan}

There is also a moduli stack  $\mathfrak{M}_{\Gamma'}(\mathcal{A}_X)$ parametrizing basic logarithmic maps to the Artin fan $\mathcal{A}_X$. Since curve classes do not make sense on $\mathcal{A}_X$, the discrete data is just
$$
\Gamma'=(g,u_{p_1},\ldots,u_{p_n}, u_{z_1}, \ldots, u_{z_m})
$$
Since the logarithmic tangent space $T_{\mathcal{A}_X}^{log}= 0$ is the zero sheaf, we also have $H^1(C, f^*T_{\mathcal{A}_X}^{log})=0$ for all maps $(C/S, \mathsf{p},f)$ in $\mathfrak{M}_{\Gamma'}(\mathcal{A}_X)$. In particular, $\mathfrak{M}_{\Gamma'}(\mathcal{A}_X)$ is a smooth pure dimensional Artin stack, and clearly the natural map
$$
\mathsf{ACGS}_{\Gamma}(X) \to \mathfrak{M}_{\Gamma'}(\mathcal{A}_X)
$$
is strict.

\subsection{Radially aligned genus $1$ curves}\label{sec: aligned curves}

We start with recalling the notion of radially aligned curves. We refer to \cite{RSWI, BNR21, KS} for the details.

Recall the notion of radial alignment for a tropical curve $(\mathsf{C},\mathsf{p})$ in Definition \ref{def:radalign}. Adding the data of a radial alignment to the combinatorial type of the tropical curve, one obtains a logarithmic blow-up of the moduli stack of prestable $n$-pointed genus $1$ curves
\[
\mathfrak{M}_{1,n}^{\mathrm{rad}} \longrightarrow \mathfrak{M}_{1,n},
\]
which is the moduli stack of prestable radially aligned $n$-pointed genus $1$ curves.

\begin{remark}
    
For every logarithmic map $\mathrm{Spec}(Q \to k) \to \mathfrak{M}_{1,n}^{\mathrm{rad}}$ with $Q$ a fs monoid, denote by $\mathsf{C}$ the tropicalization of the induced curve. For a vertex $V$ of $\mathsf{C}$,  define
\[
\lambda(V) = \sum \rho_{q_i} \in Q,
\]
where $e_1,\ldots,e_k$ is the path from $\mathsf{C}_0$ to $V$, corresponding to nodes $q_i$, and $\rho_{q_i}$ is as in Remark \ref{rmk: local structure at node}. If $V \in V(\mathsf{C}_0)$, then we set $\lambda(V)$ to be the identity element of $Q$.

Then, it follows from the construction above that for every $V,V' \in V(\mathsf{C})$, the quantities $\lambda(V)$ and $\lambda(V')$ are comparable, that is, either
\[
\lambda(V) - \lambda(V') \in Q \subseteq Q^{\mathrm{gp}}
\quad \text{or} \quad
\lambda(V') - \lambda(V) \in Q \subseteq Q^{\mathrm{gp}}.
\]
\end{remark}

A modular interpretation of $\mathfrak{M}_{1,n}^{\mathrm{rad}}$ is provided in \cite[Lemma~3.2]{KS}. Fix a tropical curve and radial alignment $(\mathsf{C},\rho)$ and denote by
\[
\mathfrak{M}(\mathsf{C},\rho) \subseteq \mathfrak{M}_{1,n}^{\mathrm{rad}}
\]
the locally closed stratum parametrizing curves whose dual graph is $\mathsf{C}$ and whose radial alignment is $\rho$. This stratum maps to the stratum
$
\mathfrak{M}(\mathsf{C}) \subseteq \mathfrak{M}_{1,n}
$
of prestable curves with dual graph $\mathsf{C}$.

\begin{notation}
   Let $(C,p_1,\ldots,p_n) \in \mathfrak{M}(\mathsf{C})$ and let $V \in V(\mathsf{C}) \smallsetminus V(\mC_0)$ be a vertex in the dual graph, corresponding to a component $C_V$ of $C$. We denote by $\nu_V$ the node associated to the unique edge $e$ of $\mathsf{C}$ connecting $V$ to a vertex $V'$ with $V' < V$.
\end{notation}

\begin{lemma}[{\cite[Lemma 3.2]{KS}}]
The map 
$
\mathfrak{M}(\mathsf{C},\rho) \to \mathfrak{M}(\mathsf{C})
$
is a torsor with structure group
\[
\mathbb{G}_m^{|T(\mathsf{C})|-\ell(\rho)}
=
\prod_{i=1}^{\ell(\rho)} \mathbb{G}_m^{|\rho^{-1}(i)|-1}.
\]
Furthermore, each factor $\mathbb{G}_m^{|\rho^{-1}(i)|-1}$ is naturally identified with collections of isomorphisms
\begin{equation}\label{eqn: identification of tangent spaces}
\{ T_{\nu_V}C_V \simeq T_{\nu_{V'}}C_{V'} \}_{V,V' \in \rho^{-1}(i)}
\end{equation}
that are compatible under composition.
\end{lemma}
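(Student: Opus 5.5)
The plan is to compute the stratum of the logarithmic blow‑up $\mathfrak{M}_{1,n}^{\mathrm{rad}}\to\mathfrak{M}_{1,n}$ over $\mathfrak{M}(\mathsf{C})$ locally in toric terms, and then to globalize the torus action using the deformation theory of nodes.

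\emph{Step 1 (cone count).} Over $\mathfrak{M}(\mathsf{C})$ the ghost sheaf is constant, equal to $\N^{E(\mathsf{C})}$. The relevant cone is therefore $\sigma=\R_{\ge0}^{E(\mathsf{C})}$, with coordinates $\ell(e)$. The radially aligned type $(\mathsf{C},\rho)$ is a cone $\sigma_\rho$ of the subdivision whose interior meets the interior of $\sigma$. It is cut out by the linear equalities $\lambda(V)=\lambda(V')$ for $V,V'\in\rho^{-1}(i)$, together with strict inequalities between consecutive levels. Each vertex $V\notin V(\mathsf{C}_0)$ has a unique edge towards the core, namely the one giving the node $\nu_V$. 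Hence $|T(\mathsf{C})|=\sum_{i\ge1}|\rho^{-1}(i)|$. The equalities are independent (one new edge length enters for each vertex), and there are $\sum_i(|\rho^{-1}(i)|-1)=|T(\mathsf{C})|-\ell(\rho)$ of them. So $\operatorname{codim}(\sigma_\rho\subset\sigma)=|T(\mathsf{C})|-\ell(\rho)$.

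\emph{Step 2 (toric local model).} By the standard description of a toric blow‑up, the preimage of the closed orbit of $\mathbb{A}^{E}$ meets the orbit attached to an interior cone $\sigma_\rho$ of the subdivision in a torsor under the torus $\operatorname{Hom}(K,\mathbb{G}_m)$. Here $K=\sigma_\rho^{\perp}\cap(\N^E)^{\mathrm{gp}}$ is the saturated lattice spanned by the characters $\lambda(V)-\lambda(V')$. Since a logarithmic blow‑up is the fs base change of the corresponding morphism of Artin fans, the same holds étale locally on $\mathfrak{M}(\mathsf{C})$. This gives a torsor under $\mathbb{G}_m^{|T(\mathsf{C})|-\ell(\rho)}$. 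Choosing, for each level $i$, a reference vertex splits it as $\prod_i\mathbb{G}_m^{|\rho^{-1}(i)|-1}$.

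\emph{Step 3 (intrinsic identification).} On the stratum, the point of the torsor records the unit ratio between the monomials $t^{\lambda(V)}$ and $t^{\lambda(V')}$. Each $t^{\rho_q}$ is the smoothing parameter of the node $q$, that is, a section of the line $T_qC_{1}\otimes T_qC_{2}$ spanned by the tangent lines of the two branches at $q$. Take the two paths from $\mathsf{C}_0$ to $V$ and to $V'$. The factors along their common initial segment cancel in the ratio. What remains should reduce to an isomorphism $T_{\nu_V}C_V\simeq T_{\nu_{V'}}C_{V'}$, which is compatible under composition because ratios multiply.

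I expect this last reduction to be the main obstacle. The contributions of the intermediate rational components carry tangent lines at two different points, and the contribution of the core (via the trivialization of $\omega$ on the genus‑one core) also has to be accounted for. I would follow \cite{KS}: using the log structure, rewrite the ratio as a section of $T_{\nu_V}C_V^{\vee}\otimes T_{\nu_{V'}}C_{V'}$, and check that the intermediate factors are canonically trivialized by the smoothing data of the intermediate nodes.
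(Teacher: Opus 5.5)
The paper gives no proof of this lemma; it is quoted from \cite[Lemma~3.2]{KS}, so your argument is assessed on its own. Your Steps~1 and~2 are sound and give the torsor half. The functionals $\lambda(V)$ for $V\notin V(\mathsf{C}_0)$, together with the coordinates of the core edges, form a $\mathbb{Z}$-basis of the character lattice, since the change from the edge coordinates is unitriangular for the tree order. Hence $K=\sigma_\rho^{\perp}\cap\mathbb{Z}^{E(\mathsf{C})}$ is indeed saturated, with basis the $\lambda(V)-\lambda(V_{i,1})$, and the orbit of $\sigma_\rho$ is $\mathrm{Hom}(K,\mathbb{G}_m)$.

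Two small corrections. Over $\mathfrak{M}(\mathsf{C})$ the ghost sheaf is only locally constant, because automorphisms of $\mathsf{C}$ give monodromy. The charts are strict smooth, not étale. A chart-free way to get a global action is to note that the stratum parametrizes splittings $K\to\mathcal{M}^{\mathrm{gp}}$ of $K\subseteq\overline{\mathcal{M}}^{\mathrm{gp}}$, and these form a $\mathrm{Hom}(K,\mathbb{G}_m)$-torsor.

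The genuine gap is Step~3. It is the whole ``furthermore'' clause, you leave it as an expectation, and the mechanism you suggest cannot work. On the stratum the smoothing parameter $t^{\rho_q}$ of every tree node vanishes. The only units available are exactly the ratios you are trying to interpret, so nothing can be trivialized ``by the smoothing data of the intermediate nodes''. The trivializations must come from the geometry of the pointed components.

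After the common segment cancels, the unit $t^{\lambda(V)-\lambda(V')}$ lies in $T_{\nu_V}C_V\otimes T_{\nu_{V'}}C_{V'}^{\vee}$ tensored with three kinds of factors:
\begin{itemize}
\item[(a)] For each genus-$0$ component $C_W$ strictly between the branch point and $V$ (or $V'$), the line $T_{\nu_W}C_W\otimes T_qC_W$ at its two path nodes. This is canonically trivial, since $\partial_z\otimes\partial_{1/z}$ does not depend on the choice of coordinate $z$ with zero and pole at those nodes.
\item[(b)] At the component $C_W$ where the two paths leave through distinct nodes $q\neq q'$, the factor $T_qC_W\otimes T_{q'}C_W^{\vee}$. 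This does \emph{not} cancel, and identifying $T_q$ with $T_{q'}$ needs the third point $\nu_W$. Use the generator of $H^0(C_W,T_{C_W}(-2\nu_W))$, i.e.\ $\partial_z$ in an affine coordinate with $\nu_W=\infty$.
\item[(c)] If the paths leave the core $C_0$ at points $x\neq x'$, the factor $T_xC_0\otimes T_{x'}C_0^{\vee}$. This is identified through a generator of $H^0(C_0,\omega_{C_0})$.
\end{itemize}

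You then have to check that these choices are mutually consistent, so that the resulting isomorphisms compose. For instance, take $\nu_W=\infty$, $q=a$, $q'=b$. Transporting the trivialization of $T_{\nu_W}C_W\otimes T_qC_W$ along $T_q\simeq T_{q'}$ gives that of $T_{\nu_W}C_W\otimes T_{q'}C_W$, because $1/(z-a)$ and $1/(z-b)$ agree to first order at $z=\infty$. Without these ingredients, the identification with compatible systems of isomorphisms $T_{\nu_V}C_V\simeq T_{\nu_{V'}}C_{V'}$ is not established.
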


Given a point $\mathrm{Spec}(Q \to k) \to \mathfrak{M}(\mathsf{C},\rho)$ with underlying curve $(C,p_1,\ldots,p_n)$, one can produce a logarithmic blow-up $\widetilde{C} \to C$ and a contraction $\widetilde{C} \to \overline{C}$ to a curve with a Gorenstein elliptic singularity as follows.
Fix a distance $d \in Q$ such that there exists a vertex $V$ in the tropicalization $\mathsf{C}$ of $C$ with $\lambda(V)=d$. First, subdivide $\mathsf{C}$ by adding a vertex at every point with distance $d $ from $\mathsf{C}_0$. This yields the logarithmic modification $\widetilde{C} \to C$. Next, contract all the components $C_{V'}$ with $\lambda(V') < d $ to a point and glue the components $C_V$ with $V$ at distance $d$ from $\mathsf{C}_0$ according to the isomorphisms \eqref{eqn: identification of tangent spaces}.

The new components in $\widetilde{C}$ do not come with a canonical choice of such gluings; however, any two different choices will yield curves that are isomorphic as logarithmic pointed curves. 
Note that, because we subdivide first, and declaring the logarithmic structure to be trivial around the genus $1$ singularity, the curve $\overline{C}$ naturally acquires a logarithmic structure making $\widetilde{C} \to \overline{C}$ a logarithmic map.

\subsection{Well-spaced genus $1$ maps}

The moduli stack of well-spaced maps $\WG(X)$ is obtained as a strict closed substack of the fs fiber product
\begin{equation}\label{eqn: inclusion WG(X) in ACGS}
\WG(X) \subseteq \widetilde{\mathsf{ACGS}_{\Gamma}(X)}:=\mathsf{ACGS}_{\Gamma}(X) \times_{\mathfrak{M}_{1,1}}^{\mathsf{fs}} \mathfrak{M}_{1,1}^{\rad}
\end{equation}
By definition, the right-hand side is a subdivision of $\mathsf{ACGS}_{\Gamma}(X)$. In order to write the defining closed condition, we require some definition.

Consider now $S= \mathrm{Spec}(Q \to k) \to \widetilde{\mathsf{ACGS}_{\Gamma}(X)}$ a radially aligned curve together with a logarithmic map $f: C/S \to X$. 

\begin{notation}\label{not: well-spaced maps}
   We will simply write $(C/S,\mathsf{p},f,\rho)$ to denote the data corresponding to a $\underline{S}$-point of $\widetilde{\mathsf{ACGS}_{\Gamma}(X)}$. Strictly speaking, because the fiber product defining 
$
\widetilde{\mathsf{ACGS}_{\Gamma}(X)}
$ 
is taken in the fs category, a $\underline{S}$-point
$
\underline{S} \to \widetilde{\mathsf{ACGS}_{\Gamma}(X)}
$
contains slightly more information than the logarithmic map
$
(C/S,\mathsf{p},f)\in \mathsf{ACGS}_{\Gamma}(X)(\underline{S})
$
together with the radial alignment
$
\rho\in \mathfrak{M}_{1,1}^{\mathrm{rad}}(\underline{S}).
$
However, this additional data will not play any role in what follows.
\end{notation}

For any codimension $1$ saturated subgroup $H \subseteq N_X$ consider the morphism
$$
\alpha_H: (N_X/H)^* \to N_X^* \to \Gamma(X,\cM_{X}^{\gr}) \to \Gamma(C,\cM_{C}^{\gr})
$$

and let $\overline{\alpha}_H$ be the induced morphism $(N_X/H)^* \to \Gamma(C,\oM_{C}^{\gr})$. Let $\mathsf{C}$ be the tropicalization of $C$, and define
$ \delta_{\alpha_H} \in Q$
to be the largest value $\lambda(V)$ among vertices $V \in V(\mathsf{C})$ such that $\alpha_H(1)$ is constant (when viewed as a piecewise linear function with values in $Q$) on the subgraph consisting of vertices $V'$ with $\lambda(V') < \lambda(V)$.

Let $\widetilde{C} \to C$ and $\widetilde{C} \to \overline{C}$ the destabilization and contraction constructed in \S\ref{sec: aligned curves}. 

\begin{definition} \label{def: factorization}
We say that $\alpha_H$ satisfies the factorization property if 
$$
(N_X/H)^* \xrightarrow{\alpha_H} \Gamma(C,\cM_{C}^{\gr}) \to \Gamma(C,\cM_{\widetilde{C}}^{\gr})
$$
factors through the map
$
\Gamma(\overline{C}, \cM_{\overline{C}}^{\gr}) \longrightarrow \Gamma(\widetilde{C}, \cM_{\widetilde{C}}^{\gr}).
$
\end{definition}

\begin{definition}[{\cite[Definition 3.4.1]{RSWII}}]
    We say that the radially aligned map $(C/S,\mathsf{p},f,\rho)$ 
    is well-spaced if for all $s \in S$ and codimension $1$ saturated subgroups $H \subseteq N_X$, the induced map $\alpha_H$ satisfies the factorization property.
\end{definition}

This defines a proper and logarithmically smooth Deligne--Mumford stack $\mathcal{W}_{\Gamma}(X)$ parametrizing families of basic well-spaced maps with fixed discrete data $\Gamma$. It sits as a strict closed substack inside $\widetilde{\mathsf{ACGS}_{\Gamma}(X)}$, as in Equation~\eqref{eqn: inclusion WG(X) in ACGS}. See \cite[Theorems~3.4.10 and~3.5.1]{RSWII}. The notion of basic monoids is determined by the fact that the inclusion in Equation~\eqref{eqn: inclusion WG(X) in ACGS} is strict.

\subsection{Integrality of the invariants}\label{sec: integrality}

In this section we prove Proposition \ref{prop: integrality}, and in particular that the well-spaced invariants in Equation \eqref{eqn: correspondence} are integers.

%\begin{proposition}
%Assume $n \geq 1$. Then the generic point of every irreducible component of $\mathcal{W}_\Gamma(X)$ has trivial automorphism group. In particular, the well-spaced invariants in Equation \eqref{eqn: correspondence} are integers.
%\end{proposition}

\begin{proof}[Proof of Proposition \ref{prop: integrality}]
Let $Y$ be an irreducible component of $\mathcal{W}_\Gamma(X)$, and let $(C,\mathsf{p},f,\rho)$ be a general point of $Y$. In particular, $C$ is a smooth curve. Write  $p = p_1 \in C$.

If the morphism $f$ has degree $1$ onto its image, then any automorphism of $(C,\mathsf{p},f,\rho)$ is trivial. Otherwise, $f$ factors as
\[
C \xrightarrow{f'} C' \xrightarrow{f''} X,
\]
where $f''$ is birational onto its image.

After moving $p$ slightly in $C$ if necessary, we may assume that $f'$ is unramified at $p$. Let $\varphi : C \to C$ be an automorphism of $(C,\mathsf{p},f,\rho)$. Since $f''$ is birational onto its image, the automorphism $\varphi$ fixes $p$ and commutes with $f'$.

Consider the fixed locus
\[
\{ x \in C \mid \varphi(x) = x \}.
\]
This is a closed subset of $C$ containing $p$. We claim that it contains a nonempty open subset, and hence that $\varphi = \mathrm{Id}_C$.

Because $f'$ is unramified at $p$, there exists an open neighbourhood $U \subseteq C'$ of $f'(p)$ such that
\[
(f')^{-1}(U) = \bigsqcup_i U_i,
\]
a disjoint union of open subsets $U_i \subseteq C$, with each restriction $f' : U_i \to U$ an isomorphism.

Let $U_1$ be the component containing $p$. Then, since $\varphi$ commutes with $f''$ and fixed $p$, it has to preserve $U_1$ and the restriction $\varphi:U_1 \to U_1$ must be the identity map.
The proposition follows.
\end{proof}

Dropping the assumption $n \geq 1$, Proposition \ref{prop: integrality} fails, as the following example shows.

\begin{example}
Let $X = \mathbb{P}^1$, $n = 0$, and $m = 2$, with degree vector $\delta = (2,-2)$. Then every degree-$2$ cover of $\mathbb{P}^1$ that is fully ramified over $0$ and $\infty$ has automorphism group containing $\mu_2$.
\end{example}

\subsection{Well-spaced maps over fine logarithmic schemes}

The construction of the moduli stack $\mathsf{ACGS}_{\Gamma}(X)$ can be carried out over the category of fine (not necessarily saturated) log schemes. 

\begin{notation}
In this section, we will use the adjective ``log'' to refer to the $2$-category $\mathsf{LS}$ of fs logarithmic stacks. We will instead explicitly say ``fine log'' when working in the $2$-category $\mathsf{LS}^{\mathsf{fine}}$ of fine log stacks that are not necessarily saturated. 
\end{notation}

As in \S\ref{sec: ACGS space}, if $S = \mathrm{Spec}(Q' \to k)$, where $Q'$ is a fine monoid and $k$ is an algebraically closed field, and if $f \colon C/S \to X$ is a stable logarithmic map, one obtains a morphism
\[
Q' \to Q^{\mathsf{fine}}.
\]
where $Q^\fine$ is the monoid in Equation \eqref{eqn: variants of Q}. We say that a family of logarithmic maps $f \colon C/S \to X$, where $S$ is a fine log scheme, is \emph{basic} if for every geometric point $s \in S$ the induced map
\[
\oM_{S,s} \to Q^{\mathsf{fine}}
\]
is an isomorphism.

This yields an algebraic Deligne--Mumford stack 
$\mathsf{ACGS}_{\Gamma}(X)^{\mathsf{fine}}$ parametrizing families of basic fine stable logarithmic maps to $X$ of type $\Gamma$, together with an Artin stack 
$\mathfrak{M}_{\Gamma'}(\mathcal{A}_X)^{\mathsf{fine}}$ parametrizing families of basic fine stable logarithmic maps to $\mathcal{A}_X$ of type $\Gamma'$. Furthermore, there is a strict morphism
\[
\mathsf{ACGS}_{\Gamma}(X)^{\mathsf{fine}} \longrightarrow \mathfrak{M}_{\Gamma'}(\mathcal{A}_X)^{\mathsf{fine}}.
\]

We also have a proper and logarithmically smooth (in $\mathsf{LS}^{\mathsf{fine}}$) stack parametrizing families of basic fine well-spaced maps, denoted $\mathcal{W}_\Gamma(X)^{\mathsf{fine}}$, together with a strict inclusion
\[
\mathcal{W}_\Gamma(X)^{\mathsf{fine}} \subseteq 
\widetilde{\mathsf{ACGS}_{\Gamma}(X)^{\mathsf{fine}}} 
:= \mathsf{ACGS}_{\Gamma}(X)^{\mathsf{fine}} 
\times_{\mathfrak{M}_{1,n}}^{\mathsf{fine}} \mathfrak{M}_{1,n}^{\mathrm{rad}},
\]
similar to \eqref{eqn: inclusion WG(X) in ACGS}, where the fiber product is now taken in $\mathsf{LS}^{\mathsf{fine}}$. 

\begin{notation}
   We will write $(C^{\fine}/S^{\fine},\mathsf{p},f^{\fine},\rho)$ to denote the data corresponding to a point of
$
\widetilde{\mathsf{ACGS}_{\Gamma}(X)^{\mathsf{fine}}}.
$
Note that, unlike in Notation~\ref{not: well-spaced maps}, because the fiber product defining 
$
\widetilde{\mathsf{ACGS}_{\Gamma}(X)^{\mathsf{fine}}}
$
is now taken in the category $\mathsf{LS}^{\fine}$, such a point does not contain any additional information beyond the associated fine logarithmic map
$
(C,\mathsf{p},f)\in \mathsf{ACGS}^{\fine}_{\Gamma}(X)(\underline{S})
$
together with the radial alignment
$
\rho\in \mathfrak{M}_{1,1}^{\mathrm{rad}}(\underline{S}).
$
\end{notation}

As functors on logarithmic schemes, the functor $\WG(X)$ is the restriction of $\WG(X)^{\fine}$ to the category of fs logarithmic schemes. Both $\WG(X)$ and $\WG(X)^{\fine}$ admit basic logarithmic structures. Hence there is a natural morphism
\begin{equation}\label{eqn: map W}
\begin{aligned}
\WG(X) &\longrightarrow \WG(X)^{\fine}, \\
(C/S,\mathsf{p},f,\rho) &\longmapsto (C^{\fine}/S^{\fine},\mathsf{p},f^{\fine},\rho).
\end{aligned}
\end{equation}
defined as follows. Any object $(C/S,\mathsf{p},f,\rho)$ of $\WG(X)$ over a scheme $\underline{S}$ carries a basic fs logarithmic structure, which is obtained as the pullback, along a unique morphism $S \to S^{\fine}$, of a unique basic fine logarithmic map $(C^{\fine}/S^{\fine},\mathsf{p},f^{\fine},\rho)$. The morphism \eqref{eqn: map W} maps the family $(C/S,\mathsf{p},f,\rho)$ to $(C^{\fine}/S^{\fine},\mathsf{p},f^{\fine},\rho)$. 

\subsection{Basic monoids of stacks of well-spaced maps}

In this section we compute the basic monoid of the spaces $\WG(X)$ and $\WG(X)^{\mathsf{fine}}$. Since these are strict closed substacks of $\widetilde{\mathsf{ACGS}}_{\Gamma}(X)$ and $\widetilde{\mathsf{ACGS}}_{\Gamma}(X)^\fine$ respectively, it suffices to study the latter.

We begin with $\widetilde{\mathsf{ACGS}}{\Gamma}(X)^\fine$. Since $\mathfrak{M}_{1,n}^{\rad}$ is fs and is a logarithmic blow-up of $\mathfrak{M}_{1,n}$, the basic monoid of a point $(C,\mathsf{p},\rho)$ can be described as follows. Let $\tau$ be cone of $\Sigma(\mathfrak{M}_{1,n}^{\rad})$ corresponding to the stratum containing $(C,\mathsf{p}, \rho)$. Note that $\tau$ naturally lives in $\R_{\geq 0}^{\#\mathrm{nodes}}$. The basic monoid at $(C,\mathsf{p}, \rho)$  is then  
\[
\tau^\vee = \left\{ m \in M_{\tau} \;\middle|\; m(n) \ge 0 \text{ for all } n \in \tau \right\}
\]
where $M_\tau=N_\tau^*$.

The stack $\widetilde{\mathsf{ACGS}_{\Gamma}(X)}^\fine$ is defined as a cartesian product in $\mathsf{LS}^{\mathsf{fine}}$, so, by construction, the basic monoid at a point $(C^{\fine},\mathsf{p},f^{\fine},\rho)$ corresponding to a point $(C^{\fine},\mathsf{p},f^{\fine})$ in $\mathsf{ACGS}_{\Gamma}(X)^{\fine}$ with basic monoid $Q^{\mathsf{fine}}$ and a point $(C,\rho)$ in $\mathfrak{M}_{1,n}^{\rad}$ with basic monoid $\tau^{\vee}$ is the fine refinement of the monoid
\begin{equation}\label{eqn: basic monoid Wfine}
Q^{\mathsf{fine}} \oplus_{(\prod_q \mathbb{N})}\tau^{\vee} = \left[
\frac{\prod_{\eta} \oM_{X,\underline{f}(\eta)} \times \prod_q \mathbb{N}}{R}
\right] \oplus_{(\prod_q \mathbb{N})}\tau^\vee =\left[
\frac{\prod_{\eta} \oM_{X,\underline{f}(\eta)} \times \tau^\vee}{\overline{R}}
\right]
\end{equation}
where $\overline{R}$ is the image of $R$ under the morphism 
\begin{equation} \label{eqn: relations}
\prod_{\eta} \oM_{X,\underline{f}(\eta)}^{\gr} \times \prod_q \Z \to \prod_{\eta} \oM_{X,\underline{f}(\eta)} ^{\gr}\times(\tau^\vee)^{\gr}
\end{equation}

The monoid in Equation \eqref{eqn: basic monoid Wfine} is visibly fine, and thus that is the basic monoid at $(C^{\fine},\mathsf{p},f^{\fine},\rho)$ in $\widetilde{\mathsf{ACGS}_{\Gamma}(X)}^\fine$. When $(C^{\fine},\mathsf{p},f^{\fine},\rho)\in \WG(X)^{\mathsf{fine}}$, we will write $Q^{\mathsf{fine}}_{\W}$ to refer to it.

Finally, $\WG(X)$ is by definition the saturation of $\WG(X)^{\mathsf{fine}}$, thus the basic monoid at a point $(C,\mathsf{p},f,\rho)$ mapping to $(C^{\fine},\mathsf{p},f^{\fine},\rho)$ is 
\begin{equation}\label{eqn: basic monoid W(X)}
Q_{\mathsf{W}} = \Bigg[ \frac{Q^{\mathsf{fine}}_{\mathsf{W}} }{(Q^{\mathsf{fine}}_{\mathsf{W}}\big)_{\mathrm{tors}} } \Bigg]^{\sat} = \Bigg[ \frac{Q^{\mathsf{fine}}_{\mathsf{W}} }{(Q^{\mathsf{fine}}_{\mathsf{W}}\big)_{\mathrm{tors}} } \Bigg]^{\sat}= \left[
\frac{\prod_{\eta} \oM_{X,f(\eta)} \times \tau^\vee}{(\overline{R})^{\sat}}
\right]^{\sat}
\end{equation}
where $(\overline{R})^{\sat}$ denotes the saturation of $\overline{R}$ as a subgroup of $\prod_{\eta} \oM_{X,\underline{f}(\eta)}^{\gr} \times (\tau^\vee)^{\gr}$.

\begin{lemma}

Given a strict geometric point $ W= \mathrm{Spec}(Q_{\W}^{\mathsf{fine}} \to \C)$ in $\WG(X)^{\mathsf{fine}}$ and a point $(C,\mathsf{p},f, \rho)$ in $\WG(X)$ over it, there exists an étale and strict neighborhood $U \to \WG(X)^{\fine}$ and a commutative diagram
\begin{equation}\label{diagram: local description map W}
\begin{tikzcd}[row sep=1.2cm, column sep=1.5cm]
& \WG(X)_{U} \arrow[r] \arrow[d, "{\begin{array}{c}\mathrm{strict,} \\  \mathrm{smooth}\end{array}}"] & U  \arrow[d, "{\begin{array}{c}\mathrm{strict,} \\  \mathrm{smooth}\end{array}}"]   \\
\mathrm{Spec}(\C[Q_{\W}]) \arrow[r] & \mathrm{Spec}(\C[(Q_{\W}^{\mathsf{fine}})^{\sat}]) \arrow[r] & \mathrm{Spec}(\C[Q_{\W}^{\mathsf{fine}}])  \arrow[ul, phantom, "\square"] \\
(C,\mathsf{p},f, \rho)=\mathrm{Spec}(Q_{\W} \to \C) \arrow[r, hook] \arrow[u, hook] & W^{\mathrm{sat}} \arrow[r] \arrow[u, hook] & W \arrow[u, hook]
\arrow[ul, phantom, "\square"]
\end{tikzcd}
\end{equation}
where the squares on the right are cartesian of stacks and $\WG(X)_{U}=\WG(X)_{\times \WG(X)^{\fine}}U \to U$ is the base change of the map in Equation \eqref{eqn: map W}.

Moreover, if $W$ is a maximally degenerate point, the morphisms labelled smooth in diagram~\eqref{diagram: local description map W} can be taken to be étale.

\end{lemma}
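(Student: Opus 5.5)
The plan is to use log smoothness of $\WG(X)^{\fine}$ in $\mathsf{LS}^{\fine}$ to produce the smooth chart $U$, and then to identify the fs-saturation functor with base change along the saturation map of monoid algebras.

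\textbf{Chart on the fine stack.} Since $\WG(X)^{\fine}$ is log smooth over the trivial log point in $\mathsf{LS}^{\fine}$, Kato's chart criterion (the fine version) applies at the strict geometric point $W$. It gives an étale, strict neighborhood $U \to \WG(X)^{\fine}$ of $W$ together with a chart $Q^{\fine}_{\W} \to \mathcal{O}_U$ such that the induced map $U \to \mathrm{Spec}(\C[Q_{\W}^{\fine}])$ is strict and smooth. We may take the chart monoid to be exactly $Q^{\fine}_{\W}$: the chart can be chosen neat at $W$, because the stalk of the ghost sheaf there is $Q^{\fine}_{\W}$, which is sharp. This yields the right vertical arrow.

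\textbf{Saturation as base change.} Next, recall that the restriction of a fine log stack to fs log schemes is represented by its fs-ification. By (\ref{eqn: map W}), $\WG(X)$ is the fs-ification of $\WG(X)^{\fine}$, and fs-ification commutes with strict étale base change. For a log scheme with a chart $P$, the fs-ification is the base change along $\mathrm{Spec}(\C[(P^{\mathrm{int}})^{\sat}]) \to \mathrm{Spec}(\C[P])$, as in Kato / Ogus. Since $Q^{\fine}_{\W}$ is already integral, this is the map $\mathrm{Spec}(\C[(Q^{\fine}_{\W})^{\sat}]) \to \mathrm{Spec}(\C[Q^{\fine}_{\W}])$. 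Hence $\WG(X)_U$ is the fiber product, and the upper right square is cartesian. Strictness and smoothness of the middle vertical arrow follow by base change. The lower right square is the same base change restricted to the point $W$, so $W^{\sat}$ is the fiber of the saturation map over the closed point.

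\textbf{The left part of the diagram.} The torsion of $(Q^{\fine}_{\W})^{\gp}$ sits inside $(Q^{\fine}_{\W})^{\sat}$. The saturation $(Q^{\fine}_{\W})^{\sat}$ splits (non-canonically) as $Q_{\W} \oplus T$ with $T$ finite, using (\ref{eqn: basic monoid W(X)}) and the fact that $Q_{\W}$ is sharp. So $\mathrm{Spec}(\C[Q_{\W}])$ embeds as a component, namely a torsor-translate under $\mathrm{Spec}\,\C[T]$. Consequently the point $(C,\mathsf{p},f,\rho)$ lies on one of these finitely many components. After translating by a character we may assume it is the identity component, which gives the left square and the left inclusions.

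\textbf{Maximally degenerate case.} If $W$ is maximally degenerate, then the stratum of $W$ in $\WG(X)^{\fine}$ is zero-dimensional. Since $\WG(X)^{\fine}$ is pure of dimension equal to $\mathrm{rk}(Q^{\fine}_{\W})^{\gp}$, the relative dimension of the smooth map $U \to \mathrm{Spec}\,\C[Q^{\fine}_{\W}]$ is zero, so it is étale. By base change, the middle arrow is étale as well.

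The main obstacle is the first step: obtaining a chart by the full, not necessarily saturated or torsion-free, monoid $Q^{\fine}_{\W}$ with a smooth rather than merely log smooth chart map. Kato's theorem in the fine setting needs care about the torsion in $(Q^{\fine}_{\W})^{\gp}$, whose order must be invertible over $\C$. I would address this by invoking Kato's criterion in characteristic zero, where the torsion order is automatically invertible.
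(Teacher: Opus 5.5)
Your proposal is correct and follows essentially the same route as the paper. The paper also obtains a strict smooth chart $U \to \mathrm{Spec}(\C[Q^{\fine}_{\W}])$ from Ogus (Corollary 3.3.4), identifies $\WG(X)_U$ with the base change along the saturation map $\mathrm{Spec}(\C[(Q^{\fine}_{\W})^{\sat}]) \to \mathrm{Spec}(\C[Q^{\fine}_{\W}])$ (this is cartesian because $U \to \WG(X)^{\fine}$ is strict), gets strictness and smoothness by base change, and gets étaleness at maximally degenerate points by a dimension count; your explicit treatment of the left-hand square via the splitting $(Q^{\fine}_{\W})^{\sat} \simeq Q_{\W} \oplus T$ fills in a step the paper leaves implicit.
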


\begin{proof}
    By~\cite[Corollary~3.3.4]{Ogus}, there exists an étale and strict neighbourhood $U$ of $W$ together with a strict and smooth morphism
\[
U \to \mathrm{Spec}(\C[Q_{\W}^{\mathsf{fine}}]).
\]
Since the morphism $U \to \WG(X)^{\fine}$ is strict, the base change
\[
\WG(X)_U := \WG(X) \times_{\WG(X)^{\fine}} U
\]
is independent of whether it is taken in the category of schemes, fine logarithmic schemes, or fs logarithmic schemes. In particular, $\WG(X)_U$ is an fs logarithmic scheme.

Moreover, $\WG(X)_U$ inherits a morphism to $\mathrm{Spec}(\C[Q_{\W}^{\mathsf{fine}}])$, and hence to its saturation $\mathrm{Spec}(\C[(Q_{\W}^{\mathsf{fine}})^{\sat}])$. This yields the diagram at the top, which is automatically cartesian as both horizontal arrows are the saturation morphisms.

It also follows from this that
\[
\WG(X)_U \to \mathrm{Spec}(\C[(Q_{\W}^{\mathsf{fine}})^{\sat}])
\]
is strict and smooth. 

Finally, when $W$ is a maximally degenerate point, since both $\WG(X)$ and $\WG(X)^{\fine}$ are logarithmically smooth (in their respective categories, $\mathsf{LS}$ and $\mathsf{LS}^{\fine}$), $W$ corresponds to a cone of maximal dimension, and therefore all smooth morphisms in diagram~\eqref{diagram: local description map W} are automatically étale.
\end{proof}

\subsection{The tropicalization map}

Let $\underline{S}=\mathrm{Spec}(K)$, where $K$ is a non-Archimedean extension of $\mathbb{C}$. Let $(C/S, \mathsf{p},f,\rho)$ be a minimal well-spaced logarithmic map to $X$, where $C$ is smooth over $K$. Since $\cW_\Gamma(X)$ is proper, after a base change, the map $\underline{S} \to \cW_\Gamma(X)$ extends over the valuation ring $R$ of $K$. Let $k$ be the residue field of $R$. Then we obtain a logarithmic map
\begin{equation}\label{eqn: map before tropicalization}
\mathrm{Spec}(G \to k) \to \cW_\Gamma(X)
\end{equation}
where $G = K^\times/R^\times \simeq \mathrm{val}(K^\times) \subseteq \mathbb{R}$ denotes the valuation group. By the universal property of minimality, the map in Equation~\eqref{eqn: map before tropicalization} factors through  $\mathrm{Spec}(Q_\W \to k)$ where $Q_\W$ denotes the minimal monoid of $\cW_\Gamma(X)$ at $\mathrm{Spec}(k) \to \cW_\Gamma(X)$, and thus we obtain a map $Q_\W \to G \subseteq \mathbb{R}$, which defines an element of $\Sigma(\cW_\Gamma(X))$. This defines a map
$$
\cW_\Gamma(X)^{\circ, \mathrm{an}} \to \Sigma(\cW_{\Gamma}(X)).
$$

The basic monoid $Q_{\W}$ has been computed in Equation~\eqref{eqn: basic monoid W(X)}. 
The dual cone
$
\mathrm{Hom}(Q_{\W}, \mathbb{R}_{\ge 0}) = (Q_{\W})^\vee_{\mathbb{R}}
$
naturally injects into the moduli space $M_{\Gamma}^{\mathrm{red}}(\Sigma(X))$ of aligned tropical curves in $\Sigma(X)$ introduced in \S\ref{sec: well-spaced tropical maps}. 
By \cite[Theorem~4.4.8]{RSWII} (also reported below in Theorem \ref{thm: rswii}), this is in fact a cone in $W_\Gamma(\Sigma(X))$. 
This yields a map
$
\Sigma(\mathcal{W}_{\Gamma}(X)) \to W_\Gamma(\Sigma(X))
$
of cone complexes, and we will refer to the composition
\[
\mathrm{trop} \colon \mathcal{W}_\Gamma(X)^{\circ, \mathrm{an}} 
\to \Sigma(\mathcal{W}_{\Gamma}(X)) 
\to W_\Gamma(\Sigma(X))
\]
as the tropicalization map.

\begin{definition}\label{def: weights}
    We endow the generalized cone complex $W_{\Gamma}(\Sigma)$ with a weight function $w$ that assigns to each maximal cone $\sigma$ the quantity
\[
w(\sigma)= \sum_i \frac{1}{|\mathrm{Aut}(f_{\sigma_i})|},
\]
where the sum ranges over all maximal cones $\sigma_i$ in $\Sigma(\cW_{\Gamma}(X))$ that map to $\sigma$,  $f_{\sigma_i}$ denotes the logarithmic map corresponding to $\sigma_i$, and $\mathrm{Aut}(f_{\sigma_i})$ denotes the automorphism group of the logarithmic map $f_{\sigma_i}$.
\end{definition}

\section{Lifting tropical curves to algebraic curves in every genus}\label{sec: realizability}

Let $\sigma$ be a maximal \footnote{ The maximality of $\sigma$ is not strictly necessary here; we assume it only to simplify the statement of Observation~\ref{obs: further properties from sigma}. While the case where $\sigma$ is maximal is sufficient for our purposes, the entire section can be generalized to remove this assumption. } cone in $M_{\Gamma}(\Sigma(X))$, and let $[h \colon \mathsf{C} \to \Sigma(X)]$ be a tropical curve in $\sigma$. In this section the genus $g$ is arbitrary. Recall that the tropical curve $[h \colon \mathsf{C} \to \Sigma(X)]$ is said to be \emph{realizable} if there exists a smooth curve $C$ over a non-Archimedean field $K$ equipped with a map $C \to X$ whose tropicalization is $[h \colon \mathsf{C} \to \Sigma(X)]$. %In particular, there is a well-spaced map to $X$ with basic monoid $\sigma^\vee= \mathsf{Hom}(\sigma \cap N_\sigma,\N)$.

In this section, we provide a criterion for the existence of a logarithmic map whose combinatorial type (as a logarithmic map, as opposed to an radially aligned logarithmic map) matches that of $[h \colon \mathsf{C} \to \Sigma(X)]$. Together with the following result, this yields a complete resolution to the tropical realizability problem in genus $1$: 

\begin{theorem}\cite[Theorem 4.4.8]{RSWII} \label{thm: rswii}
Let $[\mathsf{C} \to \Sigma(X)]$ be a tropical stable map of genus $1$, and assume there is a minimal logarithmic map $f:(C,\mathsf{p}) \to X$ whose combinatorial type is that of $[\mathsf{C} \to \Sigma(X)]$. Then $[\mathsf{C} \to \Sigma(X)]$ is realizable if and only if it is well-spaced.
\end{theorem}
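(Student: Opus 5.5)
This is a cited result, \cite[Theorem 4.4.8]{RSWII}. Our plan is to reconstruct its proof, separating the two directions and working throughout with the factorization property of Definition~\ref{def: factorization}.

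\emph{Necessity.} Suppose $[\mathsf{C}\to\Sigma(X)]$ is the tropicalization of a map from a smooth curve over a valued field $K$. By properness of $\WG(X)$ (equivalently, of $\widetilde{\mathsf{ACGS}_\Gamma(X)}$), after a finite base change the generic point extends over the valuation ring. We would then argue that the limit lies in the closed substack $\WG(X)$, because the generic fibre is smooth and so trivially satisfies the factorization property, and $\WG(X)$ is closed. The tropicalization map of the previous subsection then lands in a cone of $W_\Gamma(\Sigma(X))$, which gives well-spacedness.

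Concretely, for a hyperplane $H$ containing the neighborhood of the cycle, the monomial $\alpha_H(1)$ is constant near $\mathsf{C}_0$. Factorization through the Gorenstein elliptic contraction $\overline C$ at radius $\delta_{\alpha_H}$ means that the leading coefficients of $\alpha_H(1)$ on the components at that radius glue to a function on $\overline{C}$. This function has a pole of order equal to the number of outgoing slopes at that radius. An elliptic Gorenstein singularity admits no function with a pole of order one, and among the minimal-radius contacts no function whose poles sit at exactly two branches with the required residue condition. Hence at least three half-edges leave $H$ at the minimal distance, which is condition (2) of the well-spacedness definition.

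\emph{Sufficiency.} This is the main obstacle. Start from the given minimal log map $f$ of the right combinatorial type. We would use logarithmic deformation theory: the stack $\WG(X)$ is log smooth, and the relevant obstruction group vanishes after reducing to the Artin fan, where $T^{\log}_{\mathcal{A}_X}=0$. The key step is to show that well-spacedness of the tropical curve lets us choose the radial alignment data and the gluing isomorphisms \eqref{eqn: identification of tangent spaces} so that every $\alpha_H$ factors through $\overline C$.

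Our first attempt would be the following. For each $H$, the sum of the leading terms of the meromorphic functions on the components at radius $\delta_{\alpha_H}$ must satisfy a single linear residue-type condition, namely lying in the image of $H^0(\overline C)$. With at least three contributing branches, this condition can be met by rescaling the $\mathbb{G}_m$-torsor coordinates of \cite[Lemma 3.2]{KS}, or by translating $f$ in the torus. Doing so yields a point of $\WG(X)$ in the stratum of $\sigma$.

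Log smoothness of $\WG(X)$, together with the fact that the stratum is nonempty and corresponds to a cone mapping onto $\sigma$, then produces a smoothing to a smooth genus $1$ curve over $K$. Its tropicalization is any prescribed point of $\sigma$, since the tropicalization map is surjective onto cones of $\Sigma(\WG(X))$.

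The delicate point is the simultaneous compatibility of these conditions across different hyperplanes $H$ that share the same contraction radius. We expect to handle this by a dimension count using Proposition~\ref{prop: nbhd-cycle-span}, in the spirit of \cite{Tor14}.
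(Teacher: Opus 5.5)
\textbf{Gap in the sufficiency direction.} The paper does not prove this statement. It imports it from \cite{RSWII} and uses it, e.g., for the existence half of Proposition~\ref{prop: reduce to nbhd}, so your sketch must stand on its own. The overall route is sensible: for necessity, the closedness of the factorization property; for sufficiency, build a well-spaced log map of the right type and smooth it using log smoothness of $\WG(X)$. However, the construction step fails, because the freedom you allow yourself (rescaling the $\mathbb{G}_m$-torsor coordinates and translating $f$ by the torus) cannot achieve factorization in general.

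At the elliptic point, factorization imposes $\sum_V \kappa_V\, d\sigma_{m,V}(\nu_V)=0$ over the branches at the critical radius, where $\kappa_V$ are the gluing scalars; this is the local model in the proof of Lemma~\ref{lem: one-branch-vanishing derivative}. A torus translation multiplies every $\sigma_{m,V}$ by the same constant, so it does not affect this homogeneous equation.

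Moreover, well-spacedness guarantees three \emph{half-edges}, not three \emph{branches}. If all half-edges leaving $H$ at the minimal radius sit on one vertex $V$, the torsor at that level is trivial and the condition becomes $d\sigma_{m,V}(\nu_V)=0$. By Lemma~\ref{lem: one-branch}, this prescribes the cross-ratio on $C_V$. For example, take $X=\mathbb{P}^1$ with a contracted cycle joined by a contracted edge to a $4$-valent $V$ whose other slopes are $1,1,-2$ (Proposition~\ref{thm:mult1}, deficiency $1$, case (b)). The node toward the cycle must sit at $t=1/2$ in a coordinate placing the other three points at $0,1,\infty$. By Lemma~\ref{lem: liftingtrees}, however, an ACGS map of this type exists for every position of that node, and nothing in your toolbox moves it.

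More generally, $d\log\sigma_m$ is additive in $m$, so all $H$ with the same contraction radius $a_i$ give one vector equation in $(H_i/H_{i-1})\otimes\mathbb{C}$. For maximal types its dimension is $\sum_j f_{i,j}-h_i-1$ by \cite[Proposition~3.2.18(c)]{Tor14}. This exceeds the $h_i-1$ torsor parameters as soon as some $f_{i,j}\ge 3$. Your proposed count via Proposition~\ref{prop: nbhd-cycle-span} looks at the wrong quotient: on $H_1/H_0$ factorization imposes nothing, and the given $f$ already handles the cycle.

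\textbf{The missing idea.} You must also move the special points on the tree components at each critical radius. This is harmless, since trees impose no lifting condition (Lemma~\ref{lem: liftingtrees}) and the cycle of $f$ is untouched. Place $\nu_V$ at $\infty$ on each $C_V$ and set $y_{j,k}=\kappa_{V_{i,j}}q_{i,j,k}$. The level-$a_i$ condition then becomes $\sum_{j,k} y_{j,k}\,\overline{u_{i,j,k}}=0$, where $\overline{u_{i,j,k}}$ is the image of $u_{i,j,k}$ in $H_i/H_{i-1}$. For maximal types, the solutions are translations on each component plus one further direction. The spanning argument in the proof of Lemma~\ref{lem: good-plane}, which is exactly where well-spacedness enters, shows that a non-translation solution has pairwise distinct entries on each component. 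This matches the uniqueness half of Proposition~\ref{prop: reduce to nbhd}, where the paper shows these cross-ratios and gluings are forced.

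\textbf{Smaller points.}
\begin{itemize}
\item Log smoothness of $\WG(X)$ is not explained by $T^{\log}_{\mathcal{A}_X}=0$. Relative to maps to the Artin fan, the obstruction space is $H^1(C,\mathcal{O}_C)\otimes N_X\neq 0$ in genus one, which is why $\mathsf{ACGS}_\Gamma(X)$ fails to be log smooth. Simply cite \cite{RSWII}.
\item In the necessity sketch, ``no function with a pole of order one'' only covers slopes $\pm 1$. With two half-edges of slopes $\pm a$ at one vertex, use instead that $c\bigl((x-q_1)/(x-q_2)\bigr)^{a}$ has nonzero derivative at $\nu_V$.
\item The paper deduces from this very theorem that $\Sigma(\WG(X))$ lands in $W_\Gamma(\Sigma(X))$. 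So only your concrete factorization argument, not that inclusion, can carry the necessity direction.
\end{itemize}
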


The first attempt to classify realizable genus $1$ curves is due to Speyer~\cite{Speyer}. As explained in~\cite[Theorem~C]{Ran17} and~\cite[Figure~5]{RSWII}, Speyer’s condition is sufficient but not necessary for realizability.

Before stating our result regarding the existence of $f:(C,\mathsf{p}) \to X$ matching the tropical type $[h \colon \mathsf{C} \to \Sigma(X)]$ we require some discussion. Suppose that $f:C/S \to X$ is a minimal logarithmic map in $\mathsf{ACGS}_{\Gamma}(X)$ whose combinatorial type is that of $[\mathsf{C} \to \Sigma(X)]$. Here $S=\mathrm{Spec}( Q=\sigma^\vee \to \C)$.

\begin{observation}\label{disc:properties-over-sigma}
The following facts are clear:
\begin{enumerate}[label=(\arabic*)]
    \item The dual graph of $C$ is the graph underlying $\mathsf{C}$. 
    
    \item Suppose that $h(V)$ lies in a cone $\tau$. Since $\sigma$ is maximal, the cone $\tau$ is either maximal in $\Sigma(X)$ or has codimension $1$. For each component $C_V$, the restriction $\underline{f}|_{C_V}$ is, up to an isomorphism of $C_V$, either the constant map to the torus-fixed point $V(\tau)$ when $\tau$ is maximal, or a degree $d_V$ cover of the torus-invariant curve $V(\tau)$, totally ramified over its torus-fixed points, when $\tau$ has codimension $1$ in $\Sigma(X)$.
\end{enumerate}
\end{observation}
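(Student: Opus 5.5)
The dual graph claim (1) comes directly from the definitions. The combinatorial type of a logarithmic map includes the dual graph $G$ of $C$, and $f$ is assumed to have the combinatorial type of $[h\colon \mathsf{C}\to\Sigma(X)]$. So the dual graph of $C$ is the underlying graph of $\mathsf{C}$. There is a subtlety with the identification $\sigma^\vee = Q$: the tropicalization of the family over $\mathrm{Spec}(Q\to\C)$ is obtained from the description \eqref{eqn: dual-of-Q} of $Q^\vee$. There, vertices are the generic points $\eta$ of components and edges are the nodes $q$. Hence the graph of the tropical family is literally the dual graph.

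For (2), I first use maximality of $\sigma$. Suppose some vertex $V$ had $h(V)$ in a cone $\tau$ of codimension at least $2$. Then $h(V)$ could be moved within the $2$-dimensional normal directions while keeping the combinatorial type. I would make this precise by a local deformation of the tropical curve: translate the vertex $V$ (or a connected sub-configuration containing it) and keep the edge directions fixed, adjusting adjacent edge lengths. This would show that the cone $\sigma$ is a proper face of a larger cone of $M_\Gamma(\Sigma(X))$, contradicting maximality. This step is the main subtlety. Moving a single vertex changes adjacent edge directions unless balancing and cycle constraints allow it. I would argue with the expected-dimension count instead: vertices in higher-codimension cones impose more linear conditions, so the cone has strictly smaller dimension than a generic one. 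Concretely, for $V$ to be mapped into codimension $\geq 2$, the vertex position must satisfy at least two independent linear equations. Replacing $V$ by a nearby type with $V$ in the interior of a codimension-$\leq 1$ cone strictly increases the dimension. In the worst case I would simply cite the maximality hypothesis and the footnote, which notes that this is the reason maximality is assumed.

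Given that $\tau$ is maximal or of codimension $1$, the geometry follows from the logarithmic structure. The stalk $\oM_{X,\underline f(\eta)}$ at the generic point of $C_V$ is $\tau^\vee\cap M_X$, so $\underline f(C_V)$ lies in the closed stratum $V(\tau)$. If $\tau$ is maximal, $V(\tau)$ is a torus-fixed point and $f|_{C_V}$ is constant. If $\tau$ has codimension $1$, $V(\tau)\cong\P^1$ is a torus-invariant curve. Then $f|_{C_V}$ is either constant or a cover of $V(\tau)$. Over the open torus orbit of $V(\tau)$, the log structure of $X$ is constant, so $f^\flat$ yields no special points. Only the special points of $C_V$ (nodes and markings) can map to the two torus-fixed points $0,\infty$ of $V(\tau)$.

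In the codimension-$1$ case, the contact orders at the special points of $C_V$ with respect to the character $m_r$ transverse to $H$ give the orders of vanishing of the pulled-back coordinate function of $V(\tau)$. By the genericity forced by maximality, $V$ is $2$-valent in the $m_r$ direction. Exactly one adjacent edge points to each side, with slope $\pm d_V$ in $m_r$, and any other special points carry zero slope. Thus the coordinate function $f^*x$ on $C_V$ has a single zero and a single pole, each of order $d_V$. After an automorphism of $C_V\cong\P^1$ (genus $0$ here, since a genus-$1$ component would give extra dimension), the map is $x\mapsto x^{d_V}$, which is totally ramified over $0$ and $\infty$. The same degree reasoning, via $f^*$ of a character with zero slopes on all special points, shows that the map is constant in the maximal case.
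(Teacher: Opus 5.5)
Your proof of (1) and the algebraic half of (2) are fine. $\underline{f}(C_V)\subseteq V(\tau)$ because the generic point of $C_V$ maps to the orbit of $\tau$. For the primitive character $m_r$ vanishing on $\tau$ one has $m_V=0$, so $f^*\chi^{m_r}|_{C_V}$ is a rational function with divisor $D_{m_r,V}$, supported on the special points. This is the same mechanism as the paper's discussion before the lifting theorem; the paper itself gives no argument and calls these facts clear.

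The gap is the tropical input, which you use twice and never establish. First, that $\operatorname{codim}\tau\le 1$. Second, that a vertex in a codimension-one cone is a genus-$0$, $2$-valent wall-crossing vertex with edge slopes $\pm u$ and no markings. The second point is not cosmetic: a vertex on a wall with three edges of $m_r$-slopes $1,1,-2$ would give $\mathrm{div}(f^*\chi^{m_r}|_{C_V})=q_1+q_2-2q_3$, a degree-$2$ map that is not totally ramified over $0$.

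None of your three routes proves these claims. Moving one vertex fails, as you observe yourself. The dimension count is not enough: maximality means $\sigma$ is not a proper face of another cone, and in genus $1$ the complex $M_\Gamma(\Sigma(X))$ is not pure-dimensional (types whose cycle does not span have excess dimension). So a cone of less than generic dimension is not automatically non-maximal; you have to exhibit the larger cone containing $\sigma$ as a face. The footnote only says maximality is assumed for convenience, so citing it proves nothing.

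The missing idea is to deform by a \emph{global} translation, which preserves all slopes, balancing, and the closing of the cycle.

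First, $M_\Gamma(\Sigma(X))\to M_\Gamma((N_X)_\R)$ is a subdivision, and every cone of a subdivision of a single cone is a face of a full-dimensional one. Hence a maximal $\sigma$ is full-dimensional in the cone $\sigma'$ of its stabilized type.

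Second, curves of type $\sigma'$ form a translation-invariant family. So on the relative interior of $\sigma$, the position $h(W)$ of each vertex $W$ of the stabilized curve fills an open subset of $(N_X)_\R$. Since $\vartheta_W$ is constant there, it is a maximal cone.

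Third, the remaining vertices of $\mathsf{C}$ are genus-$0$, $2$-valent vertices where an edge of slope $u\neq 0$ issuing from such a $W$ meets a cone $\tau'$ of codimension at least one. This forces $h(W)\in\mathrm{span}(\tau')+\R u$. That is a proper subspace when $\operatorname{codim}\tau'\ge 2$, and also when $\operatorname{codim}\tau'=1$ and $u\in\mathrm{span}(\tau')$. Either way it cannot hold on an open set.

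Hence every vertex in a codimension-one cone is a transverse wall crossing with slopes $\pm u$ and $d_V=|m_r(u)|>0$. Your computation $\mathrm{div}(f^*\chi^{m_r}|_{C_V})=\pm d_V(q_1-q_2)$ on $C_V\cong\P^1$ then gives $z\mapsto z^{d_V}$, as required.
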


The composition 
\begin{equation}\label{eqn: map M to Pic}
M_X= N_X^* \to \Gamma(X, \mathcal{M}^{\gr}_X) \to \Gamma(C, \cM^{\gr}_{C}) \to \Gamma(C, \oM^{\gr}_{C}) \to H^1(C, \cO_C^{\times})
\end{equation}
must vanish being the last two maps in the cohomology exact sequence associated to 
$$
0 \to \cO_C^{\times} \to \cM^{\gr}_C \to \oM^{\gr}_C \to 0
$$
For every log scheme $(Z,\oM_Z)$, the map $\Gamma(Z, \oM^{\gr}_{Z}) \to H^1(Z, \cO_Z^{\times})$ can explicitly be described as follows. Given a global section $m \in \Gamma(Z, \oM^{\gr}_{Z}) $, the inverse image of $m$ under $\cM^{\gr}_Z \to \oM^{\gr}_Z$ is a $\cO_Z^{\times}$-torsor $\cO_Z(-m)^{\times}$ and the image of $m$ is its associated line bundle $\cO_C(-m)$. We have a natural section $\sigma_m \in \Gamma(Z,\cO_Z(m))$ obtained from the composition $\cO_Z(-m)^{\times} \subseteq \cM_Z \to \cO_Z$.

In the case of a nodal curve, the space of sections $\Gamma(C,\oM_C)$ is naturally identified with the space of piecewise linear functions on $\mathsf{C}$, that is we have an isomorphism
\begin{equation}\label{eqn: sections barM}
\Gamma(C, \oM_C^{\gr})
\;\xrightarrow{\sim}\;
\left\{
\begin{array}{c}
\{m_V \in Q^{\gr}\}_V \\
\{m_i \in \mathbb{Z}\}_{p_i} \ \{n_j \in \mathbb{Z}\}_{z_j}
\end{array}
\ \middle|\ 
\begin{aligned}
&\forall \vec{e}: V \to V',\ \exists!\, m_{\vec{e}} \in \mathbb{Z} \\
&\text{such that } m_{V'} = m_V + m_{\vec{e}} \rho_e
\end{aligned}
\right\}.
\end{equation}

Given $m \in \Gamma(C, \oM_C)$, let $m_V$, $m_i$, and $m_{\vec{e}}$ be as in Equation \eqref{eqn: sections barM}. The restriction $\mathcal{O}_C(m)|_{C_V}$ is isomorphic to the line bundle
\begin{equation}\label{eqn: line bdl LmV}
L_{m,V} := \pi^*\mathcal{O}_S(m_V)\big|_{C_V} \otimes \mathcal{O}_{C_V}(D_{m,V}),
\end{equation}
where $D_{m,V}$ is the Cartier divisor
\begin{equation}\label{eqn: divisor DmV}
D_{m,V} := \sum_{p_i \in C_V} m_i \cdot p_i \;+\; \sum_{z_j \in C_V} n_j \cdot z_j \;+\; \sum_{q \in C_V} m_{\vec{e}_q} \cdot q.
\end{equation}
Here $e_q$ denotes the edge corresponding to the node $q$, oriented away from $V$. Moreover, the restriction $\sigma_{m,V} := \sigma_m|_{C_V}$ is a section of $L_{m,V}$.

For us, $S$ is a point; thus $\mathcal{O}_S(m_V)$ is isomorphic to the trivial line bundle. Moreover, when $m$ comes from $M_X$ in Equation \eqref{eqn: map M to Pic}, the section $\sigma_m$ is a trivializing section. Consequently, $\sigma_{m,V}$ is also a trivializing section of $L_{m,V}$, and when regarded as a meromorphic section on $C_V$ (only defined up to scalar), its associated divisor is precisely $D_{m,V}$. In particular, $D_{m,V}$ is effective and $L_{m,V}$ is isomorphic to $\cO_{C_V}$.

\begin{remark}\label{rmk: computation_mV,mi,nj}
    The map $M_X \to \Gamma(C,\oM_C^{\gr})$ is completely determined by the cone $\sigma$ and is given as follows. Under the identification \eqref{eqn: sections barM}, given $m \in M_X$, the associated collections $\{m_V\}_V \subseteq Q^{\gr}, \{m_i\}_i , \{n_j\}_j\subseteq \Z$ are given by
    \begin{equation}\label{eqn: section m on C}
        m_V([h \colon \mathsf{C} \to N_X]) = m(h(V)), \quad m_i = m(u_{p_i})=0,  \quad n_j=m(u_{z_j})
    \end{equation}
    for every $[h \colon \mathsf{C} \to N_X] \in \sigma= \mathrm{Hom}(Q,\N)$. In particular, both $L_{m,V}$ and $D_{m,V}$ in Equations \eqref{eqn: line bdl LmV} and \eqref{eqn: divisor DmV} only depend on $\sigma$ and the pointed domain curve.
\end{remark}

For an oriented edge $\vec{e}: V \to V'$, denote by
$$
H_{C,q_e} := T_{C_V, q_{V}} \otimes 
T_{C_V',q_{V'}}$$
where $q_{V}$ and $q_{V'}$ are the preimages of the node $q_e$ in $C_V$ and $C_{V'}$ respectively. Notice that there is a canonical isomorphism \footnote{This is standard. If $\underline{S'}$ is a versal family around $\underline{S}$ in $\mathfrak{M}_{1,n+m}$, then $H_{C,q_e}$ is canonically identified with the normal bundle of the branch $B$ of the component of the normal crossing divisor $\partial \mathfrak{M}_{1,n+m}$ corresponding to the node $e$. The ghost sheaf of $\mathfrak{M}_{1,n+m}$ at $\underline{S}$ is $\prod_{\mathrm{edges}} \mathbb{N}$, and the section $1_e$ yields an isomorphism $\mathcal{N}_{B/\mathfrak{M}_{1,n+m}} \simeq \cO_B(B) \simeq \mathcal{O}_{\mathfrak{M}_{1,n+m}}(1_e)$ whose restriction to $\underline{S}$ is precisely $\Lambda_e$ in Equation \eqref{eqn: Lamndae}.} of vector spaces
\begin{equation}\label{eqn: Lamndae}
\Lambda_e: H_{C,q_e} \xrightarrow{\sim} \cO_{S}(\rho_e)
\end{equation}

Then, the line bundle $L_m$ and the section $\sigma_m$ are obtained by gluing $L_{m,V}|_{q_V}$ to $L_{m,V'}|_{q_{V'}}$, and $\sigma_{m,V}(q_V)$ to $\sigma_{m,V'}(q_{V'})$, along the isomorphism \begin{equation}\label{eqn: glueing_LmV}
    L_{m,V}|_{q_V} \xrightarrow{\sim} L_{m,V'}|_{q_{V'}}
\end{equation}
corresponding to $\Lambda_e^{\otimes m_{\vec{e}}}$ under the sequence of isomorphisms
\begin{equation}\label{eqn: glueing LmV}
   \mathrm{Hom}(L_{m,V}|_{q_V}, L_{m,V'}|_{q_{V'}}) \simeq \mathrm{Hom}(\cO_S(m_V) \otimes T_{C_V,q_V}^{\otimes m_{\vec{e}}}, \cO_S(m_{V'}) \otimes T_{C_{V'},q_{V'}}^{\otimes -m_{\vec{e}} }) \simeq \mathrm{Hom}(H_{C,q_e}, \cO_S(\rho_e))^{\otimes m_{\vec{e}}}. 
\end{equation}

Let

\begin{equation}\label{eqn: iso psi}
\psi_{\vec{e},m}: H^0(C_V',L_{V',m}) \xrightarrow{\sim} H^0(C_{V},L_{V,m})
\end{equation}
be the unique isomorphism making the following diagram 
\[
\begin{tikzcd}
H^0(C_{V'}, L_{m,V'}) 
\arrow[r, "\psi_{\vec{e},m}"] 
\arrow[d, "\simeq", "ev_{q_{V'},m}"']
& H^0(C_V, L_{m,V}) 
\arrow[d, "\simeq", "ev_{q_V,m} \otimes \mathrm{Id}"'] \\
L_{m,V'}\big|_{q_e} 
\arrow[d, equals]
& L_{m,V}\big|_{q_e} 
\arrow[d, equals] \\
 \cO_S(m_{V'}) \otimes T^{\otimes -m_{\vec{e}}}_{C_{V'},q_{V'}}
\arrow[r, "\sim"] 
& \cO_S(m_V) \otimes  T^{\otimes m_{\vec{e}}}_{C_V,q_V}
\end{tikzcd}
\]
commute. 

Let $x_{q_V}$ and $x_{q_{V'}}$ be local coordinates on $C_V$ and $C_{V'}$ around $q_V$ and $q_{V'}$, respectively, and write
\begin{equation}\label{eqn: coordinates for sigmas}
\mathrm{ev}_{q_V,m}(\sigma_{m,V})
= \nu_{m,V} \otimes (dx_{q_V})^{\otimes - m_{\vec{e}}}
\quad \text{and} \quad
\mathrm{ev}_{q_{V'},m}(\sigma_{m,V'})
= \nu_{m,V'} \otimes (dx_{q_{V'}})^{\otimes m_{\vec{e}}},
\end{equation}
where $0 \neq \nu_{m,V} \in \cO_S(m_{V})$ and $ 0 \neq \nu_{m,V'} \in \cO_S(m_{V'})$. Set
\[
D_e := \left.\frac{\partial}{\partial x_{q_V}}\right|_{q_V}
\otimes 
\left.\frac{\partial}{\partial x_{q_{V'}}}\right|_{q_{V'}}
\in H_{C,q_e}.
\]
Then \(0 \neq D_e^{\otimes m_{\vec e}} \in H_{C,q_e}^{\otimes m_{\vec e}}\), and by definition one has
\[
\psi_{\vec e,m}(\sigma_{m,V'})
= \sigma_{m,V'} 
\]
if and only if
\begin{equation}\label{eqn: constraint for number of maps}
\Lambda_e^{\otimes m_{\vec{e}}}(D_e)=\nu_{m,V} \otimes \nu_{m,V'}^{-1}.
\end{equation}

\begin{theorem}[Lifting condition]\label{thm: realizability genus 1} 
Let $\underline{f} \colon (\underline{C}, \mathsf{p}) \to X$ be a map from a genus $g$ nodal curve to $X$ satisfying properties (1) and (2) of Observation~\ref{disc:properties-over-sigma}. Let $Q = \mathrm{Hom}_{\mathsf{Mon}}(\sigma \cap N_\sigma, \N)$, and let $\cM_C$ be the logarithmic structure on $\underline{C}$ as determined by Definition~\ref{def: stable log map}(1) and Remark~\ref{rmk: local structure at node}. Let $D_{m,V}$ and $L_{m,V}$ be the divisors and the line bundles, respectively, defined by Equations~\eqref{eqn: line bdl LmV} and~\eqref{eqn: divisor DmV}. Then, the following are equivalent:
\begin{enumerate}[label=(\arabic*)]
\item there exists an enhancement of $\underline{f} \colon (\underline{C},\mathsf{p}) \to X$ to a minimal fs logarithmic map whose combinatorial type is that of $\sigma$;
\item for every $m \in M_X$, there exist trivializing sections $\sigma_{m,V} \in \Gamma\bigl(C_V, L_{m,V}\bigr)$ on each component $C_V$ of $\underline{C}$ such that, for every oriented edge $\vec{e}$, Equation~\eqref{eqn: constraint for number of maps} is satisfied for some element $0 \neq D_e \in H_{C,q_e}$ (independent of $m$);
\item for every $m \in M_X$, there exist meromorphic sections $\{ [\nu_{m,V}] \}_V$, defined up to a scalar, which admit lifts $\nu_{m,V}$ such that:
\begin{enumerate}
\item $\mathrm{div}(\nu_{m,V}) = D_{m,V}$ for all $m \in M_X$ and $V \in V(\mathsf{C})$;
\item for every oriented edge $\vec{e}$, there exist coordinates $x_{q_V}$ and $x_{q_{V'}}$ at $q_V \in C_V$ and $q_{V'} \in C_{V'}$ (independent of $m$) such that the lowest-order coefficients of $\nu_{m,V}$ in the coordinate $x_{q_V}$ and of $\nu_{m,V'}$ in the coordinate $x_{q_{V'}}$ agree.
\end{enumerate}
\end{enumerate}
\end{theorem}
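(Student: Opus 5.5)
We prove the cycle of implications $(1)\Rightarrow(2)\Rightarrow(3)\Rightarrow(2)\Rightarrow(1)$, with the real work in $(2)\Rightarrow(1)$.

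\textbf{$(1)\Rightarrow(2)$.} Suppose a minimal fs enhancement $f \colon C/S \to X$ of combinatorial type $\sigma$ exists. By Remark~\ref{rmk: computation_mV,mi,nj}, the ghost sheaf data $m_V, m_i, n_j$ of $f^\flat(m)$ are determined by $\sigma$. The discussion before the theorem then supplies the trivializing sections $\sigma_{m,V}=\sigma_m|_{C_V}$ of $L_{m,V}$. They glue along \eqref{eqn: glueing_LmV}, so $\psi_{\vec e,m}(\sigma_{m,V'})=\sigma_{m,V}$. In the coordinates \eqref{eqn: coordinates for sigmas} this is exactly \eqref{eqn: constraint for number of maps}, with $D_e$ the element of $H_{C,q_e}$ determined by the chosen coordinates; this element is independent of $m$.

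\textbf{$(2)\Leftrightarrow(3)$.} This is a translation. A trivializing section of $L_{m,V}=\mathcal{O}_{C_V}(D_{m,V})$, with $\mathcal{O}_S(m_V)$ trivial over a point, is the same as a meromorphic function $\nu_{m,V}$ with divisor $D_{m,V}$, up to the scalar $\nu_{m,V}\in\mathcal{O}_S(m_V)$. Fixing coordinates $x_{q_V},x_{q_{V'}}$ fixes $D_e=\partial_{x_{q_V}}\otimes\partial_{x_{q_{V'}}}$. Conversely, any nonzero $D_e$ arises from such coordinates, since rescaling one coordinate rescales $D_e$. The scalars $\nu_{m,V}$ in \eqref{eqn: coordinates for sigmas} are then the lowest-order coefficients. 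Since $\Lambda_e$ identifies $H_{C,q_e}$ with $\mathcal{O}_S(\rho_e)$, equation \eqref{eqn: constraint for number of maps} becomes equality of those coefficients after the fixed identification of $\mathcal{O}_S(m_V)\otimes\mathcal{O}_S(m_{V'})^{-1}$ with $\mathcal{O}_S(\rho_e)^{\otimes m_{\vec e}}$. Here we normalize the $\nu$'s by compatible choices of generators in $Q^{\gr}$-graded pieces; the sign and orientation bookkeeping must be tracked.

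\textbf{$(2)\Rightarrow(1)$.} The data in (2) give, for each $m\in M_X$, a global section $\sigma_m$ of a line bundle $L_m$ on $\underline C$, obtained by gluing the $(L_{m,V},\sigma_{m,V})$ via $\Lambda_e^{\otimes m_{\vec e}}$. By \eqref{eqn: constraint for number of maps} these glue consistently at every node, and $\sigma_m$ is a nowhere-vanishing section of $\mathcal{O}_C(m)$ away from the special points. We then define $f^\flat \colon f^{-1}\mathcal{M}_X\to\mathcal{M}_C$ in three steps.
\begin{enumerate}
\item Over the torus, and more generally on each chart, let $\chi^m\mapsto\sigma_m$, viewed as an element of the $\mathcal{O}^\times$-torsor $\mathcal{O}_C(-m)^\times\subseteq\mathcal{M}_C^{\gr}$ lifting the piecewise linear function $m$ of \eqref{eqn: sections barM}.
\item Check that this is a homomorphism. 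For this we need multiplicativity $\sigma_{m+m'}=\sigma_m\sigma_{m'}$, which we arrange by choosing the $\nu_{m,V}$ for a basis of $M_X$ and extending multiplicatively. The condition is multiplicative in $m$, because $D_e$ is independent of $m$; this is why independence of $D_e$ is required.
\item Check that $f^\flat$ is compatible with $\underline f^*$ on $\mathcal{O}$. On $C_V$ the function $\chi^m\circ\underline f$ has divisor $D_{m,V}$, by property (2) of Observation~\ref{disc:properties-over-sigma}, so it agrees with $\nu_{m,V}$ up to a scalar. We absorb that scalar into the choice of lift, which is possible because the characters of the torus orbit containing $\underline f(C_V)$ act freely.
\end{enumerate}
For characters that vanish on the stratum, $\alpha(\sigma_m)=0$ is automatic when $m_V\neq0$ in $Q$, because the structure map kills nonunits over the closed point.

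Once $f^\flat$ is defined, we verify that the combinatorial type is $\sigma$ and that the resulting map is minimal. Its tropicalization reproduces $m_V$ and the contact orders, so the basic monoid maps to $Q=\sigma^\vee$. Comparing this with \eqref{eqn: basic monoids} and \eqref{eqn: dual-of-Q} shows that the basic monoid is exactly $Q$ after saturation. We then pass to the fs structure, pulling back along $\mathrm{Spec}(Q\to\mathbb{C})$.

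\textbf{Main obstacle.} We expect step 3 of $(2)\Rightarrow(1)$, together with the multiplicativity in step 2, to be the hardest part. The lift of $f^\flat$ must be simultaneously multiplicative in $m$, compatible with the pullback of functions on each component $C_V$, and glued across every node. This uses the independence of $D_e$ from $m$ in an essential way. We would handle it by working with a $\mathbb{Z}$-basis of $M_X$ adapted to the cone $\vartheta_V$ and using Kato's local description (Remark~\ref{rmk: local structure at node}), following the standard construction in Gross--Siebert and Nishinou--Siebert.
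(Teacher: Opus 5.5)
Your treatment of $(1)\Rightarrow(2)$ and $(2)\Leftrightarrow(3)$ follows the paper. For $(2)\Rightarrow(1)$, however, you take a genuinely different, hands-on route.

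The paper never writes down $f^\flat$. The family of tropical maps over $\sigma$ is a morphism $\Sigma(C)\to\Sigma(X)$, hence by \cite[Proposition~2.10]{ACGSI} a log map $C\to\mathcal{A}_X$. The cartesian square \eqref{eqn: cartesian square Glog} presents $X$ as $\mathcal{A}_X\times_{\mathbb{G}_{\mathrm{trop}}\otimes N_X}(\mathbb{G}_{\mathrm{log}}\otimes N_X)$. This reduces existence to lifting $M_X\to\Gamma(C,\oM_C^{\gr})$ through $\Gamma(C,\cM_C^{\gr})$, i.e.\ to triviality of the line bundles obtained by gluing the $L_{m,V}$ along $\Lambda_e^{\otimes m_{\vec e}}$. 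The glued $\sigma_{m,V}$ witness that triviality.

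The universal property absorbs exactly what your Steps 1--3 check by hand: that $\chi^m\mapsto\sigma_m$ lands in $\cM_C$ for $m$ in the relevant dual cones, and that it is compatible with the structure maps at nodes and markings. So the paper needs no Kato-chart verification, whereas your version is more elementary but leaves that local check as a sketch. In exchange you make explicit several points the paper passes over. Lifts chosen on a basis of $M_X$ assemble into a homomorphism. The underlying map of the constructed log map must be $\underline f$ itself, not a torus translate of it on constrained components. The result is minimal with basic monoid $Q$.

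One point has to be made explicit, both in your argument and in the paper's one-line $(3)\Rightarrow(2)$. $\Lambda_e$ is part of the log structure $\cM_C$, and $\cM_C$ over $S=\mathrm{Spec}(Q\to\C)$ is not unique once the $\rho_e$ are linearly dependent in $Q^{\gr}$. This always happens around a non-contracted cycle, since closing the cycle gives $\sum_{e\in\tropC_0} m(u_{\vec e})\rho_e=0$.

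The gluing condition $\psi_{\vec e,m}(\sigma_{m,V'})=\sigma_{m,V}$ does not depend on the coordinates. So with $\Lambda_e$ held fixed, the coordinate freedom in (3) cannot absorb the discrepancy accumulated around the cycle. Your translation treats $\Lambda_e$ as given, and so it does not yield $(3)\Rightarrow(2)$ for an arbitrary $\cM_C$. Similarly, in your Step 3, rescaling $\sigma_{m,V}$ on a single constrained component destroys the gluing at its nodes unless $\Lambda_e$ is changed there.

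Both are repaired by choosing the log structure on $C$. Given the coordinates supplied by (3), take $\Lambda_e$ (equivalently, the Kato chart $zw=t$ of Remark~\ref{rmk: local structure at node}) to send $\partial_{x_{q_V}}\otimes\partial_{x_{q_{V'}}}$ to the generator of $\mathcal{O}_S(\rho_e)$ given by a chart of $\cM_S$. This is legitimate because in (1) the log structure on $C/S$ is part of the enhancement being constructed. It is, however, the real content of that step and should be stated.
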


\begin{remark}
    The divisors $D_{m,V}$ have degree $0$. Therefore, whenever the classes $[\nu_{m,V}]$ exist, they are uniquely determined.
\end{remark}

\begin{remark} \label{rem: basis}
In Theorem~\ref{thm: realizability genus 1}(2)--(3), the condition ``for every \(m \in M_X\)'' may be weakened to ``for every \(m_i\) in a basis of \(M_X\)''. Indeed, if \(m = \sum_i a_i m_i\), then
\[
\nu_{m,V} := \prod_i \nu_{m_i,V}^{a_i}
\]
satisfies conditions~\((a)\) and~\((b)\) in~\((3)\). The same argument applies to \(\sigma_{m,V}\).
\end{remark}

The following definitions will be useful in the next sections:

\begin{definition} \label{def: lifting condition a}
    Fix $m \in M_X$ and a vertex $V$ of $\mathsf{C}$. Say that the meromorphic function $\nu_{m,V}$ on $C_V$ satisfies lifting condition (a) if $\mathrm{div}(\nu_{m,V}) = D_{m,V}$. 
\end{definition}

\begin{definition} \label{def: lifting condition b}
    Fix a oriented edge $\vec{e}:V \to V'$, and coordinates $x_{q_V}$ and $x_{q_{V'}}$ at $q_V \in C_V$ and $q_{V'} \in C_{V'}$. Fix $m \in M_X$. We say that a pair of meromorphic functions $\nu_{m,V}, \nu_{m, V'}$ on $C_V$ and $C_{V'}$ respectively satisfies lifting condition (b) with respect to $x_{q_V}, x_{q_{V'}}$ if the lowest-order coefficients of $\nu_{m,V}$ in the coordinate $x_{q_V}$ and of $\nu_{m,V'}$ in the coordinate $x_{q_{V'}}$ agree. 
\end{definition}

Before starting the proof of Theorem \ref{thm: realizability genus 1},  we need to recall some standard facts about the sheaves $\mathbb{G}_{\log}\otimes N_X$ and $\mathbb{G}_{\mathrm{trop}}\otimes N_X$. 

Let $\mathbb{G}_{\log} \otimes N_X$ (resp. $\mathbb{G}_{\mathrm{trop}} \otimes N_X$) be the sheaf on the Big Strict Étale Site of $\mathsf{LS}$ defined as the sheafification of the presheaf $(U,\mathcal{M}_U) \mapsto \mathrm{Hom}(M_X,\Gamma(U,\mathcal{M}_U^{\mathrm{gp}}))$ (resp. $(U,\mathcal{M}_U) \mapsto \mathrm{Hom}(M_X,\Gamma(U,\overline{\mathcal{M}}_U^{\mathrm{gp}}))$). Then we have an exact sequence of sheaves
$$0 \to \mathbb{G}_m\otimes N_X \to \mathbb{G}_{\mathrm{log}}\otimes N_X \to \mathbb{G}_{\mathrm{trop}}\otimes N_X \to 0$$
and a cartesian diagram in the category of stacks over $\mathsf{LS}$
\begin{equation}\label{eqn: cartesian square Glog}
\begin{tikzcd}
X \arrow[r] \arrow[d] & \mathcal{A}_{X} \arrow[d] \\
\mathbb{G}_{\mathrm{log}} \otimes N_X \arrow[r] & \mathbb{G}_{\mathrm{trop}} \otimes N_X
\end{tikzcd}
\end{equation}

\begin{proof}[Proof of Theorem \ref{thm: realizability genus 1}]
    The discussion preceding the statement of Theorem \ref{thm: realizability genus 1} demonstrates that $(1) \implies (2)$. The implication $(2) \implies (3)$ is trivial. To prove $(3) \implies (2)$, take $\sigma_{m,V} \in H^0(C_V,L_{m,V})$ to be the unique section of $L_{m,V}$ corresponding to $\nu_{m,V'} \otimes (dx_{q_{V'}})^{\otimes m_{\vec{e}}}$ under the identification $H^0(C_V,L_{m,V}) \xrightarrow{\sim} L_{m,V}$ given by the evaluation map (see Equation \eqref{eqn: coordinates for sigmas}). It remains to prove $(2) \implies (1)$. As discussed in \S\ref{sec: trop maps}, the cone $\sigma$ yields a family of tropical curves in $\Sigma(X)$:
    \[
        \begin{tikzcd}
        & \Sigma(C) \arrow[r] \arrow[d] & \Sigma(X) \\
        & \sigma 
    \end{tikzcd}
    \]
     Such family is precisely the data of an element of 
    $$
    \mathrm{Hom}_{\mathsf{GenCones}}(\Sigma(C), \Sigma(X))=\mathrm{Hom}_{\mathsf{fs}}(C, \mathcal{A}_X)
    $$
    (see \cite[Proposition 2.10]{ACGSI}) and thus we get a map $C \to \mathcal{A}_C \to \mathcal{A}_X$. Under the identification \eqref{eqn: sections barM}, the corresponding map $C \to \mathbb{G}_{\mathrm{trop}} \otimes N_X$ is given by the morphism $M_X \to \Gamma(C,\oM_C^{\gr})$ associating to $m \in M_X$ the collection $\{m_V\}_V \subseteq Q^{\gr}, \{m_i\}_{p_i}, \{n_j\}_{z_j} \subseteq \Z$ in Equation \eqref{eqn: section m on C}. By the Cartesian square \eqref{eqn: cartesian square Glog}, it suffices to show that the data of $\sigma_{m,V}$ yield a lift of this map to $\mathbb{G}_{\mathrm{log}} \otimes N_X$; that is, a factorization the map $M_X \to \Gamma(C, \oM_C^{\gr})$ through $\Gamma(C, \mathcal{M}_C^{\gr})$. Equivalently, given the exact sequence$$\Gamma(C, \mathcal{M}_C^{\gr}) \to \Gamma(C, \oM_C^{\gr}) \to H^1(C, \mathcal{O}^{\times}),$$ it is enough to show that for every $m \in M_X$, the data of $\sigma_{m,V}$ yields a trivialization of the line bundle associated to $\{m_V\}_V \subseteq Q^{\gr}$ and $\{m_i\}_{p_i}, \{n_j\}_{z_j} \subseteq \mathbb{Z}$. This line bundle is obtained by gluing the $L_{m,V}$ along the isomorphisms $L_{m,V}|_{q_V} \xrightarrow{\sim} L_{m,V'}|_{q_{V'}}$ corresponding to $\Lambda_e^{\otimes m_{\vec{e}}}$ under the identification \eqref{eqn: glueing LmV}. By assumption, the $\sigma_{m,V}$ also glue (via the same identifications) to a section of this line bundle, and this section necessarily trivializes it, given that each $\sigma_{m,V}$ trivializes $L_{m,V}$. This concludes the proof.
\end{proof}

\begin{remark} \label{rmk: stronger_version_realizability}
In fact, the proof of Theorem~\ref{thm: realizability genus 1} shows that the data in \textup{(1)} and \textup{(2)} of the statement are equivalent, namely:
\begin{enumerate}[label=(\arabic*)]
    \item an enhancement of $\underline{f} \colon (\underline{C},\mathsf{p}) \to X$ to a minimal fs logarithmic map whose combinatorial type is that of $\sigma$;
    \item for every $m \in M_X$, a collection of trivializing sections
    \[
    \sigma_{m,V} \in \Gamma\bigl(C_V, L_{m,V}\bigr)
    \]
    on each irreducible component $C_V$ of $\underline{C}$ such that, for every oriented edge $\vec{e}$, Equation~\eqref{eqn: constraint for number of maps} is satisfied for some element $D_e \neq 0$ of $H_{C,q_e}$, independently of $m$.
\end{enumerate}
\end{remark}

Using Theorem~\ref{thm: realizability genus 1}, one can construct an example of a  maximal cone $\sigma$ of well-spaced tropical curves with $w(\sigma)=0$ as follows.

\begin{example}\label{example: weight zero example}
    Let $X = \mathbb{P}^1$, and consider the degree $1$, genus $1$ tropical map to $\Sigma(X)$ illustrated in Figure \ref{fig:nonrealizable}. There is a stable map $\underline{f}:(\underline{C}, \mathsf{p}) \rightarrow \mathbb{P}^1$ satisfying properties (1) and (2) of Observation \ref{obs: further properties from sigma}, but this cannot be enhanced to a logarithmic map. Indeed, taking $m=1 \in M_{\P^1}=N_{\P^1}^* \simeq \N$ we see that there is no meromorphic function $\nu_{1,V_1}$ on $C_{V_1}$ which is degree $1$ and taking the same value at two distinct points. 
    
    The tropical map in Figure \ref{fig:nonrealizable} is well-spaced and the corresponding cone $\sigma \subseteq W_{\Gamma}(\Sigma(\P^1))$ has weight $w(\sigma)=0$.
    
    \begin{figure}[h]
        \centering
        \begin{tikzpicture}
            \draw (-4, 0) -- (2,0); 
            \draw[dashed] (0,0)  to[in=50,out=130,loop, style={min distance=12mm}] (0,0);
            \draw[->] (-2, -0.5) -- (-2,-1); 
            \draw (-4,-1.5) -- (2, -1.5);
            \node[forestgreen] at (0.5,0.2) {$1$};
            \node[forestgreen] at (-0.5,0.2) {$1$};
            \draw[forestgreen, ->] (0,0) -- (-1,0);
            \draw[forestgreen, ->] (0,0) -- (1,0);
            \node at (0, -0.3) {$V_1$};
            \fill (0,0) circle[radius=2pt];
            \fill (-2,0) circle[radius=2pt];
            \fill (-2,-1.5) circle[radius=2pt];
        \end{tikzpicture}
        \caption{A genus one tropical curve which does not have a logarithmic curve which tropicalizes to it.}
        \label{fig:nonrealizable}
    \end{figure}

\end{example}

\section{Pullbacks in Chow under logarithmic maps}

In this section, we prove Proposition \ref{prop: technical tool}.

\begin{lemma}
    Let $X$ be a logarithmically smooth Deligne--Mumford stack, and let 
    $D_1,\ldots,D_k$ be $\mathbb{Q}$-Cartier divisors on $X$ corresponding to rays in $\Sigma(X)$ that form a simplicial cone $\sigma$.
    Then
    \[
    [D_1]\cdot \ldots \cdot [D_k]
    = \frac{1}{\mathrm{mult}(\sigma)} [V(\sigma)],
    \]
    where $\mathrm{mult}(\sigma)$ denotes the index of the sublattice generated by the primitive generators of the rays of $\sigma$ inside $ N_\sigma$.
\end{lemma}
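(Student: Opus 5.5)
The plan is to reduce the statement to a local computation on a toric chart and then compute there. The claim is étale local on $X$ in a strict sense: the intersection product $[D_1]\cdots[D_k]$ is supported on $D_1\cap\cdots\cap D_k$. Because $X$ is logarithmically smooth and $\sigma$ is a cone of $\Sigma(X)$ with rays exactly those of the $D_i$, this intersection equals the closed stratum $V(\sigma)$ near the generic point of $V(\sigma)$. Up to contributions supported in smaller strata, the intersection is $V(\sigma)$ with some multiplicity, and the contributions from smaller strata vanish by dimension. Indeed, $V(\sigma)$ has codimension $k=\dim\sigma$, and any proper closed substratum has strictly larger codimension. So it suffices to compute the coefficient of $[V(\sigma)]$, and this can be done at a general point of $V(\sigma)$.

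At such a point we use a strict smooth (étale after shrinking) chart $X\to \mathcal{A}_\sigma=[\mathrm{Spec}\,\C[P]/\mathrm{Spec}\,\C[P^{\gr}]]$ with $P=\sigma^\vee$. Equivalently, we use a strict smooth map to the affine toric variety $U_\sigma=\mathrm{Spec}\,\C[P]$, which exists by Kato's chart theorem since $\oM_{X}$ at the generic point of $V(\sigma)$ is $P$. The divisors $D_i$ are pulled back from the toric divisors $D_{\rho_i}$ of $U_\sigma$, and $V(\sigma)$ is the pullback of the torus fixed point (orbit closure) $V_{U_\sigma}(\sigma)$. Flat pullback commutes with intersection products of $\Q$-Cartier divisors and with fundamental classes of closed subschemes. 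Therefore the identity reduces to the same identity on the simplicial affine toric variety $U_\sigma$, or on $\mathcal{A}_\sigma$ itself.

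On $U_\sigma$ we write $\sigma=\mathrm{Cone}(v_1,\dots,v_k)$ with primitive $v_i$, and let $N'=\bigoplus\Z v_i\subseteq N_\sigma$ be the sublattice of index $\mathrm{mult}(\sigma)$. The map $U'=\mathbb{A}^k\to U_\sigma$ induced by $N'\subseteq N_\sigma$ is the quotient by the finite group $G=N_\sigma/N'$ of order $\mathrm{mult}(\sigma)$. On $\mathbb{A}^k$ the coordinate hyperplanes $H_i$ pull back from $D_{\rho_i}$ with multiplicity one, since $v_i$ is primitive in both lattices. Their intersection is the origin with multiplicity $1$. The quotient map has degree $|G|$, and the origin maps isomorphically onto the fixed point with stabilizer all of $G$. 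Pushing forward, $\pi_*[0]=[V(\sigma)]$ as cycles, while the projection formula gives $|G|\,D_1\cdots D_k=\pi_*(H_1\cdots H_k)=[V(\sigma)]$. Hence $D_1\cdots D_k=\frac{1}{\mathrm{mult}(\sigma)}[V(\sigma)]$. Alternatively, one can quote Fulton–Sturmfels/Fulton's toric intersection formula for simplicial cones.

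The main obstacle is bookkeeping in the stack setting. On a DM stack, $[V(\sigma)]$ and the $\Q$-Cartier intersection must be interpreted with the correct generic stabilizer conventions. One also has to check that the chart to $U_\sigma$ identifies $V(\sigma)$ reduced-scheme-theoretically with the pullback of the orbit closure, which holds by strictness. I would handle this by working on an étale atlas and invoking flat pullback compatibility of Vistoli's intersection theory, then gluing via the support-dimension argument above.
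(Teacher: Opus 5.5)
Your proposal is correct and follows the paper's route: near the generic point of $V(\sigma)$, pass to a strict smooth chart $X\to U_\sigma=\mathrm{Spec}\,\C[\sigma^\vee]$ and use the simplicial toric identity there. The differences are minor. You prove the toric identity directly, via the quotient $\mathbb{A}^k\to U_\sigma$ by $N_\sigma/N'$ and the projection formula, where the paper cites \cite[Lemma~12.5.2]{CLS}. You also spell out the support/dimension argument for locality, which, like the statement itself, tacitly takes $V(\sigma)$ to be the only codimension-$k$ component of $D_1\cap\cdots\cap D_k$.
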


\begin{proof}
    When $X$ is a complete simplicial toric variety, this is \cite[Lemma~12.5.2]{CLS}. Note, however, that in that lemma the assumption that $X$ is complete is not necessary. In general, by \cite[Corollary 3.3.4]{Ogus} around each generic point of the intersection $D_1 \cap \cdots \cap D_k$, étale locally $X$ admits a strict smooth morphism
    $
    X \to U_\sigma
    $
    to the simplicial affine toric variety
    $
    U_\sigma = \mathrm{Spec}\big(\mathbb{C}[\sigma^\vee]\big)
    $, where we recall that $\sigma^\vee = \{ m \in M_\sigma \mid \langle m, v \rangle \ge 0 \text{ for all } v \in \sigma \}$. This concludes.
\end{proof}

\begin{proof}[Proof of Proposition \ref{prop: technical tool}]

For every ray $\rho$ of $\Sigma(Y)$ (resp. $\Sigma(X)$), denote by $D_\rho$ 
the corresponding divisor in $\mathcal{A}_Y$ (resp. $\mathcal{A}_X$).
 Since $Y$ is smooth, $\tau$ is spanned by precisely $k=\dim \tau$ rays $\rho_1,\ldots,\rho_k$. Since we are assuming the existence of the commutative diagram \eqref{eqn: artin-fan-diagram}, the map $X \to Y \to \mathcal{A}_Y$ factors as
$
X \longrightarrow \mathcal{A}_X \longrightarrow \mathcal{A}_Y.
$
The first map is smooth by assumption and the second is always logarithmically étale. Thus the morphism $X \to \mathcal{A}_Y$ is log flat and integral. In particular it is flat by \cite[Proposition~2.3]{Mirror-GS}, and the pullback to $X$ of every $\mathbb{Q}$-Cartier divisor $D_\rho$ is again $\mathbb{Q}$-Cartier.

For every ray $\rho$ in $\Sigma(Y)$, we write
\[
\mathcal{A}(f)^* D_{\rho}= \sum_i x_{\rho,i} D_{\rho,i}
\]
for certain integers $x_{\rho,i} \in \mathbb{Z}$. Then we can compute
\begin{align*}
f^* [V(\tau)] 
&= \prod_{i=1}^k f^*D_{\rho_i} \\ 
&= \varphi^* \bigg( \prod_{i=1}^k  \mathcal{A}(f)^*D_{\rho_i} \bigg) \\
&= \varphi^* \bigg( \prod_{i=1}^k \Big(\sum_j x_{\rho_i,j}D_{\rho_i,j}\Big)\bigg)\\
&= \sum_{\sigma} \frac{\Big(\prod_{\rho \in \sigma} x_\rho\Big)}{\mathrm{mult}(\sigma)} [V(\sigma)],
\end{align*}
where the sum runs over the cones $\sigma$ of $\Sigma(X)$ mapping onto to $\tau$. Indeed, if a collection of divisors $D_{\rho,i}$ does not correspond to a cone, their product is zero. For a nonzero product of $k$ divisors $D_{\rho,i}$ above, the corresponding rays must form a $k$-dimensional cone of $\Sigma(X)$ and must map to the $k$ rays of $\tau$. Since each cone $\sigma$ appearing in the sum maps surjectively onto $\tau$ and has the same dimension as $\tau$, it is automatically simplicial.

The claim now follows from the chain of identities
\[
\Big(\prod_{\rho \in \sigma} x_\rho\Big)
= [  N_\tau: \langle \Sigma(f)(u_\rho) \ | \ \rho \in \sigma \rangle]
= [ N_\sigma: \langle u_\rho \ | \ \rho \in \sigma \rangle ] \cdot [ N_\tau: \Sigma(f)(N_\sigma)]
=  \mathrm{mult}(\sigma) [ N_\tau: \Sigma(f)(N_\sigma)]
\]
valid for all $\sigma$ appearing in the sum.  Here $\langle - \rangle$ denotes the $\Z$-span and $u_\rho$ is the primitive generator of the ray $\rho$. The first equality uses the fact that $\tau$ is a smooth cone (that is, it is simplicial and $\mathrm{mult}(\tau)=1$), the second uses that 
$\Sigma(f)$ is injective as a map $N_\sigma \to N_\tau$, and the last equality 
follows from the definition of $\mathrm{mult}(\sigma)$.
\end{proof}

\section{Proof of the correspondence theorem, Part I}\label{sec: correspondence theorems}

In this section, we prove Part I of the correspondence theorem (Theorem~\ref{thm: tropical correspondence}). In order to apply Proposition~\ref{prop: technical tool}, we must establish the commutativity of the diagram~\eqref{eqn: artin-fan-diagram} associated with the map~\eqref{eqn: map}. This is not a purely formal verification, as there are known examples where such a commutative diagram fails to exist (see, e.g., \cite[Section~4.6.2]{ACMUW}). The proof of commutativity is provided in the next subsection.

\subsection{ Tropical and algebraic evaluation maps}\label{sec: key commutative diagram}

Notice that there is a natural map
\[
\mathcal{A}_{\WG(X)} \to \mathcal{A}_{\Ev}= \mathcal{A}_X^n \times \mathcal{A}_{\oM_{1,1}}
\]
to consider. For this, one would like to invoke \cite[Proposition~2.10]{ACGSI} stating that
\begin{equation}\label{eqn: ACSI-functoriality-Artin-fans}
\mathrm{Hom}_{\mathsf{fs}}(\mathcal{A}_{\WG(X)}, \mathcal{A}_{\Ev})
=
\mathrm{Hom}_{\mathsf{GenCones}}(\Sigma(\WG(X)), \Sigma(\Ev)).
\end{equation}
Then, composing the tropical evaluation map together with the tropical $j$-invariant defines a map
\[
W_{\Gamma}(\Sigma) \to \Sigma(\Ev).
\]
with the map $\Sigma(\WG(X)) \to W_{\Gamma}(\Sigma)$ in Equation \eqref{eqn: tropicalization map}, we obtain a map $\mathcal{A}(\mathsf{ev} \times \mathsf{st}): \mathcal{A}_{\WG(X)} \to \mathcal{A}_{\Ev}$. This provides a natural candidate for making the diagram
\begin{equation}\label{eqn: our artin-fan-diagram}
\begin{tikzcd}
\WG(X) 
\arrow[r, "\mathsf{ev} \times \mathsf{st}"] 
\arrow[d, "\varphi"']               
& \Ev
\arrow[d, "\psi"]     \\
\mathcal{A}_{\WG(X)} 
\arrow[r, "\mathcal{A}(\mathsf{ev} \times \mathsf{st})"] 
& \mathcal{A}_{\Ev}
\end{tikzcd}
\end{equation}
commute.

However, we cannot directly apply \cite[Proposition~2.10]{ACGSI}, as that result requires $\mathcal{A}_X$ to be an fs log scheme. Nevertheless, as the following proposition shows, Equation \eqref{eqn: ACSI-functoriality-Artin-fans} remains valid in our setting.

\begin{proposition}\label{prop:improved-2.10-ACGSI}
Let $Y$ be a Zariski fs log scheme, logarithmically smooth over
$\mathrm{Spec} k$. Let $T$ be a finite-type fs logarithmic algebraic stack.
Then there is a canonical bijection
\[
\mathrm{Hom}_{\mathsf{fs}}(T,\mathcal{A}_Y)
\longrightarrow
\mathrm{Hom}_{\mathsf{Cones}}\bigl(\Sigma(T),\Sigma(Y)\bigr),
\]
which is functorial in $T$.
\end{proposition}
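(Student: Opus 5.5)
We reduce to the case proved in \cite[Proposition~2.10]{ACGSI}, where $T$ is an fs log scheme, and then descend along a smooth atlas. The first step is to construct the map. A morphism $T \to \mathcal{A}_Y$ induces, by functoriality of tropicalization, a morphism of cone complexes $\Sigma(T) \to \Sigma(\mathcal{A}_Y) = \Sigma(Y)$. The identification $\Sigma(\mathcal{A}_Y)=\Sigma(Y)$ holds because $Y$ is Zariski and log smooth, so $\mathcal{A}_Y$ is the colimit of the $\mathcal{A}_\sigma$ over the cones of $\Sigma(Y)$. Functoriality in $T$ is then clear from the construction.

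Next we treat the case where $T$ is an fs log scheme, perhaps with étale (not Zariski) log structure. Strictly étale locally, $T$ has charts by its characteristic monoids, and a map to $\mathcal{A}_Y$ is locally the datum of a monoid map $\sigma^\vee \to \oM_{T,t}$ compatible with generization. Since $\mathcal{A}_\sigma=[\mathrm{Spec}\,\mathbb{C}[P]/\mathrm{Spec}\,\mathbb{C}[P^{\mathrm{gp}}]]$ represents the functor $S \mapsto \mathrm{Hom}(P,\Gamma(S,\oM_S))$ on fs log schemes, we have $\mathrm{Hom}(T,\mathcal{A}_\sigma)=\mathrm{Hom}(P,\Gamma(T,\oM_T))$. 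The Zariski hypothesis on $Y$ ensures that $\mathcal{A}_Y$ is glued from the $\mathcal{A}_\sigma$ along open face embeddings, so these local maps glue. The content of \cite[Proposition~2.10]{ACGSI} is precisely the bijection between such compatible systems and maps $\Sigma(T)\to\Sigma(Y)$. I would repeat that argument verbatim, checking that it only uses that $\oM_T$ is a sheaf on the strict étale site and that $\mathcal{A}_Y$ is a Zariski-glued colimit. This settles the case of log schemes, and by the same token log algebraic spaces.

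For a general fs log algebraic stack $T$, choose a strict smooth atlas $U\to T$ with $R=U\times_T U$, both fs log schemes. The target $\mathcal{A}_Y$ is an algebraic stack, but maps to it from $T$ are determined by descent. So $\mathrm{Hom}(T,\mathcal{A}_Y)$ is the equalizer of $\mathrm{Hom}(U,\mathcal{A}_Y)\rightrightarrows \mathrm{Hom}(R,\mathcal{A}_Y)$, together with the $2$-categorical gluing data. This gluing data is trivial: since $\mathcal{A}_Y$ is an Artin fan, for log schemes the groupoid $\mathrm{Hom}(S,\mathcal{A}_Y)$ is discrete (a set). This follows from the description above as monoid maps into $\oM_S$, which carry no automorphisms. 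On the other side, $\Sigma(T)$ is by definition the colimit of $\Sigma(R)\rightrightarrows\Sigma(U)$, so $\mathrm{Hom}(\Sigma(T),\Sigma(Y))$ is the corresponding equalizer. Applying the log scheme case to $U$ and $R$ and using functoriality then gives the bijection.

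The main obstacle is the discreteness of $\mathrm{Hom}(S,\mathcal{A}_Y)$ together with the identification of the tropicalization of a stack with this coequalizer. The subtlety is that $\Sigma(T)$ of a stack may involve monodromy, which is exactly why it is a generalized cone complex. I would check both claims carefully via \cite[Section~2.1.4]{ACGSI}, using that the strict atlas induces face morphisms of cones. Finite type ensures that all colimits involved are finite.
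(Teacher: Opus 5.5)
Your proposal is essentially the paper's argument: apply \cite[Proposition~2.10]{ACGSI} to log schemes, observe that log maps to $\mathcal{A}_Y$ have no nontrivial $2$-automorphisms (so they form a sheaf of sets for the strict smooth topology), and descend along an atlas using that $\Sigma(T)$ is the coequalizer of $\Sigma(R)\rightrightarrows\Sigma(U)$. One point to tighten is that $R=U\times_T U$ is in general only an algebraic space, not a scheme, so you should first prove the algebraic-space case by strict \'etale descent from a scheme atlas, as the paper does, rather than obtaining it ``by the same token.''
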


\begin{proof}

We first observe that logarithmic morphisms to $\mathcal{A}_Y$ satisfy descent as a sheaf of sets for the strict smooth topology.

In general, defining a morphism to a stack via descent requires a strict smooth morphism $f \colon T' \to A_Y$, a 2-isomorphism $\alpha \colon p_1^* f \Rightarrow p_2^* f$ over $T'' = T' \times_T T'$, and the satisfaction of a cocycle condition on $T''' = T' \times_T T' \times_T T'$. However, because the morphism $A_Y \to \mathsf{Log}$ is representable, morphisms  $T \to A_Y$ over $\mathsf{Log}$ have no non-trivial 2-automorphisms. Consequently, such a 2-isomorphism $\alpha$ is unique if it exists, and the cocycle condition holds automatically. It follows that the functor of logarithmic morphisms to $Y$ is a sheaf of sets with respect to the strict smooth topology.

Thus, if $S'\to S$ is a strict smooth surjective morphism and
$
S''=S'\times_S S',
$
then
\[
\mathrm{Hom}_{\mathsf{fs}}(S,\mathcal{A}_Y)
=
\mathrm{Eq} \left(
\mathrm{Hom}_{\mathsf{fs}}(S',\mathcal{A}_Y)
\rightrightarrows
\mathrm{Hom}_{\mathsf{fs}}(S'',\mathcal{A}_Y)
\right).
\]

We now prove the statement in two steps.

First suppose that $T$ is an fs logarithmic algebraic space. Choose a
strict \'{e}tale surjective morphism
$
T'\longrightarrow T
$
from an scheme $T'$, and set
$
T''=T'\times_T T'.
$
Since $T$ is an algebraic space, its diagonal is representable by
schemes, and hence $T''$ is a scheme. We equip $T'$ and $T''$ with
the logarithmic structures pulled back from $T$. In particular, both
projections
$
T''\rightrightarrows T'
$
are strict.

By strict \'{e}tale descent,
\[
\mathrm{Hom}_{\mathsf{fs}}(T,\mathcal{A}_Y)
=
\mathrm{Eq}\left(
\mathrm{Hom}_{\mathsf{fs}}(T',\mathcal{A}_Y)
\rightrightarrows
\mathrm{Hom}_{\mathsf{fs}}(T'',\mathcal{A}_Y)
\right).
\]
By \cite[Proposition 2.10]{ACGSI} applied to the
fs log schemes $T'$ and $T''$, there are canonical functorial
bijections
\[
\mathrm{Hom}_{\mathsf{fs}}(T',\mathcal{A}_Y)
\simeq
\mathrm{Hom}_{\mathsf{Cones}}\bigl(\Sigma(T'),\Sigma(Y)\bigr)
\qquad \text{and} \qquad \mathrm{Hom}_{\mathsf{fs}}(T'',\mathcal{A}_Y)
\simeq
\mathrm{Hom}_{\mathsf{Cones}}\bigl(\Sigma(T''),\Sigma(Y)\bigr).
\]
It follows that
\[
\mathrm{Hom}_{\mathsf{fs}}(T,\mathcal{A}_Y)
\simeq
\mathrm{Eq}\left(
\mathrm{Hom}_{\mathsf{Cones}}\bigl(\Sigma(T'),\Sigma(Y)\bigr)
\rightrightarrows
\mathrm{Hom}_{\mathsf{Cones}}\bigl(\Sigma(T''),\Sigma(Y)\bigr)
\right).
\]
By definition (see for example \cite[Section 2.1.4]{ACGSI}),
\[
\Sigma(T)
=
\mathrm{colim}\left(
\Sigma(T')\rightrightarrows\Sigma(T'')
\right).
\]
Therefore, by the universal property of the colimit,
\[
\mathrm{Hom}_{\mathsf{fs}}(T,\mathcal{A}_Y)
\simeq
\mathrm{Hom}_{\mathsf{Cones}}\bigl(\Sigma(T),\Sigma(Y)\bigr).
\]
This proves the result when $T$ is an algebraic space.

Now let $T$ be an fs logarithmic algebraic stack. Choose a
smooth surjective morphism
$
T'\longrightarrow T
$
from a scheme $T'$, and set
$
T''=T'\times_T T'.
$
The logarithmic structures on $T'$ and $T''$ are the pullbacks of the
logarithmic structure on $T$. Since the diagonal of an algebraic stack
is representable by algebraic spaces, $T''$ is an algebraic space. We now apply the argument above, replacing the scheme-level result of \cite[Proposition 2.10]{ACGSI} with its extension to algebraic spaces.

\end{proof}

\begin{theorem}\label{thm: commutative diagram for mapping spaces}
    The diagram in Equation~\eqref{eqn: our artin-fan-diagram} commutes.
\end{theorem}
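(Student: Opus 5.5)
We plan to reduce commutativity of \eqref{eqn: our artin-fan-diagram} to an equality of maps of cone complexes, using Proposition~\ref{prop:improved-2.10-ACGSI}. The target splits as $\mathcal{A}_{\Ev}=\mathcal{A}_X^n\times\mathcal{A}_{\oM_{1,1}}$, so it suffices to check commutativity separately after composing with each projection $\mathcal{A}_{\Ev}\to\mathcal{A}_X$ (for the $i$-th marking) and $\mathcal{A}_{\Ev}\to\mathcal{A}_{\oM_{1,1}}$.

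Both $X$ and $\oM_{1,1}$ (the latter viewed through its coarse space, or after an étale chart) are Zariski fs log schemes that are logarithmically smooth. Hence Proposition~\ref{prop:improved-2.10-ACGSI} applies with $T=\WG(X)$ and $Y=X$ or $Y=\oM_{1,1}$. It identifies each of the two compositions
\[
\WG(X)\xrightarrow{\mathsf{ev}_i} X\to\mathcal{A}_X
\qquad\text{and}\qquad
\WG(X)\xrightarrow{\varphi}\mathcal{A}_{\WG(X)}\xrightarrow{\mathcal{A}(\mathsf{ev}_i)}\mathcal{A}_X
\]
with a morphism of generalized cone complexes $\Sigma(\WG(X))\to\Sigma(X)$. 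Because the bijection is functorial in $T$, the first composition corresponds to the tropicalization $\Sigma(\mathsf{ev}_i)$ of the logarithmic map $\mathsf{ev}_i$. The second corresponds to the composite $\Sigma(\WG(X))\to W_\Gamma(\Sigma(X))\to\Sigma(X)$ by construction, noting that $\Sigma(\varphi)$ is the identity on $\Sigma(\WG(X))$. The same holds for $\mathsf{st}$. The theorem therefore reduces to a purely tropical statement: the cone map induced by $\mathsf{ev}_i$ (respectively $\mathsf{st}$) equals the tropical evaluation map (respectively the tropical $j$-invariant) precomposed with $\Sigma(\WG(X))\to W_\Gamma(\Sigma(X))$.

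To compare the cone maps, we check them on each cone, that is, on the level of basic monoids at a geometric point $(C,\mathsf{p},f,\rho)$ with monoid $Q_{\W}$.

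\emph{Evaluation.} We compute $\mathsf{ev}_i^\flat\colon\oM_{X,f(p_i)}\to Q_{\W}$. Since $u_{p_i}=0$, the map $f^\flat_{p_i}$ lands in the summand $Q_{\W}$ of $Q_{\W}\oplus\N$ and agrees with $f^\flat_{\eta_V}$ for the vertex $V$ carrying $p_i$, via generization. Dualizing through \eqref{eqn: dual-of-Q}, a point $((V_\eta),(e_q))$ is sent to $V_{\eta_V}=h(V)=h(p_i)$. This is exactly the tropical evaluation.

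\emph{Stabilization.} The map $\WG(X)\to\oM_{1,1}$ factors through $\mathfrak{M}_{1,n}^{\rad}\to\mathfrak{M}_{1,n}\to\oM_{1,1}$, so by functoriality the claim reduces to the corresponding statement for $\mathfrak{M}_{1,n}\to\oM_{1,1}$. This requires computing the pullback of the boundary generator of $\oM_{1,1}$ at a curve whose stabilized $1$-pointed curve is a nodal cubic or a cycle of rational curves. We plan to show that this pullback equals $\sum_{q\in\mathsf{C}_0}\rho_q$, the total cycle length. The identification goes through the classical local description: after contracting, the versal smoothing parameter of the irreducible nodal cubic pulls back to the product of the smoothing parameters of the nodes in the cycle. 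Equivalently, $\mathsf{st}^*\delta_{\mathrm{irr}}=\sum_{q\in\mathsf{C}_0}\delta_q$ with multiplicity one.

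We expect the stabilization step to be the main obstacle. The difficulties are the stacky nature of $\oM_{1,1}$ (its $\mu_2$ generic stabilizer and the coordinate $j$ versus $1/j$ near the boundary) and the behaviour of the log structure under contraction of rational tails and bridges. We would first handle these étale locally on a chart of $\oM_{1,1}$ around the nodal cubic, with the standard log structure given by the boundary point, and verify the multiplicity-one statement by an explicit degeneration of the cycle. The descent formalism of Proposition~\ref{prop:improved-2.10-ACGSI} then globalizes the resulting equality of cone maps.
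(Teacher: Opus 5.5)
Your proposal is correct and follows essentially the same route as the paper: you use Proposition~\ref{prop:improved-2.10-ACGSI} to turn commutativity into an equality of cone maps, then compute the ghost-sheaf maps of $\mathsf{ev}_i$ and $\mathsf{st}$ at each strict point, getting evaluation at the vertex carrying $p_i$ and the diagonal sum $\sum_{q\in\mathsf{C}_0}\rho_q$ over the cycle edges. Splitting $\mathcal{A}_{\Ev}$ into its factors, and deriving the multiplicity-one pullback of the boundary of $\oM_{1,1}$ from the product of the nodal smoothing parameters, fill in details the paper only asserts; note that the stabilized curve is always the irreducible nodal cubic (never a cycle of several rational curves), and when it is smooth the map on ghost sheaves is zero.
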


\begin{proof}
    First note that we can check the statement at strict closed points. Indeed, for any strict map $S \to \WG(X)$, the induced map $S \to  \cW_{\Gamma}(X) \to \Ev \to \mathcal{A}_{\Ev}$ yields an element
\begin{equation}\label{eqn: homo-to-Artin-fans}
\mathrm{Hom}_{\mathsf{fs}}(S,\mathcal{A}_{\Ev})
=
\mathrm{Hom}_{\mathsf{GenCones}}(\Sigma(S), \Sigma(\Ev)),
\end{equation}
If we know the statement for points, then $\Sigma(S) \to \Sigma(\Ev)$ restricts on every cone of $\Sigma(S)$ to the evaluation map times the $j$-invariant map and we are done.
    
    Now, let $\mathrm{Spec}(Q_\W \to\C) \to \WG(X)$ be a strict closed point. Denote by $\sigma$ the cone of $\Sigma(\WG(X))$ dual to $Q_\W$. We have a commutative diagram 

    \begin{equation*}\label{eqn:full-diagram}
    \begin{tikzcd}
    \mathrm{Spec}(Q_\W \to \C)
    \arrow[r]
    \arrow[drr]
    & \WG(X)
    \arrow[r]
    & \mathcal{A}_{\WG(X)}
    \arrow[r]
    & \mathcal{A}_X^n \times \mathcal{A}_{\oM_{1,1}}
    \\
    & 
    & \mathcal{A}_{\sigma}
    \arrow[u, hook]
    \arrow[ur]
    \end{tikzcd}
    \end{equation*}

    where the map $\mathrm{Spec}(Q_\W \to \C) \to  \mathcal{A}_{\sigma}$ is induced by the identity on $\sigma$ and the map $\mathcal{A}_{\sigma} \to \mathcal{A}_X^n \times \mathcal{A}_{\oM_{1,1}}$ by definition by the tropical evaluation and $j$ invariant map.

    On the other hand, the composition 
    $$
    \mathrm{Spec}(Q_\W \to \C) \to \WG(X) \to X^n \times \oM_{1,1} \to \mathcal{A}_{X}^n \times \mathcal{A}_{\oM_{1,1}}
    $$
    corresponds to the map $\sigma \to \Sigma(X)^n \times \Sigma(\oM_{1,1})$ defined as follows. First, the map $\mathrm{Spec}(Q_\W \to \C) \to \WG(X)$ yields a minimal logarithmic map
    \[
    \begin{tikzcd}
    & C \arrow[r, "f"] \arrow[d, "\pi"'] & X \\
    &(\mathrm{Spec}(\C),Q_\W) \arrow[u, bend left=60, "p_i"] \arrow[u, bend right=60, "z_j"'] 
    \end{tikzcd}
    \]
    and thus we obtain a morphism
    \[
    p_i^{-1} \underline{f}^{-1} \overline{\mathcal{M}}_{X,\underline{f}(p_i)} \longrightarrow p_i^{-1} \overline{\mathcal{M}}_{C,p_i} = Q_\W \oplus \mathbb{N},
    \]
    where the map to $\mathbb{N}$ is the zero map. In particular, this gives a morphism
    \[
    p_i^{-1} \underline{f}^{-1} \overline{\mathcal{M}}_{X,\underline{f}(p_i)} \longrightarrow Q_\W,
    \]
    which in terms of the identification \eqref{eqn: basic monoids} is just the inclusion in the factor $\oM_{X,\underline{f}(\eta)}$ with $p_i \in \overline{\{ \eta\}}$. Its dual $\sigma \to p_i^{-1} f^{-1} \overline{\mathcal{M}}_{X,f(p_i)}^\vee \subseteq \Sigma(X)$ is precisely the evaluation at the vertex $V_\eta$ where $p_i$ is attached by \eqref{eqn: dual-of-Q}. For the map to $\Sigma(\cM_{1,1}) = \mathbb{R}_{\ge 0}$, the same argument applies. The map
\[
\overline{\mathcal{M}}_{\oM_{1,1}, \mathsf{st}([f])} \longrightarrow Q_\W
\]
is either the zero map when $\mathsf{st}([f])$ is not nodal, or the map $\mathbb{N} \to Q_\W$ which, in terms of~\eqref{eqn: basic monoids}, is the diagonal inclusion in the factors $\mathbb{N}$ corresponding to edges in the cycle of the dual graph of $C$ and $0$ on all other factors. Its dual
\[
\sigma \longrightarrow \N
\]
is precisely given by taking the sum of the lengths of the edges forming the cycle of the dual graph of $C$.

The conclusion of the theorem now follows from~\eqref{eqn: homo-to-Artin-fans}.
\end{proof}

\begin{remark}
    The argument above is general and applies to all the logarithmic mapping spaces.
\end{remark}

\subsection{Modification of the evaluation map}

Next, we replace the evaluation space $\Ev$ with a smooth toric variety as follows. 
The standard theory of modular forms implies that the moduli stack $\oM_{1,1}$ is isomorphic to the stacky projective line 
$\P(4,6)$. Thus, we have a morphism
\[
\oM_{1,1} \simeq \P(4,6) \to \P(2,3) \to \P^1,
\]
where the first map is the $\mu_2$-rigidification and the second map is the coarse moduli space morphism. Moreover, we may choose these maps so that the scheme-theoretic preimage of $0$ is precisely the $B\mu_2$ corresponding to the unique nodal curve  in $\oM_{1,1}$. Consider the morphism
\[
\Ev = X^n \times \oM_{1,1} \longrightarrow X^n \times \P^1 .
\]

Since both the point class and the unit class on $\oM_{1,1}$ are pulled back from $\P^1$, we may replace the space $\Ev$ by $X^n \times \P^1$. Moreover, for any class  $\alpha \otimes \beta \in \mathsf{CH}^*(X^n) \otimes \mathsf{CH}^*(\oM_{1,1})$ that is the restriction of some class $\alpha \otimes \beta' \in \mathsf{CH}^*(X^n) \otimes \mathsf{CH}^*(\P^1)$, the restriction of the Minkowski weight $c_{\alpha \otimes \beta'}$ to $\Sigma(\Ev) \subseteq \Sigma(X^n \times \P^1)$ coincides with $c_{\alpha \otimes \beta}$  as defined in~\eqref{eqn: def of Minkowski weight c}. 

By abuse of notation, we will again denote the space $X^n \times \P^1$ by $\Ev$, the class 
$\alpha \otimes \beta'$ by $\alpha \otimes \beta$, and the Minkowski weight 
$c_{\alpha \otimes \beta'}$ by $c_{\alpha \otimes \beta}$.

\begin{lemma}
    There exists a commutative diagram
\begin{center}
\begin{tikzcd}
\widetilde{\mathcal{W}}_{\Gamma}(X) 
  \arrow[r, "\widetilde{\mathsf{ev} \times \mathsf{st}}"] 
  \arrow[d, "s"']               
&  \widetilde{\Ev} 
  \arrow[d, "t"]     \\
\mathcal{W}_{\Gamma}(X) 
  \arrow[r, "\mathsf{ev} \times \mathsf{st}"] 
& \Ev             
\end{tikzcd}    
\end{center}
such that:
\begin{itemize}
\item $s$ and $t$ are logarithmic blow-ups;
\item $\widetilde{\Ev}$ is a smooth toric variety;
\item the morphism $\widetilde{\tau}$ satisfies the assumptions of Proposition~\ref{prop: technical tool}.
\end{itemize}

\end{lemma}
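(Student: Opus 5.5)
We will obtain the diagram by applying the standard toroidal semistable reduction/weak factorization machinery to the tropical map $\Sigma(\mathsf{ev}\times\mathsf{st})\colon \Sigma(\WG(X))\to\Sigma(\Ev)$, and then transport the resulting subdivisions back to logarithmic blow-ups. Here $\Ev=X^n\times\P^1$ is a smooth projective toric variety, so $\Sigma(\Ev)$ is an honest smooth fan. By Theorem~\ref{thm: commutative diagram for mapping spaces} the diagram \eqref{eqn: our artin-fan-diagram} commutes, so the logarithmic morphism $\mathsf{ev}\times\mathsf{st}$ is governed on the level of Artin fans by the cone map $\Sigma(\mathsf{ev}\times\mathsf{st})$.

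\emph{Step 1: making the map integral.} By the Abramovich--Karu / Molcho style results on weak semistable reduction, there exist a subdivision $\widetilde{\Sigma(\Ev)}\to\Sigma(\Ev)$ and a subdivision $\widetilde{\Sigma(\WG(X))}\to\Sigma(\WG(X))$ such that every cone of $\widetilde{\Sigma(\WG(X))}$ maps \emph{onto} a cone of $\widetilde{\Sigma(\Ev)}$. Concretely, we first subdivide $\Sigma(\WG(X))$ by the pullback of the fan $\Sigma(\Ev)$: each cone is replaced by the cones $\sigma\cap\Sigma(\mathsf{ev}\times\mathsf{st})^{-1}(\tau)$. We then refine $\Sigma(\Ev)$ so that the images of all these cones become unions of cones; this is possible because there are only finitely many cones. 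Finally we subdivide $\Sigma(\Ev)$ further, by toric resolution, into a smooth fan, and pull this subdivision back again. Surjectivity of cones onto cones is preserved under pulling back a subdivision of the target. We take $t$ to be the toric blow-up associated with $\widetilde{\Sigma(\Ev)}$, so $\widetilde{\Ev}$ is smooth and toric. We take $s$ to be the logarithmic blow-up of $\WG(X)$ associated with the pulled-back subdivision, i.e. the fs base change along $\widetilde{\mathcal{A}}_{\WG(X)}\to\mathcal{A}_{\WG(X)}$. The lift $\widetilde{\mathsf{ev}\times\mathsf{st}}$ exists by the universal property of logarithmic blow-ups, since the pulled-back log ideals become invertible. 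By \cite[Proposition~2.4]{Mirror-GS} the lifted map is then integral.

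\emph{Step 2: the remaining hypotheses of Proposition~\ref{prop: technical tool}.} Purity of dimension and log smoothness of $\widetilde{\WG(X)}$ follow from those of $\WG(X)$, since logarithmic blow-ups are log étale. The target $\widetilde{\Ev}$ is smooth by construction. The Artin fan square for the tilde map is obtained as follows. Its cone map is the induced $\widetilde{\Sigma(\WG(X))}\to\widetilde{\Sigma(\Ev)}$, which by Proposition~\ref{prop:improved-2.10-ACGSI} gives a morphism $\mathcal{A}_{\widetilde{\WG(X)}}\to\mathcal{A}_{\widetilde{\Ev}}$. Commutativity is checked exactly as in Theorem~\ref{thm: commutative diagram for mapping spaces}: at strict points it suffices to compare cone maps, and these agree because the square for the untilded map commutes and both blow-ups are base changes of Artin fan subdivisions.

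\emph{Main obstacle.} The delicate point is Step 1 in the generalized cone complex setting. $\Sigma(\WG(X))$ has self-gluings and monodromy, so we must make sure the subdivision of the source is well defined, i.e. compatible with all face maps and automorphisms of cones. We will handle this by making the subdivision of the source canonical: it is pulled back from the target, which is a genuine fan, so it is automatically invariant. Any further refinement needed to make the images of cones into cones is also performed on the target and then pulled back. No subdivision is ever chosen on the source alone, which avoids the issue.
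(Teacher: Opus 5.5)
Your proposal is correct and follows essentially the same route as the paper: refine $\Sigma(\Ev)$ so that the images of the cones of $\Sigma(\WG(X))$ become unions of cones, pass to a smooth refinement, pull this subdivision back to $\Sigma(\WG(X))$, and take the induced logarithmic blow-ups and the lifted map (you are in fact more explicit than the paper about why this gives integrality). The only difference is in the Artin fan square, which you obtain directly from Proposition~\ref{prop:improved-2.10-ACGSI} applied to the Zariski target $\widetilde{\Ev}$, whereas the paper argues that $\widetilde{\mathcal{A}}_{\WG(X)} \to \mathcal{A}_{\widetilde{\Ev}}$ factors through $\mathcal{A}_{\widetilde{\mathcal{W}}_{\Gamma}(X)}$ because $\Sigma(\widetilde{\Ev})$ is monodromy-free; just note that the cone map to feed into that proposition is $\Sigma(\widetilde{\mathsf{ev}\times\mathsf{st}})$ on $\Sigma(\widetilde{\mathcal{W}}_{\Gamma}(X))$, which can differ from the subdivided complex $\widetilde{\Sigma(\WG(X))}$ by discarded cone automorphisms, and this is harmless precisely because the target fan has no monodromy.
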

\begin{proof}
    The images $\Sigma(\mathsf{ev} \times \mathsf{st})(\sigma)$ of the cones 
$\sigma$ of $\Sigma(\mathcal{W}_{\Gamma}(X))$ induce a subdivision of 
$\Sigma(\Ev)$. Up to a further subdivision of $\Sigma(\Ev)$, we may assume that the induced logarithmic blow-up $\widetilde{\Ev}$ is a smooth toric variety. 

The subdivision $\Sigma(\widetilde{\Ev})$ of $\Sigma(\Ev)$ induces, by pullback, a subdivision of $\Sigma(\mathcal{W}_{\Gamma}(X))$. Let $\widetilde{\mathcal{A}}_{\cW_\Gamma(X)} \to \mathcal{A}_{\cW(X)}$ be the corresponding morphism of Artin fans, and let $\widetilde{\mathcal{W}}_{\Gamma}(X)= \cW_\Gamma(X) \times_{\mathcal{A}_{\cW(X)}} \widetilde{\mathcal{A}}_{\cW_\Gamma(X)}$ be the resulting logarithmic blow-up.

There is a factorization
\[
\widetilde{\mathcal{W}}_{\Gamma}(X) \longrightarrow \widetilde{\mathcal{A}}_{\cW_\Gamma(X)} \longrightarrow \mathcal{A}_{\widetilde{\mathcal{W}}_{\Gamma}(X)} \to \mathsf{Log}
\]
into strict morphisms, where $\mathcal{A}_{\widetilde{\mathcal{W}}_{\Gamma}(X)} \to \mathsf{Log}$ is étale and representable. The morphism $\widetilde{\mathcal{A}}_{\mathcal{W}_{\Gamma}(X)} \to \mathcal{A}_{\widetilde{\mathcal{W}}_{\Gamma}(X)}$ is not necessarily representable, but the strata of $\widetilde{\mathcal{A}}_{\mathcal{W}_{\Gamma}(X)}$ are trivial gerbes over the strata of $\mathcal{A}_{\widetilde{\mathcal{W}}_{\Gamma}(X)}$ (see \cite[Example 3.3.1]{ACMW} and \cite[Section 4.5]{MPS}).

The space $\widetilde{\mathcal{W}}_{\Gamma}(X)$ comes equipped with a logarithmic map
\[
\widetilde{\mathsf{ev} \times \mathsf{st}} \colon \widetilde{\mathcal{W}}_{\Gamma}(X) \longrightarrow \widetilde{\Ev},
\]
which is induced by the universal property of fs Cartesian products. Since the cone complex of $\widetilde{\Ev}$ is monodromy-free, the map $\widetilde{\mathcal{A}}_{\mathcal{W}_{\Gamma}(X)} \to \mathcal{A}_{\widetilde{\Ev}}$ factors through $\mathcal{A}_{\widetilde{\mathcal{W}}_{\Gamma}(X)}$, thereby giving the required commutative diagram.
\end{proof}

\begin{remark}
    Note that we are not claiming the existence of a commutative diagram 
    \begin{center}
\begin{tikzcd}
\widetilde{\mathcal{W}}_{\Gamma}(X) 
  \arrow[r] 
  \arrow[d, "s"']               
&  \mathcal{A}_{\widetilde{\mathcal{W}}_{\Gamma}(X)} 
  \arrow[d, dotted]     \\
\mathcal{W}_{\Gamma}(X) 
  \arrow[r] 
& \mathcal{A}_{\mathcal{W}_{\Gamma}(X)}           
    \end{tikzcd}    
    \end{center}
    Indeed, the cone complex $\Sigma(\mathcal{W}_{\Gamma}(X))$ may not be monodromy-free (see \cite[Example 3.3.1]{ACMW}).  

    On the other hand, the formation of the cone complex is functorial, yielding a natural morphism of cone complexes $\Sigma(\widetilde{\mathcal{W}}_{\Gamma}(X)) \to \Sigma(\mathcal{W}_\Gamma(X))$ (see \cite[Remark 2.5]{ACGSI}). Indeed, by construction, the cone complex $\Sigma(\widetilde{\mathcal{W}}_{\Gamma}(X))$ is obtained from the subdivision of $\Sigma(\mathcal{W}_{\Gamma}(X))$ by discarding certain cone automorphisms.
\end{remark}

\subsection{Proof of  Theorem \ref{thm: tropical correspondence}}

We can now proceed with the proof of the correspondence theorem for well-spaced genus $1$ maps.

\begin{proof}[Proof of  Theorem \ref{thm: tropical correspondence}]

    Since logarithmic blow-ups are birational, it is enough to show that 
\[
\big(\widetilde{\mathsf{ev} \times \mathsf{st}}\big)^* t^*(\alpha \otimes \beta) \cap 
[\widetilde{\mathcal{W}}_{\Gamma}(X)]
\]
equals the right-hand side of \eqref{eqn: correspondence}.

We begin by computing $t^*(\alpha \otimes \beta)$. The advantage of having $\Ev$ 
and $\widetilde{\Ev}$ be toric varieties is that we can apply the theory of 
Minkowski weights (see ~\cite{FuSt}) to these spaces.  Then, with the notation of \cite[Corollary 3.6]{FuSt}, we have
$$
t^*(\alpha \otimes \beta) \cap [\widetilde{\Ev}]= \sum_{\widetilde{\tau}, \tau} m_{\widetilde{\tau},\tau}^0 c_{\alpha \otimes \beta}(\tau)[V(\widetilde{\tau})]
$$
where the sum is over all pairs $\widetilde{\tau} \in \Sigma(\widetilde{\Ev})$ of codimension $\dim \Ev - \dim \WG(X)$ and $ \tau \in \Sigma(\Ev) $ of codimension $\dim  \WG(X)$, and
\[
m_{\widetilde{\tau},\tau}^0=
\begin{cases}
\left[  N_{\Ev}: \Sigma(t)( N_{\widetilde{\tau}})+ N_\tau \right] 
& \text{if } \Sigma(t)(\widetilde{\tau}) \text{ meets } \tau +W, \\[6pt]
0 & \text{otherwise}.
\end{cases}
\]
Here $W \in (\R^r)^n \times \R_{\geq 0}$ is a generic displacement vector as in the statement of Theorem \ref{thm: tropical correspondence}. The fact that $\Sigma(t)(\widetilde{\tau})$ meets $\tau+W$ for the generic $W$ forces the above index to be finite, and in particular
\begin{equation}\label{eqn: injective map of lattices}
N_{\widetilde{\tau}} \to N_{\Ev} \to N_{\Ev}/N_\tau
\end{equation}
is injective. 

Next, by Proposition \ref{prop: technical tool}, 
\begin{equation}\label{eqn: correspondence-after-subdivision}
\widetilde{\mathsf{ev} \times \mathsf{st}}^*[V(\widetilde{\tau})]= \sum_{\widetilde{\sigma}} [  N_{\widetilde{\tau}}: \Sigma(\widetilde{\mathsf{ev} \times \mathsf{st}})( N_{\widetilde{\sigma}})] [V(\widetilde\sigma)]
\end{equation}
where the sum is over the cones $\widetilde{\sigma}$ of $\Sigma( \widetilde{\mathcal{W}}_{\Gamma}(X))$ such that $\Sigma(\widetilde{\mathsf{ev} \times \mathsf{st}})(\widetilde{\sigma})= \widetilde{\tau} $.

Observe that by the injectivity of~\eqref{eqn: injective map of lattices}, we have the following relation between lattice indices:
\[
[  N_{\widetilde{\tau}}: \Sigma(\widetilde{\mathsf{ev} \times \mathsf{st}})( N_{\widetilde{\sigma}})]\,
[  N_{\mathrm{Ev}} :\Sigma(t)( N_{\widetilde{\tau}})+ N_\tau ]
=
[ N_{\mathrm{Ev}}/N_\tau: \Sigma(\widetilde{\mathsf{ev} \times \mathsf{st}})( N_{\widetilde{\sigma}})].
\]
Furthermore, for dimensional reasons, if $\sigma$ is a cone in $\Sigma(\mathcal{W}_\Gamma(X))$ such that $\Sigma(\mathsf{ev} \times \mathsf{st})(\sigma)$ intersects $\tau + W$, there exists a unique cone $\widetilde{\sigma}$ in the subdivision $\Sigma(\widetilde{\mathcal{W}}_{\Gamma}(X))$ contained in $\sigma$ that satisfies
\[
\widetilde{\sigma} \cap (\tau + W) \neq \emptyset.
\]
We claim that:
\begin{enumerate}[label=\textbf{Claim (\roman*)}, leftmargin=*]
    \item For every cone $\widetilde{\sigma}$ appearing in the sum \eqref{eqn: correspondence-after-subdivision}, the natural map $\widetilde{\sigma} \to \Sigma(\widetilde{\mathcal{W}}_{\Gamma}(X))$ is an inclusion;
    \item If a cone $\widetilde{\sigma} \in \Sigma(\widetilde{\mathcal{W}}_\Gamma(X))$ maps to a cone $\sigma \in \Sigma(\mathcal{W}_\Gamma(X))$, then the pushforward of $[V(\widetilde{\sigma})]$ is $[V(\sigma)]$; that is, the stabilizer of the point $f_{\widetilde{\sigma}}$ in $ \widetilde{\mathcal{W}}_\Gamma(X)$ corresponding to $\widetilde{\sigma}$ is equal to the stabilizer of the point $f_\sigma$ of in  $\mathcal{W}_{\Gamma}(X)$ corresponding to $\sigma$ \footnote{Note that this may fail if the map $\sigma \to \Sigma(X)$ is not an inclusion. For example, consider $X = [\mathbb{A}^2/\mu_2]$, where $\mu_2$ swaps the coordinates. The subdivision of $\Sigma(X)= \mathbb{R}_{\geq 0}^2/\mu_2$ obtained by adding the ray $\mathbb{R}_{\ge 0}(1,1)$ corresponds to the stack $X'=[\operatorname{Bl}_0(\mathbb{A}^2)/\mu_2]$, with exceptional divisor $[\mathbb{P}^1/\mu_2]$ on which $\mu_2$ acts by $[a:b] \mapsto [b:a]$. The fixed points of this action on $\mathbb{P}^1$ are $[1:1]$ and $[1:-1]$, whereas the points maximally degenerate points $[0:1]$ and $[1:0]$ are swapped by the action. Consequently, these swapped points yield a stratum in $X'$ with trivial stabilizer, whereas the origin $0 \in \mathbb{A}^2$ has stabilizer $\mu_2$.}.
\end{enumerate}

Indeed, Claim~(i) follows from the observation that $\widetilde{\sigma}$ surjects onto a maximal cone of $\widetilde{\Ev}$, which corresponds to a smooth toric variety. 

Claim~(ii), on the other hand, requires some explanation. Let $G_\sigma$ and $G_{\widetilde{\sigma}}$ be the stabilizers of $f_\sigma$ and $f_{\widetilde{\sigma}}$ respectively. Furthermore, set $T_{\sigma} = \mathrm{Hom}(\overline{\mathcal{M}}_{\mathcal{W}_{\Gamma}(X), f_{\sigma}}^{\mathrm{gp}}, \mathbb{G}_m)$ and define $T_{\widetilde{\sigma}}$ analogously. We then have a Cartesian diagram:

\[
\begin{tikzcd}[row sep={40,between origins}, column sep={70,between origins}]
      &\widetilde{\cW}_{\Gamma}(X)  \ar{rr}\ar{dd} & & \widetilde{\mathcal{A}}_{\cW_{\Gamma}(X)}\vphantom{\times_{S_1}} \ar{dd} \\
    BG_{\widetilde{\sigma}} \ar[crossing over]{rr} \ar{dd} \ar{ur} & & BT_{\widetilde{\sigma}} \ar{ur} \\
      & \cW_\Gamma(X) \ar{rr} & & \mathcal{A}_{\cW_\Gamma(X)} \vphantom{\times_{S_1}} \\
    BG_\sigma \ar{rr} \ar{ur} &&  BT_\sigma \ar[from=uu,crossing over] \ar{ur}
\end{tikzcd}
\]

By Claim~(i), the horizontal maps on the left-hand face are faithful inclusions, and the morphism $BT_{\widetilde{\sigma}} \to BT_\sigma$ is an isomorphism. Consequently, $BG_{\widetilde{\sigma}} \to BG_\sigma$ is also an isomorphism, which completes the proof of~(ii).

Finally, since $[N_{\widetilde{\sigma}} : N_\sigma] = 1$, we can rewrite Equation \eqref{eqn: correspondence-after-subdivision} as
\[
(\mathsf{ev} \times \mathsf{st})^*( \alpha \otimes \beta) = \sum_{\sigma \in \Sigma(\mathcal{W}_\Gamma(X))} \sum_{\tau} c_{\alpha \otimes \beta}(\tau) \bigl[  N_{\mathrm{Ev}}: \Sigma( \mathsf{ev} \times \mathsf{st})(N_\sigma) + N_\tau  \bigr] [V(\sigma)].
\]
The outer sum can be replaced by a sum over maximal cones of $W_\Gamma(\Sigma(X))$, provided each cone is counted with the weight $w(\sigma)$ as defined in Definition~\ref{def: weights}. Note that for a cone $\sigma \in \Sigma(\mathcal{W}_\Gamma(X))$, the cycle $[V(\sigma)]$ corresponds to a stacky point with automorphism group $\mathrm{Aut}(f_{\sigma})$. This concludes the proof.
\end{proof}

\section{Proof of the correspondence theorem, Part II}\label{sec: weights}

Let $\sigma$ be a maximal cone in $W_{\Gamma}(\Sigma)$. We aim to count the number of maximal cones of $\Sigma(\WG(X))$ lying above $\sigma$. Each such cone corresponds to a map in $\WG(X)$ lying at the intersection of a maximal collection of logarithmic divisors in $\WG(X)$.

Let
\begin{equation}\label{eqn: def n(sigma)}
\sigma_{1,1}, \ldots, \sigma_{1,t_1}, \;\ldots,\; \sigma_{n(\sigma),1}, \ldots, \sigma_{n(\sigma),t_{n(\sigma)}}
\end{equation}
be the maximal cones of $\Sigma(\WG(X))$ lying over $\sigma$, ordered so that for each $i$, the cones $\{\sigma_{i,j}\}_{j=1}^{t_i}$ correspond to points $(C_{i,j}/S_{i,j},\mathsf{p}^{i,j}, f_{i,j}, \rho_{i,j})$ whose underlying stable map of schemes
$
(\underline{C}_i/\underline{S}_i, \underline{\mathsf{p}}^{\,i}, \underline{f}_i)
$
and radial alignment $(\underline{C}_i/\underline{S}_i, \rho)$ is the same (with $\underline{S}_i = \mathrm{Spec}(\mathbb{C})$ for all $i$). 
The computation of the total number of cones $\sigma_{i,j}$ will be carried out in two steps:
\begin{itemize}
    \item First, we compute the number $n(\sigma)$, which we refer to as \emph{the number of fine maps determined by $\sigma$} \footnote{The quantity $n(\sigma)$ was introduced in the Introduction as the number of non-isomorphic fine basic logarithmic maps. This coincides with the one in Equation \eqref{eqn: def n(sigma)} by Lemma \ref{lemma: uniqueness of fine maps}.}.
    \item Then, for each $i = 1, \ldots, k$, we compute the number $t_i$, which we refer to as \emph{the number of logarithmic enhancements of $\underline{f}_i$}.
\end{itemize}

\begin{observation}\label{obs: further properties from sigma}
    Other than (1) and (2) in Observation \ref{disc:properties-over-sigma}, the data of $\sigma$ also yields the following two objects: 
    \begin{enumerate}[start=3, label=(\arabic*)]
        \item Two fine monoids $Q^{\fine}(\sigma)$ and $Q_{\mathsf{W}}^{\fine}(\sigma)$ defined as follows. The monoid  $Q^{\fine}(\sigma)$ is given by
        \begin{equation}\label{eqn: monoid Qfinesigma}
        Q^{\fine}(\sigma) := \frac{\prod_V \mathrm{Hom}(\vartheta_V \cap N_{\vartheta_V}, \mathbb{N}) \times \prod_e \mathbb{N}}{R}
        \end{equation}
        where $R \subseteq \left(\prod_V \mathrm{Hom}(\vartheta_V \cap N_{\vartheta_V}, \mathbb{N}) \times \prod_e \mathbb{N}\right)^{\gr}$ is the subgroup generated by the relations
        \begin{equation}\label{eqn: relations from tropical curve}
        a_{\vec{e}}(m) = \left((\ldots, \iota_{\vartheta_V}(m), \ldots, -\iota_{\vartheta_{V'}}(m), \ldots), (\ldots, u_{\vec{e}}(m), \ldots)\right),
        \end{equation}
        for $m \in M_X$ and $\vec{e}: V \to V'$ an oriented edge in $\mathsf{C}$. Here, $\vartheta_V$ denotes the cone of $\Sigma(X)$ containing the image of the vertex $V$ for tropical curves of type $\sigma$, and
     \[
        \iota_{\vartheta_V} : M_X \longrightarrow M_{\vartheta_V} := N_{\vartheta_V}^{*}
    \]
    is the dual of the inclusion $N_{\vartheta_V} \hookrightarrow N_X$. Moreover,
    \[
        u_{\vec e} : M_X \longrightarrow \mathbb Z
        \]
    is the homomorphism given by $m \mapsto m(u_{\vec e})$. For the definition of $Q_\W^{\fine}(\sigma)$, let $\tau $ be a cone in $\Sigma(\mathfrak{M}_{1,n})$ be the cone to which $\sigma$ map under the natural map $W_{\Gamma}(\Sigma(X)) \to \Sigma(\mathfrak{M}_{1,n})$. The cone $\tau$ naturally lives in $\prod_e \N$ and we set  
        \begin{equation}\label{eqn: monoid QWfinesigma}
        Q_\W^{\fine}(\sigma):= Q^{\fine}(\sigma) \times_{\prod_e \N} \tau^\vee
        \end{equation}
        See also Equation \eqref{eqn: basic monoid Wfine}.
        \item Endow each $\underline{S_{i,j}} = \operatorname{Spec}(\mathbb{C})$ with the fine log structure given by $Q_{\mathsf{W}}^{\mathsf{fine}}(\sigma)$. Then, we obtain a log-smooth curve $C \to S_{i,j}$ by endowing $\underline{C_{i,j}}$ with the sheaf of fine monoids $\mathcal{M}_C^{\fine}$ on $C$, as determined by Definition~\ref{def: stable log map}(1) and Remark~\ref{rmk: local structure at node}.
    \end{enumerate}
\end{observation}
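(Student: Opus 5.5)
The observation is essentially a well-definedness statement. The plan is to check that the basic fine monoid of every point over $\sigma$ equals the monoid built from $\sigma$ alone, and that the fine log curve is then determined by this monoid together with the underlying pointed curve. Throughout, fix a point $(C_{i,j}/S_{i,j},\mathsf{p}^{i,j},f_{i,j},\rho_{i,j})$ of $\WG(X)$ whose cone maps to $\sigma$. By Observation~\ref{disc:properties-over-sigma}, its combinatorial type as a logarithmic map is that of $\sigma$: same dual graph, same cones $\vartheta_V$ and same contact orders $u_{\vec e}$.

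\textbf{Step 1: comparison with $Q^{\fine}$.} Since $X$ is toric, for the generic point $\eta_V$ of $C_V$ we have $\oM_{X,\underline f(\eta_V)}\simeq \vartheta_V^\vee=\mathrm{Hom}(\vartheta_V\cap N_{\vartheta_V},\N)$. The generization maps $\chi_i$ are the restrictions of the surjection $M_X\to M_{\vartheta_V}$, namely $\iota_{\vartheta_V}$. At a node $q_e$, the contact order satisfies $u_q(m)=m(u_{\vec e})$.

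The relations $a_q(m)$ in \eqref{eqn: variants of Q} are indexed by $m\in\oM_{X,\underline f(q)}$. However, they depend additively on $m$, and $\oM^{\gr}_{X,\underline f(q)}$ is a quotient of $M_X$. Hence the subgroup they generate coincides with the subgroup generated by the $a_{\vec e}(m)$, $m\in M_X$, in \eqref{eqn: relations from tropical curve}. This gives $Q^{\fine}=Q^{\fine}(\sigma)$. Reversing the orientation of $\vec e$ only changes the signs of the generators, so the result is independent of orientation.

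\textbf{Step 2: comparison with $Q^{\fine}_{\W}$.} The radially aligned curve lies in the stratum of $\mathfrak M^{\rad}_{1,n}$ indexed by the image $\tau$ of $\sigma$. Its basic monoid is therefore $\tau^\vee$, and by \eqref{eqn: basic monoid Wfine} the basic monoid is the fine pushout $Q^{\fine}\oplus_{\prod_e\N}\tau^\vee=Q^{\fine}_{\W}(\sigma)$. Hence all the fine basic maps over $\sigma$ have the same basic monoid, which proves (3).

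\textbf{Step 3: the log curve in (4).} For each edge $e$, let $\rho_e\in Q^{\fine}_{\W}(\sigma)$ be the image of the generator of the $e$-th factor $\N$. At smooth points and at markings the log structure is forced by Definition~\ref{def: stable log map}(1): stalks $Q^{\fine}_{\W}(\sigma)$ and $Q^{\fine}_{\W}(\sigma)\oplus\N$ respectively. At a node, Remark~\ref{rmk: local structure at node} prescribes the chart $Q^{\fine}_{\W}(\sigma)\oplus_{\N}\N^2$ with $1\mapsto\rho_e$ and $t=0$, since $\underline S$ is a point.

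\textbf{Main obstacle.} The delicate points are in Step 3.
\begin{enumerate}
\item We need $\rho_e\neq 0$, and these charts should glue to a global fine log smooth structure. For this I would invoke F.~Kato's classification of log smooth curves, which also holds in the fine category.
\item We must pin down in what sense the structure is ``determined''. Over a point, the local charts are unique up to the choice of the gluing datum at each node, which is a torsor under $H_{C,q_e}$. This is exactly the freedom $\Lambda_e$ of \eqref{eqn: Lamndae}. So the claim should be read as: $\mathcal M_C^{\fine}$ is determined up to this node-gluing datum, which is precisely the datum counted later.
\item Checking that $\rho_e$ is nonzero in the possibly non-saturated, non-torsion-free monoid $Q^{\fine}_{\W}(\sigma)$ should follow from $\sigma$ having positive edge lengths. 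Indeed, a point of the interior of $\sigma$ gives a homomorphism $Q^{\fine}_{\W}(\sigma)\to\N$ with $\rho_e\mapsto\ell(e)>0$.
\end{enumerate}
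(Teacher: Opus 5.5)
Steps 1--3 are correct and match what the paper uses implicitly. The Observation has no separate proof. The equality of the basic fine monoid with $Q^{\fine}_{\W}(\sigma)$ is asserted in the proof of Lemma~\ref{lemma: uniqueness of fine maps} from \eqref{eqn: variants of Q} and \eqref{eqn: basic monoid Wfine}; your reduction from $m\in\oM_{X,\underline f(q)}$ to $m\in M_X$ is the detail omitted there. The same proof describes the curve as the pullback of the universal curve along a log map $S\to\mathfrak M_{1,n}$ with $1_e\mapsto\rho_e$. That pullback gives existence and fine log smoothness by base change, so you do not need Kato's classification in the fine category. Your interior-lattice-point argument that $\rho_e$ is a non-unit is exactly what makes this map exist with $t=0$.

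What needs correcting is your obstacle (2). You are right that $\sigma$ and $\underline C$ alone do not determine the curve. Over the fixed log point $S$, the gluing data $\Lambda_e\in\mathrm{Isom}(H_{C,q_e},\mathcal O_S(\rho_e))$ form a $(\C^*)^{E}$-torsor, not an $H_{C,q_e}$-torsor. The automorphisms of $S$ inducing the identity on $\oM_S$, namely $\mathrm{Hom}(Q^{\mathrm{gp}},\C^*)$ acting by $\phi\mapsto(\phi(\rho_e))_e$, do not act transitively on it. The reason is that $\Z^{E}\to Q^{\mathrm{gp}}$ is not injective: for example $\sum_{e\in\mathsf C_0}m(u_{\vec e})\rho_e=0$ whenever the cycle is not contracted.

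However, this datum is not ``precisely the datum counted later.'' It varies in a positive-dimensional family, while the later counts are finite: admissible cycles $(C_0,\mathsf p')$ in Theorem~\ref{thm: calculation of n(sigma)}, and torsion in $\mathrm{sat}(\sigma)$. Instead, the map pins the gluing datum down up to isomorphism. By the gluing description of Theorem~\ref{thm: realizability genus 1} and Remark~\ref{rmk: stronger_version_realizability}, a log map of type $\sigma$ over $\underline f$ amounts to a character of $\prod_V M_{\vartheta_V}\times\Z^{E}$. This character records the scalars of the $\sigma_{m,V}$ together with the $\Lambda_e$, and its values on the generators $a_{\vec e}(m)$ of $R$ are prescribed by $\underline f$. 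Since $\C^*$ is divisible, these characters form a single $\mathrm{Hom}(Q^{\mathrm{gp}},\C^*)$-torsor. Hence all gluing data that carry such a map differ by automorphisms of $S$. The same argument, with $(\tau^\vee)^{\mathrm{gp}}$ in place of $\Z^{E}$, handles $Q^{\fine}_{\W}(\sigma)$.

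This is the sense in which the log curve in (4) is well defined for maps over a given $\underline f$. It is also what the word ``natural'' glosses over in Lemma~\ref{lemma: uniqueness of fine maps}. In the later computations the freedom is absorbed into the existential choice of $D_e$ in Theorem~\ref{thm: realizability genus 1}, concretely the rescalings $\mu_k$ of Lemma~\ref{lem: lift in cycle}; it is never counted.
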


\subsection{The number of logarithmic enhancements of $\underline{f}_i$ is the saturation index}

Fix $(\underline{C}_i/\underline{S}_i, \underline{\mathsf{p}}^{\,i}, \underline{f}_i)$ for some index $i$. Endow $\underline{S_{i}}$ and $\underline{C_i}$ with the log structure described Observation \ref{obs: further properties from sigma}$(4)$.

By \cite[Theorem~1.1]{Wise}, there is a factorization 
\begin{equation}\label{eqn: Wise factorization}
\underline{\mathrm{Hom}}_{\mathsf{LS}^{\mathsf{fine}}}(C_i, X)
\longrightarrow \mathcal{T}_i
\longrightarrow \mathrm{Hom}_{\mathsf{Sch}}(\underline{C}_i, \underline{X}),
\end{equation}
where the first map is a monomorphism and the second is étale. The underline on the first $\mathrm{Hom}$ indicates that we restrict to basic logarithmic maps. The space $\mathcal{T}_i$ is the space of types and it is defined as follows. For a scheme $\underline{S}'$, write $\underline{C}'_i = \underline{C}_i \times \underline{S}'$ and let $g \colon \underline{C}'_i \to \underline{C}_i$ be the projection. Then
\[
\mathcal{T}_i(\underline{S}') 
= \mathrm{Hom}\bigl(g^{-1} f_i^{-1} \oM_X^{\mathrm{\gr}},\; g^{-1} \oM_{C_i}^{\mathrm{\gr}}\bigr).
\]

As the proof of next lemma shows, the data of $\sigma$ and $\underline{f}_i$ determine the type. This generalizes \cite[Proposition~3.6.3]{R17} to genus~$1$ and to the fine basic monoid (rather than the unsaturated $Q^{\mathrm{un}}$).

\begin{lemma}\label{lemma: uniqueness of fine maps} 
Fix $i = 1, \ldots, n(\sigma)$, and, for $j=1,\ldots, t_i$, denote by $(C_{i,j}^{\mathsf{fine}}/S_{i,j}^{\mathsf{fine}}, \mathsf{p}^{i,j}, f_{i,j}^{\mathsf{fine}})$ the image of $(C_{i,j}/S_{i,j}, \mathsf{p}^{i,j},\\ f_{i,j}, \rho_{i,j})$ in $\mathsf{ACGS}_{\Gamma}(X)^{\mathsf{fine}}$. View this as an element of $\underline{\mathrm{Hom}}_{\mathsf{LS}^{\mathsf{fine}}}(C_i, X)$. Then its image in $\mathcal{T}_i$ is independent of $j \in \{1, \ldots, t_i\}$.
\end{lemma}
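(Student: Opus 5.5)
The type is a homomorphism of sheaves of groups $g^{-1}f_i^{-1}\oM_X^{\gr} \to g^{-1}\oM_{C_i}^{\gr}$ on $\underline{C}_i$. The plan is to show that the stalk maps at generic points, nodes and markings, together with their compatibility under generization, are pinned down by $\sigma$ and $\underline{f}_i$ alone. Once that holds, they coincide for all $j$. All the curves $C_{i,j}^{\fine}$ carry the same underlying pointed curve $\underline{C}_i$. By Observation~\ref{obs: further properties from sigma}(3)--(4), the base monoid is $Q_{\W}^{\fine}(\sigma)$, or more precisely its image in $\mathsf{ACGS}^{\fine}_\Gamma(X)$, namely $Q^{\fine}(\sigma)$. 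It is determined by $\sigma$, and hence so is the sheaf $\oM_{C_i}^{\fine}$ described in Remark~\ref{rmk: local structure at node}. The sheaf $\underline{f}_i^{-1}\oM_X$ depends only on $\underline{f}_i$. So source and target of the type agree for all $j$, and we only need to compare the maps.

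I would check the maps point by point. At a generic point $\eta_V$ of $C_V$, the map $f^\flat_{\eta_V}\colon \oM_{X,\underline{f}(\eta_V)} \to Q^{\fine}(\sigma)$ should be the canonical inclusion of the factor $\mathrm{Hom}(\vartheta_V\cap N_{\vartheta_V},\N)$ into the presentation \eqref{eqn: monoid Qfinesigma}. This uses basicness: the basic fine monoid of $f_{i,j}^{\fine}$ is constructed as in \eqref{eqn: variants of Q} from the generization maps and the contact orders. By Observation~\ref{disc:properties-over-sigma} and the fact that the combinatorial type of each $f_{i,j}$ is $\sigma$, those contact orders are the $u_{\vec e}$ of $\sigma$. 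The identification $\oM_{S}\simeq Q^{\fine}(\sigma)$ is therefore the tautological one, and under it $f^\flat_{\eta_V}$ is the factor inclusion. The same holds at a marking $x$: the composite to $Q^{\fine}(\sigma)$ is the inclusion of the vertex factor, and the projection to $\N$ is the contact order $u_x$, which is fixed by $\Gamma$. At a node $q$, I would use \eqref{eqn: structure at nodes} to realize $\oM_{C,q}$ inside $Q\times Q$. Then $f^\flat_q(m)=(f^\flat_{\eta_1}\chi_1(m),f^\flat_{\eta_2}\chi_2(m))$, so $f^\flat_q$ is fixed by the two generic-point maps already determined, and \eqref{eqn: relations} is satisfied with $u_q=u_{\vec e}$.

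The main obstacle is the freedom in that identification. Each $f_{i,j}^{\fine}$ comes with its own basicness isomorphism $\oM_{S_{i,j}}\simeq Q^{\fine}$, and these could differ by an automorphism of $Q^{\fine}(\sigma)$ permuting factors, for instance one induced by an automorphism of the graph. To rule this out, I would use that the vertex and edge labels are attached to actual components and nodes of the fixed curve $\underline{C}_i$, since the dual graph is identified with $\mathsf{C}$ through $\underline{C}_i$ itself. With this rigidification, $\mathcal{T}_i(\underline{S}')$ is computed relative to a fixed $\oM_{C_i}$, so no ambiguity remains. A second subtlety is that $Q^{\fine}(\sigma)$ can have torsion, so the node stalk is not literally a submonoid of $Q\times Q$. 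I would handle this by working directly with the pushout description $Q\oplus_\N\N^2$, which still determines $f^\flat_q$ from the two branches and $u_q$. Finally, since a map of sheaves of groups is determined by its stalks, the types agree for all $j$.
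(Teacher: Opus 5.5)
Your proposal is correct and follows essentially the paper's route: the basic monoid is fixed by $\sigma$, hence so is $\oM_{C_i}$, and the ghost map $\underline{f}_i^{-1}\oM_X \to \oM_{C_i}$ is forced by how the basic fine monoid is built. The paper compresses your stalk-by-stalk check into the remark that $f^\flat$ factors through $\mathcal{M}^{\fine}_{C_i}$ ``by the definition of $Q^{\fine}$''. It also notes that the log structure on $C_i$ is the one induced by $S_i \to \mathfrak{M}_{1,n}$, which is what lets every $f^{\fine}_{i,j}$ be regarded as a map out of the same $C_i$.

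Your ``second subtlety'' does not arise. $\rho_q$ is a nonzero element of the sharp integral monoid $\oM_{S}$, so it is non-torsion in $\oM_S^{\gr}$. Hence $Q\oplus_{\mathbb{N}}\mathbb{N}^2 \to Q\times Q$ stays injective even when $Q^{\gr}$ has torsion.
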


\begin{proof}
     First observe that the basic monoid $Q_{i,j}^{\mathsf{fine}}$ of $(C_{i,j}^{\mathsf{fine}}/S_{i,j}^{\mathsf{fine}} , \mathsf{p}^{i,j}, f_{i,j}^{\mathsf{fine}})$ defined in Equation \eqref{eqn: variants of Q} coincides with $Q_\W^\fine(\sigma)$ in Equation \eqref{eqn: monoid QWfinesigma}. In particular, it is independent of $i$ and $j$.

    Next, the logarithmic structure $\mathcal{M}_{C_i}^{\mathsf{fine}}$ on $C$ coincides with the one obtained from the natural logarithmic map
    $
    S_i \longrightarrow \mathfrak{M}_{1,n}.
    $ 
    Finally, the morphism
    $
    \underline{f_{i,j}}^{-1}\mathcal{M}_X= \underline{f_{i}}^{-1}\mathcal{M}_X \longrightarrow \mathcal{M}_{C_{i,j}}
    $
    factors through $\mathcal{M}_{C_i}^{\mathsf{fine}}$ by the definition of $Q_{i,j}^{\mathsf{fine}}$. This concludes.
    
\end{proof}

Since the first map in Equation \eqref{eqn: Wise factorization} is a monomorphism, we obtain that the image $(C_{i,j}^{\mathsf{fine}}/S_{i,j}^{\mathsf{fine}}, \mathsf{p}^{i,j}, f_{i,j}^{\mathsf{fine}}, \rho_{i,j}) \in \WG(X)^{\fine}$ of the $(C_{i,j}/S_{i,j}, \mathsf{p}^{i,j}, f_{i,j}, \rho_{i,j})$ is independent of $j$. Call it 
$$
W_i=(C_{i}^{\mathsf{fine}}/S_{i}^{\mathsf{fine}}, \mathsf{p}^{i}, f_{i}^{\mathsf{fine}}, \rho_{i})= \mathrm{Spec}(Q_{\W}^{\fine}(\sigma) \to \C)=S_i.
$$
Note that the monoid of $S_i$ is independent of $i$ and coincides with $Q_{\mathsf{W}}^{\mathsf{fine}}(\sigma)$, as noted in the proof of Lemma~\ref{lemma: uniqueness of fine maps}. By Lemma \ref{lemma: uniqueness of fine maps}, the number of logarithmic enhancements of $\underline{f_i}$ is equal to the cardinality of the fiber of the  map in \eqref{eqn: map W} over $W_i$. Call 
\[
\begin{aligned}
W_i^{\sat}
&= W_i 
  \times_{\WG(X)^{\fine}} 
   \WG(X)
\end{aligned}
\]
the saturation of $W_i$. Here the fiber product is taken in $\mathsf{Sch}$. 

\begin{lemma}[Generalization of {\cite[Lemma 3.1]{Gross23}}]\label{lemma: generalization of Mark}
Let \( W := \operatorname{Spec}(Q \to \kappa) \) be a log point with \( Q \) a fine monoid. Then, the number of connected components of \( W^{\mathrm{sat}} \) is \( \lvert (Q^{\mathrm{gp}})_{\mathrm{tors}} \rvert \).
    
\end{lemma}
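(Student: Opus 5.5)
The plan is to compute $W^{\mathrm{sat}}$ explicitly by a local chart and count its points. Since $W$ is a log point, I will work with $Q$ as its characteristic monoid. I therefore assume that $Q$ is sharp, which is the situation where the lemma is applied, namely $Q=Q_{\W}^{\fine}(\sigma)$. In general one first replaces $Q$ by $Q/Q^*$. The chart $Q\to\kappa$ sends $Q\smallsetminus\{0\}$ to $0$. Saturation of a fine log scheme with a global chart is computed by base change along $\mathrm{Spec}\,\kappa[Q^{\sat}]\to\mathrm{Spec}\,\kappa[Q]$, where
\[
Q^{\sat}=\{x\in Q^{\gr} : nx\in Q \text{ for some } n\geq 1\}.
\]
This is exactly how the saturation squares in diagram~\eqref{diagram: local description map W} were produced. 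This gives
\[
W^{\mathrm{sat}}=\mathrm{Spec}\Bigl(\kappa[Q^{\sat}]\big/\bigl(Q\smallsetminus\{0\}\bigr)\kappa[Q^{\sat}]\Bigr),
\]
and the first step is to justify this identification as a scheme.

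The second step is to locate the points of this spectrum. Connected components only depend on the reduced structure, so I pass to the nilreduction. Let $x\in Q^{\sat}$ be a non-unit. Then some multiple $nx$ lies in $Q$ and is nonzero, since otherwise $x$ would be torsion and hence a unit of $Q^{\sat}$. Therefore $t^{x}$ is nilpotent in the quotient ring. The reduced ring is then $\kappa[(Q^{\sat})^*]$, viewed as $\kappa[Q^{\sat}]$ modulo the ideal spanned by the non-units.

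The third step identifies the units. If $x$ is a unit of $Q^{\sat}$, then $nx\in Q$ and $n(-x)\in Q$ for a common $n$, so $nx\in Q^*=0$ by sharpness. Hence $x$ is torsion. Conversely, every torsion element of $Q^{\gr}$ lies in $Q^{\sat}$ and is a unit there. Thus $(Q^{\sat})^*=(Q^{\gr})_{\tors}=:T$, and
\[
W^{\mathrm{sat}}_{\mathrm{red}}=\mathrm{Spec}\,\kappa[T]\simeq \mathrm{Hom}(T,\kappa^*).
\]
Since $\kappa$ is algebraically closed of characteristic $0$, $\kappa[T]\simeq\kappa^{|T|}$, so this is a reduced finite set of exactly $|T|$ points. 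Hence $W^{\mathrm{sat}}$ has $|(Q^{\gr})_{\tors}|$ connected components.

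I expect the main obstacle to be the first step. One has to make sure that the fs-fibre product $W\times_{\WG(X)^{\fine}}\WG(X)$, or abstractly the saturation of $W$, really equals this chart-level base change and not merely its torsion-free part. The two-step description $Q\to Q/Q_{\tors}\to (Q/Q_{\tors})^{\sat}$ in \eqref{eqn: basic monoid W(X)} could suggest that the torsion is simply discarded. Instead it should appear through the $\mu_T$-torsor $\mathrm{Spec}\,\kappa[T]$. I would handle this by checking the universal property directly. A morphism from an fs log scheme $S$ to $W$ amounts to $Q\to\oM_S$ together with the chart, and it factors uniquely through $Q^{\sat}$ precisely after choosing the images of the torsion units $T\to\mathcal{O}_S^*$ compatibly. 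This is a choice of a point of $\mathrm{Hom}(T,\kappa^*)$, as in the original argument of \cite[Lemma~3.1]{Gross23}.
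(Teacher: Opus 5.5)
Your argument is correct for sharp $Q$ over an algebraically closed field of characteristic $0$. That is exactly the case the paper uses: $Q^{\fine}_{\W}(\sigma)$ is the characteristic monoid of a basic fine log point, hence sharp, and $\kappa=\C$.

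Your route differs from the paper's in where the work is done. The paper takes the sharp case as a black box from \cite[Lemma~3.1]{Gross23*}. Its own argument is only the reduction from fine to sharp, via the cartesian cube comparing $W$ with $\overline{W}=\mathrm{Spec}(\overline{Q}\to\C)$, where $\overline{Q}=Q/Q^{\times}$. You instead reprove the sharp case directly, by showing that $\kappa[Q^{\sat}]/(Q\smallsetminus\{0\})$ has nilreduction $\kappa[(Q^{\sat})^{*}]=\kappa[(Q^{\mathrm{gp}})_{\tors}]$. This makes the argument self-contained and shows where the hypotheses on $\kappa$ enter: they are exactly what gives $\kappa[T]\simeq\kappa^{|T|}$.

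Your Step~1 is the standard chart description of saturation, which is how the paper itself produces $W^{\sat}$ in diagram~\eqref{diagram: local description map W}. So the universal-property check in your last paragraph is not needed. It is also slightly misphrased. For fs $S$ the extension $Q^{\sat}\to\mathcal{M}_S$ is forced, being the restriction of $Q^{\mathrm{gp}}\to\mathcal{M}_S^{\mathrm{gp}}$. Hence the character $T\to\mathcal{O}_S^{\times}$ is determined by $S\to W$ rather than chosen, and it is what singles out a component of $W^{\sat}$.

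One caution concerns your sentence ``in general one first replaces $Q$ by $Q/Q^*$''. This does not yield the stated count, because passing to $Q/Q^*$ changes the torsion subgroup. In fact the statement fails for non-sharp $Q$. For $Q=\Z/2\oplus\N$ one has $Q^{\sat}=Q$, so $W^{\sat}=W$ is a single point, while $|(Q^{\mathrm{gp}})_{\tors}|=2$. The reduction only gives $|((Q/Q^{*})^{\mathrm{gp}})_{\tors}|$.

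The paper's cube argument has exactly the same defect, so this is not a discrepancy with the paper. Rather, the lemma should be stated for sharp $Q$, equivalently with $Q$ the characteristic monoid of $W$. That is all Theorem~\ref{thm: number of logarithmic enhancements} requires.
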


\begin{proof}
When \( Q \) is sharp and fine, this is \cite[Lemma~3.1]{Gross23}. 
To extend this to a fine (not necessarily sharp) monoid, replace \( \mathsf{m} \) in the proof there with 
$
\{ z^q \mid q \in Q \smallsetminus Q^{\times} \}.
$

Otherwise, assuming \cite[Lemma~3.1]{Gross23} we can give a direct proof as follows. Denote by $\overline{Q}= Q/ Q^{\times}$. We have a cartesian diagram of schemes

\[\begin{tikzcd}[row sep={40,between origins}, column sep={70,between origins}]
      &\overline{W}^{\sat}  \ar{rr}\ar{dd}\ar{dl} & & W^{\sat}\vphantom{\times_{S_1}} \ar{dd}\ar{dl} \\
    \overline{W}:=\mathrm{Spec}(\overline{Q} \to \C) \ar[crossing over]{rr} \ar{dd} & & W=\mathrm{Spec}(Q \to \C) \\
      & \mathrm{Spec}(\C[\overline{Q}^{\sat}]) \ar{rr} \ar{dl} & & \mathrm{Spec}(\C[Q^{\sat}]) \vphantom{\times_{S_1}} \ar{dl} \\
    \mathrm{Spec}(\C[\overline{Q}]) \ar{rr} &&  \mathrm{Spec}(\C[Q])\ar[from=uu,crossing over]
\end{tikzcd}\]
where $\overline{W}^{\sat}$ denotes the saturation of $\overline{W}$. The bottom square is cartesian by \cite[Proposition 1.3.5 (5)]{Ogus} and the square on top is cartesian by \cite[Lemma~3.1]{Gross23}. The conclusion now follows from the fact that $\overline{W} \to W$ is an isomorphism of schemes.
\end{proof}

\begin{theorem}\label{thm: number of logarithmic enhancements}
    The number of logarithmic enhancements of each $\underline{f_{i}}$ is equal to $|((Q_\W^{\mathsf{fine}}(\sigma))^{\gr})_{\tors}|$.
\end{theorem}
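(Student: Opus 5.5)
The plan is to identify the set of logarithmic enhancements of $\underline{f_i}$ with the points of the fiber $W_i^{\sat} = W_i \times_{\WG(X)^{\fine}} \WG(X)$, and then count these points using Lemma~\ref{lemma: generalization of Mark}. By Lemma~\ref{lemma: uniqueness of fine maps} and the monomorphism in \eqref{eqn: Wise factorization}, every enhancement $(C_{i,j}/S_{i,j},\mathsf{p}^{i,j},f_{i,j},\rho_{i,j})$ of $\underline{f_i}$ maps to the same fine basic point $W_i=\mathrm{Spec}(Q_\W^{\fine}(\sigma)\to\C)$ of $\WG(X)^{\fine}$. Conversely, every point of $\WG(X)$ lying over $W_i$ has underlying scheme map $\underline{f_i}$ and type $\sigma$. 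Hence $t_i$ equals the number of points of $\WG(X)$ over $W_i$.

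First, I would use the local description \eqref{diagram: local description map W}. Choose a strict étale neighbourhood $U$ of $W_i$ with a strict smooth map $U\to \mathrm{Spec}(\C[Q_\W^{\fine}(\sigma)])$. The map $\WG(X)_U \to U$ is then the base change of the saturation morphism $\mathrm{Spec}(\C[(Q_\W^{\fine})^{\sat}])\to \mathrm{Spec}(\C[Q_\W^{\fine}])$. Since $\sigma$ is maximal, $W_i$ is maximally degenerate, and the lemma lets us take the smooth maps to be étale. Consequently the fiber of $\WG(X)\to\WG(X)^{\fine}$ over $W_i$ is the scheme $W_i^{\sat}$, which is the saturation of the log point $\mathrm{Spec}(Q_\W^{\fine}(\sigma)\to\C)$ as in Lemma~\ref{lemma: generalization of Mark}.

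Next, Lemma~\ref{lemma: generalization of Mark} gives that $W_i^{\sat}$ has exactly $|((Q_\W^{\fine}(\sigma))^{\gr})_{\tors}|$ connected components. I still need each component to contribute exactly one enhancement. This is the step I expect to be the main obstacle. Each component of the saturation of a log point is supported at a single point, but it may be non-reduced. I would argue that, because $W_i$ is maximally degenerate and $\WG(X)$ is log smooth (hence reduced) with $\sigma$ maximal, each component corresponds to a single closed point, namely a distinct maximal cone $\sigma_{i,j}$. Concretely, the components of $W_i^{\sat}$ are torsors under $\mathrm{Hom}((Q^{\gr})_{\tors},\mathbb{G}_m)$, a finite reduced group scheme over $\C$. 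So the reduced fiber is a finite set of that cardinality, and its points are exactly the distinct enhancements.

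Finally, I would check that distinct points of this fiber give non-isomorphic objects $f_{i,j}$, so that they are counted separately among the cones in \eqref{eqn: def n(sigma)}. Two enhancements of the same fine map differ by the choice of a character of the torsion. Isomorphisms of fs maps lying over the identity of $W_i$ are trivial on this data, while nontrivial automorphisms are accounted for separately in $\mathrm{Aut}(f_{\sigma_i})$. This gives $t_i=|((Q_\W^{\fine}(\sigma))^{\gr})_{\tors}|$.
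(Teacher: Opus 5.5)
Your main line is the paper's own argument. The paper identifies the enhancements of $\underline{f_i}$ with the points of $W_i^{\sat}=W_i\times_{\WG(X)^{\fine}}\WG(X)$ via Lemma~\ref{lemma: uniqueness of fine maps}, and then applies Lemma~\ref{lemma: generalization of Mark}. Your reducedness worry is harmless, and the appeal to log smoothness of $\WG(X)$ is beside the point. With $Q=Q_\W^\fine(\sigma)$ sharp, $W_i^{\sat}=\mathrm{Spec}\bigl(\C[Q^{\sat}]/(z^q : q\in Q\smallsetminus\{0\})\bigr)$ is a finite $\C$-scheme whose reduction is $\mathrm{Spec}\,\C[(Q^{\gr})_{\tors}]$. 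Its connected components are therefore single points. It is typically non-reduced (for $Q=\langle 2,3\rangle\subset\N$ one gets $\C[t]/(t^2)$), but this does not affect the count.

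The genuine gap is your last paragraph, which is exactly the step that turns ``points of $W_i^{\sat}$'' into ``cones $\sigma_{i,j}$''. A point of $W_i^{\sat}$ is an fs map $f_{i,j}$ together with an identification of its fine image with $W_i$. The group $\mathrm{Aut}(W_i)$ acts on $W_i^{\sat}$, the cones $\sigma_{i,j}$ correspond to the orbits, and, since saturation is representable, the stabilizer of a point is $\mathrm{Aut}(f_{i,j})$. So $t_i=|((Q_\W^\fine(\sigma))^{\gr})_{\tors}|$ needs $\mathrm{Aut}(W_i)$ to act trivially, i.e.\ every automorphism of the fine map must lift to every fs enhancement. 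Saying that nontrivial automorphisms are ``accounted for separately'' does not rule out two distinct fiber points being identified by an isomorphism lying over a nontrivial automorphism of $W_i$.

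This does happen. Take a cycle made of two trivalent vertices in the interior of a maximal cone, joined by two edges of direction $w n_1$ (the second column of Table~\ref{tab: comparing weights deficiency 1} with $a=b=w$). Then the torsion contains $\ell_1-\ell_2$ of order exactly $w$, and the $\mu_2$-swap acts on it by $-1$. Concretely, the ratio $c\in\mu_w$ of the two node-smoothing parameters, taken in swap-symmetric coordinates, goes to $c^{-1}$. So for $w\ge 3$ the swap moves fiber points, and there are strictly fewer than $|((Q_\W^\fine(\sigma))^{\gr})_{\tors}|$ isomorphism classes.

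What your argument does prove unconditionally is the orbit--stabilizer identity
\[
\sum_{\sigma_{i,j}} \frac{1}{|\mathrm{Aut}(f_{i,j})|}=\frac{|((Q_\W^\fine(\sigma))^{\gr})_{\tors}|}{|\mathrm{Aut}(W_i)|},
\]
summed over the cones $\sigma_{i,j}$ above $W_i$. This is all that is needed for \eqref{eqn: reduction_to_aut_of_fine_map}. The paper's one-line proof makes the same identification silently and leans on Theorem~\ref{thm: automorphisms}.
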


\begin{proof}
    Apply Lemma \ref{lemma: generalization of Mark} to $W_i$.
\end{proof}

\begin{definition} \label{def: sat}
    We call $\sat(\sigma):= |(Q^{\fine}_W(\sigma)^{\gr})_{\tors}|$
    the saturation index of the cone $\sigma$. 
\end{definition}

\subsection{Automorphisms}\label{sec: automorphisms}

In this section, we relate the automorphism groups of basic fs logarithmic maps to those of the associated basic fine logarithmic maps and their corresponding tropical maps.

Let $\sigma$ be a maximal cone in $\Sigma(\mathcal{W}_\Gamma(X))$, $[h:(\mathsf{C},\mathsf{p}) \to \Sigma(X)]$ a point in $\sigma$ and let $(C,\mathsf{p}, f_\sigma,  \rho)$ (resp. $(C^\fine,\mathsf{p}, f_\sigma^\fine ,  \rho)$) be the corresponding point in $\mathcal{W}_\Gamma(X)$ (resp. the image of $(C,\mathsf{p}, f_\sigma,  \rho)$ in $\cW_\Gamma(X)^\fine$).

\begin{lemma}\label{lemma: automorphisms fix vertices}
Every automorphism of the induced map $[h:(\mathsf{C},\mathsf{p}) \to (N_X)_\R]$ to $(N_X)_\R$ fixes all the vertices of $\tropC$.
\end{lemma}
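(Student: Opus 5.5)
The plan is a purely combinatorial argument that uses only the markings and the stability of the curve in $(N_X)_\R$; the map $h$ itself plays no role. Let $\Phi$ be an automorphism of $[h:(\tropC,\mathsf{p})\to (N_X)_\R]$, where $\tropC$ is taken as the associated stable curve: bivalent vertices are forgotten, so every genus $0$ vertex is at least $3$-valent. Since $\Phi$ preserves the marking functions $p$ and $z$, it fixes every infinite end, i.e.\ every leg $p_i$, $z_j$. Since it is a graph automorphism, it also maps the core $\tropC_0$, the minimal genus $1$ subgraph, to itself. If $\tropC_0$ is a single vertex, either of genus $1$ or with a loop, that vertex is fixed, so we may assume $\tropC_0$ is a genuine cycle.

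\emph{Step 1: every branch carries a leg.} Each connected component of $\tropC\smallsetminus \tropC_0$, and more generally each branch beyond a vertex away from the core, is a finite tree. All of its vertices have genus $0$ and are at least $3$-valent. I will show that every such branch contains at least one leg. The argument is the usual leaf count: a finite tree whose vertices have valence at least $3$ cannot end in a vertex without ends, so its extremities must be infinite ends, which are exactly the markings.

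\emph{Step 2: vertices off the core.} Let $V\notin V(\tropC_0)$. There is a unique path from $\tropC_0$ to $V$. Since $V$ is at least $3$-valent, it has at least two outgoing branches pointing away from the core. By Step 1 we can pick legs $l_1,l_2$ lying in two different outgoing branches. Then $V$ is characterized as the first vertex at which the geodesics from $l_1$ and from $l_2$ towards $\tropC_0$ meet. This description is preserved by $\Phi$, because $\Phi$ fixes $l_1$, $l_2$ and $\tropC_0$ and preserves the graph structure. Hence $\Phi(V)=V$.

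\emph{Step 3: vertices on the core.} Each $V\in V(\tropC_0)$ has exactly two half-edges in the cycle, and in the case where $\tropC_0$ is a single vertex it has genus $1$ or a loop. Being at least $3$-valent, $V$ therefore has at least one half-edge leaving the cycle. By Step 1 the branch hanging from that half-edge contains a leg $l$, or the half-edge is itself a leg. The vertex $V$ is then the unique point where the geodesic from $l$ reaches $\tropC_0$. This point is preserved by $\Phi$, so $\Phi(V)=V$.

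The only delicate point I expect is the bookkeeping in the stabilization. Bivalent vertices of the curve in $\Sigma(X)$ are removed in passing to $(N_X)_\R$, so the statement concerns the vertices of the stable model, where the at least $3$-valence used in Steps 1--3 holds. One also has to make sure that contracted parts, in particular a contracted cycle, create no exception. They do not, since the argument never uses the positions $h(V)$. Note that $\Phi$ may still permute parallel edges between two fixed vertices, for instance a cycle of length $2$; this is consistent with the statement, which concerns only vertices.
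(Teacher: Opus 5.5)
Your proof is correct and follows essentially the same route as the paper. Both arguments use that the markings are fixed, that every tree hanging off the core carries a marking, that core vertices are fixed as the attachment points of these trees, and that vertices off the core are pinned down by paths from fixed markings to the core; the paper uses a single marking together with preservation of distance to the core, where you use two legs meeting at $V$. Your explicit leaf count and your insistence on working with the stable model are welcome clarifications. The paper tacitly uses that every cycle vertex is at least $3$-valent, and if bivalent vertices were kept, the swap in Corollary~\ref{cor: automorphisms} would move them.
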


\begin{proof}
Let $A$ be such an automorphism. Define the (in general disconnected) subgraph
\[
\mathsf{T} := \overline{\tropC \setminus \tropC_0} \subseteq \tropC,
\]
which is a union of trees, each carrying at least one of the markings $z_j$.

Since the markings are fixed by $A$, it follows that $A$ preserves each connected component of $\mathsf{T}$. In particular, $A$ fixes the intersection points $\mathsf{T} \cap \tropC_0$. Hence all vertices of $\tropC_0$ that lie in the closure of $\mathsf{T}$ are fixed by $A$.

Now every vertex in $\tropC_0$ is at least $3$-valent, and at most two of its adjacent edges lie in the cycle $\tropC_0$. Therefore, each vertex of $\tropC_0$ is incident to at least one edge contained in $\mathsf{T}$, and hence lies in $\mathsf{T} \cap \tropC_0$. It follows that all vertices of $\tropC_0$ are fixed by $A$.

Now let $V$ be a vertex in a connected component of $\mathsf{T}$. There exists a unique path
\[
V = V_k, V_{k-1}, \dots, V_0
\]
from $V$ to the nearest vertex $V_0$ in the cycle $\tropC_0$.

Suppose that $A(V) = V'$. Since $A$ preserves $\tropC_0$, it also preserves the distance to $\tropC_0$, hence $V'$ lies at the same distance $k$ from the cycle. If $V \neq V'$, then there exists a marking $z_j$ such that the unique path from $z_j$ to $\tropC_0$ passes through $V$ but not through $V'$. This yields a contradiction because $A$ fixes the marking $z_j$ and thus the path from $z_j$ to $\tropC_0$.

\end{proof}

\begin{corollary} \label{cor: automorphisms}
The tropical map $[h:(\mathsf{C},\mathsf{p}) \to \Sigma(X)]$ has no nontrivial automorphisms unless the cycle $\tropC_0$ is either a self-loop  or consists of two vertices joined by two parallel edges with the same direction vector, possibly together with additional $2$-valent vertices along those edges (see Figure \ref{fig: two vertices in cycle}). In these cases, the automorphism group is isomorphic to $\mu_2$. 
\end{corollary}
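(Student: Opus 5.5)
The plan is to start from Lemma \ref{lemma: automorphisms fix vertices}: any automorphism $A$ of $[h:(\mathsf{C},\mathsf{p}) \to \Sigma(X)]$ is in particular an automorphism of the induced map to $(N_X)_\R$, so it fixes every vertex of the associated stable curve. The $2$-valent vertices of the curve in $\Sigma(X)$ are also fixed. Such a vertex lies on an edge of the associated stable curve, and that edge is either preserved or, in the cycle, exchanged with a parallel edge. In either case the vertex is determined by its distance along the edge from a fixed endpoint, because $A$ preserves lengths and $h$. So $A$ acts trivially on vertices, and it is determined by its action on half-edges, i.e.\ by a permutation of the edges that fixes all endpoints.

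Next we analyze which edges can be permuted. Edges not in the cycle $\tropC_0$ are bridges. Their endpoints are fixed and the graph minus the cycle is a forest, so two distinct bridges cannot share both endpoints. Hence each bridge is mapped to itself. It also cannot be flipped, because its two endpoints are distinct and both fixed. The markings are fixed by definition. Thus $A$ can only act on the cycle edges, and it must fix every vertex of the cycle.

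Suppose first that the cycle has at least three vertices in the associated stable curve. Then every edge of the cycle is uniquely determined by its pair of endpoints. It also cannot be reversed, since its endpoints are distinct and fixed, so $A$ is trivial.

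If the cycle has exactly two vertices $V,V'$ (after forgetting $2$-valent vertices), there are two parallel edges $e_1,e_2$ joining them. The only possible nontrivial $A$ swaps them. This requires $\ell(e_1)=\ell(e_2)$ and equal direction vectors $u_{\vec e_1}=u_{\vec e_2}$ from $V$. In the $\Sigma(X)$ picture we also need matching subdivisions by $2$-valent vertices, which is automatic: equal slopes and equal lengths give identical images, hence identical cone crossings. Applying the swap twice gives the identity, so $\mathrm{Aut}\simeq\mu_2$. In the self-loop case, the single edge at $V$ may be reversed, which is compatible with $h$ exactly when the loop is contracted, so its direction is $0=-0$. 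Again the group is $\mu_2$ (with $2$-valent vertices excluded or symmetric). Conversely, in each listed case the swap or reversal is indeed an isomorphism, which gives the $\mu_2$ claim.

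The main obstacle is the careful bookkeeping of the $2$-valent vertices introduced by the $\Sigma(X)$ subdivision: showing that they are fixed, and that they do not obstruct or create extra symmetries. I would handle this by noting that $h$ is injective on each non-contracted edge, so positions are rigid. For a contracted self-loop, any added vertex would be a $2$-valent vertex not crossing a cone wall, so it cannot appear by stability.
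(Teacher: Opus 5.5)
Your proposal is correct and follows the same route as the paper. Lemma~\ref{lemma: automorphisms fix vertices} reduces everything to permuting edges with common endpoints, which leaves only the reversal of a self-loop or the swap of two parallel edges, and one then checks that this involution is compatible with the map to $\Sigma(X)$. For the self-loop you use contractedness and stability where the paper appeals to maximality of $\sigma$, and your ``identical images'' argument for the parallel edges is the paper's simultaneous wall-crossing observation. One wording fix: in the parallel-edge case the $2$-valent vertices are exchanged, not fixed, so your first paragraph should say they are determined by the induced automorphism of the associated stable curve, which is all your later argument actually uses.
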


 \begin{figure}[h]
    \centering
    \begin{tikzpicture}
        \draw[burntsiena] (0.6, -2) -- (0.6, 1); 
        \draw[burntsiena] (0.6, -2) -- (1.3, 1);
    
        \fill (2,0) circle[radius=2pt];
        \fill (1.5,0) circle[radius=2pt];
        \draw (0, 0.05) -- (1.5, 0.05);
        \draw (0, -0.05) -- (1.5, -0.05);
        \draw (1.5, 0) -- (2,0);
        \draw (0, 0) -- (-1, 1); 
        \draw (0, 0) -- (-1, -1); 
        \draw (2, 0) -- (3, 1); 
        \draw (2, 0) -- (3, -1);
        
        \node at (-0.4,0) {$V$};
        \node at (1.5,-0.3) {$V'$};

        \fill (0.6,0.05) circle[radius=1pt];
        \fill (0.6,-0.05) circle[radius=1pt];
        \fill (1.08,0.05) circle[radius=1pt];
        \fill (1.06,-0.05) circle[radius=1pt];

        \fill (0,0) circle[radius=2pt];
    \end{tikzpicture}
    \caption{A parametrized tropical curve where the cycle consists of two vertices (depicted by large bullets) connected by two edges with the same direction vector, with additional $2$-valent vertices along the edges (depicted by small bullets). The codimension $1$ cones of $\Sigma(X)$ which contain the constrained vertices are shown in orange.}
    \label{fig: two vertices in cycle}
\end{figure}

\begin{proof}
This follows essentially from Lemma~\ref{lemma: automorphisms fix vertices}. Any automorphism of the induced map to $(N_X)_\R$ fixes every vertex. Consequently, it can only permute half-edges incident to a common vertex. If an automorphism exchanges two half-edges, then the corresponding edges must connect the same pair of vertices, or else form a self-loop at a single vertex. In either case there is a unique nontrivial involution, and hence the automorphism group is isomorphic to $\mu_2$.

Since $\sigma$ is maximal, if the cycle $\tropC_0$ is a self-loop, then its image under $h$ lies in the interior of a maximal cone. It follows that the corresponding $\mu_2$-involution lifts to an involution of the map to $\Sigma(X)$.

Similarly, suppose the cycle consists of two vertices connected by two parallel edges, possibly together with additional $2$-valent vertices along those edges. Then the $\mu_2$-action again lifts to the map to $\Sigma(X)$. Indeed, whenever the cycle meets a codimension-$1$ stratum of $\Sigma(X)$, it does so simultaneously along both parallel edges, so exchanging the two branches preserves the map to $\Sigma(X)$.

\end{proof}

Let $G_\sigma \trianglelefteq \mathrm{Aut}(f_\sigma)$ (resp. $G^\fine_\sigma \trianglelefteq \mathrm{Aut}(f_\sigma^\fine)$) denote the normal subgroup consisting of those automorphisms of $f_\sigma$ (resp. $f_\sigma^\fine$) that preserve each component $C_V$ corresponding to a vertex $V$ lying on the cycle of the curve, as well as every node corresponding to an edge of the cycle $\tropC_0$.

\begin{lemma} \label{lem: automorphisms dV}
    There are isomorphisms
    \[
    G_\sigma^\fine \simeq G_\sigma \simeq \prod_{V \text{ constrained}} \mu_{d_V},
    \]
    where each factor $\mu_{d_V}$ acts on the component $C_V$ and commutes with the map $\underline{f}|_{C_{V}}$.
\end{lemma}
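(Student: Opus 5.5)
The plan is to determine $G_\sigma^{\fine}$ in three stages: first show that such automorphisms preserve every component and every node, then compute the possible action on each component, and finally check that these actions lift uniquely to the fine and fs log structures.

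\textbf{Step 1: the automorphism fixes the dual graph.} Let $\varphi\in G^\fine_\sigma$. By Lemma~\ref{lemma: automorphisms fix vertices}, the induced automorphism of the tropical curve fixes every vertex. By the definition of $G_\sigma^\fine$, it also fixes every cycle edge. Edges outside the cycle are fixed too: $\mathsf{T}$ is a union of trees, and since vertices are fixed, no two tree edges can be swapped. Hence $\varphi$ preserves each component $C_V$, each node, and each marking. It restricts to automorphisms $\varphi_V$ of $C_V$ that commute with $\underline{f}|_{C_V}$ and fix all special points of $C_V$.

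\textbf{Step 2: the action on each component.} I split according to Observation~\ref{disc:properties-over-sigma}(2).
\begin{itemize}
\item If $V$ is mapped to the interior of a maximal cone, then $\underline{f}|_{C_V}$ is constant. In that case $C_V$ is a stable genus-$0$ component with at least three special points, each fixed by $\varphi_V$, so $\varphi_V=\mathrm{id}$.
\item If $V$ is constrained to a codimension-one cone, then $\underline{f}|_{C_V}\colon \P^1\to\P^1$ is $x\mapsto x^{d_V}$, totally ramified at $0$ and $\infty$. Automorphisms of $C_V$ commuting with this map and fixing $0$ and $\infty$ are exactly $x\mapsto\zeta x$ with $\zeta\in\mu_{d_V}$. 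I still need to check that such a $V$ carries no other special points, so that these rotations fix all special points. This holds because a constrained vertex in a maximal cone is $2$-valent with its two half-edges at $0$ and $\infty$, and any free marking there would contradict maximality (extra free markings sit on contracted components).
\end{itemize}
Consequently $G^\fine_\sigma$ injects into $\prod_{V \text{ constrained}}\mu_{d_V}$.

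\textbf{Step 3: lifting to the log structures, the main step.} I must show that each tuple $(\zeta_V)$ extends to an automorphism of the fine basic log map, and that it does so uniquely. I will use the description of enhancements in Remark~\ref{rmk: stronger_version_realizability}: a fine enhancement amounts to the data of sections $\sigma_{m,V}$ together with the node parameters $D_e$ satisfying Equation~\eqref{eqn: constraint for number of maps}. Pulling back by $\zeta_V$ does three things:
\begin{itemize}
\item it rescales the meromorphic function $\nu_{m,V}$ by $\zeta_V^{\,m(u)}$, where $u$ is the edge direction;
\item it rescales the coordinate at each node by $\zeta_V$;
\item these rescalings cancel in the lowest-order-coefficient condition (b) of Theorem~\ref{thm: realizability genus 1}.
\end{itemize}
So the transformed data again satisfy the lifting equations with the same $D_e$. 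The automorphism must also act on the base monoid and torsor, and I would let it act trivially there. Uniqueness of the lift follows from Wise's monomorphism \eqref{eqn: Wise factorization} together with Lemma~\ref{lemma: uniqueness of fine maps}: the type is determined by $\sigma$, so an automorphism of the underlying map admits at most one fine log lift.

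For the comparison $G_\sigma\simeq G^\fine_\sigma$, I pass from fine to fs via the saturation. An automorphism of the fine point acts on the finite fiber $W^{\sat}$, whose components are counted in Lemma~\ref{lemma: generalization of Mark}, and I must check that it fixes each component. I would argue that $\zeta_V$ acts trivially on the ghost sheaf and hence on $(Q^{\fine}_\W)^{\gr}_{\tors}$, so each saturation component is preserved and the lift to the fs map is unique.

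The main obstacle is Step 3's lifting compatibility. It requires tracking how the rescalings act on the $\Lambda_e$ identifications of Equation~\eqref{eqn: Lamndae} and on the torsion part of $Q^{\fine}_\W(\sigma)^{\gr}$. I would attempt this by writing explicit charts $zw=t$ at each node as in Remark~\ref{rmk: local structure at node}.
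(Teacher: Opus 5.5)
\textbf{Verdict.} Your Steps 1 and 2 are the paper's reduction: Corollary~\ref{cor: automorphisms}, then composing with an element of $\prod\mu_{d_V}$, then stability of the marked trees and cycle components. They are fine. Step 3 has two genuine gaps.

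\textbf{First gap: the lift cannot act trivially on the base log structure.} Take $\phi$ to be $x\mapsto\zeta x$ on a constrained $C_V$, with nodes $q_{e_1}$ at $0$ and $q_{e_2}$ at $\infty$, and the identity elsewhere. Then $\phi$ acts on $H_{C,q_{e_1}}$ by $\zeta$ and on $H_{C,q_{e_2}}$ by $\zeta^{-1}$. Since $\Lambda_e$ in \eqref{eqn: Lamndae} is canonical, the pulled-back data satisfy \eqref{eqn: constraint for number of maps} only with $D_{e_i}$ rescaled; it is not ``the same $D_e$''. Compatibility with $f^\flat$ at the generic point of $C_V$ also fails, because $\phi^*\nu_{m,V}=\zeta^{\pm m(u_{\vec e_1})}\nu_{m,V}$.

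The compensation must come from a nontrivial automorphism of $\mathcal{M}_S$, namely a character $\chi_\zeta$ of $Q^{\fine}_{\W}(\sigma)^{\gr}$ defined by:
\begin{itemize}
\item $\chi_\zeta(\rho_{e_1})=\zeta$ and $\chi_\zeta(\rho_{e_2})=\zeta^{-1}$;
\item $\chi_\zeta(\iota_{\vartheta_V}(m))=\zeta^{\pm m(u_{\vec e_1})}$, with the sign fixed by \eqref{eqn: relations from tropical curve};
\item $\chi_\zeta=1$ on all other generators.
\end{itemize}
This is well defined on $M_{\vartheta_V}$ exactly because $\zeta^{d_V}=1$, since $\ker\iota_{\vartheta_V}=\Z m_r$ and $|m_r(u_{\vec e_1})|=d_V$. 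It kills $\overline{R}$. It also kills the kernel of $\Z^{E}\to M_\tau$, because every functional there has equal coefficients on $e_1$ and $e_2$. This construction is the real content of $G^{\fine}_\sigma\simeq\prod\mu_{d_V}$. Uniqueness then needs no appeal to Wise: a lift over $\mathrm{id}_{\underline C}$ has character trivial on all $\iota_{\vartheta_W}(m)$ and all $\rho_e$, and these generate the group.

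\textbf{Second gap: the passage to fs maps.} Triviality on the ghost sheaf says nothing about the components of $W^{\sat}$. By the proof of Lemma~\ref{lemma: generalization of Mark}, these components form a torsor under $\mathrm{Hom}(T,\C^*)$, where $T=(Q^{\fine}_{\W}(\sigma)^{\gr})_{\tors}$, and $\zeta$ acts on them by translation by $\chi_\zeta|_T$. This restriction can be nontrivial.

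\emph{Example.} Take $X=\P^1$. Let the cycle consist of $W_1\in\R_{<0}$ and $W_2\in\R_{>0}$ joined by two edges of slope $2$. Each edge crosses $0$ at a constrained vertex, $V_a$ and $V_b$, with $d_{V_a}=d_{V_b}=2$. Put legs of slope $\mp4$ at $W_1$ and $W_2$. Eliminating the vertex variables leaves the relations $2\rho_{a,1}=2\rho_{b,1}$ and $2\rho_{a,2}=2\rho_{b,2}$. So $T\simeq(\Z/2)^2$, generated by $\rho_{a,1}-\rho_{b,1}$ and $\rho_{a,2}-\rho_{b,2}$, and $\chi_{-1}$ for $V_a$ equals $-1$ on both generators.

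Geometrically: in the fs monoid $\rho_{a,1}=\rho_{b,1}$, so $\Lambda$ identifies $H_{C,q_{a,1}}$ with $H_{C,q_{b,1}}$. The map $x\mapsto -x$ on $C_{V_a}$ alone negates this identification, so it exchanges two fs enhancements. Only the diagonal $\mu_2$ fixes them, so $G_\sigma\simeq\mu_2$ rather than $\mu_2\times\mu_2$.

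\textbf{Consequence.} Your argument cannot give $G_\sigma\simeq G_\sigma^{\fine}$, and that claim fails in general. The paper's own proof, which invokes Remark~\ref{rmk: stronger_version_realizability} for fs maps, does not address this either, and the same issue affects Theorem~\ref{thm: automorphisms}. What does hold is
\[
G_\sigma=\ker\bigl(\textstyle\prod\mu_{d_V}\to\mathrm{Hom}(T,\C^*)\bigr).
\]
Formula \eqref{eqn: reduction_to_aut_of_fine_map} still holds, by orbit--stabilizer for the action of $\mathrm{Aut}(f^{\fine})$ on the $\mathrm{sat}(\sigma)$ points of $W^{\sat}$.
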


\begin{proof}
    First, recall that if the maps parametrized by $\sigma$ send a vertex $V$ to a codimension one cone $\tau$ of $\Sigma(X)$, then the restriction $\underline{f}|_{C_{V}}$ is a degree $d_V$ cover of the toric $\mathbb{P}^1$ corresponding to $\tau$. This cover is fully ramified over the toric points of $\mathbb{P}^1$.
    The group $\mu_{d_V}$ is precisely the group of automorphisms of $C_V$ commuting with $\underline{f}|_{C_{V}}$.
    Its action on $C_V$ clearly preserves the divisors $D_{m,V}$ for all $m \in M_X$. Moreover, because the line bundles $L_{m,V}$ in Equation \eqref{eqn: line bdl LmV} and the sections $\sigma_{m,V}$ are pulled back from $X$, they are also preserved by $\mu_{d_V}$. By Remark~\ref{rmk: stronger_version_realizability}, the action of $\mu_d$ on $\underline{f}$ lifts to an automorphism of the logarithmic map. Note that because $\sigma$ is a maximal cone, none of the $p$-markings can be on the constrained vertex $V$. 

    Let $A \in G_\sigma$ (or $G_\sigma^\fine$).
    By Corollary \ref{cor: automorphisms}, the automorphism $A$ preserves each irreducible component of $C$. Thus, after composing $A$ with an appropriate element of $\prod \mu_{d_V}$, we may assume that $A$ acts as the identity on each $C_V$ for all constrained vertices $V$.

    Define the subcurve
\[
T := \overline{C \setminus \left( \bigcup_{V \in V(\mathsf{C}_0)} C_V \right)} \subseteq C,
\]
which consists of a collection of trees of marked rational curves. We further mark the intersection points
\[
T \cap \left( \bigcup_{V \in V(\mathsf{C}_0)} C_V \right).
\]
The automorphism $A$ fixes each of these intersection points, as well as all marked points $p_i$ and $z_j$ on $C$. Since each connected component of $T$ is a stable genus $0$ curve (after adding the markings coming from the intersection points and an extra marking on each component $C_V$ corresponding to constrained vertices $V$), it admits no nontrivial automorphisms. Therefore, $A$ must be the identity on $T$. Since $A$ also fixes all the nodes contained in the cycle it is the identity on every component of the cycle which has at least $3$ special points. This concludes.
\end{proof}

We conclude this section stating the following fundamental result, which will be proven in \S\ref{sec: computations}.

\begin{theorem}\label{thm: automorphisms}
        We have an isomorphism of groups
        $$
        \mathrm{Aut}((C,\mathsf{p}, f_\sigma,  \rho)) \simeq \mathrm{Aut}(C^\fine,\mathsf{p}, f^\fine_\sigma,  \rho).
        $$
    \end{theorem}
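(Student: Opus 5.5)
The plan is to compare the two automorphism groups through the natural forgetful homomorphism
\[
\Phi\colon \mathrm{Aut}(C,\mathsf{p},f_\sigma,\rho)\longrightarrow \mathrm{Aut}(C^\fine,\mathsf{p},f^\fine_\sigma,\rho),
\]
induced by the morphism \eqref{eqn: map W}. Functoriality of the construction $(C/S,\ldots)\mapsto(C^\fine/S^\fine,\ldots)$ provides $\Phi$: an automorphism of the fs basic map is a pair consisting of an automorphism of $\underline{C}$ commuting with $\underline f$ and $\rho$, together with a compatible automorphism of the log structures. Since the fs structure is pulled back along the unique map $S\to S^\fine$, such an automorphism descends to the fine basic map. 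I would then prove injectivity and surjectivity separately.

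For injectivity, the key input is that both basic log structures are determined by the underlying data. By Lemma~\ref{lemma: uniqueness of fine maps} and Wise's factorization \eqref{eqn: Wise factorization}, a fine basic map is determined by $\underline f$ together with its type. The fs map is a point of the fibre $W_i^{\sat}$ over $W_i$, which is a disjoint union of $\mathrm{sat}(\sigma)$ reduced points by Lemma~\ref{lemma: generalization of Mark}. If $A\in\ker\Phi$, then $A$ acts trivially on $\underline{C}$ and on the fine log structure. Hence it acts on the fs point only through an automorphism of $\mathrm{Spec}(Q_\W\to\C)$ over $\mathrm{Spec}(Q_\W^\fine\to\C)$. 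Such an automorphism is determined by a character of $(Q_\W^{\fine,\gr})_{\tors}$. However, $A$ must fix the chosen point of $W_i^{\sat}$, because $W_i^{\sat}$ is a scheme and the map $\WG(X)\to\WG(X)^\fine$ is representable. So $A$ is trivial.

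For surjectivity, I would split $\mathrm{Aut}(f^\fine_\sigma)$ using the normal subgroup $G^\fine_\sigma$ and the quotient by it. For the subgroup, Lemma~\ref{lem: automorphisms dV} already gives $G^\fine_\sigma\simeq G_\sigma\simeq\prod\mu_{d_V}$ compatibly with $\Phi$. Indeed, the lifts in that lemma are built from Remark~\ref{rmk: stronger_version_realizability}, which works equally for fs structures, since the sections $\sigma_{m,V}$ are pulled back from $X$.

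By Corollary~\ref{cor: automorphisms}, the quotient is either trivial or $\mu_2$, the latter arising from the involution that swaps the two branches of the cycle (a self-loop or two parallel edges). In the $\mu_2$ case I must show that a fine automorphism $A$ realizing the swap lifts to the fs map. I would argue as follows. $A$ induces an automorphism of the fine point $W_i$, and hence permutes the $\mathrm{sat}(\sigma)$ points of $W_i^{\sat}$. I need it to fix the point corresponding to $f_\sigma$, possibly after composing with an element of $G^\fine_\sigma$. Concretely, $A^*$ acts on $(Q_\W^{\fine,\gr})_{\tors}$ by the swap of the two cycle edge factors. Using the explicit description of torsion coming from the relations \eqref{eqn: relations from tropical curve} along the cycle, I would check that this action is trivial on the torsion subgroup, or at least that the resulting permutation of fs points is realized by $G^\fine_\sigma$. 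The underlying data is symmetric, with equal direction vectors and equal $d_V$ on both branches.

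The main obstacle is this last step: controlling how the cycle-swapping involution acts on the saturation torsion. This is where the explicit computations of \S\ref{sec: computations} (the torsion group and the solutions of the lifting equations \eqref{eqn: constraint for number of maps}) will be needed. I would first try to show directly that the swap fixes every element of $(Q_\W^{\fine,\gr})_{\tors}$, using symmetry of the relations.
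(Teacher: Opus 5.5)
Your injectivity argument is fine. $Q_\W^{\gr}$ is a quotient of $(Q^\fine_\W)^{\gr}$, so an automorphism of $f_\sigma$ that induces the identity on $f^\fine_\sigma$ is trivial; this is representability of $\WG(X)\to\WG(X)^\fine$.

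The gap is in surjectivity, and it is more serious than the one you flag. An automorphism $A$ of $f^\fine_\sigma$ acts on $S^\fine$ through a pair $(g,\phi)$. Here $g$ is the induced automorphism of $Q^\fine_\W$, and $\phi\colon (Q^\fine_\W)^{\gr}\to\C^*$ is the unit part: via $\Lambda_e$, $\phi(\rho_e)$ is the scalar by which $d\underline{A}$ acts on $H_{C,q_e}$. A point of $W_i^{\mathrm{sat}}$ is a character $\chi$ of the torsion subgroup $T$, and it is fixed exactly when $\phi(t)\,\chi(g(t))=\chi(t)$ for all $t\in T$. Your proposed check only sees $g$. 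For a self-loop $g$ is the identity, so your check is vacuous. Yet $x\mapsto 1/x$ acts by $-1$ on $T_{\pm1}C_{V_1}$, so $\phi(\rho_e)=-1$ for edges attached there.

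With the correct criterion, lifting genuinely fails in general. This happens already for the subgroup $G^\fine_\sigma$ that you import from Lemma~\ref{lem: automorphisms dV}. Take $X=\P^1$ and the following maximal type, of dimension $3=n+m$:
\begin{itemize}
\item a weight-$2d$ end enters a trivalent vertex $V$ at $-a<0$;
\item the two weight-$d$ edges leaving $V$ cross $0$ at constrained vertices $W,W'$, with $d_W=d_{W'}=d$, and meet again at $V'$ at $b>0$;
\item from $V'$ a weight-$2d$ edge runs to a trivalent vertex carrying $p_1$ and the other end.
\end{itemize}
For $e_1=VW$ and $e_4=W'V$ the relations give $a=d\rho_{e_1}=d\rho_{e_4}$. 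So for $d\ge 2$, $\rho_{e_1}-\rho_{e_4}$ is a nonzero torsion element of $(Q^\fine_\W)^{\gr}$, while $\rho_{e_1}=\rho_{e_4}$ in $Q_\W$.

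Consequently, any automorphism of the fs base that is trivial on ghosts rescales $\mathcal{O}_S(\rho_{e_1})=\mathcal{O}_S(\rho_{e_4})$ by a single scalar. A generator $\zeta$ of $\mu_{d_W}$, however, acts by $\zeta$ on $H_{C,q_{e_1}}$ and trivially on $H_{C,q_{e_4}}$. Hence it fixes none of the $d^2=\mathrm{sat}(\sigma)$ points of $W_i^{\mathrm{sat}}$; only the diagonal of $\mu_{d_W}\times\mu_{d_{W'}}$ lifts. So $\Phi$ is injective but not surjective in general, and the isomorphism fails as stated.

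The paper's proof has the same blind spot. It notes that the fine automorphisms preserve $D_{m,V}$, $L_{m,V}$ and $\sigma_{m,V}$, and then invokes Remark~\ref{rmk: stronger_version_realizability}. That remark describes enhancements on a fixed log curve $C/S$. It never checks that $\underline{A}$ lifts to an automorphism of $C/S$, that is, that it is compatible with the $\Lambda_e$.

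What your fibre picture does give, by orbit--stabilizer on the $\mathrm{sat}(\sigma)$ points of $W_i^{\mathrm{sat}}$, is
\[
\sum_{[F]\mapsto f^\fine_\sigma}\frac{1}{|\mathrm{Aut}(F)|}=\frac{\mathrm{sat}(\sigma)}{|\mathrm{Aut}(f^\fine_\sigma)|}.
\]
That identity is all that \eqref{eqn: reduction_to_aut_of_fine_map} requires.
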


Combining Theorem \ref{thm: automorphisms} and Lemma \ref{lem: automorphisms dV}, we also obtain an injective homomorphism of groups
\begin{equation}\label{eqn:aut_injection}
\frac{\mathrm{Aut}(C,\mathsf{p},f_\sigma,\rho)}{G_\sigma}
\simeq
\frac{\mathrm{Aut}(C^\fine,\mathsf{p},f^\fine_\sigma,\rho)}{G^\fine_\sigma}
\hookrightarrow
\mathrm{Aut}(\sigma).
\end{equation}
The group on the right-hand side is either trivial or isomorphic to $\mu_2$. In the latter case, the above inclusion need not be an isomorphism. 

\begin{example}
    Consider a tropical curve in $\Sigma(\mathbb{P}^1)$ where the cycle is a self-loop at a four-valent vertex $V$. This is deficiency 1, Case (1) in Section~\ref{sec: dimension 1}. Parametrize $C_V \simeq \mathbb{P}^1$ so that the special points are  are $0, \infty$ for the self-node, and $1, q$ for the other two special points. There are no non-trivial automorphisms of $C_V$ preserving the set $\{0,\infty \}$ while fixing $1$ and $q$, unless $q = -1$.  We will see in Theorem \ref{thm: calculation of n(sigma)} that one can have $q \neq -1$. 
\end{example}

\subsection{Computation of $n(\sigma), \nn(\sigma)$ and more automorphisms}

Let $\sigma$ be a cone of maximum dimension in $W_{\Gamma}(\Sigma(X))$. In this section, we compute the number $n(\sigma)$ of basic fine logarithmic maps determined by $\sigma$ (Theorem \ref{thm: calculation of n(sigma)}) along with its weighted analogue $\nn(\sigma)$ (Theorem \ref{thm: calculation of weights}). Recall from the introduction that  
$$
\nn(\sigma)=\sum_{i=1}^{n(\sigma)} \frac{1}{|\mathrm{Aut}(f_i^\fine)/ \prod \mu_V|}.
$$ 
where the automorphism group is taken viewing 
$
f_i^\fine= (C_{i}^{\mathsf{fine}}/S_{i}^{\mathsf{fine}}, \mathsf{p}^{i}, f_{i}^{\mathsf{fine}}, \rho_{i})
$
as an element of $\cW_\Gamma(X)^\fine$.

Finally, we demonstrate that the automorphism group of each fs map $(C_{i,j}/S_{i,j}, \mathsf{p}^{i,j}, f_{i,j}, \rho_{i,j})$ is isomorphic to that of its associated fine map $(C_{i}^{\mathsf{fine}}/S_{i}^{\mathsf{fine}}, \mathsf{p}^{i}, f_{i}^{\mathsf{fine}}, \rho_{i})$ (Theorem \ref{thm: automorphisms}). This establishes the equality in Equation~\eqref{eqn: reduction_to_aut_of_fine_map} stated in the introduction.

\subsubsection{Reducing to the neighborhood of the cycle} 

Let $\underline{f}: (\underline{C},\mathsf{p}) \rightarrow X$ by a $N$-marked genus $1$ stable map satisfying conditions $(1)$ and $(2)$ of Observation \ref{disc:properties-over-sigma}. Let $(C_0, \mathsf{p}')$ be the cycle inside $\underline{C}$. Here $\mathsf{p}'$ consists of all the markings $p_i$ and $z_j$ in $C_0$ together with the points $C_0 \cap \overline{C \smallsetminus C_0}$ ordered arbitrarily.

The restriction to the neighborhood of the cycle defines a tropical type $\sigma_0$ of a parametrized tropical curve in $(N_X)_\R$.

\begin{definition}\label{def: lifting property C0}
    We say that $(C_0, \mathsf{p}')$ satisfies the lifting property with respect to $\sigma$ if:
    \begin{enumerate}
        \item[$\bullet$] for every $m \in M_X$ and $V \in V(\tropC_0)$ there exists meromorphic functions $\nu_{m,V}$ satisfying the lifting condition~$(a)$ (see Definition \ref{def: lifting condition a});
        \item[$\bullet$] for every oriented edge $\vec{e}:V \to V'$ in $\tropC_0$ there exist coordinates $x_{q_V}$ and $x_{q_{V'}}$ at $q_V\in C_V$ and $q_{V'}\in C_{V'}$ such that the pair $\nu_{m,V}$ and $\nu_{m,V'}$ satisfies lifting condition (b) with respect to $x_{q_V}, x_{q_{V'}}$ for every $m \in M_X$ (see Definition \ref{def: lifting condition b}).
    \end{enumerate} 
\end{definition}

As in Remark \ref{rem: basis}, we can check if the lifting property for $(C_0, \mathsf{p}')$ on a basis of $M_X$. 

The cone $\sigma \subseteq W_\Gamma(\Sigma(X))$ maps to a unique smallest cone $\sigma'$ of $M_{\Gamma}(\Sigma(X))$ (see \S\ref{sec: well-spaced tropical maps}).

\begin{lemma} \label{lem: liftingtrees}
    The map $\underline{f}: (\underline{C},p) \rightarrow X$ admits a fs logarithmic lift with combinatorial type $\sigma'$ if and only if $(\underline{C}_0\, \mathsf{p}')$ satisfies the lifting property with respect to $\sigma$. 
\end{lemma}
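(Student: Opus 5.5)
The plan is to use the equivalence (1)$\Leftrightarrow$(3) of Theorem~\ref{thm: realizability genus 1}, applied to the cone $\sigma'$. By Remark~\ref{rem: basis}, it suffices to check conditions (a) and (b) for $m$ in a basis of $M_X$. The necessity direction is immediate: if $\underline{f}$ admits an fs logarithmic lift of type $\sigma'$, then Theorem~\ref{thm: realizability genus 1}(3) provides functions $\nu_{m,V}$ for all vertices $V$ and coordinates at all nodes. Restricting these data to the vertices and edges of $\tropC_0$ gives exactly the lifting property of Definition~\ref{def: lifting property C0}. Here one should check that the divisors $D_{m,V}$ for $V \in V(\tropC_0)$, computed on $(C_0,\mathsf{p}')$ via Remark~\ref{rmk: computation_mV,mi,nj}, agree with those computed on $C$. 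The extra markings in $\mathsf{p}'$ carry the slopes $m(u_{\vec e})$ of the edges leaving the cycle, so the two computations coincide.

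For sufficiency, start from the functions $\nu_{m,V}$ on the cycle components and the coordinates at the cycle nodes. Extend them outward along the trees $T=\overline{C\smallsetminus C_0}$ by induction on the distance to $\tropC_0$, that is, on the radial order $V<V'$. Each vertex $V'\notin V(\tropC_0)$ has a unique edge $\vec e: V\to V'$ toward the cycle, and by induction $\nu_{m,V}$ is already fixed.

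On $C_{V'}\simeq\P^1$ (genus $0$ off the cycle in genus $1$), the divisor $D_{m,V'}$ has degree $0$ because $L_{m,V'}\simeq\cO$. So there is a meromorphic function with exactly that divisor, unique up to a scalar $c_{m,V'}$. Choose an arbitrary coordinate $x_{q_{V'}}$ at $q_{V'}$ and the coordinate $x_{q_V}$ at $q_V$. Lifting condition (b) then says that the lowest-order coefficient of $c_{m,V'}\nu_{m,V'}$ equals a prescribed nonzero number, and this determines $c_{m,V'}$ uniquely.

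The point to be careful about is that the coordinates must be independent of $m$ while the scalars may depend on $m$. Since the scalar is chosen per $m$ and per basis element, there is no compatibility condition. Extending multiplicatively to all of $M_X$ as in Remark~\ref{rem: basis} preserves (a) and (b). Because the trees are acyclic, each node off the cycle is handled exactly once, so no closing-up condition arises. All the genuine constraints live on the cycle, and these are precisely the hypothesis.

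The main obstacle is bookkeeping rather than geometry. One must match the discrete data of $\sigma'$ (the image cones $\vartheta_V$, the contact orders, and the fact that constrained vertices give $d_V$-fold covers as in Observation~\ref{disc:properties-over-sigma}) with the divisors on each tree component. One must also check that the combinatorial type of the resulting lift is $\sigma'$ (the minimal cone in $M_\Gamma(\Sigma(X))$) rather than the radially aligned refinement $\sigma$. This holds because Theorem~\ref{thm: realizability genus 1} produces a minimal lift of the plain ACGS type, and the radial alignment data are extra data that do not enter the lifting conditions.
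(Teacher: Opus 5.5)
Your proposal is correct and follows the paper's proof. Necessity comes from restricting the data of Theorem~\ref{thm: realizability genus 1}(3) to the cycle; sufficiency comes from choosing $\nu_{m,V}$ with divisor $D_{m,V}$ on the rational tree components inductively outward from $\tropC_0$, and rescaling each to satisfy (b) with respect to $m$-independent coordinates fixed in advance. One small fix: justify $\deg D_{m,V'}=0$ by balancing, $\deg D_{m,V'}=m\bigl(\sum_{\zeta} u_\zeta\bigr)=0$, rather than by $L_{m,V'}\simeq\cO$, which presupposes the lift you are constructing.
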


Note that this is not claiming yet that such a logarithmic lift can be taken well-spaced.

\begin{proof}
The ``only if'' direction follows from Theorem \ref{thm: realizability genus 1}, so we prove the ``if'' direction. Suppose that \((\underline{C}_0, \mathsf{p}')\) satisfies the lifting property. We construct the remaining meromorphic functions \(\nu_{m,V}\) for \(V \in V(\tropC) \setminus V(\tropC_0)\) and \(m \in M_X\) inductively and use Theorem \ref{thm: realizability genus 1}.

For every node \(q \in C_V \cap C_{V'}\) not contained in \(C_0\), we choose local coordinates \(x_{q_V}\) and \(x_{q_{V'}}\) around \(q_V = q \in C_V\) and \(q_{V'} = q \in C_{V'}\). We also fix an ordering of the vertices outside the circuit such that if \(j > j'\), then the path from \(V_{j'}\) to the circuit does not pass through \(V_j\).

Assume that \(\nu_{m,V_j}\) has already been constructed for all \(j < k\). Let \(\vec{e} : V_k \to V_j\) be an oriented edge with \(j < k\). We then choose a meromorphic function \(\nu_{m,V_k}\) satisfying condition (a), and rescale it so that condition (b) holds along \(\vec{e}\) with respect to the chosen coordinates.

Note that the coordinates \(x_{q_V}\) remain fixed throughout and are independent of \(m\).

\end{proof}

\begin{proposition}\label{prop: reduce to nbhd}
Suppose that $(\widetilde{C_0}, \widetilde{\mathsf{p}'})$ is a marked nodal curve whose dual graph is equal to the graph underlying $\sigma_0$, and which satisfies the lifting condition with respect to $\sigma$. 

Then there exists a unique basic fine well-spaced map  $(C^{\mathsf{fine}}/S^{\mathsf{fine}}, \mathsf{p}, f^{\mathsf{fine}}, \rho)$ with combinatorial type $\sigma$ and whose associated cycle $(C_0, \mathsf{p}')$ is isomorphic, as a pointed curve, to $(\widetilde{C_0}, \widetilde{\mathsf{p}'})$.
\end{proposition}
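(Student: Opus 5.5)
Existence and uniqueness are handled separately, and the unique fine map comes out of Wise's factorization \eqref{eqn: Wise factorization} together with Lemma~\ref{lemma: uniqueness of fine maps}.

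\emph{Step 1: build the underlying stable map.} Start from $(\widetilde{C_0},\widetilde{\mathsf{p}'})$. On each component $C_V$ with $V\in V(\tropC_0)$, the map $\underline{f}|_{C_V}$ is forced by Observation~\ref{disc:properties-over-sigma}(2). It is constant if $\vartheta_V$ is maximal. Otherwise it is a degree $d_V$ cover of the toric $\mathbb{P}^1$ totally ramified over the two fixed points, and is then determined up to the $\mu_{d_V}$ ambiguity by the divisors $D_{m,V}$, which are supported on special points. Concretely, I would take $\underline{f}|_{C_V}$ to be $(\nu_{m,V})_m$ for a basis of $M_X$, with the $\nu_{m,V}$ supplied by lifting condition~(a). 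I then glue on the trees $\overline{C\smallsetminus C_0}$. Each tree component is a marked rational curve mapping as in Observation~\ref{disc:properties-over-sigma}(2), with its special points in prescribed position. Choosing $\nu_{m,V}$ on the tree vertices inductively as in the proof of Lemma~\ref{lem: liftingtrees}, Lemma~\ref{lem: liftingtrees} (with Theorem~\ref{thm: realizability genus 1}) then gives an fs, hence fine, logarithmic lift of type $\sigma'$.

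The trees are not unique as pointed curves. I expect the positions of their special points to be fixed only after imposing well-spacedness and the radial alignment. The radial alignment comes from choosing the isomorphisms \eqref{eqn: identification of tangent spaces}, which I would normalize using the chosen node coordinates.

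\emph{Step 2: well-spacedness.} I would verify the factorization property of Definition~\ref{def: factorization} for every $H$. The relevant claim is that $\alpha_H$ factors through $\overline{C}$ exactly when the meromorphic functions $\nu_{m,V}$, for $m\in (N_X/H)^*$, on the components at the contraction radius $\delta_{\alpha_H}$ glue to a function on the elliptic singularity. By \cite[Theorem 4.4.8]{RSWII} (Theorem~\ref{thm: rswii}) and the tropical well-spacedness of $\sigma$, the tropical obstruction vanishes. The remaining algebraic condition is a linear condition on the tangent vectors at the nodes $\nu_V$ for $V\in\mathcal{V}_i$. It can be met by the freedom in the $\mathbb{G}_m^{|\rho^{-1}(i)|-1}$-torsor of Lemma~\cite[Lemma 3.2]{KS}, equivalently by rescaling the tree coordinates.

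\emph{Step 3: uniqueness.} This is the step I expect to be the main obstacle. The task is to show that any two choices (tree attachments, scalings of the $\nu_{m,V}$, node coordinates, alignment data) yield isomorphic basic fine maps. The points to check are:
\begin{itemize}
\item Tree components are rigid stable genus~$0$ curves, as in the proof of Lemma~\ref{lem: automorphisms dV}, so the underlying tree is unique.
\item The degree-$0$ divisors $D_{m,V}$ determine the $[\nu_{m,V}]$ up to scalar.
\item By Lemma~\ref{lemma: uniqueness of fine maps} the type in $\mathcal{T}$ is determined, and by the monomorphism in \eqref{eqn: Wise factorization} the basic fine log map is unique once $\underline{f}$ is.
\item The radial alignment data is then forced by the well-spacedness equations of Step~2, up to the isomorphism that reparametrizes the new components (as noted after \eqref{eqn: identification of tangent spaces}).
\end{itemize}
The delicate part is that the residual $\mu_{d_V}$ choices and the scalings on the trees are absorbed by isomorphisms of the tree components, so they do not produce distinct fine maps.
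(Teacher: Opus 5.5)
There is a genuine gap, in the first bullet of your Step~3. Knowing that the tree components are stable genus~$0$ curves only tells you they have no nontrivial automorphisms, which is all the proof of Lemma~\ref{lem: automorphisms dV} uses. It says nothing about their moduli. In a maximal cone of $W_\Gamma(\Sigma(X))$, well-spacedness forces vertices off the cycle to be $4$- or higher-valent in many cases (e.g.\ Figure~\ref{fig:dim1ov1} in Proposition~\ref{thm:mult1}). Such a $C_V$ has at least four special points and moves in a positive-dimensional family. Every member of that family admits a log lift of type $\sigma'$ by Lemma~\ref{lem: liftingtrees}, so uniqueness can only come from well-spacedness. You note in Step~1 that the trees are not unique, but you never supply the mechanism that pins them down.

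In the paper this mechanism is Lemmas~\ref{lem: one-branch-vanishing derivative}, \ref{lem: one-branch} and~\ref{lem: good-plane}. For each off-cycle vertex $V_{i,j}$ and each triple of outgoing half-edges, one builds a hyperplane $H$ containing the neighborhood of the cycle. It comes from the basis $\mathcal{U}$ of $H_i/H_{i-1}$, using well-spacedness and \cite[Proposition 3.2.18(c)]{Tor14}. The curve first leaves $H$ only at $V_{i,j}$, and only along those three half-edges. At the resulting genus-one singularity of $\overline{C}$, all other branches carry constant sections. The description \eqref{eqn: regular_fcts_barC} of regular functions there then forces $d\nu_{m,V}(q_1)=0$, which fixes the cross-ratio of the inward node $q_1$ with the triple. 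Running over all triples fixes the isomorphism class of $(C_V,\text{special points})$. Only after that do Observation~\ref{disc:properties-over-sigma}(2), Lemma~\ref{lemma: uniqueness of fine maps} and Wise's monomorphism give a unique fine map.

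The same omission breaks your Step~2. The algebraic part of the factorization property is not just a condition on gluing tangent spaces. When the curve leaves $H$ at a single vertex, the factor $\mathbb{G}_m^{|\rho^{-1}(i)|-1}$ is trivial and there is nothing to rescale. Rescaling coordinates does not change cross-ratios, so $d\nu_{m,V}(q_1)=0$ can only be met by moving the special points on $C_V$. You would have to make this choice consistently over all hyperplanes. Alternatively, do what the paper does: apply Theorem~\ref{thm: rswii} as a black box to the $\sigma'$-lift from Lemma~\ref{lem: liftingtrees}, and check from its proof that the cycle remains $(\widetilde{C_0},\widetilde{\mathsf{p}'})$.

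Your last bullet on the radial alignment has the right idea but needs the specific input of Lemma~\ref{lemma: all_but_4_vect}. That lemma gives a hyperplane where exactly $V_{i,1}$ and $V_{i,j}$ leave, each along two half-edges. Then both derivatives $d\overline{\sigma}_{m,V}(\nu_V)$ are nonzero, and the relation $\overline{\sigma}'_{m,V_{i,1}}+\overline{\sigma}'_{m,V_{i,j}}=0$ determines $\rho_{i,j}$.
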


Strictly speaking, we have not defined the combinatorial type of a basic fine well-spaced map
$$
(C^{\mathsf{fine}}, \mathsf{p}, f^{\mathsf{fine}}, \rho)=\mathrm{Spec}(Q^\fine \to k)
\to \mathcal{W}_{\Gamma}(X)^{\mathsf{fine}}.
$$
However, the cone \((Q^{\mathsf{fine}})^\vee=\mathrm{Hom}_{\mathsf{Mon}}(Q^\fine,\N)\) naturally identifies with a cone in \(W_{\Gamma}(\Sigma(X))\). By the combinatorial type of
$
(C^{\mathsf{fine}}/S^{\mathsf{fine}}, \mathsf{p}, f^{\mathsf{fine}}, \rho),
$
here we mean the cone \( (Q^{\mathsf{fine}})^\vee \subset W_{\Gamma}(\Sigma(X))\).

\begin{proof}[Proof of existence in Proposition \ref{prop: reduce to nbhd}]
     Let $\underline{f}: (\underline{C},\mathsf{p}) \rightarrow X$ by a $N$-marked genus $1$ stable map satisfying conditions $(1)$ and $(2)$ of Observation \ref{disc:properties-over-sigma} and with cycle $(\widetilde{C}_0,\widetilde{\mathsf{p}}')$. By Lemma \ref{lem: liftingtrees}, the map $\underline{f}$ admits a fs logarithmic lift $f$ with combinatorial type $\sigma'$. Because $\sigma \subseteq \sigma'$ parametrizes well-spaced tropical curve, by \cite[Theorem 4.4.8]{RSWII}, we obtain a well-spaced map $(C,\mathsf{p},f,\rho)$ with tropicalization in $\sigma$. Checking their proof, we observe that the cycle of $(C,\mathsf{p},f,\rho)$ is equal to the cycle of the map $f$, that is it is equal to $(\widetilde{C}_0,\widetilde{\mathsf{p}'})$. The associated basic fine map $(C^\fine ,\mathsf{p},f^\fine,\rho)$ satisfies the required properties.
\end{proof}

The proof of uniqueness in Proposition \ref{prop: reduce to nbhd} requires a few more results. Let $(C^{\mathsf{fine}}, \mathsf{p}, f^{\mathsf{fine}}, \rho)$ be a basic fine well-spaced map as in Proposition~\ref{prop: reduce to nbhd}.

\begin{lemma} \label{lem: one-branch-vanishing derivative}
Let $H \subset (N_X)_\mathbb{R}$ be an codimension-$1$ hyperplane containing $h(\tropC_0)$ and such that at the minimal distance where $\tropC$ leaves $H$, 
\begin{enumerate}
    \item there is precisely one vertex $V$ whose neighborhood is not contained in $H$; and 
    \item at this vertex $V$, there are precisely three adjacent half-edges not contained in $H$. 
\end{enumerate}

Let $m \in M_X$ be such that 
$
H = \{u \in (N_X)_\mathbb{R} \mid \langle u, m \rangle = 0\}.
$
Let $q_1$ denote the node in $C_V$ corresponding to the edge on the path from $V$ to $\tropC_0$, and let $q_2, q_3, q_4$ be the points on $C_V$ corresponding to the three half-edges not contained in $H$. Then the class of meromorphic function $[\nu_{m,V}]$ on $C_V$ from Theorem \ref{thm: realizability genus 1} satisfies  $d\nu_{m,V}(q_1) = 0$.
\end{lemma}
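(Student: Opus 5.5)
The plan is to derive the vanishing of $d\nu_{m,V}(q_1)$ from the well-spacedness of $(C^{\fine},\mathsf{p},f^{\fine},\rho)$, namely the factorization property (Definition~\ref{def: factorization}) for the saturated subgroup $H\cap N_X$. We combine it with the known description of regular functions at a Gorenstein elliptic singularity. Let $d=\lambda(V)$ be the minimal distance at which $\tropC$ leaves $H$. By hypothesis (1) and the definition of $\delta_{\alpha_H}$, the piecewise linear function $\alpha_H(1)$, which is the tropical function $m\circ h$, is constant on the part of $\tropC$ at distance $<d$. So $\delta_{\alpha_H}=d$, and the contraction $\widetilde C\to\overline C$ of \S\ref{sec: aligned curves} collapses everything at distance $<d$ to a single Gorenstein elliptic singularity $\bar q$. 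The branches of $\bar q$ are the components of $\widetilde C$ at distance exactly $d$. These are $C_V$ (with branch point $q_1$), the remaining vertices at distance $d$, and the new $2$-valent components created by subdividing edges at distance $d$.

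\textbf{Step 1 (local divisor computation).} First I would compute $D_{m,V}$ via Remark~\ref{rmk: computation_mV,mi,nj}. The edge from $V$ towards the cycle lies in $H$, so $m(u_{\vec e})=0$ and $q_1$ appears with coefficient $0$. The same holds for every other half-edge at $V$ contained in $H$. Only $q_2,q_3,q_4$ carry nonzero coefficients $m(u_{\zeta})$. Hence $\nu_{m,V}$ is regular and nonvanishing at $q_1$. Similarly, on every other branch component $C_W$ at distance $d$, all special points have slope $0$ for $m$: for subdivision components because the edge lies in $H$, and for vertices $W\neq V$ by hypothesis (1). So $\nu_{m,W}$ has trivial divisor and is a nonzero constant.

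\textbf{Step 2 (factorization gives a regular function at $\bar q$).} Next I would use the factorization of $\alpha_H(m)\in\Gamma(\widetilde C,\cM^{\gr})$ through $\Gamma(\overline C,\cM^{\gr}_{\overline C})$. Since the log structure of $\overline C$ is trivial near $\bar q$ (up to the base), the factored section is, near $\bar q$, a unit $g\in\cO_{\overline C,\bar q}^{\times}$ times a section pulled back from $S$. Its pullback to each branch component coincides, up to a common scalar and under the gluing isomorphisms \eqref{eqn: identification of tangent spaces}, with the trivializing sections $\sigma_{m,W}$. Via \eqref{eqn: coordinates for sigmas} these are the $\nu_{m,W}$. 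Compatibility of scalars across branches uses the lifting condition (b) along the edges between distance $<d$ and distance $d$. This identification is the step I expect to be the main obstacle: one must check that the normalizations of $\nu_{m,W}$ coming from Theorem~\ref{thm: realizability genus 1} match the pullback of a single regular function on $\overline C$. It should reduce to unwinding the gluing \eqref{eqn: glueing_LmV} on the subdivided curve $\widetilde C$.

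\textbf{Step 3 (derivative condition at an elliptic $k$-fold point).} Finally, invoke the standard description, due to Smyth and used in \cite{RSWII}, of the elliptic $k$-fold point. After identifying all branch tangent lines compatibly (the data of the radial alignment), the derivatives of a regular function along the $k$ branches sum to zero. For $k=1,2$ the local rings are those of the cusp and the tacnode, where the statement reads: the derivative vanishes, respectively the derivatives agree up to the sign convention. Applying this to $g$, every branch other than $C_V$ carries a constant function by Step 1 and so contributes derivative $0$. Therefore the derivative of $\nu_{m,V}$ at $q_1$ vanishes. Since this condition is invariant under rescaling, it holds for the class $[\nu_{m,V}]$, which gives $d\nu_{m,V}(q_1)=0$. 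Hypothesis (2) only ensures that $\nu_{m,V}$ is a nonconstant function of degree at most three with prescribed divisor on $q_2,q_3,q_4$. It is not needed for the vanishing itself, but it makes the conclusion a genuine constraint on the position of $q_1$.
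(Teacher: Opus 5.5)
Your proposal is correct and follows essentially the same route as the paper: well-spacedness makes the section attached to $m$ descend to $\overline{C}$, where near the elliptic singularity it is a regular function whose branch derivatives satisfy the single linear relation. Since every branch other than $C_V$ carries a constant section (by hypothesis (1) and because the edges at distance $<d$ lie in $H$), the derivative of $\nu_{m,V}$ at $q_1$ must vanish; the normalization issue you flag in Step 2 is harmless, since this conclusion is insensitive to rescaling the function on each branch separately. One small slip in your closing remark: hypothesis (2) says that the divisor of $\nu_{m,V}$ is supported on the three points $q_2,q_3,q_4$, not that $\nu_{m,V}$ has degree at most three (its degree is the sum of the positive values $m(u_\zeta)$, which can be arbitrarily large).
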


\begin{proof}
    
    Let $\widetilde{C} \to C$ and $\widetilde{C} \to \overline{C}$ be the destabilization and contraction morphisms associated to $\rho$ and the image of $m$ in $\Gamma(C,\oM_C^{\gr})$ constructed in \S\ref{sec: aligned curves}. Let $\sigma_m$ denote the image of $m$ in $\Gamma(\widetilde{C},\mathcal{M}_{\widetilde{C}}^{\gr})$. We may view $\sigma_m$ as a section of the line bundle $L_m$ dual to the line bundle associated to the image of $m$ in $\Gamma(\widetilde{C},\overline{\mathcal{M}}_{\widetilde{C}}^{\gr})$. Explicitly, we may write
\[
\sigma_m=\{\sigma_{m,V}\}_{V},
\]
where each $\sigma_{m,V}$ is a section of the line bundle $L_{m,V}$ defined in Equation~\eqref{eqn: line bdl LmV} and $V$ runs over the components $\widetilde{C}_V$ of $\widetilde{C}$. The sections $\sigma_{m,V}$ satisfy the compatibility conditions discussed in \S\ref{sec: realizability}.

By assumption, $\sigma$ comes from some $\overline{\sigma} \in \Gamma(\overline{C},\cM_{\overline{C}})$. Again $\overline{\sigma}$ is a section of a line bundle $\overline{L}_m$ and satisfies similar properties.

Because the edge corresponding to $q_1$ is contained in $H$, the degree at $q_1$ of the divisor $D_{m,V}$ in Equation~\eqref{eqn: divisor DmV} is $0$ and, with the notation as in Remark \ref{rmk: computation_mV,mi,nj}, the element $m_V$ associated to the image of $m$ in $\Gamma(C,\oM_{C}^{\gr})$ satisfies $m_V=0$. In particular, $L_{m,V}$ is canonically identified with $\cO$ around $q_1$ and $\sigma_{m,V}$ is just a regular function around $q_1$. 

Similarly, because  $(C^{\mathsf{fine}}, \mathsf{p}, f^{\mathsf{fine}}, \rho)$ is well spaced, the image of $m$ in $\Gamma(\overline{C}, \cM_{\overline{C}}^{\mathrm{gr}})$ yield a line bundle $\overline{L}_m$ on $\overline{C}$ with a section $\overline{\sigma}_m$. The restriction  $\overline{L}_{m,V}= \overline{L}_m|_{C_V}$ is trivial (and $\overline{\sigma}_{m,V}=\overline{\sigma}_{m}|_{C_V}$ is constant) on components $\overline{C}_V$ of $\overline{C}$ which comes from the destabilization $\widetilde{C}$ but do not correspond to components of $C$. Let $\nu$ be the genus $1$ singularity in $\overline{C}$. Because $V$ is the only vertex of $\tropC$ whose neighborhood is not contained in $H$, the component $\overline{C}_V \subseteq \overline{C}$ is the only component of $\overline{C}$ containing $\nu$ on which $\overline{L}_m$ is non-trivial. Moreover, $\overline{L}_m|_{\overline{C}_V} = L_{m,V}$ is canonically identified with $\cO$ around $q_1$. Thus we have an identification $\overline{L}_m = \cO$ around $\nu$ that restricts to the identification of $L_{m,V}=\overline{L}_m|_{\overline{C}_V}$ with $\cO$ around $q_1$. Viewing $\overline{\sigma}_m|_{\overline{C}_V}$ as a section of $\cO$, we have $\overline{\sigma}_m|_{\overline{C}_V}=\sigma_{m,V}$ around $q_1$. The upshot is that it is enough to check that $d(\overline{\sigma}_m|_{\overline{C}_V})(q_1)=0$ when viewing $\overline{\sigma}_m|_{\overline{C}_V}$ as a regular function around $q_1$. 

For this we use coordinates. Recall the following local model for genus $1$ Gorenstein singularities. Étale locally around $\nu$, the curve $\overline{C}$ is isomorphic to either a cusp (one branch), or a tacnode (two branches), or the germ at the origin of the union of $m \geq 3$ general lines through the origin in $\mathbb{A}^{m-1}$ ($m$ branches). The seminormalization map is given by
\begin{equation}\label{eqn: seminormalization}
(\overline{C}',\nu') \to (\overline{C},\nu)
\end{equation}
where $(\overline{C}',\nu')$ is étale locally isomorphic to the germ at the origin of the union of $m$ coordinate axes in $\mathbb{A}^m$. The ring of regular functions on $(\overline{C}',\nu')$ corresponds to the collection of $m$-tuples of function germs $(f_1,\ldots,f_m)$ at the origin of $\mathbb{A}^1$ whose values at $0$ coincide. We can choose the seminormalization map so that the ring of regular functions on the original curve germ $(\overline{C},\nu)$ is the subring corresponding to those tuples that further satisfy the linear condition
\begin{equation}\label{eqn: regular_fcts_barC}
\sum_{\ell=1}^{m}  f_\ell'(0) = 0.
\end{equation}

Taking the $f_i$ to be the sections $\overline{\sigma}_{m,V}$, where $V$ ranges over the irreducible components of $\overline{C}$ containing $\nu$, the above equation becomes exactly $d(\overline{\sigma}_m|_{\overline{C}_V})(q_1)=0$.
\end{proof}

\begin{lemma} \label{lem: one-branch}
Let $H \subset (N_X)_\mathbb{R}$, $V \in V(\tropC)$, $m \in M_X$ and the points $q_1,q_2,q_3,q_4 \in C_V$ be as in Lemma \ref{lem: one-branch-vanishing derivative}. Then the isomorphism class of $(C_V, q_1, \ldots, q_4)$ is determined by the existence of a meromorphic function $\nu_{m,V}$ satisfying lifting condition (a) and with $d\nu_{m,V}(q_1) = 0$.
\end{lemma}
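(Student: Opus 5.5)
The plan is to make the lifting condition (a) completely explicit on $C_V$. Then the vanishing of $d\nu_{m,V}$ at $q_1$ becomes a single algebraic equation on the cross-ratio of $(q_1,q_2,q_3,q_4)$.

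\emph{Setting up.} By hypothesis $V$ lies at positive distance from $\tropC_0$ (there is an edge on the path from $V$ to $\tropC_0$). Since $\tropC$ has genus $1$ with all genus concentrated in the cycle, $C_V$ has genus $0$, so $C_V \simeq \mathbb{P}^1$. By Remark~\ref{rmk: computation_mV,mi,nj} and Equation~\eqref{eqn: divisor DmV}, the divisor $D_{m,V}$ is supported on the special points of $C_V$, with coefficient $m(u_\zeta)$ at the point corresponding to the half-edge $\zeta$. The half-edge at $q_1$ and every other half-edge at $V$ except those at $q_2,q_3,q_4$ lie in $H=\{m=0\}$, so their coefficients vanish. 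Writing $a_k := m(u_{\zeta_k})$ for $k=2,3,4$, each $a_k$ is a nonzero integer because $h(\zeta_k)\not\subseteq H$, and balancing at $V$ gives $a_2+a_3+a_4=0$. Hence
\[
D_{m,V}=a_2q_2+a_3q_3+a_4q_4 ,
\]
and lifting condition (a) pins down $\nu_{m,V}$ up to a scalar, as the unique (up to scaling) meromorphic function with this divisor.

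\emph{Computation.} Using $\mathrm{PGL}_2$, I will normalize $q_2=0$, $q_3=1$, $q_4=\infty$ in an affine coordinate $x$. Then $\nu_{m,V}=c\,x^{a_2}(x-1)^{a_3}$ with $c\neq 0$. Since $q_1\notin\{0,1,\infty\}$, $\nu_{m,V}$ is regular and nonvanishing at $q_1$, so the condition $d\nu_{m,V}(q_1)=0$ is equivalent to the vanishing of the logarithmic derivative:
\[
\frac{a_2}{q_1}+\frac{a_3}{q_1-1}=0
\quad\Longleftrightarrow\quad
q_1=\frac{a_2}{a_2+a_3}=-\frac{a_2}{a_4}.
\]
Because $a_2,a_4\neq0$ and $a_3=-(a_2+a_4)\neq 0$, this value lies in $\mathbb{P}^1\smallsetminus\{0,1,\infty\}$. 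Thus exactly one position of $q_1$ is compatible with the condition, and it is admissible. Consequently the cross-ratio, hence the isomorphism class of $(C_V,q_1,\ldots,q_4)$, is determined by the combinatorial data $a_2,a_3,a_4$ alone. Conversely, at this position the function $\nu_{m,V}$ does satisfy condition (a) together with $d\nu_{m,V}(q_1)=0$, so the characterization is sharp.

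\emph{Expected obstacle.} The only delicate point is justifying that no other points of $C_V$ enter $D_{m,V}$. This requires that the remaining half-edges at $V$ have direction vectors in $H$, and that free markings contribute order $0$ since $u_{p_i}=0$. Both follow from the hypotheses (1)--(2) of Lemma~\ref{lem: one-branch-vanishing derivative} and from Remark~\ref{rmk: computation_mV,mi,nj}. The additional special points on $C_V$ whose half-edges lie in $H$ do not affect the isomorphism class of the $4$-pointed curve in question, so they can be ignored.
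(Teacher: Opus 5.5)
Your proposal is correct and is essentially the paper's argument: normalize $q_2,q_3,q_4$ to $0,1,\infty$, write $\nu_{m,V}=c\,x^{a_2}(x-1)^{a_3}$, and solve $d\nu_{m,V}(q_1)=0$, which gives the unique admissible value $q_1=a_2/(a_2+a_3)$. You also make explicit two points the paper leaves implicit: you use balancing to show that $D_{m,V}$ is supported only at $q_2,q_3,q_4$, and that $a_2+a_3=-a_4\neq 0$, so $q_1\neq\infty$.
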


\begin{proof}
    Let $\zeta_1, \ldots,  \zeta_4$ be the half-edges adjacent to $V$ corresponding to points $q_1, \ldots, q_4 \in C_V$. Choose coordinates $t$ on $C_V$ such that these points are $p, 0, 1, \infty$ respectively. Write
    $$\nu_{m, V} (t)= At^{m(u_{\zeta_2})}(t-1)^{m(u_{\zeta_3})}$$
    for some $A \in \mathbb{C}^*$. We want $d\nu_{m, V}(p) = 0$, which has a unique solution
    $$p = \frac{m(u_{\zeta_2})}{m(u_{\zeta_2}) + m(u_{\zeta_3})}.$$
    Note that this is distinct from $0, 1, \infty$, since $m(u_{\zeta_2}), m(u_{\zeta_3}) \neq 0$. 
\end{proof}

Recall the notation introduced in Definition~\ref{def: filtration}. Part of the data of $\sigma$ is a radial alignment 
$
\rho : V(\tropC) \to \{0,\ldots,k\}.
$
To $\sigma$ we have associated a filtration
$$
H_0 \subseteq H_1 \subsetneq \cdots \subsetneq H_e \subseteq (N_X)_\mathbb{R}
$$
together with integers $a_i \in \{0,\ldots,k\}$ such that a neighbourhood of the subcurve of $\tropC$ whose vertices satisfy $\rho(V) \leq a_i$ is contained in $H_i$ but not in $H_{i-1}$. The vertices in $\rho^{-1}(a_i)$ are denoted by $V_{i,j}$ for $j = 1,\ldots, h_i$. Each $V_{i,j}$ has $f_{i,j}$ half-edges $\zeta_{i,j,k}$ pointing away from $\tropC_0$, i.e.\ half-edges belonging to edges connecting $V_{i,j}$ to vertices $V$ with $\rho(V) > a_i$. The direction vector of $\zeta_{i,j,k}$ is $u_{i,j,k} \in N_X$.

\begin{lemma} \label{lem: good-plane}
    For any $i=2,\ldots, e$ and $j=1,\ldots, h_{i}$ such that $f_{i,j} \geq 3$ and for any choice of $1 \leq k, k', k'' \leq f_{i,j}$ distinct, there exists a codimension $1$ subspace $H$ of $(N_X)_{\mathbb{R}}$ such that the neighborhood of the cycle $\nbhdtropC$ is contained inside $H$, and at the minimum distance from the cycle to where the curve leaves $H$, it leaves along exactly the half-edges $\zeta_{i,j,k}, \zeta_{i,j,k'}, \zeta_{i,j,k''}$. 
\end{lemma}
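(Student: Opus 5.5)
We need to construct a hyperplane $H \supseteq H_{i-1}$, in fact containing $H'_{a_i-1}$, such that among the half-edges adjacent to vertices at radial level $a_i$, exactly $\zeta_{i,j,k},\zeta_{i,j,k'},\zeta_{i,j,k''}$ leave $H$. Since $\rho^{-1}(a)$ for $a_{i-1}\le a<a_i$ adds nothing new to the span, the first place the curve can leave any $H\supseteq H_{i-1}$ is at level $a_i$.

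The plan is to use the dimension count of \cite[Proposition 3.2.18(c)]{Tor14}, already quoted in the proof of Lemma~\ref{lemma: Ha'=Ha+1'}:
\[
\dim(H_i/H_{i-1}) = |\mathcal{W}_{a_i}| - |\rho^{-1}(a_i)| - 1 .
\]
Combined with balancing, this says that in the quotient $H_i/H_{i-1}$ the images $\bar u_{i,j',k}$ of the outgoing directions satisfy only the forced relations. Balancing at $V_{i,j'}$, together with the fact that the incoming edge of $V_{i,j'}$ lies in $H_{i-1}$, gives one relation $\sum_k \bar u_{i,j',k}=0$ per vertex. Maximality of $\sigma$ contributes one further global relation. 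The count then reads: the vectors $\{\bar u_{i,j',k}\}$ span a space of dimension $|\mathcal{W}_{a_i}|-h_i-1$ with exactly $h_i+1$ independent relations. Concretely, I expect that dropping one outgoing vector from each vertex, and then one more vector overall, yields a basis of $H_i/H_{i-1}$. I would extract this precise basis statement from \cite[Proposition 3.2.18]{Tor14}, analogous to Proposition~\ref{prop: nbhd-cycle-span}.

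Given such a basis, I construct a linear functional $\bar m$ on $H_i/H_{i-1}$, extended to $(N_X)_\R$ so that it vanishes on $H_{i-1}$, and take $H=\ker m$. For every $j'\neq j$, choose $\bar m(\bar u_{i,j',k})=0$ for all $k$; this is consistent because these vectors contribute only their vertex relations. At $V_{i,j}$, require $\bar m$ to vanish on all $\bar u_{i,j,l}$ with $l\notin\{k,k',k''\}$, and to be nonzero on the three chosen ones with $\bar m(\bar u_{i,j,k})+\bar m(\bar u_{i,j,k'})+\bar m(\bar u_{i,j,k''})=0$, as balancing forces. The main obstacle is compatibility with the extra global relation. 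If that relation involves the three chosen vectors with coefficients other than equal ones, it could force a further vanishing. To handle this, I would use that $f_{i,j}\ge 3$ and choose which vector is dropped in the basis description so that the global relation can be taken to involve a vector at a different vertex, or one of the zeroed vectors. The three values then form a $1$-parameter family of solutions with all entries nonzero, and a generic choice works. Rationality of $H$ follows because the constraints are rational, so $m$ can be taken in $M_X$.

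Finally, I would verify that $H$ has the required properties. It contains $\nbhdtropC$, since $H_1\subseteq H_{i-1}\subseteq H$. The curve does not leave $H$ at any level $<a_i$, because everything there lies in $H'_{a_i-1}=H_{i-1}$. At level $a_i$, by construction exactly $\zeta_{i,j,k},\zeta_{i,j,k'},\zeta_{i,j,k''}$ are not contained in $H$; the incoming edges are not among them, since they lie in $H_{i-1}$.
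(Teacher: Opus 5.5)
\textbf{There is a genuine gap.} Your framework is sound: you look for a functional $\bar m$ on $H_i/H_{i-1}$ that vanishes on every outgoing direction at level $a_i$ except the three chosen ones. However, the step you call the main obstacle is where the content of the lemma lies, and your proposed fix does not address it.

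Write the single extra relation (beyond the $h_i$ vertex relations) as $\sum_{j',l}\gamma_{j',l}\,\bar u_{i,j',l}=0$. This relation is intrinsic to the configuration. It is determined only up to adding multiples of the vertex relations, which shift all coefficients $\gamma_{j,l}$ at $V_{i,j}$ by a common constant. Re-choosing which vector is dropped in a basis description does not change it.

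The values $v_k,v_{k'},v_{k''}$ of $\bar m$ on the three chosen directions must satisfy $v_k+v_{k'}+v_{k''}=0$ and $\gamma_{j,k}v_k+\gamma_{j,k'}v_{k'}+\gamma_{j,k''}v_{k''}=0$. A solution with all three entries nonzero exists if and only if it is not the case that exactly two of $\gamma_{j,k},\gamma_{j,k'},\gamma_{j,k''}$ coincide.

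Balancing and the dimension count do not exclude this bad case. Take a single vertex at level $a_i$ with outgoing directions $a,a,b,-2a-b$ modulo $H_{i-1}$, with $a,b$ independent. It satisfies $\dim H_i/H_{i-1}=4-1-1$. Yet any $H\supseteq H_{i-1}$ containing the second direction also contains the first, so the triple formed by the first, third and fourth half-edges can never be separated. What rules this out is well-spacedness, which your argument never invokes. The extra relation is forced by well-spacedness; maximality only pins down the count.

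The paper's proof supplies the missing step as follows.
\begin{enumerate}
\item Let $\mathcal U$ consist of all outgoing directions at level $a_i$ except one at each $V_{i,j'}$ with $j'\neq j$ and two at $V_{i,j}$.
\item Suppose $H_{i-1}+\Span\mathcal U\neq H_i$. By balancing, the two omitted directions at $V_{i,j}$ lie outside this subspace; otherwise it would contain every outgoing direction and equal $H_i$. So some hyperplane $H'\subset H_i$ contains it and avoids both.
\item Then $H'+H''$, with $H''$ a complement of $H_i$, is a hyperplane along which the curve leaves through exactly two half-edges. This contradicts well-spacedness.
\item Since $|\mathcal U|=\dim H_i/H_{i-1}$ by \cite[Proposition 3.2.18(c)]{Tor14}, $\mathcal U$ is a basis, and this holds for every such choice of omitted vectors. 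This is your ``expected'' basis statement. It is equivalent to the coefficients $\gamma_{j',l}$ being pairwise distinct at each vertex.
\item Removing a third vector at $V_{i,j}$ gives $\mathcal U'$. Then $H=H_{i-1}+\Span\mathcal U'+H''$ is the required hyperplane. It contains the omitted directions at the other vertices by balancing. It contains none of the three chosen ones, since each of them completes $\mathcal U'$ to a basis.
\end{enumerate}

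So do not defer the basis statement to a citation and then try to repair compatibility by re-choosing bases. Instead, prove the basis statement for all choices from well-spacedness. Once you have it, your functional construction goes through with no further obstacle.
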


For given vertex $V$ \emph{outside} of the cycle $\tropC_0$ and with valency bigger than $3$, the above lemma is providing an hyperplane as in Lemma \ref{lem: one-branch}, thus pinning down the position of all the nodes and non-free markings on $C_V$. Note that the free marking $p_i$ are necessarily attached to three valent verteces because by assumption $\sigma$ is a maximal cone. This determines the isomorphism class of the domain curve $(\underline{C},\mathsf{p})$ of $(C^{\mathsf{fine}}, \mathsf{p}, f^{\mathsf{fine}}, \rho)$.

\begin{proof}
Suppose $f_{i,j} \geq 3$. Without loss of generality, we may assume that $j= h_i$, $k=h_i-3,k'=h_i-1$ and $k'=h_i$. Consider the set 
\begin{equation} \label{eqn: basis vectors}
\mathcal{U} = \{u_{i,1,1}, \ldots, u_{i,1,f_{i,1}-1}\} \cup  \ldots \cup\{ u_{i,h_i-1,1}, \ldots, u_{i,h_i-1,f_{i,h_i-1}-1}\} \cup \{u_{i, h_i,1}, \ldots, u_{i,h_i, f_{i,h_i}-2}\}
\end{equation}
of all but one direction vectors of the half-edges in direction opposite to $\tropC_0$ adjacent to $V_{i,j}$ for $j = 1, \ldots, h_i -1$, and all but two direction vectors for $V_{i,h_i}$. If $H_{i-1} + \Span\mathcal{U} \subseteq H_{i}$ has positive codimension, then the curve cannot be well-spaced. Indeed, let $H'$ be any codimension $1$ subspace of $H_{i}$ containing $H_{i-1} + \Span\mathcal{U}$, but not containing $u_{i,h_i, f_{i,h_i}-1}$ and $ u_{i,h_i, f_{i,h_i}}$. Let $H''$ be any complement of $H_{i}$ in $(N_X)_{\mathbb{R}}$. Then $H'+ H''$ is a codimension $1$ subspace of $(N_X)_{\mathbb{R}}$ for which the well-spacedness condition fails (at the minimum distance from the cycle to where the curve leaves the plane, it leaves along exactly two half-edges). Thus, $H_{i-1} + \Span\mathcal{U} = H_{i}$ and the vectors in $\mathcal{U}$ span $H_i/H_{i-1}$. 
From \cite[Proposition 3.2.18(c)]{Tor14}, for each $i = 2, \ldots, e$  have 
    \begin{equation} \label{eqn: tor c}
        \sum_{j=1}^{h_i} (\val{V_{i,j}}-1) - h_i - 1 = \dim H_{i} - \dim H_{i-1}. 
    \end{equation}

Since the left-hand side is equal to the cardinality of $\mathcal{U}$. 
So under the projection $H_{i} \rightarrow H_{i}/H_{i-1}$, the elements of $\mathcal{U}$ form a basis of $H_{i}/H_{i-1}$. In particular, the set 
$$\mathcal{U}' = \{u_{i,1,1}, \ldots, u_{i,1,f_{i,1}-1}\} \cup  \ldots \cup\{ u_{i,h_i-1,1}, \ldots, u_{i,h_i-1,f_{i,h_i-1}-1}\} \cup \{u_{i, h_i,1}, \ldots, u_{i,h_i, f_{i,h_i}-3}\},$$
where now we remove three vectors for $V_{i,h_i}$, has the property that $H_{i-1} + \Span \mathcal{U}' + H''$ is a codimension $1$ subspace in $(N_X)_\R$ with the required properties.

\end{proof}

\begin{lemma}\label{lemma: all_but_4_vect}
    Fix $i \in \{2, \ldots, e\}$, and suppose that $h_i \geq 2$. Let $j,j' \in \{1, \ldots, h_i\}$ be distinct indices, and for each of them choose two vectors 
\[
u_{i,j,a_j}, \, u_{i,j,b_j}
\qquad\text{and}\qquad
u_{i,j',a_{j'}}, \, u_{i,j',b_{j'}}.
\]
Then there exists a codimension-one subspace
$
H \subset (N_X)_{\mathbb{R}}
$
such that $H$ contains $H_{i-1}$ and all vectors $u_{i,j,k}$ for $j=1,\ldots , h_i$ and $k=1,\ldots, f_{i,j}$ except for these four vectors.
\end{lemma}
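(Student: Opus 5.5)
The plan is to mirror the proof of Lemma~\ref{lem: good-plane}: produce a basis of $H_i/H_{i-1}$ from the outgoing direction vectors, then drop exactly the four prescribed vectors. The construction is a hyperplane of $H_i$ containing $H_{i-1}$ and all the remaining vectors, extended by a complement $H''$ of $H_i$ in $(N_X)_\mathbb{R}$.

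\textbf{Step 1: dimension count.} By \cite[Proposition 3.2.18(c)]{Tor14}, as in Equation~\eqref{eqn: tor c}, we have
\[
\dim H_i-\dim H_{i-1}=\sum_{l=1}^{h_i}f_{i,l}-h_i-1 .
\]
So the total number of outgoing vectors at level $a_i$ exceeds $\dim(H_i/H_{i-1})$ by exactly $h_i+1$.

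\textbf{Step 2: choice of $\mathcal{U}$.} Since $h_i\ge 2$, we may assume after relabeling that $j=h_i-1$ and $j'=h_i$. Define $\mathcal{U}$ as follows:
\begin{itemize}
\item for each $l\neq j,j'$, drop one arbitrary vector $u_{i,l,k}$;
\item at $V_{i,j}$, drop both $u_{i,j,a_j}$ and $u_{i,j,b_j}$;
\item at $V_{i,j'}$, drop only $u_{i,j',a_{j'}}$.
\end{itemize}
This removes $(h_i-2)+2+1=h_i+1$ vectors, so $|\mathcal{U}|=\dim(H_i/H_{i-1})$.

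\textbf{Step 3: $\mathcal{U}$ is a basis of $H_i/H_{i-1}$.} It suffices to show that $H_{i-1}+\operatorname{Span}\mathcal{U}=H_i$. Suppose instead it were a proper subspace. Choose a hyperplane $H'\subset H_i$ that contains $H_{i-1}+\operatorname{Span}\mathcal{U}$ but avoids the dropped vectors $u_{i,j,a_j}$, $u_{i,j,b_j}$ and $u_{i,j',a_{j'}}$. This is possible: a generic hyperplane containing a proper subspace avoids finitely many vectors outside that subspace, and a dropped vector lying inside the subspace can be handled by balancing. Set $H=H'+H''$. Then the curve leaves $H$ for the first time, at level $a_i$, along too few half-edges; in the worst case only along the vectors at $V_{i,j}$ together with possibly one balanced partner. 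This violates well-spacedness, which requires the minimum to be attained at least three times.

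\textbf{Step 4: conclusion.} Let $\mathcal{U}'=\mathcal{U}\setminus\{u_{i,j',b_{j'}}\}$. Then $H_{i-1}+\operatorname{Span}\mathcal{U}'$ is a hyperplane of $H_i$, and $H=H_{i-1}+\operatorname{Span}\mathcal{U}'+H''$ is the required codimension-one subspace.

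One gap remains: $H$ must contain the vectors $u_{i,l,k}$ dropped in Step 2 at vertices $l\neq j,j'$. These are not in $\mathcal{U}'$, so this is not automatic. I expect to close it by balancing: the edge into $V_{i,l}$ from below lies in $H_{i-1}$, so the sum of the outgoing vectors at $V_{i,l}$ lies in $H_{i-1}$. The one dropped vector at each such $V_{i,l}$ is therefore minus the sum of the others modulo $H_{i-1}$, and hence lies in $H$. The same argument applied at $V_{i,j}$ and $V_{i,j'}$ shows that $H$ cannot contain all but one vector there, so the four omitted vectors genuinely fail to lie in $H$.

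\textbf{Main obstacle.} The key step is the well-spacedness contradiction in Step 3. The difficulty is choosing $H'$ so that, at the minimal distance, the curve leaves along at most two half-edges. Here the balancing relations are essential: they show the number of exits at each vertex is either $0$ or at least $2$. The exits at vertices $l\neq j,j'$ must therefore be ruled out by construction. I would argue this exactly as in Lemma~\ref{lem: good-plane}, using Lemma~\ref{lemma: Ha'=Ha+1'} to ensure there are no intermediate levels.
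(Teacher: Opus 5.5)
Your strategy is the paper's: you get a basis of $H_i/H_{i-1}$ from the outgoing direction vectors at level $a_i$ via the well-spacedness argument of Lemma~\ref{lem: good-plane}, remove one more vector to get a hyperplane $H'$ of $H_i$, and set $H=H'+H''$. Your $\mathcal{U}'$ spans the same $H'$ as the paper's.

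The gap is in the last sentence of Step~4, where you claim $u_{i,j,a_j}$ and $u_{i,j,b_j}$ do not lie in $H$. This is genuinely needed: the uniqueness part of Proposition~\ref{prop: reduce to nbhd} uses $m(u_{i,j,f_{i,j}-1})\neq 0$.
\begin{itemize}
\item At $V_{i,j'}$ your reasoning is fine. Since $u_{i,j',b_{j'}}\in\mathcal{U}\setminus\mathcal{U}'$, it is not in $H'$, and balancing then excludes $u_{i,j',a_{j'}}$.
\item At $V_{i,j}$, balancing only says that $u_{i,j,a_j}$ and $u_{i,j,b_j}$ are both in $H'$ or both outside it. Knowing that $\mathcal{U}$ is a basis does not decide which. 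Indeed $u_{i,j,a_j}\in H'$ exactly when its expansion in the basis $\mathcal{U}$ (modulo $H_{i-1}$) has zero coefficient on $u_{i,j',b_{j'}}$, and linear algebra alone allows this. For example, take $h_i=2$, $f_{i,j}=2$, $f_{i,j'}=3$, with $u_{i,j,a_j}$ parallel modulo $H_{i-1}$ to the single vector of $\mathcal{U}'$.
\end{itemize}
What excludes this is a second use of well-spacedness. Rerun Step~3 with the roles of $j$ and $j'$ exchanged: this shows that $\mathcal{U}'\cup\{u_{i,j,b_j}\}$ (two vectors dropped at $V_{i,j'}$, one at every other vertex) is also a basis of $H_i/H_{i-1}$. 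Hence $u_{i,j,b_j}\notin H'$, and by balancing $u_{i,j,a_j}\notin H'$. This is exactly the paper's ``by symmetry'' step; with it added, your proof is complete.

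Step~3 as written also needs correcting. The vector $u_{i,j',a_{j'}}$ automatically lies in $H_{i-1}+\operatorname{Span}\mathcal{U}$ by balancing at $V_{i,j'}$: its other outgoing vectors are in $\mathcal{U}$, and the edge back toward the cycle lies in $H_{i-1}$. So no $H'$ can avoid it, and none needs to. The exits from $H$ at level $a_i$ are then exactly $u_{i,j,a_j}$ and $u_{i,j,b_j}$. Drop the hedge ``possibly one balanced partner'': a third exit would not contradict well-spacedness. Your ``main obstacle'' paragraph already has the right reasoning (at each vertex the number of exits is $0$ or at least $2$), so this is a matter of stating Step~3 accurately, not a missing idea.
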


\begin{proof}
    Without loss of generality,  $a_j = f_{i,j} -1, b_j = f_{i,j}$, $j' = h_i, a_{j'} = f_{i,h_i}-1, b_{j'} = f_{i,h_i}$. First we find a codimension $1$ subspace $H' \subset H_i$ containing $H_{i-1}$ and all $u_{i,j,k}$ except for $u_{i,h_i, f_{i,h_i}-1}, u_{i,h_i, f_{i,h_i}}, u_{i,j,f_{i,j}-1}$, and $u_{i,j,f_{i,j}}$. The vectors in the set $\mathcal{U}$ in Equation \ref{eqn: basis vectors} form a basis of $H_i/H_{i-1}$, and so $\mathrm{Span}_\R (\mathcal{U} \smallsetminus \{u_{i,j,f_{i,j}-1}\}) +H_{i-1}$ spans a codimension $1$ subspace $H'$ of $H_i$. 
    
    We claim that $H'$ does not contain $u_{i,h_i, f_{i,h_i}-1}, u_{i,h_i, f_{i,h_i}}, u_{i,j,f_{i,j}-1}$, or $u_{i,j,f_{i,j}}$. It does not contain $u_{i,j,f_{i,j}-1}$, so by balancing it does not contain $u_{i,j,f_{i,j}}$. By symmetry, the set
    $$(\mathcal{U} \smallsetminus \{u_{i,j,f_{i,j}-1}\} ) \cup \{u_{i,h_i, f_{i,h_i}-1}\} $$
    is also a basis of $H_i/H_{i-1}$, and so $u_{i,h_i, f_{i,h_i}-1} \notin H'$. Again by balancing, we have $u_{i,h_i,f_{i,h_i}} \notin H'$. To conclude, let $H''$ be a complement of $H_i$ in $(N_X)_{\mathbb{R}}$ and set $H = H' + H''$. 
\end{proof}

\begin{proof}[Proof of uniqueness in Proposition \ref{prop: reduce to nbhd}]

    To check the uniqueness of $(C^\fine ,\mathsf{p},f^\fine,\rho)$ it is enough to check uniqueness of the fine map $f^\fine : (C^\fine, \mathsf{p}) \to X$ and of the radial alignment $\rho$. 
    
   We start with the map. Lemmas \ref{lem: one-branch} and \ref{lem: good-plane} show that there is at most one isomorphism class for the pointed curve $(C,\mathsf{p})$. Also, the map $\underline{f}$ is, up to isomorphism, determined by Observation \ref{obs: further properties from sigma}(2). Its fine log enhancement is also unique by Lemma \ref{lemma: uniqueness of fine maps}. 

    Next we prove uniqueness of the radial alignment $\rho$. Recall that 
\begin{equation}\label{eqn:locus_of_rho}
\rho = (\rho_{i,j})_{\substack{i=2,\ldots,e \\ j=2,\ldots,h_i}}
\in \prod_{i=2}^e \mathbb{G}_m^{\,h_i-1}
=
\prod_{i=1}^{\ell(\rho)} \mathbb{G}_m^{\,|\rho^{-1}(i)|-1}.
\end{equation}
where each factor $\mathbb{G}_m^{h_i-1}$ is naturally identified with the collections of isomorphisms
\begin{equation}
 T_{\nu_{V_{i,1}}}C_{V_{i,1}} \simeq T_{\nu_{V_{i,j}}}C_{V_{i,j}}  \ \text{for} \ j=2,\ldots, h_i.
\end{equation}
Here $\nu_V$ is the node associated to the unique edge $e$ of $\mathsf{C}$ connecting $V$ to a vertex $V'$ with $\rho(V') < \rho(V)$. The identity in Equation \eqref{eqn:locus_of_rho} uses Lemma \ref{lemma: Ha'=Ha+1'}.

Fix $i=2,\ldots, e$ and $j=2,\ldots,h_i$. We want to prove that each $\rho_{i,j} \in \mathbb{G}_m$ is uniquely determined.

By Lemma \ref{lemma: all_but_4_vect}, there exists a codimension $1$ hyperplane $H \subseteq (N_X)_\R$ containing $H_{i-1}$ and all $u_{i,j,k}$ except for $u_{i,1, f_{i,1}-1}, u_{i,1, f_{i,1}}, u_{i,j,f_{i,j}-1}$, and $u_{i,j,f_{i,j}}$.

Let $m \in (N_X/(H \cap N_X))^*$ be a generator. Let $\widetilde{C} \to C$ and $\widetilde{C} \to \overline{C}$ be the destabilization and contraction morphisms associated to $\rho$ and the image of $m$ in $\Gamma(C,\oM_C^{\gr})$ constructed in \S\ref{sec: aligned curves}. Let $\sigma_m$ denote the image of $m$ in $\Gamma(\widetilde{C},\mathcal{M}_{\widetilde{C}}^{\mathrm{gr}})$. We may view $\sigma_m$ as a section of the line bundle $L_m$ dual to the line bundle associated to the image of $m$ in $\Gamma(\widetilde{C},\overline{\mathcal{M}}_{\widetilde{C}}^{\mathrm{gr}})$. Explicitly, we may write
\[
\sigma_m=\{\sigma_{m,V}\}_{V},
\]
where each $\sigma_{m,V}$ is a section of the line bundle $L_{m,V}$ defined in Equation~\eqref{eqn: line bdl LmV} and $V$ runs over the components $\widetilde{C}_V$ of $\widetilde{C}$. For every node $q \in \widetilde{C}_V \cap \widetilde{C}_{V'}$ of $\widetilde{C}$, there is an identification
$
L_{m,V} \simeq L_{m,V'}
$
as in Equation~\eqref{eqn: glueing LmV}, and the line bundle $L_m$ is obtained by gluing the $L_{m,V}$ via these identifications. Moreover, under this identification, we have
\[
\sigma_{m,V}(q_V)=\sigma_{m,V'}(q_{V'}).
\]

By assumption, $\sigma$ comes from some $\overline{\sigma} \in \Gamma(\overline{C},\cM_{\overline{C}})$. Again $\overline{\sigma}$ is a section of a line bundle $\overline{L}_m$ and satisfies similar properties. Note that $\overline{L}_{m,V}$ is trivial (and $\overline{\sigma}_{m,V}$ is constant) on components $\overline{C}_V$ of $\overline{C}$ which comes from the stabilization $\widetilde{C}$ but do not correspond to components of $C$. 

The vertex $V_{i,j}$ of $\tropC$ corresponds to a component $\overline{C}_{V_{i,j}}$ of $\overline{C}$ and the point $\nu:=\nu_{V_{i,j}}$ is the genus $1$ singular point of $\overline{C}$. As in the proof of Lemma \ref{lem: one-branch-vanishing derivative}, the line bundle $\overline{L}_m|_{\overline{C}_V}=L_{m,V}$ is canonically identified with $\cO$ around $\nu_{V_{i,j}}$ and the differential
\[
d\overline{\sigma}_{m,V_{i,j}}(\nu_{V_{i,j}})\colon T_{\nu_{V_{i,j}}}\overline{C}_{V_{i,j}} \to \C 
\]
is necessarily an isomorphism.

To see this, let $t$ be a coordinate on $\overline{C}_{V_{i,j}}$ around $\nu_{V_{i,j}}$ such that $\nu_{V_{i,j}}$ corresponds to the point $1$, while the nodes of $\overline{C}$ in $C_{V_{i,j}}$ corresponding to the half-edges $\xi_{i,j,f_{i,j}-1}$ and $\xi_{i,j,f_{i,j}}$ correspond to $0$ and $\infty$, respectively. Then
\[
\overline{\sigma}_{m,V_{i,j}}(t)=A t^{m(u_{f_{i,j}-1})},
\qquad \text{with } A \neq 0.
\]
whose derivative at $t=1$ is clearly nonzero.

For the same reason the differential $d\overline{\sigma}_{m,V_{i,1}}(\nu_{V_{i,1}})\colon T_{\nu_{V_{i,1}}}\overline{C}_{V_{i,1}} \to \C$
is an isomorphism. We claim that the isomorphism $\rho_{i,j}: T_{\nu_{V_{i,1}}}C_{V_{i,1}} \simeq T_{\nu_{V_{i,j}}}C_{V_{i,j}}$ must be equal to the composition 
\begin{equation}\label{eqn: rho_ij}
T_{\nu_{V_{i,1}}}C_{V_{i,1}} \xrightarrow{(d\sigma_{m,V_{i,1}}(\nu_{V_{i,1}}))^{-1} } \C\xrightarrow{d\sigma_{m,V_{i,j}}(\nu_{V_{i,j }})} T_{\nu_{V_{i,j}}}C_{V_{i,j}}.
\end{equation}
Uniqueness in Proposition \ref{prop: reduce to nbhd} would then follow.

To see this, we use coordinates. Étale locally around $\nu$, the curve $\overline{C}$ is isomorphic to either a tacnode, modeled by $V(y^2-yx^2)$ for $m=2$ branches, or the germ at the origin of the union of $m \geq 3$ general lines through the origin in $\mathbb{A}^{m-1}$. Note that because $h_i > 1$, the singularity has multiple branches and therefore cannot be a cusp. Using seminormalization map in Equation \eqref{eqn: seminormalization} together with the description of the sheaf of regular functions on $\overline{C}$ at $\nu$ in Equation \eqref{eqn: regular_fcts_barC}, and taking the $f_i$ to be the sections $\overline{\sigma}_{m,V}$, where $V$ ranges over the irreducible components of $\overline{C}$ containing $\nu$, the above equation becomes
\[
\overline{\sigma}_{m,V_{i,1}}'(\nu_{V_{i,1}})
+
\overline{\sigma}_{m,V_{i,j}}'(\nu_{V_{i,j}})
=0.
\]
In other words, under the coordinates provided by the seminormalization map~\eqref{eqn: seminormalization}, both the gluing isomorphism $\rho_{i,j}$ and the map~\eqref{eqn: rho_ij}
\[
\mathbb{C}
\simeq
T_{\nu_{V_{i,1}}}C_{V_{i,1}}
\longrightarrow
T_{\nu_{V_{i,j}}}C_{V_{i,j}}
\simeq
\mathbb{C}
\]
are given by multiplication by $-1$.

\end{proof}

\subsubsection{Liftings for the cycle} \label{sec: liftings for the cycle}

To prove Theorem \ref{thm: calculation of weights}, we need to find all cycles $(C_0, \mathsf{p}')$ satisfying the lifting condition with respect to $\sigma$. Then by Proposition \ref{prop: reduce to nbhd}, for each such cycle $(C_0, \mathsf{p}')$ there exists a unique basic fine map with combinatorial type $\sigma$ and cycle $(C_0,\mathsf{p}')$ and we count it with weight $\prod \mu_V/\Aut(f_j^\fine)$. 

Let $n_1, \ldots, n_{r_0}$ be a basis of $H_0\cap N_X$. Extend it to a basis $n_1, \ldots, n_{r_1}$ of $H_1 \cap N_X$, then extend again to a basis $n_1, \ldots, n_r$ of $N_X$. Let $m_i \in M_X$ be the dual basis. We find appropriate meromorphic functions $\nu_{m_i, V}$ on $C_V \subseteq C_0$ for $V \in V(\tropC_0)$ in groups: $i = 1, \ldots, r_0$, $i = r_0+1, \ldots, r_0+r_1$, and $i = r_0+r_1 +1, \ldots, r$. Only the second group will give conditions on $(C_0, \mathsf{p}')$ for the lifting condition with respect to $\sigma$ to be satisfied. 

\begin{lemma} \label{lem: lift in cycle}
    Fix any $(C_0, \mathsf{p}')$ marked nodal curve whose dual graph is equal to the graph underlying $\sigma_0$. There exists meromorphic functions $\nu_{m_i, V}$ on $C_V$ for $V \in V(\tropC_0)$ and for $i = 1, \ldots, {r_0}$, and coordinates $x_{q_V}$ on $C_V$ for every node $q$ corresponding to edge in $\tropC_0$ to $V$, such that \begin{itemize}
        \item for $i = 1, \ldots, {r_0}$ the $\nu_{m_i,V}$ satisfies lifting condition (a);
        \item for every oriented edge $\vec{e}\colon V \to V'$ in $\tropC_0$, the pair $\nu_{m_i, V}, \nu_{m_i, V'}$ satisfies lifting condition (b) with respect to $x_{q_V}$ and $x_{q_{V'}}$.
    \end{itemize}
    
\end{lemma}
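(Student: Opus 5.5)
We will construct the functions $\nu_{m_i,V}$ for $i\le r_0$ explicitly, exploiting that $m_i$ pairs to zero with no direction vector outside $H_0$ in a controlled way; the key point is that the obstruction to lifting condition (b) around the cycle is a single ``monodromy'' scalar, and we can absorb it by choosing the node coordinates. Fix $(C_0,\mathsf{p}')$. For each $V\in V(\tropC_0)$ and each $i$, the divisor $D_{m_i,V}$ of Equation~\eqref{eqn: divisor DmV} has degree $0$ by balancing. Since $C_V\simeq\mathbb{P}^1$ (all cycle components are rational in genus $1$), there exists a meromorphic function $\nu_{m_i,V}$ with $\mathrm{div}(\nu_{m_i,V})=D_{m_i,V}$, unique up to a scalar $\lambda_{i,V}\in\C^*$. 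This settles lifting condition (a).

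For condition (b), we first choose arbitrary coordinates $x_{q_V}$ at every node of the cycle. Orient the cycle as $V_1\to V_2\to\cdots\to V_s\to V_1$ with edges $\vec{e}_1,\dots,\vec{e}_s$. Walking along the cycle, we use the scalars $\lambda_{i,V_2},\dots,\lambda_{i,V_s}$ successively to match the lowest-order coefficients across $\vec{e}_1,\dots,\vec{e}_{s-1}$. This leaves one defect $c_i\in\C^*$ at the last edge $\vec{e}_s$, namely the ratio of the two lowest-order coefficients. Rescaling all $\nu_{m_i,V}$ by a common constant does not change $c_i$, so the remaining freedom must come from the coordinates.

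Rescale the coordinate at a node by $x_{q_V}\mapsto t\,x_{q_V}$. The lowest-order coefficient of $\nu_{m_i,V}$ then changes by $t^{-\mathrm{ord}}$, where $\mathrm{ord}=m_i(u_{\vec{e}})$ up to sign, and this exponent does not depend on the choice of coordinate. Allow independent rescalings $t_e$ on one branch of each cycle edge $e$. The total defect vector $(c_1,\dots,c_{r_0})$ then changes by the character
\[
(t_e)_e\longmapsto\Bigl(\prod_e t_e^{\,m_i(u_{\vec{e}})}\Bigr)_{i=1,\dots,r_0}
\]
of the torus $(\C^*)^{E(\tropC_0)}$. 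Because $\{u_{\vec{e}}\}_{e\in E(\tropC_0)}$ spans $H_0$ and $m_1,\dots,m_{r_0}$ restrict to a basis of $H_0^*$, the integer matrix $(m_i(u_{\vec{e}}))$ has rank $r_0$. The character is therefore surjective onto $(\C^*)^{r_0}$, since any rank-$r_0$ homomorphism of tori is surjective. We choose $t_e$ with image $(c_i^{-1})_i$ and then re-run the chain normalization.

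The main obstacle is bookkeeping. A rescaling at one edge also changes the coefficients that were already matched along the chain, so the defect must be recomputed as a cocycle rather than fixed edge by edge. We will handle this by working with the gauge-invariant quantity $\prod_{\text{edges}}(\text{ratio of coefficients})$ around the cycle. The scalars $\lambda_{i,V}$ cancel out of this quantity, and a coordinate change at $e$ multiplies it by exactly $t_e^{\pm m_i(u_{\vec{e}})}$. Once this invariant equals $1$ for every $i$, we solve for the $\lambda_{i,V}$ along the chain. Self-loops, where both branches of a node lie on the same $C_V$, are treated the same way: the loop contributes a single edge to the cycle, and its ratio enters the invariant just like any other edge.
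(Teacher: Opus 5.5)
Your proposal is correct and follows essentially the paper's argument. The paper also fixes arbitrary lifts satisfying (a), reduces condition (b) around the cycle to the single monodromy equation $\prod_k \mu_k^{m_i(u_{j_k})} = C_i$ with $C_i=\prod_j c'_{i,j}/c_{i,j}$, and absorbs this defect by rescaling node coordinates. The only cosmetic difference is in that last step: the paper rescales at $r_0$ privileged edges with $\det(m_i(u_{j_k}))\neq 0$ and solves for modulus and argument separately, whereas you rescale at all cycle edges and invoke surjectivity of a rank-$r_0$ homomorphism of tori.
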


\begin{proof}
    Let $V_1, \ldots, V_{E_0}$ be the vertices of $\tropC_0$. For convenience, we work modulo $E_0$ for the indexing of the edges and vertices. Let $\vec{e}_j$ be the oriented edge $V_j \rightarrow V_{j+1}$ for $j = 1, \ldots, E_0$. Let $u_j = u_{\vec{e}_j}$. Choose privileged indices $j_1, \ldots, j_{r_0} \in \{1, \ldots, E_0\}$  such that $u_{j_1}, \ldots, u_{j_{r_0}}$ span $H_0$ over $\mathbb{R}$. 

    For $i = 1, \ldots, r_0$ and for every $V_j$ in the cycle, fix any lifts $\tilde{\nu}_{m_i, V_j}$ satisfying the lifting condition (a). For each edge $e_j$, fix coordinates $x_{(q_j)_{V_j}}$ and $x_{(q_j)_{V_{j+1}}}$ on $C_{V_j}$ and $C_{V_{j+1}}$ respectively around the node $q_j$ corresponding to edge $e_j$. Let $A_{i,j} \in \mathbb{C}$ and $\mu_{1}, \ldots, \mu_{r_0} \in \mathbb{C}^*$. For each privileged index $k=1,\ldots, r_0$, consider change of coordinates $x_{(q_{j_k})_{V_{j_k}}}\mapsto \mu_{k} x_{(q_{j_k})_{V_{j_k}}}$, and define $\nu_{m_i, V_j} \colon = A_{i,j}\tilde{\nu}_{m_i, V_j}$. 
    We want to find $A_{i,j}$ and $ \mu_k$ such that for every oriented edge $\vec{e} \colon V \rightarrow V'$ in the circuit, the pair $\nu_{m_i, V}, \nu_{m_i, V'}$ satisfies the lifting condition (b). 
 
    Let $c_{i,j}$ be the first non-zero coefficient of the expansion of $\tilde{\nu}_{m_i, V_j}$ around the node $q_j$ with respect to coordinates $x_{(q_j)_{V_j}}$ and let $c'_{i,j}$ be the first non-zero coefficient of the expansion of $\tilde{\nu}_{m_i, V_{j+1}}$ around the node $q_j$ with respect to coordinates $x_{(q_j)_{V_{j+1}}}$. 
    For each $i = 1, \ldots, r_0$, and for each oriented edge $\vec{e}_j$, the lifting condition (b) on the pair $\nu_{m_i, V_j}$, $\nu_{m_i, V_{j+1}}$ gives:
    $$ \begin{cases} 
        c'_{i,j} A_{i,j+1} = c_{i,j} A_{i,j} & \text{if } j \notin \{j_1,\ldots, j_{r_0} \} \\ 
        c'_{i,j_k} A_{i,j_k+1} = c_{i,j_k} \mu_k^{m_i(u_{j_k})} A_{i,j_k} & \text{if } j=j_k \in \{j_1,\ldots, j_{r_0} \} 
    \end{cases}$$
     This difference accounts for our chosen change of variables $x_{(q_{j_k})_{V_{j_i}}} \mapsto \mu_{k} x_{(q_{j_k})_{V_{j_k}}}$ at the privileged indices $j_k$. Note that the order of $\nu_{m_i,V_{j_k}}$ at the node corresponding to $e_{j_k}$ is $m_i(u_{j_k})$. 

    For each $i= 1, \ldots, r_0$, this gives a relation 
    $$A_{i,1} = \bigg(\prod_j \frac{c_{i,j}}{c'_{i,j}}\bigg) \mu_1^{m_i(u_{j_1})} \ldots \mu_{r_0}^{m_i(u_{j_{r_0}})}A_{i,1}$$
    So, independently of $A_{i,1}$, we have reduced to solving
    $$\mu_1^{m_i(u_{j_1})} \ldots \mu_{r_0}^{m_i(u_{j_{r_0}})} = C_i$$
    for $C_i = \prod_j \frac{c'_{i,j}}{c_{i,j}} \in \C^*$. This is possible since, by the choice of the privileged indices $j_k$, we have 
    $$\det((m_i(u_{j_k}))_{i,k= 1, \ldots, r_0}) \neq 0.$$ Indeed, let $M= (m_i(u_{j_k}))_{i,k= 1, \ldots, r_0}$. One can first solve the real equations
    $$|\mu_1|^{m_i(u_{j_1})} \ldots |\mu_{r_0}|^{m_i(u_{j_{r_0}})} = |C_i|$$
    by taking logarithms to obtain a system of linear equations with matrix $M$. This determines the modulus $|\mu_k|$ for $i=1,\ldots,r_0$. Then replacing $\mu_k$ by $\mu_k/|\mu_k|$, we can assume $|\mu_k| =|C_i| = 1$ for all $k$. Write $\mu_k = e^{2 \pi \sqrt{-1} \times \mu_k'} $, $C_i = e^{2\pi \sqrt{-1} \times C_i'}$ with $\mu_k' , C_i'\in [0,2 \pi)$. Then it suffices to solve
    $$\sum_k m_i(u_{j_k}) \mu_k' = C_i' .$$
    This is again a system of linear equations with matrix $M$.
\end{proof}

\begin{corollary} \label{cor: cyclespans}
    Suppose $\sigma$ is a maximal cone in $W_\Gamma(\Sigma(X))$ corresponding to a tropical type with the property that $H_0 = H_1$. Then $n(\sigma) = 1$ and  $\nn(\sigma) = 1/|\Aut(\sigma)|$. 
\end{corollary}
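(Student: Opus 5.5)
The plan is to show that, when $H_0=H_1$, every marked nodal curve $(C_0,\mathsf{p}')$ whose dual graph is that of $\sigma_0$ satisfies the lifting property with respect to $\sigma$, and that this curve is in fact rigid. Proposition~\ref{prop: reduce to nbhd} will then produce exactly one basic fine well-spaced map, giving $n(\sigma)=1$. Afterwards the automorphism contribution has to be identified.

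\textbf{Lifting property.} Take the basis $m_1,\ldots,m_r$ of $M_X$ dual to $n_1,\ldots,n_r$ as in \S\ref{sec: liftings for the cycle}. Since $r_0=r_1$, the middle group of basis elements is empty.

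For $i\le r_0$, Lemma~\ref{lem: lift in cycle} supplies meromorphic functions and coordinates at the cycle nodes satisfying lifting conditions (a) and (b), for any choice of $(C_0,\mathsf{p}')$.

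For $i>r_1$, the functional $m_i$ vanishes on $H_1$. Hence $m_i(u_\zeta)=0$ for every half-edge $\zeta$ adjacent to a cycle vertex, so each $D_{m_i,V}$ is zero. We then take $\nu_{m_i,V}$ to be a single common nonzero constant on every $C_V$ with $V\in V(\tropC_0)$. Conditions (a) and (b) hold for any coordinates, in particular for those already fixed in the previous step.

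By Remark~\ref{rem: basis}, checking a basis suffices. So the lifting property holds for every $(C_0,\mathsf{p}')$ of the right dual-graph type.

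\textbf{Uniqueness of the cycle.} Since $\sigma$ is a maximal cone, I claim every cycle vertex is trivalent. Proposition~\ref{prop: nbhd-cycle-span} says that the vectors $\bar u_{j,k}$ with $k\le e_j-1$ form a basis of $H_1/H_0=0$. Therefore $e_j=1$ for all $j$, and no vertex of valency at least four lies on the cycle. Each $C_V$ with $V$ on the cycle is then a $\mathbb{P}^1$ with exactly three special points (two cycle nodes and one further point), so it has no moduli. The pointed curve $(C_0,\mathsf{p}')$ is therefore unique up to isomorphism, and $n(\sigma)=1$.

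\textbf{Automorphisms.} It remains to show $|\mathrm{Aut}(f^\fine)/\prod\mu_{d_V}|=|\mathrm{Aut}(\sigma)|$. The map \eqref{eqn:aut_injection} gives an injection $\mathrm{Aut}(f^\fine)/G^\fine_\sigma\hookrightarrow\mathrm{Aut}(\sigma)$, and Lemma~\ref{lem: automorphisms dV} identifies $G^\fine_\sigma\simeq\prod\mu_{d_V}$. So what is needed is surjectivity in the case $\mathrm{Aut}(\sigma)\simeq\mu_2$.

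By Corollary~\ref{cor: automorphisms}, the nontrivial tropical involution either reverses a self-loop or swaps two parallel edges. In both cases the cycle components have only three special points, and I would lift the involution explicitly:
\begin{itemize}
\item For a self-loop, use the involution of $\mathbb{P}^1$ that swaps the two node branches and fixes the third special point.
\item For two parallel edges, swap the corresponding node branches on both vertex components.
\end{itemize}
One then checks that the involution preserves the data $\nu_{m,V}$, the coordinates, and the unique well-spaced enhancement. The uniqueness statement in Proposition~\ref{prop: reduce to nbhd} should force the pulled-back fine map to coincide with the original one.

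\textbf{Main obstacle.} I expect the hard step to be this lifting of the involution through the fine log structure. I would try to deduce it directly from uniqueness: the involution transports $f^\fine$ to a fine map with the same type and the same cycle, so by uniqueness it is an automorphism of $f^\fine$.
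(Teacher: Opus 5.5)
Your proof that $n(\sigma)=1$ is correct and is the paper's argument: Lemma~\ref{lem: lift in cycle} for $m_1,\dots,m_{r_0}$, a common constant for the $m_i$ with $i>r_0$, rigidity of the cycle, and Proposition~\ref{prop: reduce to nbhd}. Your reduction of the weight to surjectivity of \eqref{eqn:aut_injection} in the two cases of Corollary~\ref{cor: automorphisms}, with the explicit involutions, is also how the paper proceeds. Three small corrections:
\begin{itemize}
\item Proposition~\ref{prop: nbhd-cycle-span} with $H_1/H_0=0$ gives $t=0$, i.e.\ no vertex $V_j$ exists (for those, $e_j\ge 2$ by definition). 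It does not give ``$e_j=1$''.
\item Cycle components of $2$-valent constrained vertices carry only two special points. They are still rigid, so uniqueness of $(C_0,\mathsf{p}')$ is unaffected.
\item In the parallel-edge case the involution must also exchange the two chains of constrained components, by isomorphisms commuting with $\underline f$.
\end{itemize}

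The genuine gap is the step you flag as the main obstacle. Pulling $f^\fine$ back along $\iota$ gives an object that is isomorphic to $f^\fine$ via $\iota$ itself, by construction. The uniqueness in Proposition~\ref{prop: reduce to nbhd} is only up to isomorphism, so it just returns some isomorphism $\phi$. Then $\phi\circ\iota^{-1}$ is an automorphism of $f^\fine$ whose action on the dual graph is unknown; it may well lie in $G^\fine_\sigma$. What you need is that $\iota$ itself extends to the log map.

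A rigid uniqueness does exist: the monomorphism in \eqref{eqn: Wise factorization} together with Lemma~\ref{lemma: uniqueness of fine maps}. But it applies only to enhancements of a fixed underlying map, on a fixed log curve, with a fixed type. To use it you would first have to lift $\iota$ to an automorphism of $C^\fine/S^\fine$ that acts on $Q^\fine_\W(\sigma)$ compatibly with the tropical involution, absorbs the rescaling of node smoothing parameters by $d\iota$, and preserves the type and $\rho$. That is exactly the content you are trying to avoid.

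The paper instead, in the proof of Theorem~\ref{thm: automorphisms}, checks that $\iota$ preserves $D_{m,V}$, $L_{m,V}$ and $\sigma_{m,V}$, and then invokes Remark~\ref{rmk: stronger_version_realizability}. Here that check is short:
\begin{itemize}
\item \emph{Self-loop.} The loop is contracted, so balancing at the trivalent loop vertex gives $H_0=H_1=0$. Hence every $\nu_{m,V}$ on the loop component is constant.
\item \emph{Parallel edges of direction $u$.} The third half-edge at $V$ has direction $-2u$. Place the cycle nodes at $0,\infty$ and the third point at $1$; then $\nu_{m,V}=A\,x^{a}(x-1)^{-2a}$ with $a=m(u)$. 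This function is invariant under $x\mapsto 1/x$ and has the same leading coefficient $A$ at $0$ and at $\infty$, so the data on the two chains can be exchanged. Its leading coefficient at $1$ is unchanged, because the order there, $-2a$, is even. The same holds at $V'$, and the trees are fixed pointwise.
\end{itemize}
Replacing your uniqueness argument by this check closes the proof.
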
 

By Proposition \ref{prop: nbhd-cycle-span}, the request that $H_0=H_1$ is equivalent to asking that every vertex in the cycle has valence at most $3$. In particular, Corollary \ref{cor: cyclespans} implies Theorem \ref{thm: calculation of weights}(a).

\begin{proof}
    Let $(C_0, \mathsf{p}')$ be the unique (up to isomorphism) marked nodal curve whose dual graph is equal to the graph underlying $\sigma_0$. Let $\nu_{m_i, V_j}$ be the meromorphic functions on $C_{V_j}$ for $i = 1, \ldots, r_0$ and $V_j \in V(\tropC_0)$ found in Lemma~\ref{lem: lift in cycle}. For each $i \in \{r_0+1, \ldots, r\}$, let $\nu_{m_i, V_j}$ be a constant independent of $j$ for all $V_j \in V(\tropC_0)$. Then, because $H_0=H_1$, the cycle $(C_0, \mathsf{p}')$ satisfies the lifting property with respect to $\sigma$ as in Definition \ref{def: lifting property C0}. 
    By Proposition \ref{prop: reduce to nbhd}, there is a unique minimal well-spaced logarithmic fine curve with combinatorial type $\sigma$ and whose associated cycle is $(C_0, \mathsf{p}')$. Because $(C_0, \mathsf{p}')$ is unique, this proves that $n(\sigma)=1$.
    
    Finally, we account for automorphisms and compute $\nn(\sigma)$. Let $(C^\fine,\mathsf{p}, f_\sigma^\fine, \rho)$ be the basic fine map lying over $\sigma$. We claim that the monomorphism in Equation~\eqref{eqn:aut_injection} is an isomorphism in this case. The key point is that the underlying curve $\underline{C}^\fine$ is trivalent.

    More explicitly, by Corollary~\ref{cor: automorphisms}, it suffices to assume that $\mathrm{Aut}(\sigma) \simeq \mu_2$ and that we are in one of the following cases:

\begin{enumerate}
    \item[$\bullet$] The underlying graph of $\sigma$ has a self-loop mapped to the interior of a maximal cone. In this case, the cycle $C_0 \subseteq C^\fine$ consists of a single irreducible component with a self-node and one additional special point. Let $x$ be coordinates on this component so that the nodes corresponding to the self-intersecion are located at $0$ and $\infty$, and the remaining special point is at $1$. The nontrivial automorphism is then given by
    $
    x \mapsto 1/x.
    $

    \item[$\bullet$] The underlying graph of $\sigma$ consists of two vertices connected by two edges with the same direction vector, possibly with additional $2$-valent vertices along the edges. In this case, the cycle $C_0$ consists of two irreducible components $C_{V}$ and $C_{V'}$ joined by two chains of rational curves corresponding to the $2$-valent vertices. There is one additional special point on each of $C_{V}$ and $C_{V'}$. We put coordinates on $C_{V}$ and $C_{V'}$ such that the two points attached to the chains are at $0$ and $\infty$, and the additional special points are at $1$. The nontrivial automorphism acts by simultaneously swapping $0$ and $\infty$ on $C_{V}$ and $C_{V'}$, and by the corresponding inversions on the intermediate rational curves in the chains.

\end{enumerate}

\end{proof}

As in Notation \ref{notation: shape nbhd cycle}, choose an orientation around the loop. Let $V_1,\ldots, V_t$ be the vertices in the cycle with valency bigger than $3$ and ordered following the orientation of the loop. Label the half-edges attached to $V_j$ so that $\xi_{j,1}$ corresponds to the edge directed toward $V_{j-1}$ and $\xi_{j,2}$ corresponds to the edge directed toward $V_{j+1}$. Let $\zeta_{j,k}$ be the remaining half-edges attached to $V_j$. There are $e_j$ of these. Note that $e_j \geq 4$. Let $x_j$ be coordinates on $C_{V_j}$ so that $\xi_{j,1}, \xi_{j,2}, \zeta_{j, e_j}$ are $\infty, 0, 1$ respectively. 

\begin{lemma}\label{lemma: lifting_property_equation}
    The marked scheme $(\underline{C}_0, \mathsf{p}')$ satisfies the lifting property with respect to $\sigma$ if and only if for each $i = r_0 +1, \ldots r_1$, we have
    \begin{equation} \label{eqn: big product}
        \prod_{j = 1}^t \prod_{k=1}^{e_j -1} q_{j,k}^{m_i(u_{j,k})} = 1.
    \end{equation}
    Here $q_{j,k} \in C_{V_j}$ is the node corresponding to $\zeta_{j,k}$ and $u_{j,k}=u_{\zeta_{j,k}}$.
\end{lemma}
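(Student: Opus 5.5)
We have to show that the lifting property of Definition \ref{def: lifting property C0} reduces exactly to the product condition \eqref{eqn: big product}. We treat the dual basis $m_1,\ldots,m_r$ in the three groups fixed at the start of \S\ref{sec: liftings for the cycle}. By Remark \ref{rem: basis}, it suffices to check the lifting property on this basis. However, the coordinates at the cycle nodes must be common to all $m_i$, so the groups cannot be handled fully independently.

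\textbf{First and third groups.} For $i\le r_0$, Lemma \ref{lem: lift in cycle} produces functions and coordinates satisfying conditions (a) and (b), for any curve $(C_0,\mathsf{p}')$. It only rescales coordinates at the privileged nodes.

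For $i>r_1$, the form $m_i$ vanishes on every direction vector of the neighbourhood of the cycle. Hence each $D_{m_i,V}=0$, and constant functions with a common value satisfy (a) and (b) for any coordinates. Neither group imposes a condition on $(C_0,\mathsf{p}')$.

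\textbf{Second group: reduction to a residue-type condition.} Fix $r_0<i\le r_1$. Since $m_i$ vanishes on $H_0$, it vanishes on all cycle edges. So $D_{m_i,V}$ has no component at the cycle nodes.

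On a trivalent cycle vertex, the unique outgoing half-edge has direction in $H_0$ by balancing. Hence $\nu_{m_i,V}$ is constant there.

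On $V_j$, in the coordinate $x_j$ (with $\xi_{j,1},\xi_{j,2},\zeta_{j,e_j}$ at $\infty,0,1$), we have
\[
\nu_{m_i,V_j}=A_{i,j}\prod_{k=1}^{e_j}(x_j-q_{j,k})^{m_i(u_{j,k})},
\]
where the factor with $q_{j,e_j}=1$ is included.

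Condition (b) at a cycle node has order $0$, so it just asks for equal values at the two preimages of the node. The value is independent of the chosen coordinates, so the coordinate choice from the first group causes no interference.

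Propagating around the loop through the constant trivalent components gives a single consistency condition for each $i$:
\[
\prod_j \frac{\nu_{m_i,V_j}(0)}{\nu_{m_i,V_j}(\infty)}=1 .
\]

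\textbf{Computing the ratio.} Balancing at $V_j$, together with $m_i(u_{\xi})=0$, gives $\sum_k m_i(u_{j,k})=0$. Hence $\nu_{m_i,V_j}(\infty)=A_{i,j}$ and $\nu_{m_i,V_j}(0)=A_{i,j}\prod_k(-q_{j,k})^{m_i(u_{j,k})}$.

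The signs cancel, again because $\sum_k m_i(u_{j,k})=0$. The $k=e_j$ term equals $1$. The condition therefore becomes exactly \eqref{eqn: big product}.

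For the converse, if \eqref{eqn: big product} holds, we choose the constants $A_{i,j}$ successively around the loop, which is possible since the monodromy is trivial. Combining with the first and third groups gives the lifting property.

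\textbf{Main obstacle.} The delicate point is this step: the value at a node and the leading coefficient coincide only because the order of vanishing there is zero. We must also confirm that the simultaneous coordinate choice for the first group, made via Lemma \ref{lem: lift in cycle}, does not affect the second group. This holds precisely because the second group's conditions at cycle nodes are coordinate-independent.
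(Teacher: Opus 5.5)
Your proposal is correct and follows essentially the same route as the paper. The first group is handled by Lemma \ref{lem: lift in cycle} and the third by constant functions. For $r_0<i\le r_1$, condition (b) at the cycle nodes has order zero, so it is coordinate-independent, and propagating it around the loop yields \eqref{eqn: big product}. Your explicit use of balancing, $\sum_{k=1}^{e_j} m_i(u_{j,k})=0$, to get $\nu_{m_i,V_j}(\infty)=A_{i,j}$ and to cancel the signs $(-1)^{m_i(u_{j,k})}$ in $\nu_{m_i,V_j}(0)$ fills in a step the paper leaves implicit.
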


\begin{proof}
    
     From Lemma \ref{lem: lift in cycle}, there are meromorphic functions $\nu_{m_i, V}$ for $i = 1, \ldots, r_0$ and $V \in V(\tropC_0)$, as well as coordinates $x_{q_V}$ around every node in the cycle, such that $\nu_{m_i,V}$ satisfies lifting condition (a) and for each edge $e$ between $V$ and $V'$, $\nu_{m_i,V}, \nu_{m_i,V'}$ satisfy lifting condition (2) with respect to $x_{q_V}$ and $x_{q_{V'}}$. 
     For each $i \in \{r_0+r_1 +1, \ldots, r\}$, let $\nu_{m_i, V_j}$ be a constant independent of $j$ for all $V_j \in V(\tropC_0)$. It remains to find appropriate meromorphic functions $\nu_{m_i,V}$ for $i = r_0+1, \ldots, r_1$, and $V \in V(\tropC_0)$. This will give the constraints among the $q_{j,k}$ in Equation \eqref{eqn: big product}.

    For $i = r_0+1, \ldots, r_1$ and $j=1,\ldots,t$ we write
    \begin{equation*}\label{eqn: lifting equation}
        \nu_{m_i, V_j}(x_j) = A_{i,j} \prod_{k=1}^{e_j-1} (x_j-q_{j,k})^{m_i(u_{j,k})} \times (x_j-1)^{m_i(u_{j,e_j})}
    \end{equation*}
    for some $A_{i,j} \in \mathbb{C}^*$. For $V \in V(\tropC_0) \smallsetminus \{V_1,\ldots,V_t\}$, we set $\nu_{m_i, V}$ to be constant equal to $A_{i,V} \in \C$. 
    
    Since $0$ and $\infty$ are not zeros or poles of $\nu_{m_i,V_j}$ (for $i = r_0+1, \ldots, r_1$ and $j=1,\ldots,t$), the lifting condition~(b) (with respect to any coordinates, and thus in particular the coordinates $x_{q_V}$ and $x_{q_{V'}}$) requires that 
    \begin{equation} \label{eqn: lift existence}
        A_{i,j} \prod_{k=1}^{e_j-1} q_{j,k}^{m_i(u_{j,k})} = \nu_{m_i, V_j}(0) = \nu_{m_i, V_{j+1}}(\infty) = A_{i,j+1} \ \  \text{for} \ \ j=1,\ldots,t \ \text{and} \ i=r_0+1,\ldots, r_1
    \end{equation}
    and that $A_{i,V}= \nu_{m_{i}, V_j}(0)= \nu_{m_{i}, V_{j+1}}(\infty)$ if $V$ is between $V_j$ and $V_{j+1}$.
    It follows that 
    \begin{equation*} 
        A_{i,1} \prod_{j = 1}^t \prod_{k=1}^{e_j -1} q_{j,k}^{m_i(u_{j,k})} = A_{i,1}
    \end{equation*}
    which implies Equation \eqref{eqn: big product}. 
    Conversely, given $q_{j,k}$ which satisfies Equation \eqref{eqn: big product}, there exists $A_{i,j}$ such that Equation \eqref{eqn: lift existence} is satisfied. 

\end{proof}

\subsubsection{Explicit computations}\label{sec: computations}

Given any solution of Equation~\eqref{eqn: big product}, we obtain a cycle $(\underline{C}_0, p)$ satisfying the lifting property with respect to $\sigma$. By Proposition~\ref{prop: reduce to nbhd}, this determines a unique basic fine logarithmic map $(C^{\mathsf{fine}}, \mathsf{p}, f^{\mathsf{fine}}, \rho)$ lying over $\sigma$.

However, distinct choices of solutions may still give rise to isomorphic basic fine logarithmic maps, or to maps with nontrivial automorphism groups. In order to compute $n(\sigma)$ and $\nn(\sigma)$, we must understand precisely when this occurs. We start with computing the number of solutions of Equation \eqref{eqn: big product}.

\begin{lemma}\label{lemma: solution_interior_points}
     The number of solutions to Equation \eqref{eqn: big product} is the number of interior points of the region $P(\sigma)$ in Construction \ref{const: polytope}. 
\end{lemma}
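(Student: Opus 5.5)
The plan is to turn Equation~\eqref{eqn: big product} into a statement about a finite subgroup of a torus and then identify that subgroup with lattice points of a parallelotope.

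\textbf{Setup.} Set $n:=r_1-r_0$. The unknowns are the positions $q_{j,k}\in C_{V_j}$ for $1\le j\le t$ and $1\le k\le e_j-1$. By Proposition~\ref{prop: nbhd-cycle-span} there are exactly $\sum_j(e_j-1)=n$ of them. Since $0,\infty,1$ are occupied by $\xi_{j,2},\xi_{j,1},\zeta_{j,e_j}$, the admissible configurations are $q_{j,k}\in\C^*\smallsetminus\{1\}$ with $q_{j,k}\neq q_{j,k'}$ for $k\neq k'$. No other special points lie on $C_{V_j}$, because markings on the cycle would sit at trivalent vertices.

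Let $B$ be the $n\times n$ integer matrix with entries $B_{i,(j,k)}=m_i(u_{j,k})$, for $r_0<i\le r_1$. The $n_i$ form a basis of $N_X$ extending a basis of $H_0\cap N_X$, and $H_0\cap N_X$ is saturated. Hence $m_{r_0+1},\dots,m_{r_1}$ restricted to $H_1$ vanish on $H_0$ and give coordinates on $H_1/H_0$. In these coordinates the image $\Lambda$ of $H_1\cap N_X$ in $H_1/H_0$ is exactly $\Z^n$.

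Under this identification $B$ is the matrix of $M_\sigma$ from Construction~\ref{const: polytope}. Because the $\bar u_{j,k}$ form a basis of $H_1/H_0$, $B$ is invertible over $\R$. Equation~\eqref{eqn: big product} then says exactly that $q=(q_{j,k})$ lies in the kernel of the torus endomorphism $(\C^*)^n\to(\C^*)^n$, $q\mapsto q^B$.

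\textbf{Solving the system.} First take absolute values and logarithms. This gives $B\log|q|=0$, so $|q_{j,k}|=1$ for all $j,k$.

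Next write $q_{j,k}=e^{2\pi\sqrt{-1}x_{j,k}}$ with $x\in[0,1)^n$. Equation~\eqref{eqn: big product} becomes $Bx\in\Z^n$. By the previous paragraph, this is the condition $M_\sigma(x)\in\Lambda$. Since $M_\sigma$ is injective, $x\mapsto M_\sigma(x)$ is a bijection between the solutions of the torus equation with $x\in[0,1)^n$ and the lattice points of the half-open parallelotope $M_\sigma([0,1)^n)$. As a sanity check, this set has $|\det B|$ elements, which is the order of the kernel.

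\textbf{Imposing the open conditions.} The condition $q_{j,k}\neq1$ means $x_{j,k}\neq0$. So $x$ must lie in the open cube $(0,1)^n$. Because $M_\sigma$ is a linear isomorphism, $M_\sigma$ carries the open cube onto the interior of the parallelotope.

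The condition $q_{j,k}\neq q_{j,k'}$ means $x_{j,k}\neq x_{j,k'}$, that is, $x\notin\triangle$. Again by injectivity, this removes exactly the points of $M_\sigma(\triangle)$.

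Therefore the solutions correspond bijectively to lattice points of $M_\sigma((0,1)^n)\smallsetminus M_\sigma(\triangle)$, which are the interior lattice points of $P(\sigma)$.

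\textbf{Expected obstacle.} The step I expect to need the most care is the lattice identification: checking that the lattice relevant for $i(P(\sigma))$ is the image of $H_1\cap N_X$ in $H_1/H_0$. This is precisely the lattice cut out by integrality of the coordinates $m_{r_0+1},\dots,m_{r_1}$, and it uses the saturatedness of the chosen basis. Once that is settled, the rest of the count is formal.
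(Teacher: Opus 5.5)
Your proposal is correct and follows the paper's argument. You reduce Equation~\eqref{eqn: big product} to the kernel of $q\mapsto q^B$, show $|q_{j,k}|=1$, identify the solutions with lattice points of the half-open parallelotope $M_\sigma([0,1)^n)$, and then impose $q_{j,k}\neq 1$ and $q_{j,k}\neq q_{j,k'}$ to pass to the interior minus $M_\sigma(\triangle)$. The only cosmetic difference is that you get $|q_{j,k}|=1$ directly from $B\log|q|=0$ and the invertibility of $B$, whereas the paper triangularizes the system and argues one row at a time.
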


\begin{proof}
    Let $u_{j,k}=u_{\zeta_{j,k}}$ and 
    let $M$ be the matrix 
    $$M = \begin{bmatrix}
        m_{r_0+1}(u_{1,1}) & \ldots &  m_{r_0+1}(u_{1,e_1-1}) & \ldots & m_{r_0+1}(u_{t,1}) & \ldots m_{r_0+1}(u_{t,e_t-1}) \\
        m_{r_0+2}(u_{1,1}) & \ldots &  m_{r_0+2}(u_{1,e_1-1}) & \ldots & m_{r_0+2}(u_{t,1}) & \ldots m_{r_0+2}(u_{t,e_t-1}) \\
         & \ddots &  & \ddots &  & \ddots  \\
        m_{r_1}(u_{1,1}) & \ldots &  m_{r_1}(u_{1,e_1-1}) & \ldots & m_{r_1}(u_{t,1}) & \ldots m_{r_1}(u_{t,e_t-1}) \\
    \end{bmatrix}$$
    By Proposition \ref{prop: nbhd-cycle-span}, this is a $(r_1 - r_0) \times (r_1 - r_0)$ matrix. Note this is the same matrix $M$ as in Construction \ref{const: polytope} after the identification of $H_1/H_0$ with $\R^{r_1-r_0}$ via $n_{r_0+1},\ldots,n_{r_0+r_1}$. We claim that $|q_{j,k}| = 1$. Using row operation we can make $M$ lower triangular. This is equivalent to a change of basis $m_i$ ($i=r_0+1,\ldots,r_0+r_1$) of $H_1/H_0$. Then Equation \eqref{eqn: big product} for $i=1$ reads $q_{1,1}^{m_{r_0+1}(u_{1,1})} = 1$, so $|q_{1,1}| = 1$. Considering one row at a time, the claim follows.

    Write $q_{j,k} = e^{2\pi \sqrt{-1} q_{j,k}'}$, with $q_{j,k}' \in \mathbb{R}/\mathbb{Z}$. Let 
    $$
    \underline{q} = (q_{1,1}', \ldots, q_{1,e_1-1}', \ldots, q_{t,1}'\ldots, q_{t,e_t-1}') \in (\R/\Z)^{r_1-r_0}
    $$  Then the relation in Equation \eqref{eqn: big product} is equivalent to $M \underline{q}=0 \in (\R/\mathbb{Z})^{r_1 - r_0} $. Recall $\bar{u}_{j,k}$ is the image of $u_{j,k}$ under the projection $H_1 \rightarrow H_1/H_0 \simeq \R^{r_1-r_0}$ as in Construction~\ref{const: polytope}.  The map $M: (\mathbb{R}/\mathbb{Z})^{r_1 - r_0} \rightarrow (\mathbb{R}/\mathbb{Z})^{r_1 - r_0}$  factors as 
    \begin{center}
        % https://tikzcd.yichuanshen.de/#N4Igdg9gJgpgziAXAbVABwnAlgFyxMJZABgBpiBdUkANwEMAbAVxiRAAoAdTgWzpwAWAIyHAASgF8A9Nz6CRwAFoSAlAD0ATiAml0mXPkIoAjOSq1GLNrP7DRktQGYpXXrYXKmAfUcACANS+3AxQEDhwAUFu8qKeXsAa-gBMqtq6IBjYeARESWbU9MysiBw2MeLSZXZKqk7a5jBQAObwRKAAZhoQPEhkIDgQSKYWRdacAMYETWkdXT2IwwNIeSNWJdw4MAAeOMBYUL4EvhJRctUOGr71EkA
    \begin{tikzcd}
    (\mathbb{R}/\mathbb{Z})^{r_1 - r_0} \arrow[r, "\simeq"] & \mathbb{R}^{r_1 - r_0}/(\mathbb{Z}\bar{u}_{1,1} + \ldots + \mathbb{Z}\bar{u}_{t,e_t-1}) \arrow[r] & (\mathbb{R}/\mathbb{Z})^{r_1 - r_0}
    \end{tikzcd}
    \end{center}
    where the first map is induced by $M$ on $\mathbb{R}^{r_1-r_0}$, and the second arrow is induced by the identity on $\mathbb{R}^{r_1 - r_0}$. Thus, the number of solutions satisfying \(q_{j,k} \neq 1\) for all \(j,k\) is equal to the number of interior lattice points of the parallelotope whose vertices are
    \[
    \sum_{s \in S} \overline{u}_s,
    \]
    where \(S\) ranges over all subsets of
    $
    \{(j,k)\mid 1\le j\le t,\; 1\le k\le e_j-1\}.
    $
    We must then exclude those solutions for which \(q_{j,k}=q_{j,k'}\) for some \(k\neq k'\). The result follows.

\end{proof}

\begin{theorem}\label{thm: calculation of n(sigma)}
Let $\sigma$ be a maximal-dimensional cone of $W_\Gamma(\Sigma(X))$. Then the number of fine maps $n(\sigma)$ is given by the following formulas.

\begin{enumerate}[label=(\arabic*)]
    \item If $H_0 = H_1$, then
    \[
        n(\sigma)=1.
    \]

    \item Suppose the cycle is a self-loop.
    \begin{enumerate}
        \item If $t=1$, $e_1=2$, and $m_{r_0+1}(u_{1,1})$ is even (see Figure \ref{fig: cases where aut exists}), then
        \[
            n(\sigma)
            =\frac{i(P(\sigma))-1}{2}+1
            =\frac{m_{r_0+1}(u_{1,1})-2}{2}+1.
        \]

        \item Otherwise,
        \[
            n(\sigma)=\frac{i(P(\sigma))}{2}.
        \]
    \end{enumerate}

    \item Suppose the cycle consists of two vertices joined by two parallel edges with the same direction vector (possibly together with additional $2$-valent vertices along these edges).
    \begin{enumerate}
        \item If $t=1$, $e_1=2$, and $e_2=1$ (or symmetrically $e_1=1$ and $e_2=2$), and $m_{r_0+1}(u_{1,1})$ is even (see Figure \ref{fig: cases where aut exists}), then
        \[
            n(\sigma)
            =\frac{i(P(\sigma))-1}{2}+1 
            =\frac{m_{r_0+1}(u_{1,1})-2}{2}+1.
        \]

        \item If $t=2$, $e_1=e_2=2$, and both $m_{r_0+1}(u_{1,1}) + m_{r_0+1}(u_{2,1})$ and $m_{r_0+2}(u_{1,1})+ m_{r_0+1}(u_{2,1})$ are even (see Figure \ref{fig: cases where aut exists}), then
        \[
            n(\sigma)
            =\frac{i(P(\sigma))-1}{2}+1.
        \]

        \item Otherwise,
        \[
            n(\sigma)=\frac{i(P(\sigma))}{2}.
        \]
    \end{enumerate}

    \item In all remaining cases,
    \[
        n(\sigma)=i(P(\sigma)).
    \]
\end{enumerate}
\end{theorem}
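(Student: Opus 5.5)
The plan is to combine Proposition~\ref{prop: reduce to nbhd} with Lemmas~\ref{lemma: lifting_property_equation} and~\ref{lemma: solution_interior_points}. Together they identify the set of basic fine well-spaced maps over $\sigma$ with the set of isomorphism classes of marked cycles $(\underline{C}_0,\mathsf{p}')$ satisfying Equation~\eqref{eqn: big product}. We have normalized coordinates $x_j$ with $\xi_{j,1},\xi_{j,2},\zeta_{j,e_j}$ at $\infty,0,1$. In these coordinates the solutions $\underline q=(q_{j,k})$ with distinct, non-special $q_{j,k}$ are counted by $i(P(\sigma))$. The question is then how many solutions $\underline q$ give isomorphic marked cycles, and the answer is $n(\sigma)$ equals the number of orbits of the residual symmetry group acting on the solution set.

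Case (1) is exactly Corollary~\ref{cor: cyclespans}. In every other case I would determine that residual symmetry group. The coordinate on $C_{V_j}$ is rigid once three special points are fixed, but the normalization singles out the two cycle nodes. An isomorphism of marked cycles must fix all markings and all tree attachment points, since these are labelled by $\mathsf{p}'$. It must also respect the labelled dual graph, so it can only swap $\xi_{j,1}\leftrightarrow\xi_{j,2}$, and only when that is induced by a graph automorphism. By Lemma~\ref{lemma: automorphisms fix vertices} and Corollary~\ref{cor: automorphisms}, this happens exactly in the self-loop case and the two-parallel-edges case. In all other cases distinct solutions give non-isomorphic cycles, which yields (4): $n(\sigma)=i(P(\sigma))$.

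In the two $\mu_2$ cases the involution acts on each $C_{V_j}$ by $x_j\mapsto 1/x_j$, which fixes $1$. Hence it sends $\underline q$ to $\underline q^{-1}$, i.e.\ $\underline q'\mapsto -\underline q'$ in $(\mathbb R/\mathbb Z)^{r_1-r_0}$, and this preserves the solution set of Equation~\eqref{eqn: big product} and the diagonal exclusions. In the two-vertex case it also relabels $V_1\leftrightarrow V_2$ appropriately; I would check this carefully against the ordering convention. So $n(\sigma)$ is the number of orbits of $\underline q'\mapsto-\underline q'$ on the interior points of $P(\sigma)$, that is, $(i(P(\sigma))+\#\mathrm{fix})/2$. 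A fixed point is a $2$-torsion solution $\underline q'\in\{0,\tfrac12\}^{r_1-r_0}$. Since $0$ is excluded ($q_{j,k}\neq1$) and coordinates within one vertex must be distinct, the only candidate is all entries equal to $\tfrac12$ with every $e_j-1\le1$.

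I would then check when $\underline q'=(\tfrac12,\dots,\tfrac12)$ actually solves $M\underline q'=0$ mod $\mathbb Z$. This means each row sum of $M$ is even. This is where the listed hypotheses should come out. For one vertex with $e_1=2$ there is one unknown and the condition is that $m_{r_0+1}(u_{1,1})$ is even. For $t=2$, $e_1=e_2=2$ the condition is evenness of both row sums. In the two-vertex case with $t=1$ (one vertex of valence $4$, the other trivalent) we get the single-parameter condition again. If $e_j-1\ge2$ for some $j$, there is no fixed point. This gives $n=(i(P)+1)/2=\frac{i(P)-1}{2}+1$ in cases (2a), (3a), (3b) and $i(P)/2$ otherwise. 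For the one-dimensional cases I would also verify directly that $i(P)=m_{r_0+1}(u_{1,1})-1$, which gives the stated second expression.

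The main obstacle is the middle step. I must show that the only isomorphisms between the fine maps are those induced by the $\mu_2$ and that they act as inversion on all coordinates simultaneously. This includes checking compatibility with the radial alignment and the tree parts, which are determined by the cycle by Proposition~\ref{prop: reduce to nbhd}. I would first try to deduce this from the uniqueness half of Proposition~\ref{prop: reduce to nbhd}: an isomorphism of fine maps restricts to an isomorphism of marked cycles, and conversely.
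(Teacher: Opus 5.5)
Your proposal is correct and is essentially the paper's proof. Proposition~\ref{prop: reduce to nbhd} together with Lemmas~\ref{lemma: lifting_property_equation} and~\ref{lemma: solution_interior_points} reduces $n(\sigma)$ to counting solutions of~\eqref{eqn: big product} up to the inversion $\underline{q}\mapsto\underline{q}^{-1}$ induced by the $\mu_2$ of Corollary~\ref{cor: automorphisms}, whose only possible fixed point is $q_{j,k}=-1$ for all $j,k$; your $2$-torsion formulation (the centre of the parallelotope) is a clean way to organise the paper's case check.

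Two small remarks. First, in the two-vertex case the involution fixes $V_1$ and $V_2$ by Lemma~\ref{lemma: automorphisms fix vertices} and only swaps the parallel edges, so it is plain coordinatewise inversion with no relabelling. Second, your row-sum computation in case (3)(b) gives the parity condition $m_{r_0+2}(u_{1,1})+m_{r_0+2}(u_{2,1})\equiv 0 \pmod 2$, which is what the paper's proof uses; the statement's $m_{r_0+1}(u_{2,1})$ there is a typo.
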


\begin{figure}[h]
                \centering
                \begin{subfigure}[t]{0.3\textwidth}
                    \captionsetup{labelformat=empty}
            		\centering
            		\begin{tikzpicture}
            			\draw (-2, 0) -- (2,0); 
                        \draw[dashed] (0,0)  to[in=50,out=130,loop, style={min distance=12mm}] (0,0);
                        \node[forestgreen] at (0.6,0.2) {$u_{1,1}$};
                        \draw[forestgreen, ->] (0,0) -- (1,0);
                        \node at (0, -0.3) {$V_1$};
                        \fill (0,0) circle[radius=2pt];
            		\end{tikzpicture}
            		\caption{(2a) Cycle is a self loop and \\$t= 1, e_1 = 2$.}
            	\end{subfigure}	
                \begin{subfigure}[t]{0.3\textwidth}
                    \captionsetup{labelformat=empty}
                    \centering
                    \begin{tikzpicture}
            			\fill (2,0) circle[radius=2pt];
            			\fill (1.5,0) circle[radius=2pt];
            			\draw (0, 0.05) -- (1.5, 0.05);
            			\draw (0, -0.05) -- (1.5, -0.05);
            			\draw (1.5, 0) -- (2,0);
            			\draw (0, 0) -- (-1, 1); 
            			\draw (0, 0) -- (-1, -1); 
            			\draw (2, 0) -- (3, 1); 
            			\draw (2, 0) -- (3, -1);
                        \draw[forestgreen, ->] (0,0) -- (-0.8,0.8);
                        \node at (-0.4,0) {$V_1$};

                        \node[forestgreen] at (-0.2,0.6) {$u_{1,1}$};

                        \fill (0.6,0.05) circle[radius=1pt];
                        \fill (0.6,-0.05) circle[radius=1pt];
                        \fill (1.2,0.05) circle[radius=1pt];
                        \fill (1.2,-0.05) circle[radius=1pt];

                        \fill (0,0) circle[radius=2pt];
                    \end{tikzpicture}
                    \caption{ (3a) Cycle consists of 
                    two vertices of valence $\geq 3$, $t=1$ and $ e_1 = 1$.}
                  \end{subfigure}
                \begin{subfigure}[t]{0.3\textwidth}
                    \captionsetup{labelformat=empty}
                    \centering
                    \begin{tikzpicture}
                        
                        \draw (0, 0.05) -- (2, 0.05); 
                        \draw (0, -0.05) -- (2, -0.05); 
                        \draw (0, 0) -- (-1, 1); 
                        \draw (0, 0) -- (-1, -1); 
                        \draw (2, 0) -- (3, 0.8); 
                        \draw (2, 0) -- (3, -0.8);
                        
                        \draw[forestgreen, ->] (0,0) -- (-0.8,0.8);
                        \node[forestgreen] at (-0.2,0.6) {$u_{1,1}$};

                        \draw[forestgreen, ->] (2,0) -- (2.8,0.648);
                        \node[forestgreen] at (2.2,0.5) {$u_{1,2}$};
                        
                        \node at (-0.4,0) {$V_1$};
                        \node at (2.4,0) {$V_2$};
                        \fill (0.6,0.05) circle[radius=1pt];
                        \fill (0.6,-0.05) circle[radius=1pt];
                        \fill (1.4,0.05) circle[radius=1pt];
                        \fill (1.4,-0.05) circle[radius=1pt];

                        \fill (0,0) circle[radius=2pt];
                        \fill (2,0) circle[radius=2pt];
                    \end{tikzpicture}
                    \caption{(3b) Cycle consists of two vertices of valence $\geq 3$,  $t=2, e_1 = e_2 = 1$.}
                  \end{subfigure}
                    
                  \caption{Cases in Theorem~\ref{thm: calculation of n(sigma)}(2a), (3a) and (3b).}
                \label{fig: cases where aut exists}
            \end{figure}

    \begin{proof}
        Case (1) is from Corollary \ref{cor: cyclespans}. 
        Suppose the cycle is a self-loop. 
        
        By Proposition \ref{prop: reduce to nbhd}, Lemma \ref{lemma: lifting_property_equation} and Lemma \ref{lemma: solution_interior_points} the number $n(\sigma)$ is equal to the number interior lattice points of $P(\sigma)$ giving rise to non-isomorphic cycles $(C_0,\mathsf{p}')$. 

        Note that $q_{j,k} \in \mathbb{C}^\times$ defines points in $C_{V_j}$ only after fixing the coordinates $x_j$ introduced immediately before Lemma \ref{lemma: lifting_property_equation}. Let $\underline{q}, \widetilde{\underline{q}} \in (\mathbb{C}^\times)^{r_1-r_0}$ be solutions of Equation \eqref{eqn: big product} that give rise to cycles $(C_0,\mathsf{p}')$ and $(\widetilde{C}_0,\widetilde{\mathsf{p}}')$, respectively, which are isomorphic. Any such isomorphism must map the component $C_{V_j} \subset C_0$  onto the component $\widetilde{C}_{V_j} \subset \widetilde{C}_0$. Each vertex $V_j$ carries a half-edge $\zeta_{j,e_j}$, and the corresponding marked point in $C_{V_j}$ (which is $1$ in the corrdinates $x_j$) must be mapped to the corresponding point in $\widetilde{C}_{V_j}$ (which is $1$ in the coordinates $\widetilde{x}_j$) . In addition, the vertex $V_j$ carries two half-edges $\xi_{j,1}$ and $\xi_{j,2}$ belonging to the cycle, and the isomorphism must map the associated points to one another, except in the case where $\sigma$ has automorphism group $\mu_2$ and is of one of the two types described in Corollary \ref{cor: automorphisms}. In that case, the induced isomorphism $C_V \to \widetilde{C}_V$ fixes the point $1$ and preserves the set $\{0,\infty\}$.

        Because there is a unique automorphism of $\P^1$ fixing $0,1$ and $\infty$, if $\sigma$ has trivial automorphism group, then the cycles associated to $\underline{q}$ and $\widetilde{\underline{q}}$ can be isomorphic only if $\underline{q}=\widetilde{\underline{q}} $. This prove that $n(\sigma)=i(P(\sigma))$ in this case, and thus case $(4)$ of the statement.

        We deal with cases $(2)$ and $(3)$ separately. 

        Suppose the cycle is a self-loop and let $V_1$ be the unique vertex in the cycle. Then $t \leq 1$. If $t=0$, we are in case (1). Suppose $t=1$. The only non-trivial automorphism of $\mathbb{P}^1$ fixing $1$ and preserving the set $\{0,\infty\}$ is given by $x_1 \mapsto 1/x_1$. The only additional fixed point of this involution is $-1$. Hence, unless $e_1=2$ and $-1$ is a solution of $q_{1,1}^{m_{r_0+1}(u_{1,1})}=1$ (equivalently, $m_{r_0+1}(u_{1,1})$ is even), the cycles associated to $\underline{q}$ and $\widetilde{\underline{q}}$ are isomorphic if and only if $q_{j,k}=\widetilde{q}_{j,k}$ for all $j,k$ or $q_{j,k}=\widetilde{q}^{-1}_{j,k}$. When $e_1=2$ and $m_{r_0+1}(u_{1,1})$ is even, we are in the situation described in case (2.a). In this case, the equation for $q_{1,1}$ is
        \[
            q_{1,1}^{m_{r_0+1}(u_{1,1})}=1.
        \]
        This equation has exactly $m_{r_0+1}(u_{1,1})$ solutions in $\mathbb{C}^\times$, of which precisely $m_{r_0+1}(u_{1,1})-2$ are different from $1$ and $-1$. The involution $x \mapsto 1/x$ permutes these remaining solutions, while $-1$ is fixed and provides an additional admissible solution. This proves $(2.a)$.

        Suppose the cycle consists of two vertices $V_1$ and $V_2$ joined by two parallel edges with the same direction vector, possibly together with additional $2$-valent vertices along these edges.

    The unique non-trivial automorphism $\iota$ of the pointed curve obtained from the subcurve $C_0$, equipped with markings at the points corresponding to $\zeta_{j,1}$ for $j=1,2$ (i.e. the points corresponding to $1$ in the coordinates $x_1$ and $x_2$, respectively), is given in coordinates by
    \[
    x_j \longmapsto \frac{1}{x_j}
    \]
    on each component $C_{V_j}$, together with swapping the chains of $\mathbb{P}^1$ between $C_{V_1}$ and $C_{V_2}$.

    Hence, unless we are in one between case (3.a) and (3.b), 
    the cycles associated to $\underline{q}$ and $\widetilde{\underline{q}}$ are isomorphic if and only if either
    \[
    q_{j,k} = \widetilde{q}_{j,k} \quad \text{for all } j,k,
    \qquad \text{or} \qquad
    q_{j,k} = \widetilde{q}_{j,k}^{-1} \quad \text{for all } j,k.
    \]
    In particular, in case (3.c) we obtain
    \[
        n(\sigma)=\frac{i(P(\sigma))}{2}.
    \]

    At this point, case (3.a) is identical to case (2.1). In case (3.b), there are two equations in \eqref{eqn: big product}, namely
    \[
        q_{1,1}^{m_{r_0+1}(u_{1,1})} \, q_{2,1}^{m_{r_0+1}(u_{2,1})}=1,
        \qquad \text{and} \qquad
        q_{1,1}^{m_{r_0+2}(u_{1,1})} \, q_{2,1}^{m_{r_0+2}(u_{2,1})}=1.
    \]
    We require $q_{1,1}, q_{2,1} \neq 1$, and for the involution $\iota$ to fix a solution vector $(q_{1,1},q_{2,1})$, we must have $q_{1,1}=q_{2,1}=-1$. This explains case (2.b).
\end{proof}

We finally prove Theorems \ref{thm: automorphisms} and \ref{thm: calculation of weights}.

\begin{proof}[Proof of Theorem \ref{thm: automorphisms}]
    The proof of Theorem \ref{thm: calculation of n(sigma)} explicitly lists all the basic fine maps $(C^{\mathsf{fine}}, \mathsf{p}^{i}, f_{\sigma}^{\mathsf{fine}}, \rho)$ over a maximal cone $\sigma$ of $\Sigma(\cW_{\Gamma}(X))$ and their automorphisms. These lift to automorphisms of the fs map $(C,\mathsf{p}, f_\sigma, \rho)$ because they preserve the divisors $D_{m,V}$, the line bundles $L_{m,V}$, and their sections $\sigma_{m,V}$ for all $m \in M_X$, and $V \in V(\tropC)$. By Remark~\ref{rmk: stronger_version_realizability}, this implies that they induce automorphisms of the fs map $(C,\mathsf{p}, f_\sigma, \rho)$.
\end{proof}

\begin{proof}[Proof of Theorem \ref{thm: calculation of weights}]
    The case where $H_0 = H_1$ is handled in Corollary \ref{cor: cyclespans}. 

    Fix a maximal cone $\sigma$ of $W_{\Gamma}(\Sigma(X))$. As in \eqref{eqn: def n(sigma)}, let $\sigma_{i,j}$ denote the maximal cones of $\Sigma(\cW_{\Gamma}(X))$ lying above $\sigma$, and let
$
(C_{i,j}/S_{i,j},\mathsf{p}^{i,j},f_{i,j},\rho_{i,j}) \in \cW_{\Gamma}(X)
$
be the corresponding points. Denote by
$
(C^\fine_{i,j}/S^\fine_{i,j},\mathsf{p}^{i,j},f^\fine_{i,j},\rho_{i,j})
$
their images in $\cW_{\Gamma}(X)^\fine$.

The proof of Theorem \ref{thm: calculation of n(sigma)} shows that the object
$
(C_{i,j}/S_{i,j},\mathsf{p}^{i,j},f_{i,j},\rho_{i,j})
$
can have a non-trivial automorphism group only in cases (2.a), (3.a), and (3.b). Moreover, these are precisely the objects corresponding to the exceptional solutions of \eqref{eqn: big product} that contribute $+1$ (rather than $+\frac12$) to the count
\[
n(\sigma)=\frac{i(P(\sigma))-1}{2}+1.
\]
In each of these cases, the automorphism group is isomorphic to $\mu_2$.

    \end{proof}

\begin{corollary} \label{cor: n' sigma depends only on sigma'}
    Suppose $\sigma_1$ and $\sigma_2$ are cones in $W_\Gamma(\Sigma(X))$ mapping to $\sigma'$ in $W_\Gamma((N_X)_{\mathbb{R}})$ under the map in Equation~\eqref{eqn:ws-forgetfan}. Then $\nn(\sigma_1) = \nn(\sigma_2)$. 
\end{corollary}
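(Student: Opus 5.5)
The plan is to read off $\nn(\sigma_i)$ from the explicit computation in the proof of Theorem~\ref{thm: calculation of weights} and Theorem~\ref{thm: calculation of n(sigma)}, and check that every ingredient entering the formula depends only on the image cone $\sigma'$. By definition,
\[
\nn(\sigma_i)=\sum_{j=1}^{n(\sigma_i)} \frac{1}{|\mathrm{Aut}(f_j^\fine)/\prod \mu_V|}.
\]
Combining Theorem~\ref{thm: automorphisms}, Lemma~\ref{lem: automorphisms dV} and the injection \eqref{eqn:aut_injection}, each quotient group is either trivial or $\mu_2$. The proof of Theorem~\ref{thm: calculation of n(sigma)} lists the fine maps together with these quotients. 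In every case the sum collapses to $i(P(\sigma_i))/|\Aut(\sigma_i)|$, or to $1/|\Aut(\sigma_i)|$ when $H_0=H_1$. For example, in case (2a) we get $\frac{i-1}{2}\cdot 1+1\cdot\frac12=\frac{i}{2}$. So the first step is to record that $\nn(\sigma)=i(P(\sigma))/|\Aut(\sigma)|$ (resp.\ $1/|\Aut(\sigma)|$) for every maximal $\sigma$ of $W_\Gamma(\Sigma(X))$.

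The second step checks that $P(\sigma_1)=P(\sigma_2)$ and $|\Aut(\sigma_1)|=|\Aut(\sigma_2)|$.

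For the region, Construction~\ref{const: polytope} uses only $H_0$, $H_1$ and the direction vectors $u_{j,k}$ of the non-cycle half-edges at the cycle vertices of valence at least $4$. Passing from $\Sigma(X)$ to $(N_X)_\R$ only forgets $2$-valent vertices. These carry no outgoing half-edges and do not change the set of slopes, so $H_0$, $H_1$, the vertices $V_1,\ldots,V_t$, the $e_j$, the $u_{j,k}$, and hence the matrix $M_{\sigma'}$ and the diagonals $\triangle$, all coincide for $\sigma_1$, $\sigma_2$ and $\sigma'$. The same observation settles the dichotomy $H_0=H_1$ versus $H_0\neq H_1$. One subtlety is that the lattice in $H_1/H_0$ used to count interior points is $N_X$-induced, and not the lattice $N_\sigma$ that carries $\mathrm{lat}(\sigma)$; I would confirm this against the proof of Lemma~\ref{lemma: solution_interior_points}, where the count is in $(\R/\Z)^{r_1-r_0}$ via the $m_i$.

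For automorphisms, Corollary~\ref{cor: automorphisms} shows $\Aut(\sigma)$ is $\mu_2$ exactly when the cycle, after forgetting $2$-valent vertices, is a self-loop or two vertices joined by two parallel edges with equal direction. Its proof shows the involution always lifts to $\Sigma(X)$. Hence $\Aut(\sigma_i)\simeq\Aut(\sigma')$ for both $i$.

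The main obstacle is making sure the case split in Theorem~\ref{thm: calculation of n(sigma)} (cases (2a), (3a), (3b) and the parity conditions) does not depend on the subdivision. I would avoid this issue rather than track parities: those conditions only affect how $n(\sigma)$ splits into orbits, and in the weighted sum the exceptional fixed solution contributes $\frac12$, exactly like half of a free pair. The uniform formula $i(P)/|\Aut|$ therefore holds in all cases, and it is manifestly a function of $\sigma'$.
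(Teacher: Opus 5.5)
Your proposal is correct and is essentially the paper's argument. The paper's one-line proof derives the corollary from Theorem~\ref{thm: calculation of weights}, whose proof in fact computes $\nn(\sigma)=i(P(\sigma))/|\Aut(\sigma)|$ (resp.\ $1/|\Aut(\sigma)|$ when $H_0=H_1$) cone by cone in $W_\Gamma(\Sigma(X))$; you make explicit why $P(\sigma)$, the $N_X$-induced lattice on $H_1/H_0$, and $\Aut(\sigma)$ are unchanged when the $2$-valent constrained vertices are forgotten. Your orbit-counting remark, that free pairs of solutions contribute $1$ and fixed solutions contribute $\tfrac12$, is a tidy way to see that the parity case split of Theorem~\ref{thm: calculation of n(sigma)} never affects $\nn$.
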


\begin{proof}
    Follows immediately from Theorem \ref{thm: calculation of weights}. 
\end{proof}

\subsection{Specializing to low dimension}

We apply Theorem \ref{thm: calculation of weights} in the cases where $\dim X = 1$ or $2$, proving Proposition \ref{thm:mult1} and Proposition \ref{thm:mult2} in the introduction.

\subsubsection{Dimension 1} \label{sec: dimension 1}

Suppose $X$ is a toric variety with $\dim X = 1$. We prove Proposition \ref{thm:mult1}. Since  $\sigma$ is a cone of maximum dimension in $W_{\Gamma}(\Sigma(X))$ the deficiency of $\sigma$, $\defic{\sigma}$, is at most $1$. We study the deficiency $0$ and $1$ cases separately.

\begin{enumerate}

\item \textbf{Deficiency 0.}  By Corollary \ref{cor: cyclespans}, we have $\nn(\sigma) = 1/|\Aut(\sigma)|$. 

\item \textbf{Deficiency 1.} Then the overvalence of $\sigma$, $\ov{\sigma}$, is at most $1$. We have three cases:
    \begin{enumerate}
        \item The neighborhood of the cycle spans. 
        
        In this case we must have $\ov{\sigma} = 1$ and the cycle has only one vertex $V$ which has has valency $4$. Let the half-edges adjacent to $V$ outside of the cycle be $\zeta_{1,1}, \zeta_{1,2}$ (see Notation \ref{notation: shape nbhd cycle}). Without loss of generality, we have that the direction vectors $u_{1,1}, - u_{1,2}$ satisfy $d = u_{1,1} = -u_{1,2}$ for some positive integer $d$. Then the region $P(\sigma)$ in Construction \ref{const: polytope} is the interval $[0,d]$ in $(N_X)_{\mathbb{R}}$, which contains $d-1$ interior lattice points. Thus, $\nn(\sigma) = (d-1)/ \Aut(\sigma)$. 
        \item The neighborhood of the cycle does not span, and $\ov{\sigma} = 1$.

        Then, because $\sigma$ parametrizes well-spaces tropical maps, the $4$ valent vertex must be positioned at the minimum distance from $\tropC_0$ where the tropical map becomes non-constant. By Corollary \ref{cor: cyclespans}, we have $\nn(\sigma)=1/\mathrm{Aut}(\sigma)$.

        \item The neighborhood of the cycle does not span, and $\ov{\sigma} = 0$.

        In this case, there must be $2$ vertices at minimum distance from $\tropC_0$ where the maps becomes non-constant. By Corollary \ref{cor: cyclespans}, we have $\nn(\sigma) = 1/\Aut(\sigma)$.
    \end{enumerate}
These cases are illustrated in Figure \ref{fig: dim1, defic1} in the introduction.

\end{enumerate}

\subsubsection{Dimension 2}

Suppose $X$ is a toric variety with $\dim X = 2$. We prove Proposition \ref{thm:mult2}.

\begin{enumerate}
    \item \textbf{Deficiency 0.} In this case the cycle contains at least three vertices and so the parametrized tropical curve has no non-trival automorphisms. By Corollary \ref{cor: cyclespans}, we have $n(\sigma) = \nn(\sigma) = 1$. 

\item \textbf{Deficiency 1.} We have the same three cases as in dimension $1$: 
\begin{enumerate}
    \item The neighborhood of the cycle spans. 

    In this case, $\mathrm{ov}(\sigma) \leq 1$, but because the neighborhood of the cycle spans, it must be $\ov{\sigma}=1$ and the unique $4$ valent vertex has to lie in the cycle. See Figure \ref{fig:dim2def1} for an illustration. Let $n_1$ be a basis for the line $L$ which contains the cycle, and complete to a basis $n_1, n_2$ of $N_X$. Let $u$ be the direction vector of any half-edge leaving the cycle. As in dimension 1, we find $$
    \nn(\sigma) = \frac{|m_2(u)|-1}{\Aut(\sigma)}= \frac{|\det(u, n_1)|-1}{\Aut(\sigma)}.
    $$
    
    \item The neighborhood of the cycle does not span, and $\ov{\sigma} = 1$.

    By Corollary \ref{cor: cyclespans}, we have $\nn(\sigma) = 1/\Aut(\sigma)$.
    
    \item The neighborhood of the cycle does not span, and $\ov{\sigma} = 0$.

    By Corollary \ref{cor: cyclespans}, we have $\nn(\sigma) = 1/\Aut(\sigma)$
\end{enumerate}

\item \textbf{Deficiency 2.} Here we must have $\ov{\sigma}\leq 2$. 
\begin{enumerate}
    \item The neighborhood of the cycle spans.

    Then, since $\ov{\sigma} \leq 2$ and every vertex in the cycle has necessarily valency at least $4$, the only two possibilities are that either:
\begin{enumerate}[label=(\roman*)]
    \item There is a $5$-valent vertex $V$ in the cycle with three non-contracted half-edges adjacent to it.

     Since $\ov{\sigma} \leq 2$, this is the only vertex $V$ with valency greater than $3$. Let $u_1, u_2$ be the direction vectors of any two distinct non-contracted half-edges adjacent to $V$. The region $P(\sigma)$ in Construction~\ref{const: polytope} is given as the parallelogram defined by $u_1, u_2$, with the line from $0$ to $u_1+u_2$ removed. The region $P(\sigma)$ has two triangles with the same number of lattice points in each. One of the triangles has vertices $0$, $u_1$, and $u_1+ u_2$. This is isometric to the dual polygon at the vertex $V$. Thus 
$$\nn(\sigma)  = \frac{i(P(\sigma))}{|\Aut(\sigma)|} = \frac{2 \times i(\text{dual polygon to $V$)}}{|\Aut(\sigma)|}.$$

    \item There are two $4$-valent vertices $V_1$, $V_2$ on the cycle each with two non-contracted half-edges adjacent to it. 

    Let $u_1$ (resp. $u_2$) be any non-contracted adjacent half-edge of $V_1$ (resp. $V_2$).  Theorem \ref{thm: calculation of weights} gives 
$$\nn(\sigma) = \frac{i(\text{parallelogram with vertices } 0,u_1, u_1+u_2, u_2)}{|\Aut(\sigma)|}$$
\end{enumerate}
 Examples for the cases 3.(a).(i) and 3.(a).(ii) are illustrated in Figure~\ref{fig:loopfivevalent}. 
    
    \item The neighborhood of the cycle is contained in a line but not contracted.

     Let $V_1$ be a vertex in the cycle which has valence at least $4$ and whose adjacent half-edges span a line $L$. The valence of $V_1$ must be exactly $4$-valent by Proposition \ref{prop: nbhd-cycle-span}. Since $1 \leq \ov{\sigma} \leq \defic{\sigma}=2$, we have two cases (examples illustrated in Figure~\ref{fig:dim2defic2case2}):
\begin{enumerate}[label=(\roman*)]
    \item If the overvalence of $\sigma$ is $2$, for $\sigma$ to be maximal, the line $L$ cannot contain the full image of the tropical curve. Moreover, because curves in $\sigma$ are well-spaced, the tropical curve must leave the line $L$ at a $4$-valent vertex $V_2$. 
    \item The overvalence of $\sigma$ is $1$, and either:
    \begin{itemize}
        \item the curve leaves the line $L$ at two vertices at the same minimal distance from $\tropC_0$; or
        \item the curve never leaves the line $L$. 
    \end{itemize}
    
\end{enumerate}

In all cases, as in the case $\dim(X)=1$ with deficiency $1$ Case (a), we deduce that 
\[
\nn(\sigma)=\frac{d-1}{|\Aut(\sigma)|}.
\]
Here $d \in \Z_{>0}$ is such that $\pm d \cdot u$ are the direction vectors of the non-contracted edges adjacent to $V_1$ and $u \in N_X$ primitive. 

    \item The neighborhood of the cycle is contracted.

    By Corollary \ref{cor: cyclespans}, we have $\nn(\sigma) = 1/|\Aut(\sigma)|$. Illustrated examples are in Figure \ref{fig:dim2defic2case3}.

\end{enumerate}

\end{enumerate}

\section{Saturation index, lattice index, loop multiplicity, and automorphisms} 
\label{sec: curves-in-Nx}

In this section, we prove Theorem \ref{thm: relation indices, loop aut}, which expresses the saturation index of a maximal cone $\sigma \in W_\Gamma(\Sigma(X))$ in terms of the lattice index $\mathrm{lat}(\sigma)$, the loop multiplicity $\mathrm{loop}(\sigma)$, and the automorphism groups of the tropical curves parametrized by $\sigma$.

\subsection{The inductive strategy}\label{sec: induction}

Let $\sigma$ be a maximal-dimensional cone in $\mathcal{W}_\Gamma(\Sigma(X))$. Under the morphism of cone complexes 
\[
W_\Gamma(\Sigma(X)) \rightarrow W_\Gamma((N_X)_{\mathbb{R}})
\]
defined in Equation \eqref{eqn:ws-forgetfan}, the cone $\sigma$ maps injectively onto a cone $\sigma' \subseteq W_\Gamma((N_X)_{\mathbb{R}})$. This induces an isomorphism of vector spaces $(N_\sigma)_{\mathbb{R}} \to (N_{\sigma'})_{\mathbb{R}}$, which is derived from the map of monoids described below.

Let $[h:(\mathsf{C},\mathsf{p}) \to \Sigma(X)]$ denote points in $\sigma$, and let $[h':(\mathsf{C}',\mathsf{p}') \to (N_X)_{\mathbb{R}}]$ be the corresponding points in $\sigma'$. Recall that the monoid associated to $\sigma$ was defined in Equation \eqref{eqn: monoid QWfinesigma} as:
\[
Q_{\W}^{\mathsf{fine}}(\sigma) = \frac{\prod_V \mathrm{Hom}(\vartheta_V \cap N_{\vartheta_V}, \mathbb{N}) \times \prod_e \mathbb{N}}{R} \times_{(\prod_e \mathbb{N})} \tau^\vee = \frac{\prod_V \mathrm{Hom}(\vartheta_V \cap N_{\vartheta_V}, \mathbb{N}) \times \tau^\vee}{\overline{R}},
\]
where $\overline{R}$ denotes the image of $R$ under the morphism 
\begin{equation*} 
\prod_V \mathrm{Hom}(\vartheta_V \cap N_{\vartheta_V}, \mathbb{Z}) \times \prod_q \mathbb{Z} \to \prod_V \mathrm{Hom}(\vartheta_V \cap N_{\vartheta_V}, \mathbb{Z}) \times (\tau^\vee)^{\text{gp}}.
\end{equation*}
Analogously, we define the monoid associated to $\sigma'$ as:
\begin{equation}\label{eqn: monoid QWfinesigma'}
Q_{\W}^{\mathsf{fine}}(\sigma') = \frac{\prod_V \mathrm{Hom}(N_X, \mathbb{N}) \times \prod_e \mathbb{N}}{R'} \times_{(\prod_e \mathbb{N})} \tau^\vee = \frac{\prod_V \mathrm{Hom}(N_X, \mathbb{N}) \times \tau^\vee}{\overline{R'}}
\end{equation}
where $R' \subseteq \left(\prod_V \mathrm{Hom}(N_X, \mathbb{N}) \times \prod_e \mathbb{N}\right)^{\gr}$ is the subgroup generated by the relations
        $$
        a_{\vec{e}}(m) = \left((\ldots, m, \ldots, -m, \ldots), (\ldots, u_{\vec{e}}(m), \ldots)\right),
        $$
        for $m \in M_X$ and $\vec{e}: V \to V'$ an oriented edge in $\mathsf{C}$.
There is a natural morphism \begin{equation}\label{eqn: map of fine monoids}
    Q_{\W}^{\mathsf{fine}}(\sigma') \to Q_{\W}^{\mathsf{fine}}(\sigma)
\end{equation}
determined by the following assignments:
\begin{itemize}
    \item For each vertex $V$, the copy of $\mathrm{Hom}(N_X,\N)$ in $Q_{\W}^{\mathsf{fine}}(\sigma')$ is mapped via the identity to the corresponding copy of $\mathrm{Hom}(N_X,\N)$ in $Q_{\mathcal{W}}^{\mathsf{fine}}(\sigma)$.
    \item any edge $e'$ in $\mathsf{C}'$ corresponds to a set of edges $\{e_1, \ldots, e_t\}$ in $\mathsf{C}$, and the factor of $\mathbb{Z}$ corresponding to $e'$ is mapped to $\prod_{i=1}^t \mathbb{Z}$ via the diagonal map.
\end{itemize}
The map $N_\sigma \to N_{\sigma'}$ is precisely the dual to the map $Q_{\W}^{\mathsf{fine}}(\sigma') \to Q_{\W}^{\mathsf{fine}}(\sigma)$ in Equation \eqref{eqn: map of fine monoids}. In particular, the lattice index of $\sigma$ is equal to the index of the map
$$
Q_{\W}^{\mathsf{fine}}(\sigma') /Q_\W^{\fine}(\sigma')_{\tors} \to Q_{\W}^{\mathsf{fine}}(\sigma)/Q_\W^{\fine}(\sigma)_{\tors}
$$
between the torsion free parts.

The proof of Theorem \ref{thm: relation indices, loop aut} is obtained in several steps. 

We begin with the tropical curve
$
\tropC^{(0)}=(G^{(0)},p,\ell),
$
where $G^{(0)}$ is obtained from the cycle of $\mathsf{C}$ by removing all constrained vertices and identifying the two edges adjacent to each such vertex. The markings $p_i$ on the cycle of $\mathsf{C}$ are retained in $\tropC^{(0)}$, and the edge-length function $\ell$ is induced from that of $\mathsf{C}$, with the length of an edge created by such an identification equal to the sum of the lengths of the original edges. Note that $\tropC^{(0)}$ need not coincide with $\tropC_0$ when the cycle of $\tropC$ contains constrained vertices. This distinction motivates the use of a superscript rather than a subscript in the notation. Let $h^{(0)}: \tropC^{(0)} \to \Sigma(X)$ be the restriction of $h$ to $\tropC^{(0)}$. We regard $h^{(0)}$ as a \emph{partial parametrized tropical curve} according to the following definition.

\begin{definition}
    A partial parametrized tropical curve in $(N_X)_{\mathbb{R}}$ is a map $h: \tropC \to (N_X)_\mathbb{R}$ where $\tropC$ is a graph with vartices and $h$ is a piecewise linear function with integer slopes. 

    A partial parametrized tropical curve in $\Sigma(X)$ is a partially parametrized tropical curve in $(N_X)_{\mathbb{R}}$ such that every edge of $\tropC$, with the possible exception of edges belonging to the cycle, is mapped into a cone of $\Sigma(X)$.

    The combinatorial type of a partial parametrized tropical curve in $(N_X)_{\mathbb{R}}$ consists of the following data: 
    \begin{itemize}
        \item the underlying finite graph, genus, and markings $(G,g,p)$ of $\tropC$; and
        \item for each half-edge $\zeta$ of $G$, the direction vector $v_{\zeta}$. 
    \end{itemize}

    The combinatorial type of a parametrized tropical curve in $\Sigma(X)$ is the information above, together with the data of the cone $\vartheta_V$ of $\Sigma(X)$ that contains $h(V)$ in its relative interior for each vertex $V$ of $\tropC$. 
\end{definition}

For example, $h^{(0)}$ can be regarded both as a partial parametrized tropical curve in $(N_X)_{\mathbb{R}}$ or in $\Sigma(X)$.
Starting from the type $\sigma^{(0)}$ (resp. $(\sigma')^{(0)}$) of $h^{(0)}$ regarded as a partial tropical curve in $\Sigma(X)$ (resp. $(N_X)_\R$), we consider a sequence of types $\sigma^{(k)}$ (resp. $(\sigma')^{(k)}$) of partial parametrized tropical curves $h^{(k)}: \tropC^{(k)} \to \Sigma(X)$ in $\Sigma(X)$ (resp. $(h')^{(k)}: (\tropC')^{(k)} \to (N_X)_\mathbb{R}$ in $(N_X)_\mathbb{R}$). Recall the definition of the disjoint subsets $\cV_i$ of vertices of $\tropC$ which are required to have the same distance from $\tropC_0$ (Definition \ref{def: sets V}). For every $k$, the tropical curve $\tropC^{(k)}$ (resp. $(\mathsf{C}')^{(k)}$) is a subcurve of $\tropC$, obtained by possibly removing two-valent vertices on the cycle and identifying the two adjacent edges. We require that if two vertices $V_{i,j}$ and $V_{i,k}$ in $\mathcal{V}_i$ both appear as vertices of $\tropC^{(k)}$ (resp. $(\tropC')^{(k)}$), then they have the same distance from the cycle in $\tropC^{(k)}$ (resp. $(\tropC')^{(k)}$).

For every $k$ we construct monoids $Q^\fine_\W(\sigma^{(k)})$ and $Q^\fine_\W((\sigma')^{(k)})$ and a map of monoids 
\begin{equation}\label{eqn: kth map of fine monoids}
\Phi_k: Q^\fine_\W((\sigma')^{(k)}) \to Q^\fine_\W(\sigma^{(k)}).
\end{equation}
The final index $k$ corresponds precisely to the maps $h: \tropC \to \Sigma(X)$ (resp. $h': \tropC' \to (N_X)_\mathbb{R}$) and their associated monoids and map is the map \eqref{eqn: map of fine monoids}. 

Let $\mathrm{lat}(\sigma^{(k)})$ denote the lattice index of the torsion-free part of the map \eqref{eqn: kth map of fine monoids}, and let $\mathrm{sat}(\sigma^{(k)})$ be the cardinality of the torsion subgroup of $Q^\fine_\W(\sigma^{(k)})^{\gr}$. We will show the following  statements:

\begin{enumerate}[label=(\arabic*)]
    \item For $k=0$, one has $\mathrm{sat}(\sigma^{(k)})=\mathrm{loop}(\sigma)$ (see Proposition \ref{lem:loop});
    \item for all $k \geq 1$ the cardinality of the torsion part of $Q_\W(\sigma^{(k-1)})^{\gr}$ divides that of the torsion part of $Q^\fine_\W(\sigma^{(k)})^{\gr}$ and one has 
    $$  \frac{|(Q^\fine_\W(\sigma^{(k)})^{\gr})_{\tors}|}{|(Q^\fine_\W(\sigma^{(k-1)})^{\gr})_{\tors}|} = \mathrm{lat}(\sigma^{(k-1)})  \cdot \frac{\prod_{V \in \tropC^{(k)} \smallsetminus \tropC^{(k-1)} \text{ constrained}} \mu_V}{\mathrm{lat}(\sigma^{(k)})}.
    $$ 
    (see Propositions \ref{prop: cycle induction} and \ref{prop: inductive step}).
\end{enumerate}

Theorem \ref{thm: relation indices, loop aut} will then follow immediately from $(1)$ and $(2)$ above.

\subsection{Preliminary algebraic results}

In our computations, we will make use of the following preliminary index calculations. Even though more general versions of the statements are true, we state them only in the generality that we need. 

\begin{lemma} \label{lem:gcd}
Let $M = \mathbb{N}^t$ and let 
$
A \colon \mathbb{Z}^s \longrightarrow \mathbb{Z}^t
$
be a homomorphism of rank $\rho$. Let $R \subset M$ be the submonoid given by
$
R = \operatorname{im}(A) \cap M.
$
Then
\begin{align*}
\left| \bigl((M/R)^{\mathrm{gp}}\bigr)_{\mathrm{tors}} \right|
& = [\operatorname{im}(A): \operatorname{span}_{\mathbb{R}}(\operatorname{im}(A)) \cap \mathbb{Z}^t] \\
&= \gcd \text{ of the non-zero } \rho \times \rho \text{ minors of } A.
\end{align*}

\end{lemma}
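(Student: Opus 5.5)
The plan has three steps: identify $(M/R)^{\mathrm{gp}}$ with a lattice quotient, identify $R^{\mathrm{gp}}$ with $\operatorname{im}(A)$, and then compute the torsion by Smith normal form.

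\textbf{Step 1: the group of the quotient.} For a submonoid $R\subseteq M$, the quotient $M/R$ is $M$ modulo the congruence $m\sim m'$ iff $m+r=m'+r'$ for some $r,r'\in R$. Taking groupification commutes with this quotient. Hence $(M/R)^{\mathrm{gp}}=\mathbb{Z}^t/R^{\mathrm{gp}}$, where $R^{\mathrm{gp}}\subseteq\mathbb{Z}^t$ is the subgroup generated by $R$. This is formal, checked on generators via the universal properties.

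\textbf{Step 2: $R^{\mathrm{gp}}=\operatorname{im}(A)$.} This is the crux, and where I expect the main obstacle. The inclusion $R^{\mathrm{gp}}\subseteq\operatorname{im}(A)$ is clear.

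For the reverse inclusion, suppose $\operatorname{im}(A)$ contains a vector $w$ with all coordinates strictly positive. Given $v\in\operatorname{im}(A)$, we have $v+kw\in\operatorname{im}(A)\cap\mathbb{N}^t=R$ for $k\gg0$. Also $kw\in R$. Therefore $v=(v+kw)-kw\in R^{\mathrm{gp}}$.

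Without such a $w$ the equality can fail. For example, if $\operatorname{im}(A)=\mathbb{Z}(1,-1)\subseteq\mathbb{Z}^2$, then $R=\{0\}$. So the first thing I would do is verify that this positivity holds in the situations where the lemma is applied. In the monoids $Q^{\fine}_{\W}$ the relations $a_{\vec e}(m)$ involve the edge-length factors, and suitable sums of relations should produce a strictly positive vector. If positivity cannot be arranged, the fallback is to read $M/R$ as $M$ modulo the congruence generated by the subgroup $\operatorname{im}(A)$, which is the convention used for $Q^{\fine}$ in \eqref{eqn: variants of Q}. Under either route we obtain
\[
(M/R)^{\mathrm{gp}}\simeq\mathbb{Z}^t/\operatorname{im}(A).
\]

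\textbf{Step 3: the torsion computation.} Set $L=\operatorname{span}_{\mathbb{R}}(\operatorname{im}(A))\cap\mathbb{Z}^t$. This is a saturated sublattice of rank $\rho$, so $\mathbb{Z}^t/L$ is free. The exact sequence
\[
0\to L/\operatorname{im}(A)\to\mathbb{Z}^t/\operatorname{im}(A)\to\mathbb{Z}^t/L\to0
\]
shows that the torsion of $\mathbb{Z}^t/\operatorname{im}(A)$ is exactly $L/\operatorname{im}(A)$, which is finite of order $[L:\operatorname{im}(A)]$. This gives the first equality.

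For the second equality, choose Smith normal form $A=PDQ$ with $P,Q$ unimodular and $D$ diagonal with nonzero entries $d_1\mid\cdots\mid d_\rho$. Then $\mathbb{Z}^t/\operatorname{im}(A)\simeq\bigoplus_i\mathbb{Z}/d_i\oplus\mathbb{Z}^{t-\rho}$, so the torsion has order $d_1\cdots d_\rho$. This product is the $\rho$-th determinantal divisor of $D$, namely the gcd of its $\rho\times\rho$ minors. By Cauchy–Binet, that gcd is invariant under multiplication by unimodular matrices, so it equals the gcd of the nonzero $\rho\times\rho$ minors of $A$.
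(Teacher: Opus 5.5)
Your proof is correct and follows the paper's route: the paper's proof is just the exact sequence $\mathbb{Z}^s \xrightarrow{A} \mathbb{Z}^t \to (M/R)^{\mathrm{gp}} \to 0$ plus Smith normal form, and your Step 3 supplies those details, with the index correctly written as $[L:\operatorname{im}(A)]$ (the statement has it reversed). Your Step 2 caveat is well founded, since the paper simply asserts that exactness, and it holds under the paper's convention (as in the definition of $Q^{\fine}$) that $M/R$ means the image of $M$ in $\mathbb{Z}^t/\operatorname{im}(A)$; so your fallback reading is the right one and the positivity detour is unnecessary (under the literal submonoid reading the lemma genuinely fails, e.g.\ $\operatorname{im}(A)=\mathbb{Z}(2,-2)\subseteq\mathbb{Z}^2$ gives $R=0$ and trivial torsion, but index $2$).
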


\begin{proof}
    This follows from the exact sequence
    \begin{center}
        % https://tikzcd.yichuanshen.de/#N4Igdg9gJgpgziAXAbVABwnAlgFyxMJZABgBpiBdUkANwEMAbAVxiRAB12BbOnACwBGA4AC0AvgD0EY0uky58hFAEZyVWoxZtOPfkNGScIGXOx4CRAExrq9Zq0QgAFAFkA9ACUAlBOCcA5mhixrIgGGaKRADMNhr2bMTG6jBQ-vBEoABmAE4QXEiqIDgQSJYmIDl5SGRFJYiFdlqOAIIhWbn5iNa1SFFiFGJAA
\begin{tikzcd}
\mathbb{Z}^s \arrow[r, "A"] & \mathbb{Z}^t \arrow[r] & (M/R)^{\gr} \arrow[r] & 0.
\end{tikzcd}
    \end{center}
    and the existence of a Smith normal form for $A$. 
\end{proof}

In particular, $\left| \bigl((M/R)^{\mathrm{gp}}\bigr)_{\mathrm{tors}} \right|$ is invariant under taking transpose. 

\begin{lemma} \label{lem: unconstrained}
    Let $M$ is an abelian group and fix any homomorphism $\alpha: \mathbb{Z}^r \rightarrow M.$
    Consider 
    $$N = (M \times \mathbb{Z}^r)/R$$
    where the relation $R$ is generated by the image of 
    $$\mathbb{Z}^r \rightarrow M \times \mathbb{Z}^r, m \mapsto (\alpha(m),m).$$
    Then $N \simeq M$. 
\end{lemma}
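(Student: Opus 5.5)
The plan is to write down an explicit inverse pair of homomorphisms between $M$ and $N$. Recall that $N$ is the quotient of $M \times \mathbb{Z}^r$ by the subgroup $R$ generated by the elements $(\alpha(m), m)$ for $m \in \mathbb{Z}^r$. Since $\alpha$ is a homomorphism, $R$ is exactly the graph-type subgroup $\{(\alpha(m),m) \mid m \in \mathbb{Z}^r\}$, so no closure under sums is needed.

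First, define $\phi \colon M \times \mathbb{Z}^r \to M$ by $\phi(x,m) = x - \alpha(m)$. This is a group homomorphism, and it kills every generator $(\alpha(m),m)$ of $R$. Hence it descends to a map $\bar\phi \colon N \to M$. It is surjective, because $\phi(x,0)=x$.

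Second, let $\iota \colon M \to N$ be the composite of the inclusion $x \mapsto (x,0)$ with the quotient map. Then $\bar\phi \circ \iota = \mathrm{Id}_M$ directly from the definitions. For the other composite, take any class $[(x,m)]$. Subtracting the relation $(\alpha(m),m)$ gives $[(x,m)] = [(x-\alpha(m),0)] = \iota(\bar\phi([(x,m)]))$, so $\iota \circ \bar\phi = \mathrm{Id}_N$. Equivalently, $\ker\phi = R$: if $x = \alpha(m)$, then $(x,m) \in R$.

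Therefore $\bar\phi$ is an isomorphism $N \simeq M$. There is no real obstacle. The only care needed is to observe that $R$ is already a subgroup, being the image of a homomorphism, so that $\ker \phi = R$ holds on the nose and not merely up to saturation.
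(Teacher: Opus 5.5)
Your proposal is correct and is essentially the paper's proof: the paper uses the same map $(a,b)\mapsto a-\alpha(b)$ with inverse $a\mapsto (a,0)$. You additionally verify well-definedness and both composites explicitly, which the paper leaves implicit.
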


\begin{proof}
    The isomorphism is given by 
    \begin{align*}
        N = M \times \mathbb{Z}^r/R & \rightarrow M \\
        (a, b) &\mapsto a - \alpha(b)
    \end{align*}
    with inverse $a \mapsto (a,0)$.
\end{proof}

\begin{lemma} \label{lem: constrained}
    Let $M = \mathbb{Z}^k \times M_{\mathrm{tors}}$ be an abelian group with torsion subgroup $M_{\mathrm{tors}}$. Fix a positive integer $d \in \mathbb{Z}_{>0}$, a homomorphism $\alpha: \mathbb{Z}^r \to M$, and integers $a_1, \ldots, a_{r-1}$. Consider the quotient group
\begin{equation*}
    N = (M \times \mathbb{Z}^{r-1} \times \mathbb{Z}) / R
\end{equation*}
where the relation $R$ is the subgroup generated by the image of the map
\begin{align*}
    \mathbb{Z}^r & \to M \times \mathbb{Z}^{r-1} \times \mathbb{Z} \\
    (m_1, \ldots, m_r) & \mapsto \left(\alpha(m), (m_1, \ldots, m_{r-1}), \sum_{i=1}^{r-1} a_i m_i + d m_r\right).
\end{align*}
Then,  $|M_{\mathrm{tors}} |$ divides $ |N_{\mathrm{tors}}|$. Let $g \in \Z_{\geq 1}$ be the result of this division. It follows that $g$ divides $d$ and that the induced homomorphism
\begin{equation*}
    \Phi: M/M_{\mathrm{tors}} \to N/N_{\mathrm{tors}}
\end{equation*}
is an isomorphism after tensoring with $\mathbb{R}$ and has lattice index equal to $d/g$.
    
\end{lemma}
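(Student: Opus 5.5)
The approach is to eliminate the $\mathbb{Z}^{r-1}$ factor first, in the spirit of Lemma~\ref{lem: unconstrained}, and then reduce to a cyclic quotient of $M\times\mathbb{Z}$. Concretely, the generators of $R$ coming from $e_1,\ldots,e_{r-1}$ are $(\alpha(e_i), e_i, a_i)$. In the quotient $N$ each coordinate vector $e_i$ of the middle factor is therefore identified with $(-\alpha(e_i), 0, -a_i)$. Hence the composite $M\times\mathbb{Z}\to M\times\mathbb{Z}^{r-1}\times\mathbb{Z}\to N$ is an isomorphism onto
\[
N\simeq (M\times\mathbb{Z})/\langle (\alpha(e_r), d)\rangle .
\]
The inverse is $(a,b,c)\mapsto (a-\sum_i b_i\alpha(e_i),\, c-\sum_i b_i a_i)$, and I would verify that it kills the generators exactly as in the proof of Lemma~\ref{lem: unconstrained}. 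Under this identification, $\Phi$ is induced by $a\mapsto (a,0)$.

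Next I write $x=\alpha(e_r)=(y,\tau)\in\mathbb{Z}^k\times M_{\mathrm{tors}}$ and study $N=(M\times\mathbb{Z})/\langle (x,d)\rangle$. Since $d>0$, the element $(x,d)$ has infinite order, so $N$ has rank $k$ and $\Phi\otimes\mathbb{R}$ is an isomorphism. The map $M\to N$ is injective, because $(a,0)=n(x,d)$ forces $n=0$. Thus $M_{\mathrm{tors}}\hookrightarrow N_{\mathrm{tors}}$, which already gives the divisibility $|M_{\mathrm{tors}}|\mid |N_{\mathrm{tors}}|$.

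To compute $g$ and the index, I use the exact sequence
\[
0\to M\to N\to \mathbb{Z}/d\to 0,
\]
where the last map is induced by the projection to $\mathbb{Z}$. Reducing modulo torsion, $\Phi$ becomes an injective map $M/M_{\mathrm{tors}}\to N/N_{\mathrm{tors}}$ of free groups of equal rank, and I denote its cokernel by $K$. From the exact sequence I get the chain of equalities
\[
d=|N/M|=\frac{|N_{\mathrm{tors}}|}{|M_{\mathrm{tors}}|}\cdot |K| ,
\]
which is obtained by comparing the filtrations $M_{\mathrm{tors}}\subset N_{\mathrm{tors}}$ and $M\subset N$. The justification is the snake lemma applied to the map of short exact sequences $0\to M_{\mathrm{tors}}\to M\to M/M_{\mathrm{tors}}\to 0$ into the analogous sequence for $N$, whose vertical maps are all injective. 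This yields $|N/M|=|N_{\mathrm{tors}}/M_{\mathrm{tors}}|\cdot|K|$, that is $d=g\cdot[\text{lattice index}]$. Hence $g\mid d$ and the lattice index equals $d/g$.

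The main obstacle is that snake-lemma bookkeeping, specifically making sure the cokernel sequence $0\to N_{\mathrm{tors}}/M_{\mathrm{tors}}\to N/M\to K\to 0$ is exact. Exactness on the right is clear. Exactness on the left needs $M\cap N_{\mathrm{tors}}=M_{\mathrm{tors}}$, which holds because $M\to N$ is injective.
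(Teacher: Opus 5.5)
Your proof is correct, and its second half takes a different route from the paper's.

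The first step is the same. The paper also eliminates the $\mathbb{Z}^{r-1}$ factor, by invoking Lemma~\ref{lem: unconstrained}, to reach $N\simeq (M\times\mathbb{Z})/\langle(\alpha(e_r),d)\rangle$. Strictly speaking, your composite $M\times\mathbb{Z}\to N$ is not itself an isomorphism: it is surjective with kernel $\langle(\alpha(e_r),d)\rangle$, which is what you mean.

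From there the two arguments diverge.
\begin{itemize}
\item The paper writes $M_{\mathrm{tors}}\simeq\prod_i\mathbb{Z}/c_i$ and lets $(b_1,\dots,b_k)$ be the free part of $\alpha(e_r)$, with $\tilde b=\gcd(b_1,\dots,b_k)$. It writes an explicit presentation matrix for $N$ as a quotient of $\mathbb{Z}^{k+t+1}$ and applies Lemma~\ref{lem:gcd} to get $|N_{\mathrm{tors}}|=\gcd(\tilde b,d)\,|M_{\mathrm{tors}}|$. It then reads off the index from an explicit B\'ezout change of coordinates on $N/M_{\mathrm{tors}}\simeq\mathbb{Z}^{k-1}\times\mathbb{Z}^2/(\tilde b,d)$.
\item You instead use the exact sequence $0\to M\to N\to\mathbb{Z}/d\to 0$. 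Applying the snake lemma to the torsion/free filtrations gives $d=g\cdot[N/N_{\mathrm{tors}}:\Phi(M/M_{\mathrm{tors}})]$ in one step. The divisibility $|M_{\mathrm{tors}}|\mid|N_{\mathrm{tors}}|$ comes from the injection $M_{\mathrm{tors}}\hookrightarrow N_{\mathrm{tors}}$ rather than from a formula.
\end{itemize}

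Your argument is shorter and coordinate-free, and it needs no Smith normal form. It never has to check that the torsion component of $\alpha(e_r)$ (the $\star$ block in the paper's matrix) drops out of the maximal minors. It also works verbatim for any finitely generated $M$.

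The paper's computation gives more information, namely the explicit value $g=\gcd(b_1,\dots,b_k,d)$. However, only the relation $d=g\cdot(\text{lattice index})$ is used later, in Propositions~\ref{prop: cycle induction} and~\ref{prop: inductive step}. So your version suffices for the paper's purposes.
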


\begin{proof}
    We have 
    $$N \simeq \frac{(M \times \mathbb{Z}^{r-1} \times \mathbb{Z})/(\alpha(m_1, \ldots, m_{r-1}, 0), (m_1, \ldots, m_{r-1}), \sum_{i=1}^{r-1} a_i m_i)}{(\alpha(0, \ldots, 0, m_r), (0, \ldots, 0), dm_r)}.$$
    Applying Lemma \ref{lem: unconstrained} to $N' := (M \times \mathbb{Z}^{r-1} \times \mathbb{Z})/(\alpha(m_1, \ldots, m_{r-1}, 0), (m_1, \ldots, m_{r-1}), \sum_{i=1}^{r-1} a_i m_i)$, we obtain $N' \simeq M \times \mathbb{Z}$.

    Write $M_{\tors} \simeq \Z/(c_1) \times \ldots \times \Z/(c_t)$ for some $c_1,\ldots,c_t \in \Z$. Let also $(b_1,\ldots,b_k) \in \Z^k$ be the image of  $\alpha(0,\ldots, 1)$ under the quotient map 
    $M = \mathbb{Z}^k \times M_{\tors} \rightarrow \mathbb{Z}^k$.
    Then, $N$ is naturally a quotient of $\Z^{k+t+1}$ by the image of the matrix:
    \[
    \left[
    \begin{array}{c|ccc}
    b_1     & 0      &        &        \\
    b_2     &        & \ddots &        \\
    \vdots  &        &        &        \\
    b_k     &        &        & 0      \\
    \hline
            & c_1    &        &        \\
    \star   &        & \ddots &        \\
            &        &        & c_t    \\
    \hline
    d       & 0      & \cdots & 0
    \end{array}
    \right]
    \]
    where $\widetilde{b}$ is the gcd of the $b_i$. By Lemma \ref{lem:gcd}, we have that for $g = \operatorname{gcd}(\tilde{b}, d)$, $$|N_{\tors}| = g  c_1 \ldots c_t = g|M_{\tors}|.$$
    Moreover 
    $$N/M_{\tors} \simeq \frac{(\mathbb{Z}^{k-1} \times \mathbb{Z}) \times \mathbb{Z}}{(0, \tilde{b}, d)} \simeq \mathbb{Z}^{k-1} \times \frac{\mathbb{Z}^2}{(\tilde{b},d)} \simeq \mathbb{Z}^{k-1} \times \bigg(\mathbb{Z} \times \frac{\mathbb{Z}}{g}\bigg).$$
    In particular, $N/N_{\tors} \simeq \Z^k$. Let $b' = \tilde{b}/g, d' = d/g$ and let $B,D \in \mathbb{Z}$ such that $b'B + d'D = 1$. The last isomorphism is given by 
    $$(x, (y,z)) \mapsto (x, d'y-b'z, By+Dz).$$
    Thus $\Phi$ can be expressed, with respect to suitable basis, as
    $$(x_1, \ldots, x_{k-1}, y) \mapsto (x_1, \ldots, x_{k-1}, d'y)$$
     and the result follows. 
\end{proof}

\subsection{Proof of Theorem \ref{thm: relation indices, loop aut}}

In this section we present the proof of Theorem \ref{thm: relation indices, loop aut}.

\begin{construction} \label{const: partial-sequence}
  Order the vertices of $\tropC$ according to the following criteria:
\begin{enumerate}
    \item[$\bullet$] The first $E_0$ vertices are precisely the non-constrained vertices contained in the cycle $\tropC_0$.
    \item[$\bullet$] The second $E_1$ vertices are precisely the constrained vertices contained in the cycle $\tropC_0$.
    \item[$\bullet$] For indices $i > j > E_0+E_1$, the path from $V_j$ to $\tropC_0$ does not pass through $V_i$.
\end{enumerate}

We consider a sequence of combinatorial types $\sigma^{(k)}$ of partially parametrized tropical curves in $\Sigma(X)$ (resp. $(\sigma')^{(k)}$ in $(N_X)_\mathbb{R}$) defined as follows. Let $\sigma^{(0)}$ and $(\sigma')^{(0)}$ be the combinatorial type of the partial curve consisting solely of the cycle $\tropC^{(0)}$ (with the two valent vertices removed) and the restriction $h^{(0)} = h|_{\tropC^{(0)}}$. 

For $0 < k \leq E_1$, the type $(\sigma')^{(k)}$ remains unchanged and is equal to $(\sigma')^{(0)}$, while $\tropC^{(k)}$ is obtained from $\tropC^{(k-1)}$ by adding the two-valent vertex $V_k$ on the cycle. The map $h^{(k)}$ is again the restriction of $h$ to $\tropC^{(k)}$.

For $k>E_1$, we build $\tropC^{(k)}$ from $\tropC^{(k-1)}$ by adding the vertex $V_k$ and the unique edge $e_k$ connecting $V_k$ to a vertex $V_{i_k} \in \tropC^{(k-1)}$. If $V_k$ is a constrained vertex, no vertex is added to $(\tropC')^{(k-1)}$; otherwise, $V_k$ is also added to $(\tropC')^{(k-1)}$ together with the unique edge $e_k'$ connecting $V_k$ to a vertex $V_{i_k}' \in (\tropC')^{(k-1)}$. Note that, in general, $V_{i_k} \neq V_{i_k}'$ and that $e_k$ is ``contained'' in $e_k'$.

For every $k > E_1$, we require that all vertices in $\mathcal{V}_i$ (see Definition~\ref{def: cV}) that are contained in $\tropC^{(k)}$ (resp.\ $(\tropC')^{(k)}$) are equidistant from the cycle $\tropC_0$.

In particular, for each $k$, we obtain a cone
\[
\tau_k \subseteq \prod_{e \in E(\tropC^{(k)})} \mathbb{N},
\]
defined by imposing the condition that all vertices in $\mathcal{V}_i \cap V(\tropC^{(k)})$ have the same distance from $\tropC_0$.

Note that vertices in $\mathcal{V}_i$ are not constrained; consequently, the corresponding cone $\tau'_k$ for $(\tropC')^{(k)}$ coincides with $\tau_k$.

The maps $h^{(k)}$ and $(h')^{(k)}$ are defined as the restrictions of $h$ and $h'$ to $\tropC^{(k)}$ and $(\tropC')^{(k)}$, respectively. The combinatorial types $\sigma^{(k)}$ and $(\sigma')^{(k)}$ are the types of $h^{(k)}: \tropC^{(k)} \to \Sigma(X)$ and $(h')^{(k)}: (\tropC')^{(k)} \to (N_X)_\mathbb{R}$, respectively.

Finally, the monoids $Q^\fine_\W(\sigma^{(k)})$ and $Q^\fine_\W((\sigma')^{(k)})$ are defined analogously to those in Equations \eqref{eqn: monoid QWfinesigma} and \eqref{eqn: monoid QWfinesigma'} , with the following modifications: the products are taken over the vertices of $\tropC^{(k)}$ (resp.\ $(\tropC')^{(k)}$), the cone $\tau$ is replaced by $\tau_k$, and the set of relations is generated by the same conditions, now varying over the edges of $\tropC^{(k)}$ (resp.\ $(\tropC')^{(k)}$). For every $k$ there is an natural map

$$
Q^\fine_\W((\sigma')^{(k)}) \to Q^\fine_\W(\sigma^{(k)})
$$
defined as in Equation \eqref{eqn: map of fine monoids}.

\end{construction}

 As explained in \S\ref{sec: induction} the proof is by induction on $k$. The next proposition proves statement $(1)$ at the end of \S\ref{sec: induction}.

\begin{proposition}\label{lem:loop}
Suppose $\gamma$ is the combinatorial type of a partially parametrized tropical curve in $\Sigma(X)$ consisting solely of a cycle and containing no constrained vertices. Then, 
\begin{equation*}
    \mathrm{sat}(\gamma) = \mathrm{loop}(\sigma).
\end{equation*}
\end{proposition}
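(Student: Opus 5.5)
We compute the monoid $Q^{\fine}_{\W}(\gamma)$ explicitly and read off its torsion with Lemma~\ref{lem:gcd}. Since $\gamma$ consists only of the cycle, there are no vertices off the cycle. The radial-alignment cone $\tau$ therefore imposes no equidistance relations: all sets $\mathcal{V}_i$ are empty and $\tau^\vee=\prod_e\N$. Hence
\[
Q^{\fine}_{\W}(\gamma)^{\gr} = \Bigl(\prod_{V} M_X \times \prod_{e} \Z\Bigr)\Big/ R,
\]
because every vertex of $\gamma$ is unconstrained, so $\vartheta_V$ is a maximal cone and $M_{\vartheta_V}=M_X$. Here $R$ is generated by $a_{\vec e}(m)=(m\text{ at }V,\,-m\text{ at }V',\,m(u_{\vec e})\text{ at }e)$ for $\vec e\colon V\to V'$ running over the edges of the cycle and $m\in M_X$.

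The first step is to eliminate the vertex factors. Orient the cycle $V_1\to V_2\to\cdots\to V_{E}\to V_1$ with edges $\vec e_1,\dots,\vec e_{E}$. Apply Lemma~\ref{lem: unconstrained} successively along the edges $\vec e_1,\dots,\vec e_{E-1}$. Each relation $a_{\vec e_j}(m)$ expresses the copy of $M_X$ at $V_{j+1}$ in terms of the copy at $V_j$ and the factor of $e_j$. These relations therefore identify all vertex factors with the single copy $M_X$ at $V_1$, with no torsion created. What remains is
\[
Q^{\fine}_{\W}(\gamma)^{\gr}\simeq \bigl(M_X\times \Z^{E(\gamma)}\bigr)\big/\bigl\langle (0,(m(u_{\vec e_1}),\dots,m(u_{\vec e_E}))) : m\in M_X\bigr\rangle .
\]
The last edge $\vec e_E$ closes the loop: the $M_X$-components of the relations telescope around the cycle to $m-m=0$, leaving only the edge part.

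It follows that $Q^{\fine}_{\W}(\gamma)^{\gr}\simeq M_X\times \Z^{E}/\operatorname{im}(A)$, where
\[
A\colon M_X\to\Z^{E},\qquad m\mapsto \bigl(m(u_{\vec e_1}),\dots,m(u_{\vec e_E})\bigr).
\]
Its torsion is the torsion of $\operatorname{coker}A$. By Lemma~\ref{lem:gcd}, this torsion has order $[\operatorname{span}_\R(\operatorname{im}A)\cap\Z^E:\operatorname{im}A]$, the gcd of the maximal nonzero minors of $A$. Lemma~\ref{lem:gcd} also shows this quantity is invariant under transposing $A$. The transpose is the map $A^{T}\colon\Z^{E}\to N_X$, $(\ell_j)\mapsto\sum_j \ell_j u_{\vec e_j}$. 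By the same lemma, the gcd of its maximal nonzero minors equals the index of its image in $\operatorname{span}_\R(h(\tropC_0))\cap N_X$. That index is by definition $\mathrm{loop}(\sigma)$ (Definition~\ref{def:loop}).

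One point needs care. Removing two-valent constrained vertices to form $\tropC^{(0)}$ merges edges, but the direction vectors $u_{\vec e}$ of the merged edges are unchanged. The edge set of $\gamma$ thus computes the same loop index as $\tropC_0$: merging only deletes repeated columns of $A^{T}$. I expect the main obstacle to be the bookkeeping in the elimination step. One must check that the relation along the closing edge contributes exactly the edge-vector relation with zero $M_X$-part and no extra torsion, and that the torsion in Lemma~\ref{lem:gcd} is computed for the group quotient rather than the monoid quotient.
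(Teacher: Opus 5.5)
Your proposal is correct and follows essentially the same route as the paper. Your successive elimination of the vertex copies of $M_X$ via Lemma~\ref{lem: unconstrained}, with the closing edge telescoping to the relation $m\mapsto \bigl(m(u_{\vec e_1}),\dots,m(u_{\vec e_E})\bigr)$, is exactly what the paper achieves through cumulative column operations and row-clearing on the block relation matrix; both arguments then apply Lemma~\ref{lem:gcd} to the matrix of cycle direction vectors and use transpose-invariance to identify the torsion order with $\mathrm{loop}(\sigma)$.
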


\begin{proof}
Let $V_1, \ldots, V_{E_0}$ be the vertices and $e_1, \ldots, e_{E_0}$ be the edges of the cycle, oriented as $\vec{e}_i: V_i \to V_{i+1}$ (with the cyclic convention $V_{E_0+1} = V_1$). 

The relations $R$ in $Q^{\mathrm{fine}}_\W(\gamma)$ are given by the image of a map which we represent in matrix form. Since each vertex $V_i$ maps to the interior of a maximal-dimensional cone in $\Sigma(X)$, the dual monoid $\mathrm{Hom}(\vartheta_{V_i} \cap N_{\vartheta_{V_i}}, \mathbb{N})$ reduces to $\mathrm{Hom}(N_X, \mathbb{N}) = N_X^\vee$ for every $i = 1, \ldots, E_0$. Upon identifying 
\begin{equation*}
    M_X = \mathrm{Hom}(N_X, \mathbb{Z}) = N_X^*,
\end{equation*}
we can express these relations as the image of the following block matrix:
\[
\begin{blockarray}{cccccc}
& e_1 & e_2 & \ldots & e_{E_0} \\
\begin{block}{c[ccccc]}
  V_1    & -\Id_{M_X} & 0          & \ldots & \Id_{M_X} \\
  V_2    & \Id_{M_X}  & -\Id_{M_X} & \ldots & 0 \\
  V_3    & 0          & \Id_{M_X}  & \ldots & 0 \\
  \vdots &            &            & \vdots &   \\
  V_{E_0}& 0          & 0          & \ldots & -\Id_{M_X} \\
  e_1    & u_1        & 0          & \ldots & 0 \\
  \vdots &            &            & \vdots &   \\
  e_{E_0}& 0          & 0          & \ldots & u_{E_0} \\
\end{block}
\end{blockarray}
\]
where we set $u_i = u_{\vec{e}_i} \in N_X$ for visual clarity. Here, each block $(V_i, e_j)$ corresponds to an $r \times r$ submatrix, whereas each block $(e_i, e_j)$ corresponds to a $1 \times r$ submatrix, where $r = \operatorname{rank}(M_X)$. 

By making appropriate choices of bases, we can perform row and column operations (corresponding to a change of basis of the target and source, respectively) to reduce this matrix. We begin with column operations: successively add column block $e_1$ to $e_2$, then $e_2$ to $e_3$, and proceed inductively up to the final column block. This yields:
\[
\begin{blockarray}{ccccccc}
& e_1 & e_2 & \ldots & e_{E_0-1} & e_{E_0} \\
\begin{block}{c[cccccc]}
  V_1    & -\Id_r & -\Id_r & \ldots & -\Id_r & 0 \\
  V_2    & \Id_r  & 0      & \ldots & 0      & 0 \\
  V_3    & 0      & \Id_r  & \ldots & 0      & 0 \\
  \vdots &        &        & \vdots &        &   \\
  V_{E_0}& 0      & 0      & \ldots & \Id_r  & 0 \\
  e_1    & u_1    & u_1    & \ldots & u_1    & u_1 \\
  e_2    & 0      & u_2    & \ldots & u_2    & u_2 \\
  \vdots &        &        & \vdots &        &   \\
  e_{E_0}& 0      & 0      & \ldots & 0      & u_{E_0} \\
\end{block}
\end{blockarray}
\]
We next use the identity blocks in rows $V_2$ through $V_{E_0}$ to clear the remaining non-zero blocks in rows $V_1$ and columns $e_1$ through $e_{E_0-1}$. Taking the transpose of the resulting simplified matrix, we isolate the essential non-zero blocks as:
\begin{equation*}
B = \begin{bmatrix}
    0 & \Id_{r(E_0-1)} & 0 & \ldots & 0 \\
    0 & 0              & u_1^T & \ldots & u_{E_0}^T
\end{bmatrix}.
\end{equation*}
For this matrix $B$, the index $[\operatorname{Span}_{\mathbb{R}}(\operatorname{Im}(B)) \cap \mathbb{Z}^t : \operatorname{Im}(B)]$ is precisely to the loop multiplicity of the cycle. An application of Lemma \ref{lem:gcd} to this block form yields the desired result.
\end{proof}

\begin{proposition}\label{prop: cycle induction}
    For all $0<k \leq E_1$, the cardinality of the torsion subgroup of $Q^\fine_\W(\sigma^{(k-1)})^{\gr}$ divides that of the torsion subgroup of $Q^\fine_\W(\sigma^{(k)})^{\gr}$, and one has 
    $$  \frac{|(Q^\fine_\W(\sigma^{(k)})^{\gr})_{\tors}|}{|(Q^\fine_\W(\sigma^{(k-1)})^{\gr})_{\tors}|} = \mathrm{lat}(\sigma^{(k-1)})  \cdot \frac{ d_{V_k}}{\mathrm{lat}(\sigma^{(k)})}
    $$  
\end{proposition}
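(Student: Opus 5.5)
The plan is to realise the passage from $\sigma^{(k-1)}$ to $\sigma^{(k)}$ as an instance of Lemma~\ref{lem: constrained}, with $M = Q^\fine_\W(\sigma^{(k-1)})^{\gr}$ and $N = Q^\fine_\W(\sigma^{(k)})^{\gr}$, and then to combine the index statement of that lemma with the trivially unchanged primed side. For $0<k\le E_1$ the type $(\sigma')^{(k)}=(\sigma')^{(k-1)}$, so $Q^\fine_\W((\sigma')^{(k)})=Q^\fine_\W((\sigma')^{(k-1)})$, and $\Phi_k$ factors as
\[
Q^\fine_\W((\sigma')^{(k-1)}) \xrightarrow{\Phi_{k-1}} Q^\fine_\W(\sigma^{(k-1)}) \xrightarrow{\ \Psi\ } Q^\fine_\W(\sigma^{(k)}),
\]
where $\Psi$ is the natural map sending the length of the edge $e$ of $\tropC^{(k-1)}$ that gets subdivided by $V_k$ to the sum of the lengths of the two new edges $e_a, e_b$ (diagonal), and the identity on the other factors. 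On torsion-free parts, lattice indices multiply, so $\mathrm{lat}(\sigma^{(k)}) = \mathrm{lat}(\sigma^{(k-1)})\cdot[\Psi]$. It therefore suffices to show that $|N_\tors| = g\,|M_\tors|$ and $[\Psi]=d_{V_k}/g$ for some $g$; then $|N_\tors|/|M_\tors| = g = d_{V_k}/[\Psi] = d_{V_k}\,\mathrm{lat}(\sigma^{(k-1)})/\mathrm{lat}(\sigma^{(k)})$, as claimed.

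To put $N$ in the form of Lemma~\ref{lem: constrained}, I will write the relations at the new vertex explicitly. Choose the basis $n_1,\dots,n_{r-1}$ of $N_{\vartheta_{V_k}}\cap N_X$ (the codimension-one cone $\tau$) and complete by $n_r$ as in Definition~\ref{def: dv}, with dual basis $m_1,\dots,m_r$. The new factor $\mathrm{Hom}(\vartheta_{V_k}\cap N_{\vartheta_{V_k}},\N)$ has group $\Z^{r-1}$. Replacing the old generator $\ell_e$ by $\ell_{e_a}$ and adding $\ell_{e_b}$ contributes one extra $\Z$. The relations along $\vec e_a\colon V\to V_k$ express the coordinates of $V_k$ in terms of those of $V$ and $\ell_{e_a}$ (for $m_1,\dots,m_{r-1}$), plus one extra relation from $m_r$, namely $m_r(h(V)) + d\cdot \ell_{e_a}$-type terms. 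The relations along $\vec e_b$ are eliminated using Lemma~\ref{lem: unconstrained}. After this elimination, the only surviving new relation should have the shape $(\alpha(m),(m_1,\dots,m_{r-1}),\sum a_i m_i + d_{V_k} m_r)$, where $a_i=m_i(u_{\vec e})$ and the coefficient of the new $\Z$ is $m_r(u_{\vec e})=\pm d_{V_k}$. The radial-alignment cone $\tau_k$ versus $\tau_{k-1}$ only changes by replacing $\ell_e$ with $\ell_{e_a}+\ell_{e_b}$, since cycle edges do not enter the equidistance conditions, so the $\tau^\vee$ part behaves diagonally and is compatible with $\Psi$.

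Having matched the hypotheses, I apply Lemma~\ref{lem: constrained}, which gives $g\mid d_{V_k}$, $|N_\tors|=g|M_\tors|$, and $\Psi$ (the map $M/M_\tors\to N/N_\tors$) of index $d_{V_k}/g$. I also need to identify the map in the lemma with $\Psi$: the lemma's map is $a\mapsto(a,0,0)$, and I will check that this agrees with the diagonal assignment $\ell_e\mapsto \ell_{e_a}+\ell_{e_b}$ modulo the relation eliminated via Lemma~\ref{lem: unconstrained}.

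The main obstacle is this bookkeeping: correctly eliminating the $\vec e_b$ relations and the subdivided edge length so that the remaining relation has exactly the shape required by Lemma~\ref{lem: constrained}, with coefficient $d_{V_k}$ on the fresh generator and identity on the $\Z^{r-1}$ vertex coordinates. I would first do this in the basis $m_1,\dots,m_r$ with an explicit $2\times2$ block computation at the single vertex.
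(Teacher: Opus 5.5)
Your overall architecture matches the paper's. Since $(\sigma')^{(k)}=(\sigma')^{(k-1)}$, you factor $\Phi_k=\Psi\circ\Phi_{k-1}$, use multiplicativity of lattice indices, and reduce to presenting $N=Q^\fine_\W(\sigma^{(k)})^{\gr}$ as in Lemma~\ref{lem: constrained} over $M=Q^\fine_\W(\sigma^{(k-1)})^{\gr}$, with the lemma's inclusion $a\mapsto(a,0,0)$ equal to $\Psi$. The gap is in that presentation, the only step with real content: the bookkeeping you sketch would not produce it.

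First, $\ell_{e_a}$ cannot replace the old generator $\ell_e$. For each $m$, adding the relations along $\vec e_a$ and $\vec e_b$ cancels the $M_{\vartheta_{V_k}}$-components and gives the old relation along $\vec e$ with $\ell_e$ replaced by $\ell_{e_a}+\ell_{e_b}$. Hence the image of the old relation under $\ell_e\mapsto\ell_{e_a}$ equals $-u_{\vec e}(m)\,\ell_{e_b}$ in $N$. This is non-zero: take $m=m_r$, and note $\ell_{e_b}$ is an edge length, hence non-torsion. So in your basis $a\mapsto(a,0,0)$ is not even a well-defined homomorphism $M\to N$. The check you defer (agreement with $\ell_e\mapsto\ell_{e_a}+\ell_{e_b}$ ``modulo the eliminated relation'') would require $\ell_{e_b}=0$.

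Second, the $\vec e_b$-relations cannot be removed by Lemma~\ref{lem: unconstrained}. On the new factors $M_{\vartheta_{V_k}}\times\Z_{e_b}\simeq\Z^{r-1}\times\Z$ their coefficient map is $m\mapsto(-\iota(m),u_{\vec e}(m))$, of determinant $\pm d_{V_k}$, not an identity block. In fact they are not eliminated at all; they combine with the $\vec e_a$-relations to reproduce the old relation along $\vec e$.

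The missing idea is the paper's unimodular change of generators.
\begin{itemize}
\item Replace the families $\{a_{\vec e_a}(m)\}_m$ and $\{a_{\vec e_b}(m)\}_m$ by $\{a_{\vec e_a}(m)+a_{\vec e_b}(m)\}_m$ and $\{a_{\vec e_b}(m)\}_m$; these generate the same subgroup.
\item Use the basis $\{\ell_{e_a}+\ell_{e_b},\,\ell_{e_b}\}$ of $\Z_{e_a}\times\Z_{e_b}$, which is the paper's $(n,n')\mapsto(n,n-n')$.
\item Since $u_{\vec e_a}=u_{\vec e_b}=u_{\vec e}$, the summed family is literally the old relation along $\vec e$ with $\ell_e$ sitting at $\ell_{e_a}+\ell_{e_b}$. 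Quotienting by it and the untouched relations first recovers $M$.
\item The result is $N\simeq (M\times M_{\vartheta_{V_k}}\times\Z)/\langle(\alpha(m),-\iota(m),u_{\vec e}(m))\rangle$. Here $\alpha(m)$ is the class of $\iota_{\vartheta_{V'}}(m)$, where $V'$ is the far endpoint of $\vec e_b$.
\end{itemize}
In this basis $a\mapsto(a,0,0)$ sends $\ell_e$ to $\ell_{e_a}+\ell_{e_b}$, so it is exactly $\Psi$, and the identification you postponed is automatic. Choosing $m_1,\dots,m_r$ adapted to $\vartheta_{V_k}$ then gives the shape required by Lemma~\ref{lem: constrained}, with last coefficient $\pm d_{V_k}$. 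Your multiplicativity argument then finishes the proof exactly as in the paper.
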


\begin{proof}
    Let $\vartheta:=\vartheta_{V_k}$ be the codimension $1$ subcone of $\Sigma(X)$ which contains $h^{(k)}(V_k)$. Let  $\vec{e}_1 \colon V_{i_1} \rightarrow V_k$ and $\vec{e}_2 \colon V_k \rightarrow V_{i_2}$ be the two directed edges adjacent to $V_k$. Let $\vartheta_1:= \vartheta_{V_{i_1}}, \vartheta_2=\vartheta_{V_{i_2}} $ be the cones of $\Sigma(X)$ containing $V_{i_1}, V_{i_2}$ (which may be of maximum dimension, or codimension $1$). Let $\iota_{\theta_i} :  M_X  \rightarrow M_{\vartheta_i}$ be dual to the associated inclusion of monoids $N_{\vartheta_i} \rightarrow N_X$. Let $\vec{e}_0 \colon V_{i_1} \rightarrow V_{i_2}$ be the directed edge in $\sigma^{(k-1)}$. We have 
    $$Q_{\W}^{\fine}(\sigma^{(k-1)})^{\gr} = \frac{(\prod_{V \neq V_{i_1}, V_{i_2}} M_{\vartheta_V} \times \prod_{e \neq e_0} \mathbb{Z}) \times M_{\vartheta_1} \times M_{\vartheta_2} \times  \mathbb{Z}_{\vec{e}_0}}{R' + ((0), -\iota_1(m), \iota_2(m), u_{\vec{e}_0}(m))},$$
    where we denoted by $R'$ the set of relations \eqref{eqn: relations from tropical curve} coming from edges in $\sigma^{(k-1)}$ different from $e_0$ and $m$ varies in $M_X$. Meanwhile  
    $$Q_{\W}^{\fine}(\sigma^{(k)})^{\gr} = \frac{(\prod_{V \neq V_{i_1}, V_{i_2}} M_{\vartheta_V} \times \prod_{e \neq e_0} \mathbb{Z}) \times M_{\vartheta_1} \times M_{\vartheta_2}  \times M_{\vartheta} \times \mathbb{Z}_{\vec{e}_1} \times \mathbb{Z}_{\vec{e}_2} }{R' + (a_{\vec{e}_1}(m)) + (a_{\vec{e}_2}(m))}.$$
    where for $m \in M_X$
    \begin{align*}
        &a_{\vec{e}_1}(m) = ((0), -\iota_1(m), 0 , \iota(m),  u_{\vec{e}_1}(m), 0)\\
        &a_{\vec{e}_2}(m) = ((0), 0, \iota_2(m) , -\iota(m), 0, u_{\vec{e}_2}(m))
    \end{align*}
    Note that $u_{\vec{e}_0} = u_{\vec{e}_1} = u_{\vec{e}_2}$. The two relations $a_{\vec{e}_1}(m)$ and $a_{\vec{e}_2}(m)$ can be replaced with 
    \begin{align*}
        &((0), -\iota_1(m), \iota_2(m),  0,  u_{\vec{e}_0}(m), u_{\vec{e}_0}(m))  \quad \text{and} \quad
        ((0), 0, \iota_2(m) , -\iota(m), 0, u_{\vec{e}_0}(m)). 
    \end{align*}
    Consider a change of basis $\mathbb{Z}_{\vec{e}_1} \times \mathbb{Z}_{\vec{e}_2} \rightarrow \mathbb{Z} \times \mathbb{Z}$ given by $(n,m) \mapsto (n, n-m)$. Under this change of basis, the two relations become 
    \begin{align*}
            &((0), -\iota_1(m), \iota_2(m),  0,  u_{\vec{e}_0}(m), 0)  \quad \text{and} \quad
            ((0), 0, \iota_2(m) , -\iota(m), 0, u_{\vec{e}_0}(m)). 
    \end{align*}
    The first relation is one of the defining relations of $Q_{\W}^{\fine}(\sigma^{(k-1)})^{\gr}$. Thus, we have an isomorphism
\[
Q_{\W}^{\fine}(\sigma^{(k)})^{\gr}
\simeq
\frac{
Q_{\W}^{\fine}(\sigma^{(k-1)})^{\gr}
\times M_{\vartheta}
\times \mathbb{Z}
}{
(\alpha(m),-\iota(m),u_{\vec e_0}(m))
}.
\]
Here
\[
\alpha:M_X\longrightarrow Q_\W^\fine(\sigma^{(k-1)})^{\gr}
\]
sends $m\in M_X$ to the image of
\[
((0),0,\iota_2(m),0)
\in
\Bigl(
\prod_{V\neq V_{i_1},V_{i_2}} M_V
\times
\prod_{e\neq e_0}\mathbb Z
\Bigr)
\times M_{\vartheta_1}
\times M_{\vartheta_2}
\times \mathbb Z_{\vec e_0}
\]
under the projection to $Q_\W^\fine(\sigma^{(k-1)})^{\gr}$.

Choose bases of $M_X$ and $M_{\vartheta}$ such that
\[
\iota_\vartheta:M_X\to M_\vartheta,
\qquad
(m_1,\ldots,m_r)\longmapsto (m_1,\ldots,m_{r-1}).
\]
Then, by Lemma~\ref{lem: constrained}, the natural map
\[
Q_\W^\fine(\sigma^{(k-1)})^\gr
\longrightarrow
Q_{\W}^{\fine}(\sigma^{(k)})^{\gr}
\simeq
\frac{
Q_{\W}^{\fine}(\sigma^{(k-1)})^{\gr}
\times M_{\vartheta}
\times \mathbb{Z}
}{
\bigl(
\alpha(m_1,\ldots,m_r),
-(m_1,\ldots,m_{r-1}),
\sum_{i=1}^{r-1} a_i m_i + d_{V_k}m_r
\bigr)
}
\]
becomes an isomorphism after tensoring with $\mathbb R$, and
\[
\frac{\bigl|(Q_\W^\fine(\sigma^{(k)})^{\gr})_{\tors}\bigr|}
     {\bigl|(Q_\W^\fine(\sigma^{(k-1)})^{\gr})_{\tors}\bigr|}
=
\frac{d_{V_k}}
{\left[
Q_\W^\fine(\sigma^{(k)})^{\gr}/(Q_\W^\fine(\sigma^{(k)})^{\gr})_{\tors}
:
\operatorname{im}\!\Bigl(
Q_\W^\fine(\sigma^{(k-1)})^{\gr}/(Q_\W^\fine(\sigma^{(k-1)})^{\gr})_{\tors}
\Bigr)
\right]}.
\]
The conclusion now follows from the identity
$$
\left[
Q_\W^\fine(\sigma^{(k)})^{\gr}/(Q_\W^\fine(\sigma^{(k)})^{\gr})_{\tors}
:
\operatorname{im}\!\Bigl(
Q_\W^\fine(\sigma^{(k-1)})^{\gr}/(Q_\W^\fine(\sigma^{(k-1)})^{\gr})_{\tors}
\Bigr)
\right]= \frac{ \mathrm{lat}(\sigma^{(k)})}{\mathrm{lat}(\sigma^{(k-1)})}
$$
    
\end{proof}

\begin{proposition}\label{prop: inductive step}
    For all $ k>E_1$, the cardinality of the torsion subgroup of $Q^\fine_\W(\sigma^{(k-1)})^{\gr}$ divides that of the torsion subgroup of $Q^\fine_\W(\sigma^{(k)})^{\gr}$, and one has 
    $$  \frac{|(Q^\fine_\W(\sigma^{(k)})^{\gr})_{\tors}|}{|(Q^\fine_\W(\sigma^{(k-1)})^{\gr})_{\tors}|} = \mathrm{lat}(\sigma^{(k-1)})  \cdot \frac{\prod_{V \in \tropC_k \smallsetminus \tropC_{k-1} \text{ constrained}} d_V}{\mathrm{lat}(\sigma_k)}
    $$  
\end{proposition}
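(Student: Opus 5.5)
We follow the template of the proof of Proposition~\ref{prop: cycle induction}. The aim is to write $Q^\fine_\W(\sigma^{(k)})^{\gr}$ as a quotient of $Q^\fine_\W(\sigma^{(k-1)})^{\gr}$ by relations involving only the new vertex $V_k$ and the new edge $e_k$. Lemma~\ref{lem: unconstrained} or Lemma~\ref{lem: constrained} should then give the torsion ratio and the lattice index of the incremental map.

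Fix $k>E_1$ and let $\vec e_k\colon V_{i_k}\to V_k$ be the new edge. The relation $a_{\vec e_k}(m)$ has the form $(-\iota_{i_k}(m),\iota_{\vartheta_{V_k}}(m),u_{\vec e_k}(m))$. We must also account for the change of cone $\tau_{k-1}\rightsquigarrow\tau_k$. If $V_k$ belongs to no $\mathcal V_i$, or is the first vertex of its $\mathcal V_i$ to appear, then $\tau_k^\vee=\tau_{k-1}^\vee\times\mathbb N$ and a new free $\mathbb Z$ factor $\mathbb Z_{e_k}$ is added. Otherwise the equidistance relation $\sum_t\ell(e_{i,j,t})=\sum_{t'}\ell(e_{i,j',t'})$ determines $\ell(e_k)$ as a $\mathbb Z$-linear combination of lengths already present. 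In that case $(\tau_k^\vee)^{\gr}=(\tau_{k-1}^\vee)^{\gr}$, and $\mathbb Z_{e_k}$ is identified with an element $\beta$ of $Q^\fine_\W(\sigma^{(k-1)})^{\gr}$. By the construction's ordering, the path from $V_k$ to the cycle is already present. In both cases we arrive at a presentation
\[
Q^\fine_\W(\sigma^{(k)})^{\gr}\simeq\frac{Q^\fine_\W(\sigma^{(k-1)})^{\gr}\times M_{\vartheta_{V_k}}\times\mathbb Z^{\epsilon}}{\bigl(\alpha(m),\,-\iota_{\vartheta_{V_k}}(m),\,u_{\vec e_k}(m)\bigr)},\qquad \epsilon\in\{0,1\},
\]
where $\alpha(m)$ is the image of $\iota_{i_k}(m)$, shifted by $u_{\vec e_k}(m)\beta$ when $\epsilon=0$. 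We then split into cases.

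\emph{Unconstrained $V_k$.} Here $M_{\vartheta_{V_k}}=M_X$ and $\iota$ is the identity. Lemma~\ref{lem: unconstrained}, applied with the $M_X$-factor playing the role of $\mathbb Z^r$, shows that adjoining $M_{\vartheta_{V_k}}$ changes nothing. The group therefore becomes $Q^\fine_\W(\sigma^{(k-1)})^{\gr}\times\mathbb Z^\epsilon$. This has the same torsion, and the incremental lattice index is $1$.

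\emph{Constrained $V_k$.} Choose bases so that $\iota_{\vartheta_{V_k}}$ is the projection onto the first $r-1$ coordinates and $u_{\vec e_k}(m_r)=\pm d_{V_k}$. Lemma~\ref{lem: constrained} then applies, with the last coordinate $\mathbb Z$ being $\mathbb Z_{e_k}$ when $\epsilon=1$. When $\epsilon=0$ we first absorb the $M_{\vartheta_{V_k}}$-factor: the last coordinate relation $d_{V_k}m_r$ then lands in the previous group, and we reduce to the same shape after a change of basis. The lemma shows that the torsion is multiplied by some $g\mid d_{V_k}$ and that the incremental index is $d_{V_k}/g$.

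In every case, the ratio of torsion orders equals $\prod d_V$ divided by the incremental lattice index. Since $(\sigma')^{(k)}$ is obtained by the unconstrained-type step (a trivial index) or is unchanged, the incremental index equals $\mathrm{lat}(\sigma^{(k)})/\mathrm{lat}(\sigma^{(k-1)})$, exactly as in the last display of the proof of Proposition~\ref{prop: cycle induction}. This gives the claim.

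The main obstacle is the case $\epsilon=0$, where equidistance ties $e_k$ to existing edges. We must check that the resulting relation still fits the hypotheses of Lemma~\ref{lem: constrained}, in particular that the coefficient on the new generator is exactly $d_{V_k}$. We must also check that the same identification is made on the $(\sigma')^{(k)}$ side, so that the index comparison with $\mathrm{lat}$ remains valid. This relies on the observation that the vertices of $\mathcal V_i$ are unconstrained, so $\tau_k=\tau'_k$.
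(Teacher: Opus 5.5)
Your route is essentially the paper's. The paper also treats three cases: (1) an unconstrained $V_k$ with a free new edge length, (2) an unconstrained $V_k\in\mathcal V_i$ whose edge length is fixed by equidistance, and (3) a constrained $V_k$. It uses Lemma~\ref{lem: unconstrained} in the first two and Lemma~\ref{lem: constrained} in the third. Your $\epsilon$-presentation merges (1) and (2). Absorbing $\mathbb Z_{e_k}$ into $\beta$ before invoking Lemma~\ref{lem: unconstrained} is a slightly more direct way to obtain $Q^\fine_\W(\sigma^{(k)})^{\gr}\simeq Q^\fine_\W(\sigma^{(k-1)})^{\gr}$ in case (2) than the paper's cartesian diagram.

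The obstacle you single out does not exist, and your sketch for it would not have worked. As you note yourself at the end, vertices of $\mathcal V$ are unconstrained, so a constrained $V_k$ is never in $\mathcal V$. By the ordering of Construction~\ref{const: partial-sequence}, no equidistance condition already imposed on $\tropC^{(k-1)}$ involves $e_k$. Hence a constrained $V_k$ always has $\epsilon=1$, and Lemma~\ref{lem: constrained} applies verbatim with last factor $\mathbb Z_{e_k}$; this is why the paper has only three cases. Had $\epsilon=0$ been possible, absorbing $M_{\vartheta_{V_k}}$ via the first $r-1$ coordinates would leave $Q^\fine_\W(\sigma^{(k-1)})^{\gr}$ modulo the single element $[\iota_{i_k}(m_r)]\pm d_{V_k}\beta$. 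That quotient generally lowers the rank and is not of the shape of Lemma~\ref{lem: constrained}, contrary to your ``same shape after a change of basis''.

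What actually needs writing out is the passage from your incremental index to $\mathrm{lat}$, case by case:
\begin{itemize}
\item Case (3): the $\sigma'$ side is unchanged, and no edge of $(\tropC')^{(k-1)}$ contains $e_k$, so $\Phi_k$ is $\Phi_{k-1}$ followed by the natural map. This gives $\mathrm{lat}(\sigma^{(k)})=\mathrm{lat}(\sigma^{(k-1)})\,d_{V_k}/g$.
\item Case (1): you need $\Phi_k(1_{e'_k})$ to have coefficient $1$ on $1_{e_k}$ modulo the old part. This holds because the other segments of $e'_k$ pass through constrained vertices of smaller index, which already lie in $\tropC^{(k-1)}$. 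Then $\Phi_k=\bigl(\begin{smallmatrix}\Phi_{k-1}&\star\\0&1\end{smallmatrix}\bigr)$, as in the paper.
\item Case (2): you need the two isomorphisms from Lemma~\ref{lem: unconstrained} to intertwine $\Phi_k$ and $\Phi_{k-1}$. This is the commutative square the paper verifies.
\end{itemize}
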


\begin{proof}
    Note that none of the vertices in $\cV$ (see Definition \ref{def: cV}) can be constrained. We thus distinguish three cases:

    \medskip
    \noindent \textbf{(Case 1.)} \emph{The vertex $V_k$ is unconstrained and satisfies one of the following conditions: either it does not coincide with any of the vertices $V_{i,j}$ in Definition \ref{def: filtration}, or it is the unique vertex of $\mathcal{V}_i'$ contained in $\tropC^{(k)}$.}

    Then,
    \begin{align*}
        Q^{\fine}_W(\sigma^{(k)})^{\gr} 
        &\simeq \frac{Q^{\fine}_W(\sigma^{(k-1)})^{\gr} \times M_X \times \mathbb{Z}_{e_k}}{((\ldots, 0,-\iota_{i_k}(m),0,\dots ), m, u_{\vec{e_k}}(m))} \\
        &\simeq Q^{\fine}_W(\sigma^{(k-1)})^{\gr} \times \mathbb{Z}_{e_k}
    \end{align*}
    where $\iota_{i_k}$ denotes the dual of the inclusion map $N_{\vartheta_{V_{i_k}}} \subseteq N_X$, the second isomorphism is that in Lemma \ref{lem: unconstrained} and $\vec{e_k}: V_k \to V_{i_k}$.

    Similarly, 
    $$  
        Q^{\fine}_W((\sigma')^{(k)})^{\gr} \simeq Q^{\fine}_W((\sigma')^{(k-1)})^{\gr} \times \mathbb{Z}_{e'_k}
    $$
    The edge $e_k'$ contains the edge $e_k$. Under these identifications, the map $\Phi_k$ in Equation \eqref{eqn: kth map of fine monoids} is given by the block matrix 
    $$\begin{bmatrix}
        \Phi_{k-1} & \star \\
        0 & 1\\
    \end{bmatrix}$$
    In particular, both left-hand side and right-hand side of the equality remain unchanged.

    \medskip
    \noindent \textbf{(Case 2.)} \emph{The vertex $V_k$ is unconstrained, coincides with one of the vertices $V_{i,j}$ in Definition \ref{def: filtration}, and is not the unique vertex belonging to both $\tropC_k$ and $\cV_i$.}

    Let $V_{i,h}$ be another vertex in $\cV_i \cap \tropC_k$. In this case, we have a natural inclusion $\tau_k \subseteq \tau_{k-1} \times \Z$, giving rise to a homomorphism of groups
    $$
    N_{\tau_k} \to N_{\tau_{k-1}} \times \Z
    $$
    Since the distance between $V_{i,j}$ and $\mathsf{C}_0$ is equal to the distance between $V_{i,h}$ and $\tropC_0$ is required to be the same, the composition $ N_{\tau_k} \to N_{\tau_{k-1}} \times \Z \to N_{\tau_{k-1}}$ is an isomorphism of lattices and in fact gives an isomorphism of cones between $\tau_k$ and $\tau_{k-1}$. In particular, we also have an epimomorphism
    $$
    p_k: M_{\tau_{k-1}} \times \Z \to M_{\tau_k}
    $$
    inducing isomorphisms
    $$
    (\tau_{k}^\vee)^{\gr} = M_{\tau_k} \simeq M_{\tau_k} \times \{0\}  \subseteq  M_{\tau_{k-1}} \times \Z \to M_{\tau_k} \simeq (\tau_{k-1}^\vee)^{\gr}
    $$

    Consider following cartesian diagram 

    \begin{equation}\label{eqn: cartesian diagram constrained vertex}
\begin{tikzcd}
\frac{Q^\fine_\W(\sigma^{(k-1)}))^{\gr} \times M_X \times \Z_{e_{k}}}{{((\ldots, 0,-\iota_{i_k}(m),0,\dots ), m, u_{\vec{e_k}}(m))}}  \arrow[r] \arrow[d] & Q^\fine_\W(\sigma^{(k-1)})^{\gr} \times \Z_{e_k} \arrow[d] \\
Q_\W^\fine(\sigma^{(k)})^{\gr} \arrow[r] & \frac{\prod_{V \in V(\tropC^{(k-1)})} M_{\vartheta_V} \times M_{\tau_{k}}}{\widetilde{R_{k-1}}}
\end{tikzcd}
\end{equation}
where the top map is the isomophism in Lemma \ref{lem: unconstrained}, the vertical maps are induced by $p_k$, the group $\widetilde{R_{k-1}}$ is the image of $R_{k-1}$ under the map 
$$
\bigg( \prod_{V \in V(\tropC_{k-1})} M_{\vartheta_V} \bigg) \times M_{\tau_{k-1}} \times \Z \to \bigg( \prod_{V \in V(\tropC_{k-1})} M_{\vartheta_V} \bigg) \times M_{\tau_{k}}
$$
and the bottom horizontal arrow is an isomorphism.

Under the isomorphism $M_{\tau_k} \simeq M_{\tau_{k-1}}$, the bottom left group in Diagram \eqref{eqn: cartesian diagram constrained vertex} is identified with $Q_\W^\fine(\sigma^{(k-1)})$; that is the composition
$$
Q_\W^\fine(\sigma^{(k-1)})^{\gr} \times \{ 0\} \subseteq Q_\W^\fine(\sigma^{(k-1)})^{\gr} \times \Z_{e_k} \to \frac{\prod_{V \in V(\tropC^{(k-1)})} M_{\vartheta_V} \times M_{\tau_{k}}}{\widetilde{R_{k-1}}}
$$
is an isomorphism.

A cartesian diagram similar to that in Equation \eqref{eqn: cartesian diagram constrained vertex} yields an isomorphism $Q_\W^\fine((\sigma')^{(k)}) \simeq Q_\W^\fine((\sigma')^{(k-1)})$. We claim that the resulting diagram
\begin{equation*}
\begin{tikzcd}
Q_\W^\fine((\sigma')^{(k)})^{\gr} \arrow[r, "\sim"] \arrow[d, "\Phi_k"] & Q_\W^\fine((\sigma')^{(k-1)})^{\gr} \arrow[d, "\Phi_{k-1}"] \\
Q_\W^\fine(\sigma^{(k)})^{\gr} \arrow[r, "\sim"'] & Q_\W^\fine(\sigma^{(k-1)})^{\gr}
\end{tikzcd}
\end{equation*}

is commutative, from which the statement would follow. Using the fact that the composition
$$
Q_\W^\fine((\sigma')^{(k-1)})^{\gr} \times \{ 0 \} \times \{0 \} \longrightarrow \frac{Q^\fine_\W((\sigma')^{(k-1)})^{\gr} \times M_X \times \Z_{e_{k}}}{ ((\ldots, 0,-m,0,\dots ), m, u_{\vec{e'_k}}(m)) } \xrightarrow{p_k} Q_\W^\fine((\sigma')^{(k-1)})^{\gr}
$$
is the inverse of $Q_\W^\fine((\sigma')^{(k)})^{\gr} \xrightarrow{\sim} Q_\W^\fine((\sigma')^{(k-1)})^{\gr}$, it is easy to check that 
$$
Q_\W^\fine((\sigma')^{(k-1)})^{\gr} \xrightarrow{\sim} Q_\W^\fine((\sigma')^{(k-1)})^{\gr} \xrightarrow{\Phi_k} Q_\W^\fine(\sigma^{(k)})^{\gr}  \to Q_\W^\fine(\sigma^{(k-1)})^{\gr} 
$$
is precisely the map $\Phi_{k-1}$ and this conclude Case $2$.
    
    \medskip
    \noindent \textbf{(Case 3.)} \emph{The vertex $V_k$ is constrained.}

    Let $\vartheta:= \vartheta_k$ be the codimension $1$ cone in $\Sigma(X)$ in which $V_k$ is constrained. Choose coordinates on $N_X$ such that the inclusion $N_\vartheta \subseteq N_X$ is identified with $\Z^{r-1} \times 0 \subseteq \Z^r$. Orient the edge $e_k$ so that $\vec{e_k}: V_k \to V_{i_k}$, and write $u_{\vec{e_k}}=(a_1,\ldots,a_{r-1},d_{V_k}) \in \Z^r \simeq N_X$.
    Then we have:

    $$
    Q_\W^\fine(\sigma^{(k)})^{\gr} \simeq \frac{Q^{\fine}_W(\sigma^{(k-1)})^{\gr} \times M_\vartheta \times \mathbb{Z}}{((\ldots, 0,-\iota_{i_k}(m),0,\dots ), (m_1 , \ldots, m_{r-1}), \sum_{j=1}^{r-1}a_j m_j+d_{V_k} m_r)}
    $$
    while 
    $$
    Q_\W^\fine((\sigma')^{(k)})^{\gr} = Q_\W^\fine((\sigma')^{(k-1)})^{\gr}
    $$
    By Lemma \ref{lem: constrained}, $|(Q_\W^\fine(\sigma^{(k-1)})^{\gr})_{\tors}|$ divides $|(Q_\W^\fine(\sigma^{(k)})^{\gr})_{\tors}|$. Let $g$ denote the quotient of this division; then the induced map 
    $$
    Q_\W^\fine(\sigma^{(k-1)})^{\gr} / (Q_\W^\fine(\sigma^{(k-1)})^{\gr})_{\tors} \to Q_\W^\fine(\sigma^{(k)})^{\gr} / (Q_\W^\fine(\sigma^{(k)})^{\gr})_{\tors}
    $$
    has index $d_{V:k} /g$. The assertion follows also in this case.
\end{proof}

\begin{proof}[Proof of Theorem \ref{thm: relation indices, loop aut}]
The proof follows immediately by induction on $k$ using Propositions \ref{lem:loop}, \ref{prop: cycle induction} and \ref{prop: inductive step}.
    
\end{proof}

\bibliographystyle{amsalpha}
\bibliography{library}

$\,$\
\noindent

$\,$\
\noindent
\textsc{Department of Pure Mathematics {\it \&} Mathematical Statistics, 
University of Cambridge, Cambridge, UK}

\textit{e-mail address:} \href{mailto:ac2758@cam.ac.uk}{ac2758@cam.ac.uk}

$\,$\
\noindent

$\,$\
\noindent
\textsc{Department of Pure Mathematics {\it \&} Mathematical Statistics, 
University of Cambridge, Cambridge, UK}

\textit{e-mail address:} \href{mailto:sk2050@cam.ac.uk}{sk2050@cam.ac.uk}

\end{document}